\crefname{equation}{}{}
\newcommand{\F}{\mathbb{F}_2}
\newcommand{\QH}{\mathbb{H}}
\newcommand{\Q}{\mathbb{Q}}
\newcommand{\R}{\mathbb{R}}
\newcommand{\Z}{\mathbb{Z}}
\newcommand{\rH}{\mathrm{H}}
\newcommand{\ul}{\underline}
\newcommand{\ulF}{\ul{\F}}
\newcommand{\ulm}{\ul{m}}
\newcommand{\ulZ}{\ul{\Z}}
\newcommand{\ulf}{\ul{f}}
\newcommand{\ulg}{\ul{g}}
\newcommand{\ulw}{\ul{w}}
\newcommand{\mystery}{\ul{mgw}}
\newcommand{\mf}[1]{\ul{#1}}
\newcommand{\mpi}{\ul{\pi}}
\newcommand{\iso}{\cong}
\newcommand{\smsh}{\wedge}
\newcommand{\into}{\hookrightarrow}
\newcommand{\onto}{\twoheadrightarrow}
\newcommand{\rtarr}{\longrightarrow}
\newcommand{\ltarr}{\longleftarrow}
\newcommand{\xrtarr}[1]{\xrightarrow{#1}}
\newcommand{\SI}{\Sigma}
\newcommand{\Si}[1]{\Sigma^{#1}}
\newcommand{\Sp}{\mathbf{Sp}}
\newcommand{\Mack}{\mathbf{Mack}}
\newcommand{\phiZ}{\Psi}
\newcommand{\HphiZ}{\Psi}
\DeclareMathOperator{\Mod}{Mod}
\newif\ifDrawTikzPictures
\tikzstyle{encirc}=[draw,circle,inner sep=0.8pt,scale=0.7]
\newcommand{\MackKNoArrows}[5]
{\begin{tikzcd}[ampersand replacement=\&]
 \& #1 \& \\
 #2 \& #3 \& #4 \\
 \& #5 \& 
\end{tikzcd}}
    \pgfmathsetlength{\pgf@xa}{\pgfkeysvalueof{/pgf/outer xsep}}%
    \pgfmathsetlength{\pgf@ya}{\pgfkeysvalueof{/pgf/outer ysep}}%
    \pgfmathsetlength{\pgf@xb}{\pgfkeysvalueof{/pgf/outer xsep}}%
    \pgfmathsetlength{\pgf@yb}{\pgfkeysvalueof{/pgf/outer ysep}}%
\def\dbox{\dot{\Box}}
\def\abox{
     \begin{tikzpicture}
        \node at (0,0) [rectangle,draw,fill={gray!50},inner sep=0.75pt] {$\ast$};
     \end{tikzpicture}
}
\def\dbox{
     \begin{tikzpicture}
        \node at (0,0) [rectangle,draw,inner sep=0.75pt] {$\ast$};
     \end{tikzpicture}
}
\def\atrap{
    \begin{tikzpicture}
      \node[trapezium, fill={gray!50}, draw, inner sep=2.5pt,scale=1] at (0,0) {};
      \node at (0,0) {$\ast$};
    \end{tikzpicture}
}
\def\apent{
    \begin{tikzpicture}
      \node[regular polygon, fill={gray!50}, draw, regular polygon sides=5, 
 minimum width=0pt, inner sep = 0.5ex,] at (0,0) {};
      \node at (0,0) {$\ast$};
    \end{tikzpicture}
}
\def\mysterysymbol{
    \begin{tikzpicture}
      \node[fill = black, regular polygon, regular polygon sides=5,  regular polygon rotate=180, minimum width=0pt,  inner sep = 0.3ex,scale=1.7] at (0,0) {};
    \end{tikzpicture}
}
\def\phiZF{
    \begin{tikzpicture}[x=1.0ex,y=1.0ex]
      \draw (0,0.3) circle (0.3ex);
      \draw (0,0.6) -- (0,1.0);
      \begin{scope}[yshift={-0.1ex}]
      \fill (0,1) -- (1,2) -- (0,3) -- (-1,2) -- cycle;
      \end{scope}
    \end{tikzpicture}
}
\def\phiZFstar{
    \begin{tikzpicture}[x=1.0ex,y=1.0ex]
      \draw (0,0.4) circle (0.3ex);
      \draw (0,0.7) -- (0,1.1);
      \filldraw[fill={gray!50}] (0,1) -- (1,2) -- (0,3) -- (-1,2) -- cycle;
      \node at (0,2) {$\ast$};
    \end{tikzpicture}
}
\def\phiZM{
    \begin{tikzpicture}[x=1.0ex,y=1.0ex]
      \node[phiZMshape,draw,inner sep=0.4ex] at (0,0) {};
    \end{tikzpicture}
}
\def\btrap{
\begin{tikzpicture}
  \node[trapezium, fill=black, draw, inner sep=2.5pt,scale=1] at (0,0) {};
\end{tikzpicture}
}
\def\gcirc{
\begin{tikzpicture}[x=1.2ex,y=1.2ex]
    \draw (0,-0.65) circle (5pt);
    \node at (0,-0.65) {\(n\)};
\end{tikzpicture}
}
\def\bpent{
\begin{tikzpicture}
  \node[regular polygon, fill=black, draw, regular polygon sides=5, 
minimum width=0pt, inner sep = 0.5ex,] at (0,0) {};
\end{tikzpicture}
}
\newcommand{\onecolor}{blue}
\newcommand{\twocolor}{orange}
\newtheorem{thm}{Theorem}[section]
\newtheorem*{thm*}{Theorem}
\newtheorem{cor}[thm]{Corollary}
\newtheorem{prop}[thm]{Proposition}
\theoremstyle{definition}
\newtheorem{defn}[thm]{Definition}
\newtheorem{eg}[thm]{Example}
\newtheorem{rmk}[thm]{Remark}
\let\c@equation\c@thm
\numberwithin{equation}{section}
\newenvironment{pf}{\begin{proof}}{\end{proof}}
\title{The  slices of quaternionic Eilenberg-Mac~Lane spectra}
\author{Bertrand J. Guillou }
\address{Department of Mathematics, The University of Kentucky, Lexington, KY 40506--0027}
\email{bertguillou@uky.edu}
\author{Carissa Slone}
\address{Department of Mathematics, The University of Kentucky, Lexington, KY 40506--0027}
\email{c.slone@uky.edu}
\date{\today}
\thanks{The authors were  supported by NSF grant DMS-2003204.}
\begin{document}

\begin{abstract}
    We compute the slices and slice spectral sequence of integral suspensions of the equivariant Eilenberg-Mac~Lane spectra $H\ulZ$ 
    for the group of equivariance $Q_8$. Along the way, we compute the Mackey functors $\mpi_{k\rho} H\ulZ$.
\end{abstract}

\maketitle

\tableofcontents

\section{Introduction}

Let $G$ be a finite group.
The $G$-equivariant slice filtration  was first defined in the context of $G$-equivariant stable homotopy theory by Dugger in \cite{Dug}; it came to prominence as a result of its role in the proof of the Kervaire invariant conjecture by Hill, Hopkins, and Ravenel  \cite{Kervaire}. The slice filtration is an analogue in the $G$-equivariant stable homotopy category of the classical Postnikov filtration of spectra. One can also define a $G$-equivariant Postnikov filtration; on passage to fixed points with respect to any subgroup $H\leq G$, this recovers the Postnikov filtration of the $H$-fixed point spectrum. However, there are many equivariant spectra which possess a periodicity with respect to suspension by a $G$-representation sphere, and this periodicity is not visible in the $G$-equivariant Postnikov filtration. The slice filtration was devised by Dugger in order to display this periodicity for the case of the $C_2$-spectrum $K\R$.

Since the groundbreaking work \cite{Kervaire}, a number of authors have calculated the slice filtration, as well as the associated slice spectral sequence, for $G$-spectra of interest. 
A few cases are understood for an arbitrary finite group $G$. If $\ul{M}$ is a $G$-Mackey functor, then the equivariant Eilenberg-Mac~Lane spectrum $H_G \ul{M}$ is always a 0-slice \cite{Kervaire} (in this article, we use the ``regular'' slice filtration, as introduced in \cite{Ullman}). The slice filtrations of $\Si1 H_G\ul{M}$ and $\Si{-1} H_G \ul{M}$ were described in \cite{Ullman}.
The slices of certain suspensions of equivariant Eilenberg-Mac~Lane spectra were determined for $G$ an odd cyclic $p$-group in \cite{HHR2}, \cite{Yarnall} and \cite{V}, for dihedral groups of order $2p$, where $p$ is odd, in \cite{Zou}, and for the Klein-four group in \cite{GY} and \cite{Slone}. We extend this list by considering in this article the case of $G=Q_8$.

Some of the most far-reaching applications of the slice filtration and associated spectral sequence have come in the case of cyclic $p$-groups of equivariance. In addition to \cite{Kervaire}, this  also includes \cite{HHR}, \cite{MSZ}, \cite{Sulyma}, and \cite{HSWX}.
In particular, in \cite{HSWX} the authors use slice technology to understand a $C_4$-equivariant, height 4 Lubin-Tate theory at the prime 2. For each height $n$, there is a height $n$ Lubin-Tate theory that comes equipped with an action of the height $n$ (profinite) Morava stabilizer group. The homotopy fixed points with respect to this action gives a model for the $K(n)$-local sphere, a central object of study.
More approachable are the homotopy fixed points with respect to finite subgroups. At height 4, the Morava stabilizer group contains a $C_4$-subgroup (in fact a $C_8$), which gives the context for \cite{HSWX}. On the other hand, at height $2m$, where $m$ is odd, the Morava stabilizer group contains a $Q_8$-subgroup. Therefore it is possible that $Q_8$-equivariant slice techniques will eventually shed light on the $K(n)$-local sphere when $n=2m$ and $m$ is odd.

\medskip

The focus of our article is the determination of the slices of $\Si{n} H_{Q_8} \ulZ$. We list the slices in \cref{sec:Q8slice} and describe the associated spectral sequence in \cref{sec:SpectralSequences}.
We rely heavily on the computation of the slices of $\Si{n}H_{K_4} \ulZ$ given by the second author in \cite{Slone}. The quotient map $Q_8 \rtarr K_4$ allows us to gain insight into the $Q_8$-equivariant slices from the $K_4$-case, as we now explain in greater generality.

Given a normal subgroup $N \unlhd G$, there are several constructions that will produce a $G$-spectrum from a $G/N$-spectrum. First is the ordinary pullback, or inflation, functor. If $q\colon G \rtarr G/N$ is the quotient, then inflation is denoted $q^* \colon \Sp^{G/N} \rtarr \Sp^G$; it is left adjoint to the $N$-fixed point functor. This inflation functor plays an important role. For instance $q^*( S^0_{G/N})$ is equivalent to $S^0_G$. However, from our point of view, this construction has two deficiencies. First, the ordinary inflation does not interact well with the slice filtration. Secondly, the inflation of an $H_{G/N}\ulZ$-module does not have a canonical $H_G \ulZ$-module structure. 

On the other hand, the ``geometric inflation'' functor (\cite{SlicePrimer}*{Definition~4.1}, \cite{LMS}*{Section II.9})
\[
\phi_N^* \colon \Sp^{G/N} \rtarr \Sp^G,
\]
which is right adjoint to the geometric fixed points functor, interacts well with slices. Namely, if $N$ is a normal subgroup of order $d$ and $X$ is a $G/N$-spectrum, then 
\[ \phi_N^* P^k_k (X) \simeq P^{dk}_{dk} \left( \phi_N^* X \right), 
\]
by \cite{Ullman}*{Corollary~4-5} (see also \cite{SlicePrimer}*{Section~4.2}). However, in general the geometric inflation of an $H_{G/N}\ulZ$-module will not be an $H_G \ulZ$-module. 

The third variant is the $\ulZ$-module inflation functor 
(\cite{Z}*{Section~3.2})
\[
\HphiZ_N^*\colon \Mod_{H_{G/N} \ulZ} \rtarr \Mod_{H_G \ulZ}.
\]
By design, the $\ulZ$-module inflation of an $H_{G/N}\ulZ$-module has a canonical $H_G\ulZ$-module structure, though in general this functor does not interact well with the slice filtration.

In some cases, these constructions agree. For instance, if the underlying spectrum of the $G/N$-spectrum $X$ is contractible, then $q^* X \simeq \phi_N^* X$. If $X$ is furthermore an $H_{G/N}\ulZ$-module, then the three inflation functors coincide on $X$ (\cref{HphiZgeometric}).

The above discussion applies to the slices of $\Si{n}H_{G/N}\ulZ$: all slices, except for the bottom slice, have trivial underlying spectrum. It follows that these inflate to give many of the slices of $\Si{n}H_G\ulZ$.

Our main result along these lines, \cref{MainSliceInflationTheorem}, describes the higher slices of such an inflated $H_{G}\ulZ$-module. In the case of $G=Q_8$, $N=Z(Q_8)$, and $G/N=Q_8/Z \iso K_4$, it gives the following:

\begin{thm}
\label{IntroSliceInflationTheorem}
Let $n\geq 0$.
Then the nontrivial slices of $\Si{n} H_{Q_8} \ulZ$, above level $2n$, are
\[ P^{2k}_{2k} \left( \Si{n}H_{Q_8} \ulZ \right) \simeq \HphiZ_Z^* P^k_k \left( \Si{n} H_{K_4} \ulZ \right)
\simeq \phi_Z^* P^k_k \left( \Si{n} H_{K_4} \ulZ \right)
\]
for $k>n$. Furthermore,
\[
P^{2k}_n \left( \Si{n}H_{Q_8} \ulZ \right) \simeq \HphiZ_Z^* P^k_n \left( \Si{n} H_{K_4} \ulZ \right).
\]
\end{thm}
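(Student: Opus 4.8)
\emph{Proof strategy.} The statement is the case $G=Q_8$, $N=Z=Z(Q_8)$, $G/N=K_4$ of \cref{MainSliceInflationTheorem}, and the plan is to argue in this generality: $N\unlhd G$ of order $d$ (so $d=2$ here), writing $X:=\Si{n}H_G\ulZ$ and $Y:=\Si{n}H_{G/N}\ulZ$. Since $H_G\ulZ$ is a $0$-slice, both $X$ and $Y$ are slice $\geq n$, so $P^m_n X \simeq P^m X$ for $m\geq n$, and both slice towers are bounded; thus the first reduction is to understand the slices $P^m_m X$ for $m>dn$ together with the truncations $P^m X$ for $m\geq n$.

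The key preliminary I would establish is that every slice $P^m_m X$ with $m\neq n$ — and hence $P_{n+1}X$, as well as $P_m Y$ for $m>n$ — has \emph{contractible underlying spectrum}. This follows from slice-exactness of the restriction functor $\operatorname{res}^G_e$ (it carries $m$-slices to $m$-slices over the trivial group, i.e.\ to spectra concentrated in Postnikov degree $m$): applying $\operatorname{res}^G_e$ to the slice tower of $X$ produces a bounded filtration of $\operatorname{res}^G_e X=\Si{n}H\Z$ whose $m$-th graded piece is concentrated in degree $m$, and since $\Si{n}H\Z$ is concentrated in degree $n$ there can be no nonzero graded piece in any degree $\neq n$.

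Next I would compare $X$ with the geometric inflation $\phi_N^* Y$. By \cite{Ullman}*{Corollary~4-5}, in its tower form, $\phi_N^*$ carries the slice tower of $Y$ to the $d$-fold rescaling of the slice tower of $\phi_N^* Y$; in particular $\phi_N^* P_m Y\simeq P_{dm}(\phi_N^* Y)$ and $\phi_N^* P^m_m Y\simeq P^{dm}_{dm}(\phi_N^* Y)$. Since the slices $P^k_k Y$ with $k>n$ have contractible underlying spectrum, \cref{HphiZgeometric} gives $\phi_N^* P^k_k Y\simeq \HphiZ_N^* P^k_k Y$ for $k>n$. The crux is then to show that $X$ and $\phi_N^* Y$ have the same slice tower above level $dn$, that is, to produce a map between them — built from the (co)unit of the $(\Phi^N,\phi_N^*)$-adjunction together with the identification $\HphiZ_N(H_G\ulZ)\simeq H_{G/N}\ulZ$ of \cite{Z} — whose cofiber is slice $\leq dn$, equivalently which induces an equivalence $P_{dn+1}X\simeq P_{dn+1}(\phi_N^* Y)$. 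Granting this, for $k>n$ one obtains
\[
P^{dk}_{dk}X\;\simeq\;P^{dk}_{dk}(\phi_N^* Y)\;\simeq\;\phi_N^* P^k_k Y\;\simeq\;\HphiZ_N^* P^k_k Y,
\]
which is the first assertion, and the odd slices $P^{2k+1}_{2k+1}X$ above level $dn$ vanish because the corresponding slices of $\phi_N^* Y$ do (all slices of $\phi_N^* Y$ lie in degrees divisible by $d$). For the second assertion I would use $\HphiZ_N^* Y\simeq X$ (again \cite{Z}: $\HphiZ_N^*$ sends $H_{G/N}\ulZ$ to $H_G\ulZ$, hence $\Si{n}H_{G/N}\ulZ$ to $\Si{n}H_G\ulZ$) together with exactness of $\HphiZ_N^*$. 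For $k\geq n$ write $P^k_n Y\simeq \operatorname{cofib}(P_{k+1}Y\to Y)$, so that $\HphiZ_N^* P^k_n Y\simeq\operatorname{cofib}(\HphiZ_N^* P_{k+1}Y\to X)$; since $P_{k+1}Y$ has contractible underlying spectrum, \cref{HphiZgeometric}, \cite{Ullman}*{Corollary~4-5} and the comparison above identify $\HphiZ_N^* P_{k+1}Y\simeq\phi_N^* P_{k+1}Y\simeq P_{d(k+1)}(\phi_N^* Y)\simeq P_{dk+1}X$ compatibly with the maps to $X$, whence the cofiber is $P^{dk}X\simeq P^{dk}_n X$, after the usual octahedral bookkeeping.

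The step I expect to be the genuine obstacle is the comparison of the tops of the slice towers of $X$ and $\phi_N^* Y$ — equivalently, isolating the ``error term'' between $\Si{n}H_G\ulZ$ and its $N$-geometric inflation and verifying that it is concentrated in slice degrees $\leq dn$. One would presumably approach this by a careful isotropy-separation analysis (noting that for $G=Q_8$ the family $\mathcal{F}[Z]$ is just the trivial family, so ``contractible underlying'' coincides with ``$\widetilde{E}\mathcal{F}[Z]$-local''), using the vanishing lemma of the second paragraph to pin down where the two towers can differ. Everything else is formal manipulation with \cref{HphiZgeometric}, \cite{Ullman}*{Corollary~4-5}, and the basic properties of $\HphiZ_N^*$ from \cite{Z}.
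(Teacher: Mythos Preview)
Your overall framework is close to the paper's, and the formal pieces you assemble---the identification $\HphiZ_N^*(\Si{n}H_{G/N}\ulZ)\simeq\Si{n}H_G\ulZ$, Ullman's rescaling result for $\phi_N^*$, and the observation that $P^k_k(\Si{n}H_{G/N}\ulZ)$ has trivial underlying spectrum for $k>n$ so that $\HphiZ_N^*$ and $\phi_N^*$ agree there---are exactly the ingredients the paper uses. The divergence is at the step you correctly flag as the genuine obstacle, and there your proposal has a real gap.

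You want to compare $X=\Si{n}H_G\ulZ$ directly with $\phi_N^* Y$ via a map built from the $(\Phi^N,\phi_N^*)$-adjunction. That map does not exist as you describe it: the unit $X\to\phi_N^*\Phi^N X$ does not land in $\phi_N^* Y$, because $\Phi^N(H_G\ulZ)\not\simeq H_{G/N}\ulZ$ (already for $C_2$ one has $\Phi^{C_2}H\ulZ\simeq H\F$, not $H\Z$). The map that \emph{is} available is $\HphiZ_N^*(P_{n+1}Y\to Y)$: the target is $X$, and by \cref{HphiZgeometric} plus Ullman the source is $\phi_N^* P_{n+1}Y\simeq P_{d(n+1)}(\phi_N^* Y)$. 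Its cofiber is precisely $\HphiZ_N^* P^n_n Y$, so the ``error term'' you are hunting for is nothing other than the $\ulZ$-module inflation of the bottom $K_4$-slice.

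This is where the paper's argument differs from your sketch. Rather than attempting an abstract isotropy-separation analysis, the paper uses as an explicit hypothesis (in the general \cref{MainSliceInflationTheorem}) that $P^n_n(\Si{n}H_{G/N}\ulZ)\simeq\Si{V}H_{G/N}\ul{L}$ for some $G/N$-representation $V$, and then shows $\Si{V}H_G(\phiZ_N^*\ul{L})$ is slice $\leq dn$ by: (a) restricting to $N$, where it is $\Si{n}$ of a constant $N$-Mackey functor and hence $\leq |N|\cdot n=dn$; and (b) for $H>N$, killing the relevant mapping groups via the cofiber sequence $S(kU)_+\smsh S^{k\rho_{G/N}}\to S^{k\rho_{G/N}}\to S^{k\rho_G}$, where $\rho_G\iso\rho_{G/N}\oplus U$, using that $G$ acts freely on $S(kU)$ (since $|N|$ is prime) together with a connectivity estimate. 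For the specific $Q_8/K_4$ statement, the representation-sphere form of the bottom $K_4$-slice is supplied by \cite{Slone}*{Proposition~8.5}; you do not mention this input, but it is what makes the argument go. Once this step is in place, your derivation of the second assertion via exactness of $\HphiZ_N^*$ is correct and matches the paper.
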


As the slices of $\Si{n} H_{K_4} \ulZ$ were determined by the second author in \cite{Slone}, this immediately provides all of the slices of $\Si{n} H_{Q_8} \ulZ$ above level $2n$.
The remaining slices of $\Si{n}H_Q \ulZ$ are then given by analyzing the slice tower of $\HphiZ_N^* (P^n_n H_K \ulZ)$. We perform this analysis in \cref{sec:phiZK4Zslicetowers}.

\subsection{Notation}
\label{sec:notn}

Throughout, whenever referencing the slice filtration, we will always mean the ``regular'' slice filtration of \cite{Ullman}.

We will often write simply $Q$ and $K$ to denote the quaternion group $Q_8$ and Klein four group $K_4$, respectively.
We write $Z$ for the central subgroup of $Q$ of order two generated by $z=-1$.
We write 
\[ 
L = \langle i \rangle, \qquad 
D = \langle k \rangle, \qquad \text{and} \qquad
R = \langle j \rangle \qquad 
\]
for the normal, cyclic subgroups of $Q$ of order 4. 
We also use the same names for the images of these subgroups in $Q/Z \iso K$.
In other words, the subgroup lattices of $Q_8$ and $K_4$ are
\[
\begin{tikzpicture}
\node (G) at (0,3) {$Q_8$};
\node (I) at (-1.2,2) {$L$};
\node (J) at (0,2) {$D$};
\node (K) at (1.2,2) {$R$};
\node (Z) at (0,1) {$Z$};
\node (e) at (0,0) {$e$};
\begin{scope}[xshift=.4\textwidth]
\node (Klein) at (0,3) {$K_4$};
\node (L) at (-1.2,2) {$L$};
\node (D) at (0,2) {$D$};
\node (R) at (1.2,2) {$R$};
\node (eK) at (0,1) {$e$};
\end{scope}

\draw (G) to (I);
\draw (G) to (J);
\draw (G) to (K);
\draw (I) to (Z);
\draw (J) to (Z);
\draw (K) to (Z);
\draw (Z) to (e);

\draw (Klein) to (L);
\draw (Klein) to (D);
\draw (Klein) to (R);
\draw (eK) to (L);
\draw (eK) to (D);
\draw (eK) to (R);
\end{tikzpicture}
\]
Our nomenclature for the order 4 subgroups of $Q_8$ amounts to a choice of  isomorphism $Q/Z \iso K$.

The sign representation of $C_2$ will be denoted $\sigma$, and we will write $\Z^\sigma$ for the corresponding $C_2$-module. 

\subsection{Organization}

The paper is organized as follows. In \cref{sec:background}, we review the representations of \(C_4\), \(K_4\), and \(Q_8\), as well as Mackey functors over \(C_4\) and \(K_4\). Then in \cref{Inflationsection}, we introduce three inflation functors from a quotient group \(G/N\) of some finite group \(G\) as well as several results that will aid in the calculation of the slices of \(\Si{n} H_{Q_8} H\ulZ\). The relevant \(Q_8\)-Mackey functors and the homology of \(\Si{ k\rho_{Q_8}} H_{Q_8}\ulZ\) are found in \cref{Qsection}. The slices of \(\Si{n} H_{Q_8}\ulZ\) must restrict to the appropriate slices of \(\Si{n} H_{C_4}\ulZ\); thus, we review this information in \cref{sec:C4slice}. We provide some slice towers and describe all slices of \(\Si{n} H_{Q_8}\ulZ\) in \cref{sec:Q8slice}. We then compute the homotopy Mackey functors of the slices of \(\Si{n} H_{Q_8}\ulZ\) in \cref{sec:Homology}. Finally, we provide some examples of the slice spectral sequence for \(\Si{n} H_{C_4}\ulZ\) and \(\Si{n} H_{Q_8}\ulZ\) in \cref{sec:SpectralSequences}.

\subsection{Acknowledgements}

The authors are very happy to thank Agnes Beaudry, Michael Geline, Cherry Ng, and Mincong Zeng for a number of helpful discussions.
The spectral sequence charts in \cref{sec:SpectralSequences} were created using Hood Chatham's {\tt spectralsequences} package.

\section{
Background} \label{sec:background}

\subsection{Background for $C_4$}
\label{backgroundC4}

The $C_4$-sign representation $\sigma_{C_4}$ is the inflation $p^* \sigma_{C_2}$ of the $C_2$-sign representation along the surjection $C_4 \rtarr C_2$. We will simply write $\sigma$ for $\sigma_{C_4}$. Then the regular representation for $C_4$ splits as
\[ \rho_{C_4} = 1 \oplus \sigma \oplus \lambda,\]
where $\lambda$ is the irreducible 2-dimensional rotation representation of $C_4$.
The $RO(C_4)$-graded homotopy Mackey functors of $H_{C_4} \ul{\Z}$ are given in \cite{HHR}.
More specifically, the homotopy Mackey functors of $\Si{k\rho_{C_4}} H_{C_4} \ulZ$, $\Si{k\lambda} H_{C_4} \ulZ$, and $\Si{k\sigma} H_{C_4} \ulZ$ are given in Figures 3 and 6 of \cite{HHR}.
Some $C_4$-Mackey functors that will appear below are displayed in \cref{tab-C4Mackey}.
All of these Mackey functors have trivial Weyl-group actions.

\begin{table}
\caption[Some  {$C_{4}$}-Mackey functors]{Some $C_{4}$-Mackey functors}
\label{tab-C4Mackey}
\begin{center}
{\renewcommand{\arraystretch}{1.2}
\begin{tabular}{|c|c|c|c|}
\hline 
$\square=\ulZ$
   & $\dbox=\ulZ^*$ 
   & $\ulZ(2,1)$
         &  $\circ=\ul B(2,0) $
         \\
         \hline
         \begin{tikzcd}[bend right, swap]
         \Z \ar[d,"1", color=\onecolor]  \\
         \Z \ar[d,"1", color=\onecolor] \ar[u,"2", color=\twocolor] \\
         \Z  \ar[u,"2", color=\twocolor]
         \end{tikzcd}
         & 
        \begin{tikzcd}[bend right,swap]
         \Z \ar[d,"2", color=\twocolor]  \\
         \Z \ar[d,"2", color=\twocolor] \ar[u,"1", color=\onecolor] \\
         \Z  \ar[u,"1", color=\onecolor]
         \end{tikzcd}
         & 
        \begin{tikzcd}[bend right, swap]
         \Z \ar[d,"2", color=\twocolor]  \\
         \Z \ar[d,"1", color=\onecolor] \ar[u,"1", color=\onecolor] \\
         \Z  \ar[u,"2", color=\twocolor]
         \end{tikzcd}
         & 
         \begin{tikzcd}[bend right, swap]
         \Z/4 \ar[d,"1"]  \\
         \Z/2 \ar[u,"2"] \\
         0  
         \end{tikzcd}
         \\
         \hline
         $\bullet=\ulg$
         &  $\overline{\bullet}=\phi^*\ulf$ 
         & $\raisebox{-0.35ex}{$\phiZF$}=\phi^*\ulF$
         & $\phi^*\ulF^*$
         \\ \hline
          \begin{tikzcd}
            \F \\ 0 \\ 0  
          \end{tikzcd}
         &  
         \begin{tikzcd}
            0 \\ \F \\ 0  
          \end{tikzcd}
         &    
         \begin{tikzcd}
            \F \ar[d,"1"] \\ \F \\ 0  
          \end{tikzcd}
         & 
          \begin{tikzcd}
            \F  \\ \F \ar[u,"1"] \\ 0  
          \end{tikzcd}
\\
\hline
\end{tabular} }
\end{center}
\end{table}

\subsection{Background for $K_4$}
\label{backgroundK4}

The Klein 4-group $K_4=C_2\times C_2$ has three sign representations, obtained as the inflation along the three surjections $K_4 \rtarr C_2$. We denote these three surjections by $p_1$, $m$, and $p_2$. Then the regular representation of $K_4$ splits as
\[ \rho_{K_4} \iso 1 \oplus p_1^* \sigma \oplus m^* \sigma \oplus p_2^* \sigma.\]
Some $K_4$-Mackey functors that will appear below  are displayed in \cref{tab-K4Mackey}.

\begin{table}
\caption{Some $K_{4}$-Mackey functors}
\label{tab-K4Mackey}
\begin{center}
{\renewcommand{\arraystretch}{1.5}
\begin{tabular}{|c|c|c|}
\hline 
$\square = \ulZ$ &
$\dbox = \ulZ^*$ & 
         $\ulZ(2,1)$ \\
         \hline
         \begin{tikzcd}[bend right=2.5ex, swap]
         & \Z \ar[dl,"1", color=blue] \ar[d,"1", color=blue] \ar[dr,"1", color=blue] & \\
         \Z \ar[dr,"1", color=blue] \ar[ur,"2", color=orange]  & \Z \ar[d,"1", color=blue] \ar[u,"2", color=orange]  & \Z \ar[ul,"2", color=orange]  \ar[dl,"1", color=blue] \\
          & \Z  \ar[ul,"2", color=orange] \ar[u,"2", color=orange] \ar[ur,"2", color=orange] 
         \end{tikzcd}
         & 
         \begin{tikzcd}[bend right=2.5ex, swap]
         & \Z \ar[dl,"2", color=orange] \ar[d,"2", color=orange] \ar[dr,"2", color=orange] & \\
         \Z \ar[dr,"2", color=orange] \ar[ur,"1", color=blue]  & \Z \ar[d,"2", color=orange] \ar[u,"1", color=blue]  & \Z \ar[ul,"1", color=blue]  \ar[dl,"2", color=orange] \\
          & \Z \ar[ul,"1", color=blue] \ar[u,"1", color=blue] \ar[ur,"1", color=blue] 
         \end{tikzcd}
         &
         \begin{tikzcd}[bend right=2.5ex, swap]
         & \Z \ar[dl,"2", color=orange] \ar[d,"2", color=orange] \ar[dr,"2", color=orange] & \\
         \Z \ar[dr,"1", color=blue] \ar[ur,"1", color=blue]  & \Z \ar[d,"1", color=blue] \ar[u,"1", color=blue]  & \Z \ar[ul,"1", color=blue]  \ar[dl,"1", color=blue] \\
          & \Z  \ar[ul,"2", color=orange] \ar[u,"2", color=orange] \ar[ur,"2", color=orange] 
         \end{tikzcd}
         \\
         \hline
        $\blacksquare=\ulF$ 
        &
         $\abox=\ulF^*$
         &
         $\circ=\ul B(2,0) $
         \\
         \hline
         \begin{tikzcd}[ swap]
         & \F \ar[dl,"1"] \ar[d,"1"] \ar[dr,"1" swap]  & \\
         \F \ar[dr,"1" pos=0.3] & \F \ar[d,"1"]  & \F \ar[dl,"1" swap, pos=0.4] \\
          & \F  
         \end{tikzcd}
         &
         \begin{tikzcd}[swap]
         & \F  & \\
         \F  \ar[ur,"1" swap]  & \F \ar[u,"1"]  & \F \ar[ul,"1"]  \\
          & \F  \ar[ul,"1" swap] \ar[u,"1"] \ar[ur,"1"] 
          \end{tikzcd}
         &
         \begin{tikzcd}[bend right=2.5ex, swap]
         & \Z/4 \ar[dl,"1"] \ar[d,"1"] \ar[dr,"1"] & \\
         \Z/2 \ar[ur,"2"]  & \Z/2 \ar[u,"2"]  & \Z/2 \ar[ul,"2"]  \\
          & 0 
         \end{tikzcd}
         \\ 
         \hline 
         $\begin{tikzpicture}
      \node[regular polygon, fill=black, draw, regular polygon sides=5, 
 minimum width=0pt, inner sep = 0.5ex,] at (0,0) {};
    \end{tikzpicture} =
\phi^*_{LDR} (\ulF)$
         &
         $\raisebox{-3pt}{\apent} = \phi^*_{LDR}(\ulF)^*$
         &
         $\phi^*_{LDR}(\ul f)$
         \\ \hline
         \begin{tikzcd}[ swap]
         & \F^3 \ar[dl,"p_1"] \ar[d,"p_2"] \ar[dr,"p_3" swap]  & \\
         \F  & \F  & \F \\
          & 0  
         \end{tikzcd}
         &
         \begin{tikzcd}[]
         & \F^3    & \\
         \F \ar[ur,"\iota_1"]  & \F \ar[u,"\iota_2"'] & \F \ar[ul,"\iota_3" swap] \\
          & 0  
         \end{tikzcd}
         &
         \begin{tikzcd}[ swap]
         & 0 & \\
         \F  & \F  & \F \\
          & 0  
         \end{tikzcd}
         \\ 
         \hline 
         $\begin{tikzpicture}
      \node[trapezium, fill=black, draw, 
 minimum width=0pt, inner sep = 0.5ex,] at (0,0) {};
    \end{tikzpicture} = \ul{mg}$
         &
         $\raisebox{-3pt}{\begin{tikzpicture}
      \node[trapezium, fill={gray!50}, draw, 
 minimum width=0pt, inner sep = 0.5ex,] at (0,0) {};
 \node at (0,0) {$\ast$};
    \end{tikzpicture}} = \ul{mg}^*$
         &
         $\bullet=\ulg$
         \\ \hline
         \begin{tikzcd}[ swap]
         & \F^2 \ar[dl,"p_1"] \ar[d,"\nabla"] \ar[dr,"p_2" swap]  & \\
         \F  & \F  & \F \\
          & 0  
         \end{tikzcd}
         &
         \begin{tikzcd}[]
         & \F^2     & \\
         \F\ar[ur, "\iota_1"]  & \F \ar[u,"\Delta"'] & \F \ar[ul,"\iota_2" swap] \\
          & 0  
         \end{tikzcd}
         &
         $\MackKNoArrows{\F}0000$
         \\ 
         \hline 
         $\ulm$
         &
         $\ulm^*$
         &
         \\ \hline
         \begin{tikzcd}[ swap]
         & \F \ar[dl,"1"] \ar[d,"1"] \ar[dr,"1" swap]  & \\
         \F  & \F  & \F \\
          & 0  
         \end{tikzcd}
         &
         \begin{tikzcd}[]
         & \F    & \\
         \F \ar[ur,"1"]  & \F \ar[u,"1"'] & \F \ar[ul,"1" swap] \\
          & 0  
         \end{tikzcd}
         &
         \\ 
         \hline 
         $\ul w$
         &
         $\ul w^*$
         &
         \\ \hline
         \begin{tikzcd}[]
         & 0  & \\
         \F \ar[dr,"1" swap] & \F \ar[d,"1"'] & \F \ar[dl,"1"]\\
          & \F     
         \end{tikzcd}
         &
         \begin{tikzcd}[swap]
         & 0    & \\
         \F   & \F  & \F  \\
          & \F  \ar[ur,"1"] \ar[u,"1"] \ar[ul,"1"']
         \end{tikzcd}
         &
\\
\hline
\end{tabular} }
\end{center}
\end{table}

The  homotopy Mackey functors of $\Sigma^{n\rho} H_K \ulZ$ were computed in \cite{Slone}*{Section~9}. They are displayed in \cref{fig:HtpySigmarhoK4Z}.
The  homotopy Mackey functors of $\Sigma^{n\rho} H_K \ulF$ were computed in \cite{GY}*{Section~7}. They are displayed in \cref{fig:HtpySigmarhoK4}.


\begin{figure}
\ifDrawTikzPictures
\begin{tikzpicture}[scale=0.45,mytrap/.style={
  trapezium, fill,inner xsep=2pt,scale=0.8},mypent/.style={fill = black, regular polygon, regular polygon sides=5, 
 minimum width=0pt, 
 inner sep = 0.3ex,scale=1.7}]
\draw [step = 1,shift={(0.5,0.5)}] (-13,-10) grid (12,9);
\foreach \x in {-12,...,12}
 \node[anchor=north] at (\x+0,-9.5) {\x};
\foreach \y in {-9,...,9}
 \node[anchor=east] at (-12.5,\y) {\y};

\node[mypent,scale=0.9,xshift={-1.5pt}] at (12,8) {};
\node[encirc,scale=0.7,xshift={7pt},yshift={-4pt}] at (12,8) {2};
\node[encirc] at (11,9) {4};
\node[mypent,] at (12,4) {};
\node[encirc,] at (11,5) {2};
\node[mypent,scale=0.9,xshift={-1.5pt}] at (10,6) {};
\node[xshift={3pt},yshift={-4pt}] at (10,6) {$\bullet$};
\node[encirc] at (9,7) {3};
\node[mypent,scale=0.9,xshift={-1.5pt}] at (8,8) {};
\node[encirc,scale=0.7,xshift={7pt},yshift={-4pt}] at (8,8) {2};
\node[encirc] at (7,9) {4};
\node at (12,0) {$\square$};
\node[mytrap] at (10,2) {};
\node at (9,3) {$\bullet$};
\node[mypent] at (8,4) {};
\node[encirc] at (7,5) {2};
\node[mypent,scale=0.9,xshift={-1.5pt}] at (6,6) {};
\node[xshift={3pt},yshift={-4pt}] at (6,6) {$\bullet$};
\node[encirc] at (5,7) {3};
\node[encirc] at (4,8) {2};
\node at (3,9) {$\bullet$};
\node at (8,0) {$\square$};
\node[mytrap] at (6,2) {};
\node at (5,3) {$\bullet$};
\node[mypent,scale=1] at (4,4) {};
\node[encirc] at (3,5) {2};
\node at (2,6) {$\bullet$};
\node at (4,0) {$\square$};
\node[mytrap] at (2,2) {};
\node at (1,3) {$\bullet$};
\node at (0,0) {$\square$};
 \node at (-4,0) {$\dbox$};
 \node at (-8,0) {$\dbox$};
 \node at (-7,-1) {$\atrap$};
 \node at (-6,-2) {$\bullet$};
\node at (-12,0) {$\dbox$};
\node at (-11,-1) {$\atrap$};
\node at (-10,-2) {$\bullet$};
\node at (-9,-3) {$\apent$};
\node[encirc] at (-8,-4) {2};
\node at (-7,-5) {$\bullet$};
\node[encirc] at (-12,-4) {2};
\node[scale=0.9,xshift={1.5pt},yshift={-1pt}] at (-11,-5) {$\apent$};
\node[xshift={-3pt},yshift={3pt}] at (-11,-5) {$\bullet$};
\node[encirc] at (-10,-6) {3};
\node[encirc] at (-9,-7) {2};
\node at (-8,-8) {$\bullet$};
\node[encirc] at (-12,-8) {4};
\node[encirc] at (-11,-9) {3};
\draw[color=gray!80, fill=gray!40, thick, rounded corners] (-0.625, -9.375) -- (-0.625, 9.375) -- (-3.375,9.375) -- (-3.375,-9.375) -- cycle;
\node at (-2,0) {\tiny The ``gap''};
\end{tikzpicture}
\else
\includegraphics{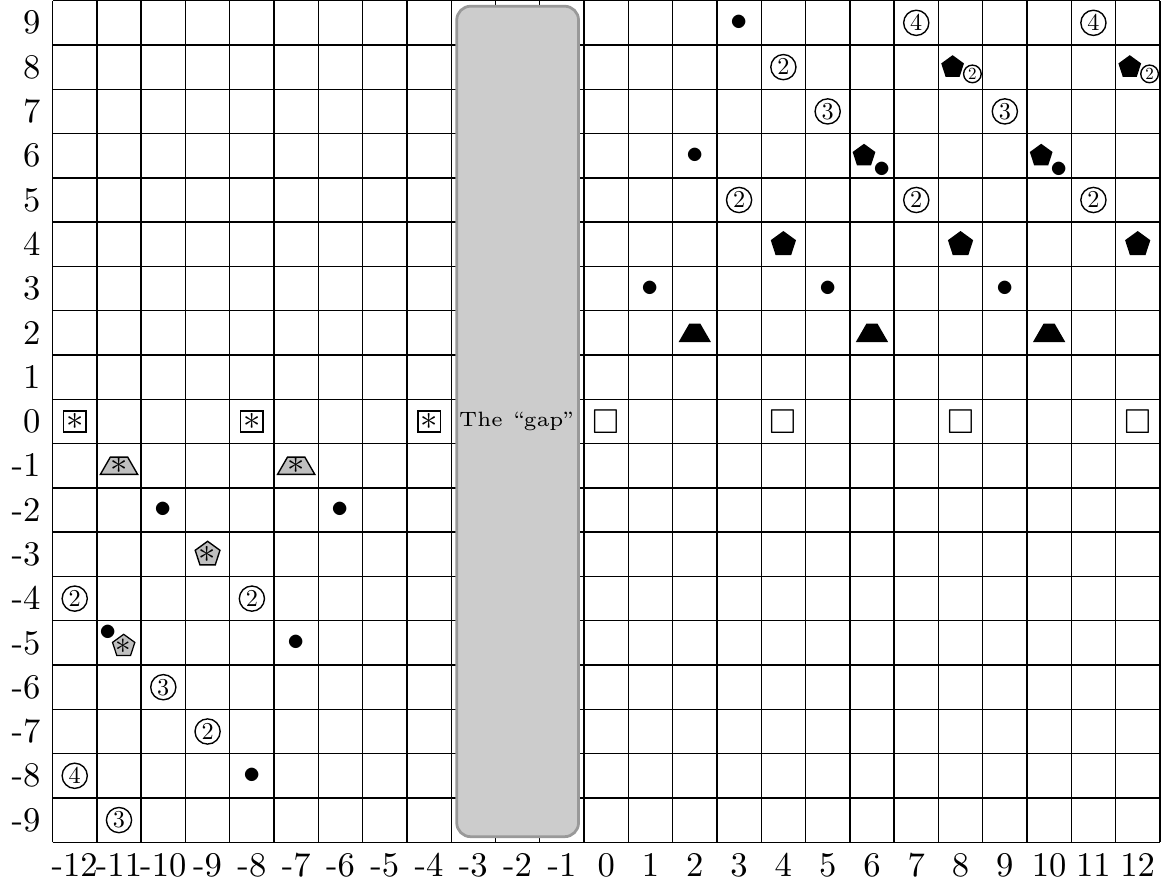}
\fi
\caption{
The homotopy Mackey functors of $\bigvee_n \Sigma^{n\rho} H_{K_4} \ulZ$. The Mackey functor $\mpi_k \Sigma^{n\rho} H_{K_4} \ulZ$ appears in position $(k,4n-k)$.
}
\label{fig:HtpySigmarhoK4Z}
\end{figure}

\begin{figure}
\ifDrawTikzPictures
\begin{tikzpicture}[scale=0.45,mytrap/.style={
  trapezium, fill,inner xsep=2pt,scale=0.8},mypent/.style={fill = black, regular polygon, regular polygon sides=5, 
 minimum width=0pt, 
 inner sep = 0.3ex,scale=1.7}]
\draw [step = 1,shift={(0.5,0.5)}] (-13,-10) grid (12,9);
\foreach \x in {-12,...,12}
 \node[anchor=north] at (\x+0,-9.5) {\x};
\foreach \y in {-9,...,9}
 \node[anchor=east] at (-12.5,\y) {\y};

\node[mypent,scale=0.9,xshift={-1.5pt}] at (11,9) {};
\node[encirc,scale=0.7,xshift={7pt},yshift={-4pt}] at (11,9) {7};
\node[mypent,scale=0.9,xshift={-1.5pt}] at (12,8) {};
\node[encirc,scale=0.7,xshift={7pt},yshift={-4pt}] at (12,8) {6};
\node[mypent,scale=0.9,xshift={-1.5pt}] at (12,4) {};
\node[encirc,scale=0.7,xshift={7pt},yshift={-4pt}] at (12,4) {2};
\node[mypent,scale=0.9,xshift={-1.5pt}] at (11,5) {};
\node[encirc,scale=0.7,xshift={7pt},yshift={-4pt}] at (11,5) {3};
\node[mypent,scale=0.9,xshift={-1.5pt}] at (10,6) {};
\node[encirc,scale=0.7,xshift={7pt},yshift={-4pt}] at (10,6) {4};
\node[mypent,scale=0.9,xshift={-1.5pt}] at (9,7) {};
\node[encirc,scale=0.7,xshift={7pt},yshift={-4pt}] at (9,7) {5};
\node[mypent,scale=0.9,xshift={-1.5pt}] at (8,8) {};
\node[encirc,scale=0.7,xshift={7pt},yshift={-4pt}] at (8,8) {6};
\node[encirc] at (7,9) {7};
\node at (12,0) {$\blacksquare$};
\node[mytrap] at (11,1) {};
\node[mypent] at (10,2) {};
\node[mypent,scale=0.9,xshift={-1.5pt}] at (9,3) {};
\node[xshift={3pt},yshift={-4pt}] at (9,3) {$\bullet$};
\node[mypent,scale=0.9,xshift={-1.5pt}] at (8,4) {};
\node[encirc,scale=0.7,xshift={6.5pt},yshift={-4pt}] at (8,4) {2};
\node[mypent,scale=0.9,xshift={-1.5pt}] at (7,5) {};
\node[encirc,scale=0.7,xshift={6.5pt},yshift={-4pt}] at (7,5) {3};
\node[mypent,scale=0.9,xshift={-1.5pt}] at (6,6) {};
\node[encirc,scale=0.7,xshift={7pt},yshift={-4pt}] at (6,6) {4};
\node[encirc] at (5,7) {5};
\node[encirc] at (4,8) {3};
\node at (3,9) {$\bullet$};
\node at (8,0) {$\blacksquare$};
\node[mytrap] at (7,1) {};
\node[mypent] at (6,2) {};
\node[mypent,scale=0.9,xshift={-1.5pt}] at (5,3) {};
\node[xshift={3pt},yshift={-4pt}] at (5,3) {$\bullet$};
\node[mypent,scale=0.9,xshift={-1.5pt}] at (4,4) {};
\node[encirc,scale=0.7,xshift={6.5pt},yshift={-4pt}] at (4,4) {2};
\node[encirc] at (3,5) {$3$};
\node at (2,6) {$\bullet$};
\node at (4,0) {$\blacksquare$};
\node[mytrap] at (3,1) {};
\node[mypent] at (2,2) {};
\node at (1,3) {$\bullet$};
\node at (0,0) {$\blacksquare$};
 \node at (-4,0) {$\abox$};
 \node at (-8,0) {$\abox$};
 \node at (-7,-1) {$\atrap$};
 \node at (-6,-2) {$\apent$};
 \node at (-5,-3) {$\bullet$};
\node at (-12,0) {$\abox$};
\node at (-11,-1) {$\atrap$};
\node at (-10,-2) {$\apent$};
\node[scale=0.9,xshift={1.5pt},yshift={-1pt}] at (-9,-3) {$\apent$};
\node[xshift={-3pt},yshift={3pt}] at (-9,-3) {$\bullet$};
\node[scale=0.9,xshift={1.5pt},yshift={-2pt}] at (-8,-4) {$\apent$};
\node[encirc,scale=0.7,xshift={-6.5pt},yshift={5pt}] at (-8,-4) {2};
\node[encirc] at (-7,-5) {3};
\node at (-6,-6) {$\bullet$};
\node[scale=0.9,xshift={1.5pt},yshift={-2pt}] at (-12,-4) {$\apent$};
\node[encirc,scale=0.7,xshift={-6.5pt},yshift={5pt}] at (-12,-4) {2};
\node[scale=0.9,xshift={1.5pt},yshift={-2pt}] at (-11,-5) {$\apent$};
\node[encirc,scale=0.7,xshift={-6.5pt},yshift={5pt}] at (-11,-5) {3};
\node[scale=0.9,xshift={1.5pt},yshift={-2pt}] at (-10,-6) {$\apent$};
\node[encirc,scale=0.7,xshift={-6.5pt},yshift={5pt}] at (-10,-6) {4};
\node[encirc] at (-9,-7) {5};
\node[encirc] at (-8,-8) {3};
\node at (-7,-9) {$\bullet$};
\node[scale=0.9,xshift={1.5pt},yshift={-2pt}] at (-12,-8) {$\apent$};
\node[encirc,scale=0.7,xshift={-6.5pt},yshift={5pt}] at (-12,-8) {6};
\node[encirc] at (-11,-9) {7};
\end{tikzpicture}
\else
\includegraphics{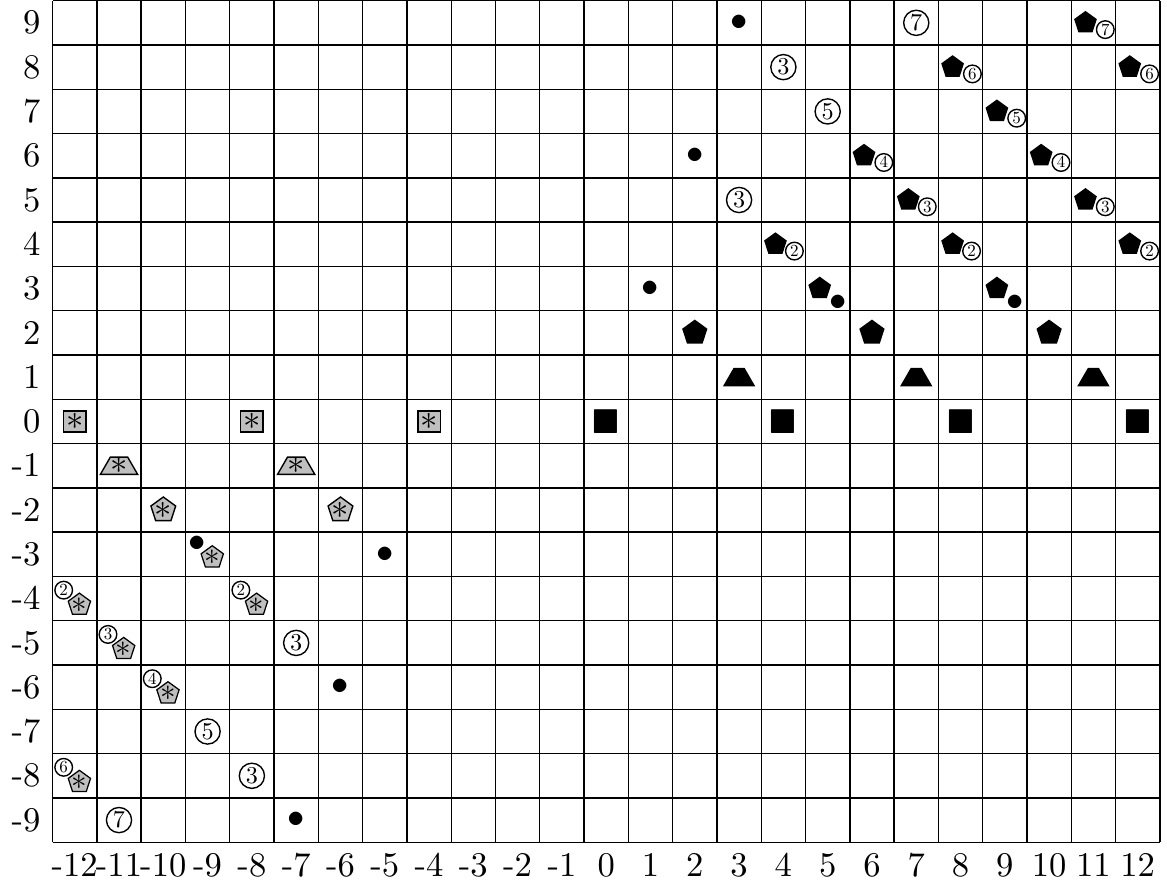}
\fi
\caption{
The homotopy Mackey functors of $\bigvee_n \Sigma^{n\rho} H_{K_4} \ulF$. The Mackey functor $\mpi_k \Sigma^{n\rho} H_{K_4} \ulF$ appears in position $(k,4n-k)$.
}
\label{fig:HtpySigmarhoK4}
\end{figure}

\subsection{Background for $Q_8$}

The regular representation of $Q$ splits as 
\[ \rho_Q \iso \QH \oplus \rho_K,\]
where $\QH$ is the 4-dimensional irreducible $Q_8$-representation given by 
the action of the unit quaternions on the  algebra of quaternions  
and $\rho_K$ is the regular representation of $K$, inflated to $Q$ along the quotient. 

Denoting by $C_4$ any of the subgroups $L$, $D$, or $R$ of $Q_8$, we have that 
\[ 
\downarrow^{Q_8}_{C_4} \rho_K = 2 + 2\sigma \qquad \text{and} \qquad 
\downarrow^{Q_8}_{C_4} \QH = 2\lambda.
\]

\section{Inflation functors}
\label{Inflationsection}

\subsection{Inflation and the projection formula}
\label{sec:Projection}

Let $N \unlhd G$ be a normal subgroup and $q\colon G \rtarr G/N$ the quotient map. Recall that there is an induced adjunction
\[
\begin{tikzcd}[every arrow/.append style={shift left}]
\Sp^{G/N} \ar[r,"q^*"] &
\Sp^{G} \ar[l,"(-)^N"]
\end{tikzcd}
\]
where the pullback functor $q^*$, called inflation, is strong symmetric monoidal. 
We will also need a description of the $N$-fixed points of an Eilenberg-Mac~Lane $G$-spectrum. First note that there is a  functor
\begin{equation} 
\label{qstar}
\Mack(G) \xrtarr{q_*} \Mack(G/N)
\end{equation}
given by 
\[
q_*(\ul{M})(\overline{H}) = \ul{M}(H),
\]
where $\overline{H} = H/N \leq G/N$ whenever $N\leq H$. The functor $q_*$ is denoted $\beta^!$ in \cite{TW}*{Lemma~5.4}.
Then the homotopy Mackey functors of the  $N$-fixed points of a $G$-spectrum $X$ are given by
\begin{equation}
\mpi_n(X^N) \iso q_* \mpi_n(X).
\label{HtpyFixedPts}    
\end{equation}
In the case of an Eilenberg-Mac~Lane spectrum this yields an equivalence
\[ (H_G \ul{M})^N \simeq H_{G/N} (q_* \ul{M}).\]
The following result will be quite useful.

\begin{prop} \cite{HKBPO}*{Lemma~2.13}
\cite{BDS}*{Proposition~2.15} 
(Projection formula)
\label{ProjectionFormula}
Let $N \unlhd G$ be a normal subgroup and $q\colon G \rtarr G/N$ be the quotient map. 
Then for $X\in \Sp^{G/N}$ and $Y\in \Sp^{G}$, there is a natural equivalence of $G/N$-spectra
\[ (q^* X \smsh Y)^N \simeq X \smsh Y^N. \]
\end{prop}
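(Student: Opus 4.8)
The plan is to write down a natural comparison map and then show it is an equivalence by reducing to generators. Since $q^*$ is strong symmetric monoidal and $\varepsilon \colon q^*\big((-)^N\big) \rtarr \mathrm{id}$ denotes the counit of the adjunction $q^* \dashv (-)^N$, for $X\in\Sp^{G/N}$ and $Y\in\Sp^{G}$ there is a natural map
\[ q^*(X \smsh Y^N) \simeq q^* X \smsh q^*(Y^N) \xrtarr{\mathrm{id}\smsh \varepsilon_Y} q^* X \smsh Y, \]
whose adjoint is a natural transformation $\theta_{X,Y}\colon X \smsh Y^N \rtarr (q^* X \smsh Y)^N$ of $G/N$-spectra. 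The claim is that $\theta_{X,Y}$ is an equivalence for all $X$ and $Y$; the projection formula then follows.

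First I would reduce to the case where $X$ is an orbit. Both functors $X\mapsto X\smsh Y^N$ and $X\mapsto (q^* X\smsh Y)^N$ are exact and preserve arbitrary coproducts: $q^*$ and $(-)\smsh Y$ are left adjoints, and $(-)^N\colon\Sp^G\rtarr\Sp^{G/N}$ is exact and commutes with all homotopy colimits (this is checked on homotopy groups, using that $\mpi^{\overline H}$ of a $G/N$-spectrum is $\mpi^{H}$ of the underlying $G$-spectrum and that the relevant representing objects are compact). Since $\theta$ is natural and the suspension spectra $\Si{\infty}_+ (G/N)/\overline H$ for $N\leq H\leq G$ generate $\Sp^{G/N}$ under (de)suspension and homotopy colimits, it suffices to prove $\theta_{X,Y}$ is an equivalence when $X = \Si{\infty}_+ (G/N)/\overline H$, for arbitrary $Y\in\Sp^G$.

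For such an $X$ the inflation is computed space-wise: $q$ pulls the transitive $G/N$-set $(G/N)/(H/N)$ back to the transitive $G$-set with isotropy $q^{-1}(H/N)=H$ (here we use $N\leq H$), so $q^* X \simeq \Si{\infty}_+ G/H$ and hence $q^* X\smsh Y \simeq \mathrm{Ind}_H^G\,\mathrm{Res}^G_H Y$. Now comes the key point: because $N$ is normal and contained in $H$, the left $N$-action on $G/H$ is trivial, since $g^{-1}ng\in N\leq H$ gives $ngH=gH$ for all $g\in G$, $n\in N$. Therefore $\mathrm{Res}^G_N(\mathrm{Ind}_H^G\,\mathrm{Res}^G_H Y)$ decomposes as a wedge of copies of $\mathrm{Res}^G_N Y$ indexed by $G/H$, with $N$ acting on the summand at $gH$ through conjugation by $g$; passing to $N$-fixed points gives $\mathrm{Ind}_{H/N}^{G/N}\big((\mathrm{Res}^G_H Y)^N\big) = \mathrm{Ind}_{H/N}^{G/N}\,\mathrm{Res}^{G/N}_{H/N}(Y^N)\simeq \Si{\infty}_+ (G/N)/\overline H \smsh Y^N = X\smsh Y^N$, and one checks that $\theta_{X,Y}$ induces exactly this identification.

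The main obstacle is precisely this last step — making the interchange $(\mathrm{Ind}_H^G Z)^N \simeq \mathrm{Ind}_{H/N}^{G/N}(Z^N)$ (for $Z$ an $H$-spectrum, $N\unlhd G$, $N\leq H$) rigorous and checking that $\theta$ realizes it, rather than just an abstract isomorphism. The cleanest way to pin this down is to reduce in the $Y$-variable as well, again by exactness and colimit-preservation, to the orbits $Y = \Si{\infty}_+ G/K$; then $q^* X\smsh Y$ is a finite wedge of orbits governed by the double-coset formula for $G/H\times G/K$, its $N$-fixed points can be read off termwise, and the comparison with the analogous double-coset decomposition of $\Si{\infty}_+ (G/N)/\overline H \smsh \Si{\infty}_+ (G/N)/\overline K$ over $G/N$ becomes a direct combinatorial check. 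Naturality of $\theta$ then guarantees that these local equivalences assemble into the asserted natural equivalence of $G/N$-spectra.
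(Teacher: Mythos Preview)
The paper does not give its own proof of this proposition; it simply cites \cite{HKBPO}*{Lemma~2.13} and \cite{BDS}*{Proposition~2.15}. So there is no in-paper argument to compare against.

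Your argument is the standard one and is correct. The essential content is exactly what you identify: the comparison map $\theta_{X,Y}$ built from the strong monoidality of $q^*$ and the counit, together with the fact that both sides are exact and colimit-preserving in $X$, so it suffices to check on the generating orbits. The one point worth stating crisply (you gesture at it) is \emph{why} the right adjoint $(-)^N$ preserves colimits: it is because $q^*$ sends the compact generators $\Si{\infty}_+(G/N)/\overline H$ to the compact objects $\Si{\infty}_+ G/H$, so its right adjoint preserves filtered colimits, and an exact functor between stable categories that preserves filtered colimits preserves all colimits. This is precisely the hypothesis under which the abstract projection formula in \cite{BDS} applies, so your proof is in the same spirit as that reference.

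Your reduction in the $Y$-variable at the end is a valid way to finish, though once you have reduced to $X=\Si{\infty}_+(G/N)/\overline H$ you can also argue more directly: smashing with this $X$ is $\mathrm{Ind}_{\overline H}^{G/N}\mathrm{Res}^{G/N}_{\overline H}$, smashing with $q^*X\simeq\Si{\infty}_+ G/H$ is $\mathrm{Ind}_H^G\mathrm{Res}^G_H$, restriction manifestly commutes with $(-)^N$, and the remaining identification $(\mathrm{Ind}_H^G W)^N\simeq\mathrm{Ind}_{\overline H}^{G/N}(W^N)$ for $N\leq H$ follows by checking both sides represent the same functor (using the induction--restriction adjunctions on both levels together with $q^*\mathrm{Res}^{G/N}_{\overline H}\simeq\mathrm{Res}^G_H q^*$). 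Either route is fine.
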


We will frequently employ this in the case that $X=S^V$ for some $G/N$-representation $V$ and $Y=H_G \ul{M}$ for some $G$-Mackey functor $\ul{M}$. Then the projection formula reads
\begin{equation}
\label{ProjectionHM}    
(S^{q^* V} \smsh H_G\ul{M})^N \simeq S^V \smsh H_{G/N}(q_*\ul{M}).
\end{equation} 
See also \cite{Z}*{Corollary~5.8}

\subsection{Geometric fixed points}
\label{sec:GeoFixed}

For a normal subgroup $N \unlhd G$, we define the family of subgroups $\mathcal{F}[N]$ of $G$ to consist of those subgroups that do not contain $N$. 
Recall that the $N$-geometric fixed points spectrum of a $G$-spectrum is defined as
\[ \Phi^N(X) = \left( \widetilde{E\mathcal{F}[N]} \smsh X \right)^N.\]
This notation is simultaneously used to denote the resulting $G/N$-spectrum as well as the underlying spectrum. The $N$-geometric fixed points has a right adjoint, given by the geometric inflation functor
\[ \phi^*_N (Z) = \widetilde{E\mathcal{F}[N]} \smsh q^* Z.\]
To sum up, we have an adjunction
\[
\begin{tikzcd}[every arrow/.append style={shift left}]
\Sp^{G} \ar[r,"\Phi^N"] &
\Sp^{G/N}. \ar[l,"\phi_N^*"]
\end{tikzcd}
\]

\subsection{Bottleneck subgroups}
\label{sec:Bottleneck}

The subgroup $Z \unlhd Q$ plays an important role in this article. The primary reason is that it satisfies the following property.

\begin{defn}
We say that $N \unlhd G$ is a {\bf bottleneck} subgroup if it is a nontrivial, proper subgroup such that, for any subgroup $H \leq G$, either $H$ contains $N$ or $N$ contains $H$.
\end{defn}

We now demonstrate that bottleneck subgroups only occur in cyclic p-groups or quaternion groups. The following argument was sketched to us by Mike Geline.

\begin{prop}
\label{ShapeofBottlenecks}
Let $N \unlhd G$ be a  bottleneck subgroup of $G$. Then $N$ is cyclic, and $G$ is either a cyclic $p$-group or a generalized quaternion group.
\end{prop}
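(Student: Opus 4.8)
The plan is to exploit the bottleneck condition to force a chain of structural restrictions on $G$, then invoke the classical classification of $p$-groups with a unique subgroup of prime order. First I would observe that $N$ being a bottleneck subgroup means every subgroup of $G$ is comparable (under inclusion) with $N$; in particular every \emph{minimal} nontrivial subgroup of $G$ is contained in $N$, since no nontrivial subgroup can be properly contained in the minimal ones. Taking such a minimal subgroup $C$ inside $N$, the subgroup $C$ has prime order $p$, and applying the bottleneck property to \emph{every} subgroup of order $p$ in $G$ shows each one lies in $N$.

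Next I would argue that $G$ is a $p$-group. If $q \neq p$ were another prime dividing $|G|$, Cauchy gives a subgroup of order $q$, which by the previous step must be contained in $N$; but then $N$ has order divisible by both $p$ and $q$, and $C \leq N$ together with a subgroup of order $q$ inside $N$ would be two incomparable subgroups of $G$ both needing to be comparable to... actually the cleaner route: a subgroup of order $q$ and the subgroup $C$ of order $p$ are incomparable subgroups, and both are contained in $N$, which is fine, but now $N$ itself contains incomparable subgroups — yet that is allowed. So instead I would note $N$ must be comparable to \emph{itself} trivially, and the real obstruction is: take a Sylow $p$-subgroup $P$ and a Sylow $q$-subgroup; $P$ is comparable to $N$. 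If $N \subseteq P$ then $q \nmid |N|$, but the order-$q$ subgroup lies in $N$, contradiction. If $P \subseteq N$ then $N \supseteq P$ and $N$ is a proper subgroup, so $G$ has order divisible by $q$ beyond $P$; the order-$q$ group lies in $N \subseteq$ something — push this to a contradiction with $N$ proper and a Sylow argument. The upshot is $G$ is a $p$-group, and then $N$ is a $p$-group as well, and every subgroup of order $p$ in $G$ lies in $N$.

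The key reduction is then: a subgroup of order $p$ inside the (nontrivial) $p$-group $N$ is comparable with every subgroup of $N$, and in particular $N$ has a subgroup of order $p$ that is contained in every other subgroup of order $p$ of $N$ — forcing $N$ to have a \emph{unique} subgroup of order $p$. Moreover the same applies to all of $G$: every subgroup of order $p$ is contained in $N$, and they are all comparable to the minimal one $C \leq N$, so $C$ is the unique subgroup of order $p$ in $G$. Now I invoke the classical theorem (Burnside/Zassenhaus; see e.g. any standard group theory reference) that a finite $p$-group with a unique subgroup of order $p$ is cyclic if $p$ is odd, and is either cyclic or generalized quaternion if $p = 2$. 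This immediately gives the conclusion: $G$ is a cyclic $p$-group or a generalized quaternion $2$-group, and $N$, being a subgroup of such a group containing the unique order-$p$ subgroup (indeed any subgroup of a cyclic or generalized quaternion group that is not the whole thing), is cyclic.

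The main obstacle I anticipate is the step ruling out mixed primes, i.e.\ showing $G$ is a $p$-group: one must be careful that the bottleneck axiom only compares subgroups with $N$, not arbitrary pairs, so the contradiction has to be routed through $N$ and a Sylow subgroup rather than directly between two small subgroups. The cleanest formulation is probably: if $r \mid |G|$ is any prime, a subgroup of order $r$ must be comparable to $N$, hence (being of prime order and $N$ nontrivial) contained in $N$ unless $N$ is contained in it; the latter forces $|N| = r$ and then every subgroup of order $p \neq r$ is also forced inside this order-$r$ group, absurd. So all primes dividing $|G|$ equal the prime $p$ dividing $|N|$, and $G$ is a $p$-group. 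After that, the citation to the classification of $p$-groups with a unique subgroup of order $p$ does the rest, and the cyclicity of $N$ is automatic since subgroups of cyclic and of generalized quaternion groups of index $\geq 2$... in fact $N$ could be the quaternion group's maximal cyclic subgroup or smaller, all of which are cyclic, so one only needs that $N$ is a proper subgroup and every proper subgroup of a generalized quaternion group is cyclic or again generalized quaternion — but a bottleneck subgroup cannot itself be generalized quaternion since that has multiple incomparable subgroups of order $4$, none comparable to a chosen order-$2$ one inside it... so $N$ is cyclic.
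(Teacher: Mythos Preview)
Your overall strategy matches the paper's: show $G$ is a $p$-group, show $G$ has a unique subgroup of order $p$, then invoke the classification of such $p$-groups. However, there is a genuine gap in your argument for uniqueness of the order-$p$ subgroup, and the fix is precisely the step you postponed to the end.

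You write that the order-$p$ subgroups ``are all comparable to the minimal one $C \leq N$, so $C$ is the unique subgroup of order $p$ in $G$.'' But the bottleneck hypothesis only says that every subgroup of $G$ is comparable to $N$, not to $C$. Two distinct order-$p$ subgroups could both sit inside $N$ without being comparable to each other, and nothing you have established rules this out. The same error appears in the sentence ``a subgroup of order $p$ inside the $p$-group $N$ is comparable with every subgroup of $N$''---that is not what the bottleneck condition says.

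The paper repairs this by proving \emph{first} that $N$ is cyclic, via a one-line observation you are missing: pick any $g \in G \setminus N$; then $\langle g \rangle$ is comparable to $N$ and not contained in it, so $N \subseteq \langle g \rangle$, whence $N$ is cyclic. Once $N$ is cyclic it has a unique subgroup of order $p$, and since (as you correctly argue) every order-$p$ subgroup of $G$ lies in $N$, uniqueness in $G$ follows. The paper then uses cyclicity of $N$ again to exclude a $C_p \times C_p$ inside $G$: such a subgroup must contain $N$ (it cannot be contained in the cyclic $N$), forcing $|N|=p$, and then a complement to $N$ inside $C_p \times C_p$ is adjacent to $N$. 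Your route, deducing cyclicity of $N$ only \emph{after} classifying $G$, is circular as written. Separately, your ``cleanest formulation'' for the $p$-group step only fully handles the case $|N|$ prime; your earlier Sylow sketch is actually closer to a complete argument (and is what the paper does): if $N$ is contained in some Sylow $p$-subgroup then any Sylow $q$-subgroup is adjacent, while if $N$ contains every Sylow subgroup then $N = G$.
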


\begin{pf}
We will refer to a subgroup $H\leq G$ which neither contains $N$ nor is contained in $N$ as ``adjacent'' to $N$.
The assumption that $N$ is a bottleneck subgroup means precisely that $G$ has no  subgroups that are adjacent to $N$.
To see that $N$ must be cyclic, note 
that if $g$ is not in $N$, then $N \leq \langle g \rangle$, which implies that $N$ is cyclic.

We next observe that $G$ is necessarily a $p$-group. This is because if $N$ is contained in some Sylow $p$-subgroup, then any Sylow $q$-subgroup, for a different prime $q$, would be adjacent. It follows that $N$ contains all of the Sylow subgroups and therefore is all of $G$.

Next, we recall \cite{Brown}*{Theorem~4.3} that for a $p$-group $G$, the group contains a unique subgroup of order $p$ if and only if $G$ is either cyclic or generalized quaternion. So we will argue that $G$ contains a unique subgroup of order $p$. The first step is to note that $G$ cannot contain a subgroup isomorphic to $C_p \times C_p$. This is because such a subgroup would necessarily contain $N$. This would imply that $N \iso C_p$, and then $N$ would have a complement in $C_p \times C_p$, which would be a subgroup adjacent to $N$ in $G$.

Finally, note that the center $Z(G)$ contains a subgroup of order $p$. If $G$ has another subgroup of order $p$, these two would generate a $C_p \times C_p$, contradicting the previous step.
\end{pf}

\begin{rmk}
\label{ShapeGmodN}
It follows from \cref{ShapeofBottlenecks} that if $N \unlhd G$ is a bottleneck subgroup, then 
$G/N$ is either a cyclic $p$-group or a dihedral 2-group.
\end{rmk}

If $N\unlhd G$ is a bottleneck subgroup, then geometric fixed points with respect to $G$ can be computed in terms of geometric fixed points with respect to the quotient group $G/N$.

\begin{prop}
\label{GeoFixedBottleneck}
Let $N\unlhd G$ be a bottleneck subgroup. Then $\Phi^G X \simeq \Phi^{G/N} X^N$ for any $X\in \Sp^G$.
\end{prop}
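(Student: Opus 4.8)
The plan is to exploit the adjunction $\Phi^N \dashv \phi^*_N$ together with the compatibility of geometric fixed points with composition of quotient maps. Recall that for an iterated quotient, geometric fixed points compose: if $N \unlhd G$ and we write $q\colon G \rtarr G/N$, then for any $G$-spectrum $X$ there is a natural equivalence $\Phi^{G}(X) \simeq \Phi^{G/N}\bigl( \Phi^N(X) \bigr)$. (This is the standard transitivity of geometric fixed points; it follows from the fact that $\widetilde{E\mathcal F[G]}$ can be built as a smash of the inflation of $\widetilde{E\mathcal F[G/N]}$ with $\widetilde{E\mathcal F[N]}$, using that $\mathcal F[G] = \mathcal F[N] \cup q^{-1}\mathcal F[G/N]$ as families of subgroups of $G$.) So the proposition will follow once I show that $\Phi^N(X) \simeq X^N$ as a $G/N$-spectrum whenever $N$ is a bottleneck subgroup, since then $\Phi^G X \simeq \Phi^{G/N}(\Phi^N X) \simeq \Phi^{G/N}(X^N)$.

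The key point is the identification of $\mathcal F[N]$ when $N$ is a bottleneck subgroup. By definition $\mathcal F[N]$ consists of the subgroups $H \leq G$ that do not contain $N$. The bottleneck hypothesis says that every such $H$ is instead \emph{contained} in $N$, i.e.\ $\mathcal F[N] = \{ H : H \leq N\}$, which is exactly the family of all subgroups of $N$, pushed forward to a family in $G$. Consequently the universal space $E\mathcal F[N]$ has the property that its $H$-fixed points are contractible for $H \leq N$ and empty for all other $H$; in particular $(E\mathcal F[N])^H \simeq *$ for $H \leq N$ and $(\widetilde{E\mathcal F[N]})^H \simeq *$ for $H \not\leq N$, while $(\widetilde{E\mathcal F[N]})^H \simeq S^0$ for $H = N$ (and more generally $\widetilde{E\mathcal F[N]}$ is built from cells $G/H_+$ with $N \not\leq H$... wait, I should be careful). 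The cleaner route: $\widetilde{E\mathcal F[N]}$ is characterized by $\Phi^H(\widetilde{E\mathcal F[N]}) \simeq *$ for $H$ not containing $N$ and $\simeq S^0$ for $H$ containing $N$; since the bottleneck condition forces $N$ to be the \emph{unique minimal} subgroup containing $N$ among... in fact the subgroups containing $N$ are precisely the $H$ with $N \leq H$, and smashing with $\widetilde{E\mathcal F[N]}$ localizes at the family of such subgroups.

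Now I compute: $\Phi^N(X) = (\widetilde{E\mathcal F[N]} \smsh X)^N$. Because $N$ is a bottleneck subgroup, the only subgroups of $G$ containing $N$ form a chain-like structure, but more to the point, when we take $N$-fixed points (not $G$-fixed points), the relevant comparison is between $(\widetilde{E\mathcal F[N]} \smsh X)^N$ and $X^N$. These agree because $\widetilde{E\mathcal F[N]}$, viewed as an $N$-spectrum by restriction, is just $\widetilde{E\mathcal P}$ for $\mathcal P$ the family of proper subgroups of $N$ — but wait, the bottleneck condition makes $N$ cyclic of prime order or we need $\mathcal F[N] \cap (\text{subgroups of } N)$ to be all proper subgroups of $N$; actually since every subgroup not containing $N$ is contained in $N$, restricting to $N$ the family $\mathcal F[N]$ becomes all proper subgroups of $N$. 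Hmm, then $\Phi^N$ restricted to underlying $N$-action is the usual $N$-geometric fixed points, which is \emph{not} the same as naive $N$-fixed points in general. So this last step is wrong as stated, and I need to think harder — the actual content must be that the bottleneck condition makes $\widetilde{E\mathcal F[N]}$ equivalent, \emph{after taking into account the $G$-action and then passing to $N$-fixed points landing in $G/N$-spectra}, to something that doesn't change $X^N$; perhaps the right statement uses that $N$ has prime order (from \cref{ShapeofBottlenecks}, $N$ is cyclic — though possibly not of prime order).

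\textbf{Main obstacle.} The delicate point is precisely the last identification $\Phi^N(X) \simeq X^N$: naively $\Phi^N$ and $(-)^N$ differ, so the proof must genuinely use the bottleneck structure — most likely that the family $\mathcal F[N]$ of subgroups of $G$ not containing $N$ is generated, under the $G$-action and subconjugacy, by subgroups of $N$ in a way that is "invisible" to $X^N$. Concretely, I expect the argument to run: $\Phi^G X \simeq \Phi^{G/N}(\Phi^N X)$ by transitivity, and then $\Phi^N X \simeq (\widetilde{E\mathcal F[N]} \smsh X)^N$; one then observes $\Phi^{G/N}$ kills the contribution of all $G/N$-subgroups not containing... no. The honest plan is: establish transitivity of $\Phi$; reduce to showing $\Phi^{G/N} \Phi^N X \simeq \Phi^{G/N} X^N$; and prove this by showing the natural map $X^N \rtarr \Phi^N X$ (coming from the unit $S^0 \rtarr \widetilde{E\mathcal F[N]}$) becomes an equivalence after $\Phi^{G/N}$, using that $\mathrm{cofib}(S^0 \to \widetilde{E\mathcal F[N]}) \simeq \Sigma E\mathcal F[N]_+$ has a cell structure involving only $G/H_+$ with $H$ not containing $N$, hence (bottleneck!) with $H \subseteq N$, so that $(\Sigma E\mathcal F[N]_+ \smsh X)^N$ is built from pieces whose further $\Phi^{G/N}$ vanishes since these subgroups map to proper subgroups of $G/N$ — and $\Phi^{G/N}$ of an induced-from-proper spectrum is trivial. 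That vanishing is the crux, and getting the cell-structure/induction bookkeeping exactly right is where the real work lies.
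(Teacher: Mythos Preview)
Your final plan---use transitivity $\Phi^G \simeq \Phi^{G/N}\circ\Phi^N$ and then show that the map $X^N \to \Phi^N X$ becomes an equivalence after $\Phi^{G/N}$ by analyzing the cofiber $(E\mathcal{F}[N]_+ \smsh X)^N$ cell by cell---can be made to work, but it is considerably more circuitous than the paper's argument, and the bookkeeping you flag as the ``main obstacle'' (commuting $(-)^N$ with the skeletal colimit, computing $\Phi^{G/N}((G/H_+\smsh X)^N)$ for $H\subsetneq N$) is real work. To push the cell computation through you would end up invoking the projection formula anyway, together with the observation that $\mathrm{Res}^G_H\, q^*\widetilde{E\mathcal{P}_{G/N}}$ is contractible for $H\subsetneq N$ (since $H$ maps to the trivial subgroup of $G/N$).

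The paper bypasses all of this with a single observation: the bottleneck hypothesis implies $q^*\widetilde{E\mathcal{P}_{G/N}} \simeq \widetilde{E\mathcal{P}_G}$ as $G$-spaces. This is a one-line fixed-point check---for $H\leq G$, the $H$-fixed points of $q^*\widetilde{E\mathcal{P}_{G/N}}$ are $(\widetilde{E\mathcal{P}_{G/N}})^{HN/N}$, which is $S^0$ iff $HN=G$; the bottleneck condition forces $HN\in\{N,H\}$, so $HN=G$ iff $H=G$. Then
\[
\Phi^G X = (\widetilde{E\mathcal{P}_G}\smsh X)^G \simeq \bigl((q^*\widetilde{E\mathcal{P}_{G/N}}\smsh X)^N\bigr)^{G/N} \simeq (\widetilde{E\mathcal{P}_{G/N}}\smsh X^N)^{G/N} = \Phi^{G/N}(X^N),
\]
where the last equivalence is the projection formula (\cref{ProjectionFormula}). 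So the difference is that the paper identifies the right universal space up front and applies the projection formula once, whereas your route introduces an extra cofiber sequence and a cell filtration that ultimately reduce to the same ingredients. Your intuition that ``these subgroups map to proper subgroups of $G/N$'' is exactly the content of $q^*\widetilde{E\mathcal{P}_{G/N}} \simeq \widetilde{E\mathcal{P}_G}$; recognizing this equivalence directly is the shortcut you are missing.
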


\begin{pf}
If $N\unlhd G$ is a bottleneck subgroup,
then $q^* \widetilde{E\mathcal{P}_{G/N}} \simeq \widetilde{E \mathcal{P}_G}$. Thus
\[ \Phi^G X = ( \widetilde{E \mathcal{P}_G} \smsh X)^G \simeq (( q^*\widetilde{E \mathcal{P}_{G/N}} \smsh X)^N )^{G/N}.\]
By the Projection Formula (\cref{ProjectionFormula}), this is equivalent to
\[\ (\widetilde{E \mathcal{P}_{G/N}} \smsh X^N)^{G/N} = \Phi^{G/N} X^N. \qedhere
\] 
\end{pf}

\cref{GeoFixedBottleneck}
also follows from the  more general \cite{SKriz}*{Proposition~9}.

\subsection{Inflation for $\ulZ$-modules}
\label{sec:Inflation}

Given a surjection \( q\colon G \rtarr G/N \), the inflation functor 
\[
\phi_N^* \colon \Mack(G/N) \rtarr \Mack(G)
\]
does not send $\ulZ$-modules for $G/N$ to $\ulZ$-modules for $G$. We now describe a modified inflation functor that exists at the level of $\ulZ$-modules. This functor  previously appeared in \cite{Z}*{Section~3.2} and \cite{BasuGhosh}*{Section~3.10}.

\begin{defn}\label{defn:PsiN}
Let $\mathcal{B}\ulZ_G \subset \Mod_{\Z[G]}$ denote the full subcategory of permutation $G$-modules. Recall \cite{Z}*{Proposition~2.15} that $\ulZ_G$-modules correspond to additive functors $\mathcal{B}\ulZ_G^{op} \rtarr \mathrm{Ab}$. Then the $\ulZ$-module inflation functor
\[ \phiZ^*_N \colon \Mod_{\ulZ_{G/N}} \rtarr \Mod_{\ulZ_{G}}
\]
is defined to be the left Kan extension along the inflation functor $\mathcal{B}\ulZ_{G/N} \rtarr \mathcal{B}\ulZ_G$.
\end{defn}

The following is an immediate corollary of the definition as a left Kan extension.

\begin{prop}
The functor $\phiZ^*_N$ is left adjoint to the functor
\( q_* \colon \Mod_{\ulZ_G} \rtarr \Mod_{\ulZ_{G/N}}\), defined as in \eqref{qstar}.
\end{prop}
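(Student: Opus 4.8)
The plan is to deduce the adjunction formally from the universal property of left Kan extensions, once the relevant restriction functor is identified with $q_*$.

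First I would recall from \cite{Z}*{Proposition~2.15} that $\Mod_{\ulZ_G}$ is equivalent to the category of additive presheaves $\mathcal{B}\ulZ_G^{op} \rtarr \mathrm{Ab}$, and write $\iota\colon \mathcal{B}\ulZ_{G/N} \rtarr \mathcal{B}\ulZ_G$ for the inflation functor of \cref{defn:PsiN}: on objects it sends a permutation $G/N$-module $\Z[T]$ to the permutation $G$-module $\Z[q^* T]$, where $q^* T$ is the finite $G/N$-set $T$ regarded as a $G$-set through $q$, and on morphisms it sends a $\Z[G/N]$-linear map to the same underlying map, now $\Z[G]$-linear because $N$ acts trivially on source and target. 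Since $\mathrm{Ab}$ is cocomplete, the pointwise left Kan extension $\mathrm{Lan}_{\iota^{op}}$ along $\iota^{op}$ exists on presheaf categories, preserves additivity, and is by construction left adjoint to the restriction functor $\iota^*$ given by precomposition with $\iota^{op}$. By \cref{defn:PsiN} we have $\phiZ^*_N = \mathrm{Lan}_{\iota^{op}}$, so it remains to prove that $\iota^* = q_*$.

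Because every permutation module is a finite direct sum of transitive ones and the functors involved are additive, it suffices to compare the two functors on orbits. An orbit of $G/N$ has the form $(G/N)/\overline{H}$ with $N \leq H \leq G$ and $\overline{H} = H/N$, and its inflation is $\iota\bigl(\Z[(G/N)/\overline{H}]\bigr) = \Z[G/H]$. Hence for $\ul{M} \in \Mod_{\ulZ_G}$, viewed as a presheaf,
\[
(\iota^*\ul{M})\bigl((G/N)/\overline{H}\bigr) = \ul{M}(G/H) = \ul{M}(H),
\]
which is exactly the value $q_*(\ul{M})(\overline{H}) = \ul{M}(H)$ prescribed in \eqref{qstar}. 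One then checks that $\iota^*$ carries the restriction and transfer maps of $\ul{M}$ to those of $q_*\ul{M}$, since $\iota$ sends the structure maps between $G/N$-orbits (inclusions, fold maps, and the $\Z[G/N]$-linear maps encoding Weyl actions) to the corresponding structure maps between the inflated $G$-orbits. Thus $\iota^* = q_*$ as functors $\Mod_{\ulZ_G} \rtarr \Mod_{\ulZ_{G/N}}$, and therefore $\phiZ^*_N \dashv q_*$.

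I expect the only real content to be this last step — matching the combinatorics of inflation of permutation modules with the bare formula $q_*(\ul{M})(\overline{H}) = \ul{M}(H)$ — while the existence of the Kan extension and the adjunction itself are formal. The one point to treat carefully is the bookkeeping of variances, coming from working with (additive) presheaves on $\mathcal{B}\ulZ_G$ rather than directly with the orbit or Burnside category.
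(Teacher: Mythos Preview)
Your proposal is correct and follows the same approach as the paper: the paper simply notes that the adjunction is ``an immediate corollary of the definition as a left Kan extension,'' and you have unpacked precisely this, supplying the identification $\iota^* = q_*$ that the paper leaves implicit. Your added detail about checking the identification on orbits is sound and makes explicit what the paper takes for granted.
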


\begin{prop}[\cite{BasuGhosh}*{(3.11)}]
\label{DescriptionphiZ}
For $\ul{M} \in \Mod_{\ulZ_{G/N}}$,
the $\ulZ_G$-module $\phiZ^*_N(\ul{M})$ satisfies
\begin{enumerate}
    \item \(q_* \left( \phiZ^*_N(\ul{M}) \right) \) is $\ul{M}$ and
    \item \(\downarrow^G_N \left( \phiZ^*_N(\ul{M}) \right) \) is the constant Mackey funtor at $\ul{M}(e)$.
\end{enumerate}
\end{prop}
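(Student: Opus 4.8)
The plan is to work with $\phiZ^*_N$ as a left Kan extension. Write $\iota\colon \mathcal{B}\ulZ_{G/N}\rtarr\mathcal{B}\ulZ_G$ for the inflation of permutation modules, so that $\iota$ carries the transitive object $\Z[(G/N)/\overline H]$ to $\Z[G/H]$ for $N\le H$, and write $h^X_G:=\mathrm{Hom}_{\mathcal{B}\ulZ_G}(-,X)$ for the representable $\ulZ_G$-module attached to an object $X\in\mathcal{B}\ulZ_G$. By its definition $\phiZ^*_N=\mathrm{Lan}_{\iota^{op}}$ is left adjoint to precomposition $\iota^*$ along $\iota$, and by the preceding proposition it is also left adjoint to $q_*$; hence $q_*\cong\iota^*$, consistent with the formula $q_*(\ul M)(\overline H)=\ul M(H)$. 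In particular the comparison map $\ul M\rtarr q_*\phiZ^*_N\ul M$ of part~(1) is the unit of the adjunction $\phiZ^*_N\dashv q_*$.

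For part~(1) we must show this unit is a natural isomorphism, and the unit of any adjunction $L\dashv R$ is a natural isomorphism exactly when $L$ is fully faithful; so it suffices to show $\phiZ^*_N=\mathrm{Lan}_{\iota^{op}}$ is fully faithful, which in turn holds exactly when $\iota$ itself is fully faithful. Thus it remains to check that inflation induces an isomorphism
\[
\mathrm{Hom}_{\Z[G/N]}(\Z[(G/N)/\overline H],\Z[(G/N)/\overline K])\rtarr\mathrm{Hom}_{\Z[G]}(\Z[G/H],\Z[G/K])
\]
whenever $N\le H$ and $N\le K$. Both sides are free abelian on the corresponding sets of double cosets, $\overline H\backslash(G/N)/\overline K$ and $H\backslash G/K$, with basis given by orbit sums; and the surjection $G\rtarr G/N$ induces a bijection between these two double-coset sets, since $N\le K$ forces $HgKN=HgK$, so that no cosets merge. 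This bijection matches the two bases compatibly with $\iota$, which proves~(1).

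For part~(2), both of the functors $\ul M\mapsto\,\downarrow^G_N\phiZ^*_N\ul M$ and $\ul M\mapsto$ (the constant $N$-Mackey functor at $\ul M(e)$) are colimit-preserving functors $\Mod_{\ulZ_{G/N}}\rtarr\Mod_{\ulZ_N}$: the left Kan extension $\phiZ^*_N$ preserves colimits; restriction $\downarrow^G_N$ is precomposition along the induction functor $\mathcal{B}\ulZ_N\rtarr\mathcal{B}\ulZ_G$, so it has both adjoints; evaluation at $e$ is computed pointwise; and the constant-Mackey-functor construction preserves coproducts and cokernels, since colimits of Mackey functors are computed pointwise. Since every $\ulZ_{G/N}$-module is a cokernel of a map of coproducts of the representables $h^{\Z[(G/N)/\overline H]}_{G/N}$, it is enough to compare the two functors on these representables and on the morphisms between them. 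A left Kan extension sends representables to representables, so $\phiZ^*_N h^{\Z[(G/N)/\overline H]}_{G/N}\simeq h^{\Z[G/H]}_G$. The induction-restriction adjunction then gives $\downarrow^G_N h^{\Z[G/H]}_G\simeq h^{\downarrow^G_N\Z[G/H]}_N$; and since $N\unlhd G$ with $N\le H$, the subgroup $N$ fixes every point of the $G$-set $G/H$, so $\downarrow^G_N\Z[G/H]$ is the trivial $N$-permutation module $\Z[N/N]^{\oplus[G:H]}$. A direct computation identifies $h^{\Z[N/N]}_N$ with the constant Mackey functor $\ulZ$ over $N$ (every restriction map is an isomorphism and every transfer is multiplication by the index). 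Hence $\downarrow^G_N\phiZ^*_N h^{\Z[(G/N)/\overline H]}_{G/N}$ is the constant $N$-Mackey functor at $\Z^{[G:H]}=h^{\Z[(G/N)/\overline H]}_{G/N}(e)$, and tracing through the adjunctions shows this identification is natural in the representable---which is exactly what is needed.

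The only real work---and the main obstacle to a completely self-contained argument---lies in the two identifications flagged as routine: that $q_*$ agrees with $\iota^*$ as functors and not merely on underlying-group values, so that the restriction and transfer morphisms of $\mathcal{B}\ulZ$ transport correctly, and that the representable-level isomorphism in part~(2) is natural in $X\in\mathcal{B}\ulZ_{G/N}$ rather than objectwise only. Both checks are mechanical given the equivalence $\Mod_{\ulZ_G}\simeq\mathrm{Fun}^{\mathrm{add}}(\mathcal{B}\ulZ_G^{op},\mathrm{Ab})$; alternatively, as the statement is \cite{BasuGhosh}*{(3.11)}, one may simply defer to that reference.
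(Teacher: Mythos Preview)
The paper states this proposition with a citation to \cite{BasuGhosh}*{(3.11)} and gives no proof of its own, so there is no argument of the paper's to compare against. Your proof is a correct and self-contained verification that goes well beyond what the paper does: the full faithfulness of $\iota$ via the double-coset bijection $H\backslash G/K\cong \overline H\backslash(G/N)/\overline K$ handles part~(1) cleanly through the standard fact that the unit of $\mathrm{Lan}_{\iota^{op}}\dashv\iota^*$ is an isomorphism exactly when $\iota$ is fully faithful; and the reduction of part~(2) to representables is sound since both sides preserve colimits and you correctly identify $\downarrow^G_N h^{\Z[G/H]}_G$ with $h^{\Z[N/N]^{[G:H]}}_N$, which is the constant $N$-Mackey functor at $\Z^{[G:H]}$.

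The two loose ends you flag at the end are indeed routine in the representable framework, but since the paper defines $q_*$ by a formula rather than as $\iota^*$, the identification $q_*\cong\iota^*$ is the one place where your argument interfaces with the paper's conventions, and you might strengthen the writeup by actually checking it on a restriction and a transfer rather than asserting it. That aside, your proof supplies content the paper simply outsources to the reference.
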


Note that  \cref{DescriptionphiZ} completely describes $\phiZ^*_N(\ul{M})$ if $N$ is a bottleneck subgroup.
The following result states that $\ulZ$-module inflation agrees with ordinary inflation on geometric Mackey functors.

\begin{prop}
\label{phiZgeometric}
Let $\ul{M} \in \Mod_{\ulZ_{G/N}}$, and let $N\unlhd G$ be a bottleneck subgroup.
If $\ul{M}(e)=0$, then $\phiZ_N^* \ul{M} \iso \phi_N^* \ul{M}$.
\end{prop}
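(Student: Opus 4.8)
The plan is to compare the two $\ulZ_G$-modules $\phiZ_N^* \ul{M}$ and $\phi_N^* \ul{M}$ using the characterization of $\phiZ_N^*$ provided by \cref{DescriptionphiZ}, exploiting the hypothesis that $N$ is a bottleneck subgroup. By the remark following \cref{DescriptionphiZ}, when $N$ is a bottleneck subgroup the two conditions in \cref{DescriptionphiZ} completely determine $\phiZ_N^*(\ul{M})$: condition (1) says $q_*\phiZ_N^*(\ul{M}) = \ul{M}$, which pins down the values and structure maps at subgroups $H$ with $N \leq H$, and condition (2) says $\downarrow^G_N \phiZ_N^*(\ul{M})$ is the constant Mackey functor at $\ul{M}(e) = 0$, which pins down the values and structure maps at subgroups $H$ with $H \leq N$; since $N$ is a bottleneck, every subgroup of $G$ falls into one of these two classes. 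So it suffices to check that $\phi_N^* \ul{M}$ satisfies these same two conditions.

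First I would verify condition (1): that $q_* \phi_N^* \ul{M} \iso \ul{M}$ as $\ulZ_{G/N}$-modules (equivalently, as $G/N$-Mackey functors). This is the statement that ordinary inflation of Mackey functors along $q$ is a section of $q_*$ — i.e. $q_*\phi_N^* \iso \mathrm{id}$ — which is immediate from the definitions: for $N \leq H$, one has $\phi_N^*\ul{M}(H) = \ul{M}(H/N)$, and the transfer and restriction maps agree after dividing by $N$. (Here I use that $\phi_N^*$ in the statement denotes the inflation of Mackey functors; the notation $\phi_N^*$ is reused in the paper both for geometric inflation of spectra and, as in \cref{sec:Inflation}, for inflation of Mackey functors, and \cref{phiZgeometric} is about the Mackey-functor version.)

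Next I would verify condition (2): that $\downarrow^G_N \phi_N^* \ul{M}$ is the constant Mackey functor at $\ul{M}(e)=0$, i.e. is the zero Mackey functor. For a subgroup $H \leq N$ (so $H$ does not contain $N$ unless $H=N$), the value $\phi_N^*\ul{M}(H)$ of the inflated Mackey functor is $0$ whenever $H$ does not contain $N$ — this is the defining feature of inflation along $q$ combined with $\ul M(e) = 0$ when restricted appropriately; more precisely, inflation $\phi_N^* \ul M$ has $\phi_N^*\ul M(H) = \ul M(HN/N)$ if one uses the "lift to the quotient" description, but the relevant point is that $\downarrow^G_N \phi_N^*\ul M$ only sees subgroups of $N$, and on those the inflated Mackey functor takes the value $\ul M(e) = 0$. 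Concretely, $\downarrow^G_N \phi_N^*\ul M = q'^*(\downarrow \ul M)$ where the only subgroup involved on the $G/N$ side with nonzero... — in any case $\downarrow^G_N \phi_N^*\ul M$ is the constant Mackey functor at $\ul M(e)$, which is the standard compatibility of inflation with restriction together with $\ul M(e)=0$.

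The main obstacle I anticipate is purely notational/bookkeeping rather than conceptual: one must be careful that $\phi_N^*$ in the statement is the Mackey-functor inflation of \cref{sec:Inflation} (not the geometric inflation of spectra from \cref{sec:GeoFixed}), and one must correctly identify $\downarrow^G_N$ of an inflated Mackey functor. Once the two conditions of \cref{DescriptionphiZ} are checked for $\phi_N^*\ul M$, the bottleneck hypothesis forces $\phiZ_N^*\ul M \iso \phi_N^*\ul M$, since a bottleneck $N$ leaves no subgroup of $G$ "adjacent" to $N$ (in the terminology of the proof of \cref{ShapeofBottlenecks}), so conditions (1) and (2) jointly determine the Mackey functor on the full subgroup lattice. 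I would close by remarking that the isomorphism is natural in $\ul M$, since both functors and the identifying conditions are.
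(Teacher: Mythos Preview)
Your approach is correct and matches the paper's, whose proof is the one line ``This follows immediately from \cref{DescriptionphiZ}.'' One clarification on your verification of condition~(2): the restriction $\downarrow^G_N \phi_N^*\ul M$ is \emph{not} the constant Mackey functor at $\ul M(e)$ in general --- it takes the value $\ul M(e)$ at $N$ but $0$ at every proper subgroup of $N$, by the very defining feature of $\phi_N^*$ you cite --- so the alternative formula $\phi_N^*\ul M(H)=\ul M(HN/N)$ you float should be discarded. It is precisely the hypothesis $\ul M(e)=0$ that collapses $\downarrow^G_N \phi_N^*\ul M$ to the zero (hence constant) Mackey functor and makes condition~(2) hold; your conclusion is correct, but the intermediate reasoning should be tightened to reflect this.
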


\begin{pf}
This follows immediately from \cref{DescriptionphiZ}.
\end{pf}

\begin{rmk}
Note that \cref{phiZgeometric} is not true without the bottleneck hypothesis. For instance, in the case $N=C_3 \unlhd \SI_3$, then $\downarrow^{\SI_3}_{C_2} \left( \phiZ_{C_3}^* \ul{M} \right) \iso \ul{M}$. In particular, it is not true that $\phiZ_{C_3}^* \ul{M}$ is concentrated over $N=C_3$.
\end{rmk}

We now discuss the extension to equivariant spectra.

\begin{prop}
\label{HZinflation}
The $N$-fixed points functor
\[ 
(-)^N \colon \Mod_{H_G \ulZ} \rtarr \Mod_{H_{G/N}\ulZ}
\]
for $H\ulZ$-modules has a left adjoint
\[
\HphiZ_N^* \colon \Mod_{H_{G/N}\ulZ} \rtarr \Mod_{H_G\ulZ}.
\]
If $N \unlhd G$ is a bottleneck subgroup, 
then the spectrum-level functor $\HphiZ_N^*$ extends the functor $\phiZ_N^*$ of \cref{defn:PsiN}, in the sense that
\begin{equation} \label{PsiExtends}
\HphiZ_N^* H_{G/N} \ul{M} \simeq H_G (\phiZ^*_N \ul{M})
\end{equation}
for $\ul{M}$ in $\Mod_{\ulZ_{G/N}}$.
\end{prop}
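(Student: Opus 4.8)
The plan is to establish the existence of the left adjoint $\HphiZ_N^*$ by a general adjoint-functor argument, and then verify the Eilenberg–Mac~Lane identity \eqref{PsiExtends} by comparing adjoints. For the first part, I would observe that $(-)^N \colon \Mod_{H_G\ulZ} \rtarr \Mod_{H_{G/N}\ulZ}$ is a functor between presentable stable $\infty$-categories. Indeed $\Mod_{H_G\ulZ}$ is presentable, and the $N$-fixed point functor preserves colimits: on the underlying level it is a composite of the forgetful functor to $\Sp^G$ (which preserves colimits on module categories) followed by the spectrum-level $N$-fixed points, which is a colimit-preserving (geometric-fixed-points-plus-Tate assembly aside — categorical fixed points on genuine $G$-spectra preserve colimits). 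One then invokes the adjoint functor theorem to produce the left adjoint $\HphiZ_N^*$. Alternatively, and perhaps more cleanly for the paper's purposes, one can write $(-)^N$ as the composite $\Mod_{H_G\ulZ} \xrtarr{(-)^N} \Mod_{(H_G\ulZ)^N}$ followed by restriction of scalars along the ring map $q^*(H_{G/N}\ulZ) \to$ (something mapping to) $(H_G\ulZ)^N$; since $(H_G\ulZ)^N \simeq H_{G/N}(q_*\ulZ_G)$ by \eqref{HtpyFixedPts} and there is a canonical map $H_{G/N}\ulZ \to H_{G/N}(q_*\ulZ_G)$ induced by the unit $\ulZ_{G/N} \to q_*\ulZ_G$, the composite has a left adjoint given by (extension of scalars along that ring map) followed by the left adjoint of $(-)^N$ at the level of plain $H_G\ulZ$-modules.

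For the identity \eqref{PsiExtends}, the approach is to show both sides corepresent the same functor on $\Mod_{H_G\ulZ}$, i.e.\ that for every $H_G\ulZ$-module $Y$ there is a natural equivalence
\[
\Mod_{H_G\ulZ}\bigl( H_G(\phiZ_N^*\ulM),\, Y \bigr) \;\simeq\; \Mod_{H_{G/N}\ulZ}\bigl( H_{G/N}\ulM,\, Y^N \bigr).
\]
The right-hand side, by adjunction between $H_{G/N}\ulZ$-modules and $\ulZ_{G/N}$-Mackey functors (i.e.\ $\mpi_0$ on connective modules, or more precisely the Eilenberg–Mac~Lane/linearization adjunction), reduces — at least after suitable truncation — to $\Mod_{\ulZ_{G/N}}(\ulM, \mpi_0(Y^N)) \iso \Mod_{\ulZ_{G/N}}(\ulM, q_*\mpi_0 Y)$, using \eqref{HtpyFixedPts}. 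By the adjunction of the Proposition preceding \cref{DescriptionphiZ}, that $\phiZ_N^*$ is left adjoint to $q_*$ on $\ulZ$-modules, this is $\Mod_{\ulZ_G}(\phiZ_N^*\ulM, \mpi_0 Y)$, which by the same Eilenberg–Mac~Lane adjunction over $G$ is $\Mod_{H_G\ulZ}(H_G(\phiZ_N^*\ulM), Y)$, giving the left-hand side. The bottleneck hypothesis enters to guarantee that $\phiZ_N^*\ulM$ really is the $\ulZ_G$-module described by \cref{DescriptionphiZ} (so that $H_G(\phiZ_N^*\ulM)$ is the correct target) and, more substantively, to ensure the truncation step is harmless — cf.\ \cref{phiZgeometric} and the surrounding discussion.

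The main obstacle I anticipate is precisely this last truncation/connectivity issue: $H_G(\phiZ_N^*\ulM)$ is a $0$-slice–type object (an Eilenberg–Mac~Lane spectrum), but a general target $Y$ is not, and $Y^N$ need not be discrete, so the naive ``apply $\mpi_0$'' manipulation of mapping spaces must be justified. The clean fix is to restrict attention to the heart: since $\HphiZ_N^*$ is left adjoint to $(-)^N$ and $(-)^N$ on Eilenberg–Mac~Lane modules lands in Eilenberg–Mac~Lane modules (via $q_*$), one checks that $\HphiZ_N^*$ sends $H_{G/N}\ulZ$-module Eilenberg–Mac~Lane objects to $H_G\ulZ$-module Eilenberg–Mac~Lane objects — this is where one needs that $N$ is a bottleneck subgroup, so that the underlying spectrum of $\phiZ_N^*\ulM$ is the constant Mackey functor at $\ulM(e)$ by \cref{DescriptionphiZ}(2), keeping everything in homological degree $0$ — and then identifies the resulting Mackey functor with $\phiZ_N^*\ulM$ by matching the universal property against the $\phiZ_N^* \dashv q_*$ adjunction of \cref{defn:PsiN}. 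Once connectivity is under control, everything else is a formal chase of adjunctions.
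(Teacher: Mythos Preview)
Your abstract existence argument for the left adjoint is fine, but the paper instead gives an explicit formula
\[
\HphiZ_N^* X = H_G\ulZ \smsh_{q^* H_{G/N}\ulZ} (q^*X),
\]
which is both cleaner and is used again in the proof of \cref{HphiZgeometric}. Your alternative description via restriction of scalars is in the same spirit, but you never quite land on this formula.

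For \eqref{PsiExtends}, there is a genuine gap. You correctly flag the connectivity issue and then assert that ``one checks that $\HphiZ_N^*$ sends Eilenberg--Mac~Lane objects to Eilenberg--Mac~Lane objects,'' justifying this by the fact that the underlying Mackey functor $\phiZ_N^*\ul{M}$ restricts to a constant functor on $N$. But that is a statement about the \emph{target} Mackey functor $\phiZ_N^*\ul{M}$, not about the spectrum $\HphiZ_N^* H_{G/N}\ul{M}$; it does not by itself rule out higher homotopy. In general a left adjoint to a $t$-exact functor is only right $t$-exact, so all you get for free is that $\HphiZ_N^* H_{G/N}\ul{M}$ is connective with $\mpi_0 \iso \phiZ_N^*\ul{M}$ (your adjunction argument does establish this much). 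The vanishing of $\mpi_i$ for $i>0$ is the whole content, and your proposal does not supply it.

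The paper's argument is quite different: it first verifies \eqref{PsiExtends} on the indecomposable projectives $\uparrow_{K/N}^{G/N}\ulZ$ via the compatibility of $\HphiZ_N^*$ with induction, and then uses that $\phiZ_N^*$ is exact on $\Mod_{\ulZ_{G/N}}$ (Zeng) together with the fact that $\Mod_{\ulZ_{G/N}}$ has finite global projective dimension (Bouc--Stancu--Webb). The bottleneck hypothesis enters here, not through \cref{DescriptionphiZ}, but because \cref{ShapeGmodN} forces $G/N$ to be cyclic or dihedral, which are exactly the groups for which the BSW finiteness holds. A finite projective resolution of $\ul{M}$ then yields a finite filtration of $H_{G/N}\ul{M}$ on which $\HphiZ_N^*$ is already known to behave correctly, and exactness of $\phiZ_N^*$ collapses the resulting spectral sequence.
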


\begin{pf}
For an $H_{G/N}\ulZ$-module $X$, the inflation $q^*X$ is canonically a module over $q^* H_{G/N}\ulZ$.
We then define the spectrum-level functor $\HphiZ_N^*$ by the formula
\[ 
\HphiZ_N^* X = H\ulZ \smsh_{q^* H\ulZ} (q^*X).
\]
We leave it to the reader to verify that this is indeed left adjoint to the $N$-fixed points functor.

To see that 
\eqref{PsiExtends} holds, we show first that this holds on the indecomposable projective
$\ulZ_{G/N}$-modules.
These are of the form $\uparrow_{K/N}^{G/N} \ulZ$,
and the diagram of commuting adjoint functors
\[
\begin{tikzcd}
    \Mod_{H_{G/N}\ulZ} \ar[d,"\downarrow^{G/N}_{K/N}",xshift={2pt}] \ar[r,"\HphiZ_N^*",yshift={2pt}]  
    & \Mod_{H_G\ulZ} 
    \ar[d,"\downarrow^G_K",xshift={2pt}] \ar[l,"(-)^N",yshift={-2pt}] 
    \\
    \Mod_{H_{K/N}\ulZ} \ar[u,"\uparrow_{K/N}^{G/N}",xshift={-2pt}] \ar[r,"\HphiZ_N^*",yshift={2pt}] 
    & \Mod_{H_K\ulZ} 
    \ar[l,"(-)^N",yshift={-2pt}]
    \ar[u,"\uparrow^G_K",xshift={-2pt}]
\end{tikzcd}
\]
shows that 
\[
\HphiZ_N^* \left(H_{G/N} \uparrow_{K/N}^{G/N} \ulZ \right)  \simeq \  \uparrow_K^G \HphiZ_N^* (H_{K/N} \ulZ) \simeq \ \uparrow_K^G H_K \ulZ 
\simeq H_G \uparrow_K^G \ulZ
\simeq H_G \, \phiZ^*_N \left( \uparrow_{K/N}^{G/N} \ulZ \right).
\]
Since the functor $\phiZ^*_N\colon \Mod_{\ulZ_{G/N}} \rtarr \Mod_{\ulZ_G}$ is exact \cite{Z}*{Lemma~3.14}, it follows that if $\Mod_{\ulZ_{G/N}}$ has finite global projective dimension, then 
\eqref{PsiExtends} will hold for any $\ulZ_{G/N}$-module $\ul{M}$.
By  \cite{BSW}*{Theorem~1.7}, this is the case precisely when $G/N$ is as described in \cref{ShapeGmodN}.
\end{pf}

\begin{eg}
\label{phiZGeometricSpectrumExample}
Let $X \in \Sp^{G/N}$ and $\ul{M} \in \Mack(G/N)$, with $\ul{M}(e)=0$. 
Again assume that $N$ is a bottleneck subgroup.
Then \cref{phiZgeometric} and \cref{HZinflation} give that
\[\begin{split}
    \HphiZ_N^*(X \smsh H_{G/N}\ul{M} ) &\simeq q^*(X) \smsh \HphiZ_N^* ( H_{G/N} \ul{M} ) \simeq q^*(X) \smsh \phi_N^* H_{G/N} \ul{M} \\
    &\simeq \phi_N^* ( X \smsh H_{G/N} \ul{M} ).
\end{split}
\]
We will employ this equivalence when $X$ is a representation sphere.
\end{eg}

\begin{prop}\label{HtpyInflation}
Let $N\unlhd G$ be a bottleneck subgroup. Then for any $G/N$-representation $V$ and $\ulZ_{G/N}$-module $\ul{L}$, we have
\[
\mpi_n \left( \HphiZ_N^* \Si{V} H_{G/N} \ul{L} \right) \iso \phiZ_N^* \mpi_n \left( \Si{V} H_{G/N} \ul{L}\right).
\]
\end{prop}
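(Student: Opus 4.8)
The plan is to reduce the statement to the spectrum-level equivalence \eqref{PsiExtends} established in \cref{HZinflation}, together with the fact that the geometric inflation functor $\phi^*_N$ at the spectrum level is built from the Mackey-functor-level $\phiZ^*_N$ in exactly the way the homotopy groups record. First I would observe that $\HphiZ_N^* \Si{V} H_{G/N} \ul{L}$ is an $H_G\ulZ$-module, so all of its homotopy Mackey functors are $\ulZ_G$-modules, and similarly the target $\phiZ_N^* \mpi_n(\Si{V} H_{G/N}\ul{L})$ is a $\ulZ_G$-module since $\phiZ^*_N$ lands in $\Mod_{\ulZ_G}$. Thus it suffices to produce a natural isomorphism of $\ulZ_G$-modules, and by \cref{DescriptionphiZ} (recalling that $N$ is a bottleneck subgroup, so $\phiZ^*_N(\ul M)$ is completely determined by $q_*(\phiZ^*_N \ul M)=\ul M$ and the underlying constant Mackey functor at $\ul M(e)$) it is enough to check the claimed isomorphism after applying $q_*$ and after restricting to $N$.

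For the $q_*$-comparison: by \eqref{HtpyFixedPts}, $q_* \mpi_n(\HphiZ_N^* \Si{V} H_{G/N}\ul L) \iso \mpi_n\big((\HphiZ_N^* \Si{V} H_{G/N}\ul L)^N\big)$. Since $(-)^N$ is right adjoint to $\HphiZ_N^*$ and the counit $\HphiZ_N^*(-)^N \to \mathrm{id}$ — or more directly, the fact that $(-)^N$ applied to $\HphiZ_N^* X$ recovers $X$ for $X$ an $H_{G/N}\ulZ$-module (this is implicit in \eqref{PsiExtends} combined with the $N$-fixed-point formula $(H_G \ul M)^N \simeq H_{G/N}(q_* \ul M)$ and \cref{DescriptionphiZ}(1)) — we get $(\HphiZ_N^* \Si{V} H_{G/N}\ul L)^N \simeq \Si{V} H_{G/N}\ul L$, hence $q_* \mpi_n(\HphiZ_N^*\Si{V} H_{G/N}\ul L)\iso \mpi_n(\Si{V} H_{G/N}\ul L)$. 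On the other side, $q_* \phiZ^*_N \mpi_n(\Si{V} H_{G/N}\ul L) \iso \mpi_n(\Si{V}H_{G/N}\ul L)$ by \cref{DescriptionphiZ}(1). For the restriction-to-$N$ comparison: by \cref{phiZGeometricSpectrumExample} — or by the formula $\HphiZ_N^* X = H\ulZ \smsh_{q^* H\ulZ} q^* X$ directly — the underlying $N$-spectrum of $\HphiZ_N^*\Si{V} H_{G/N}\ul L$ depends only on the underlying spectrum of $\Si{V}H_{G/N}\ul L$; restricting to $N$ and using that $V$ restricts trivially (as $N$ acts trivially on any $G/N$-representation, so $\downarrow^G_N q^* V$ is a trivial $N$-representation of dimension $\dim V$) one computes $\downarrow^G_N \HphiZ_N^*\Si{V}H_{G/N}\ul L$ to be $H_N$ of the constant Mackey functor at $\mpi_n\Si{V} H_{G/N}\ul L(e) = \rH_{n-\dim V}(\ul L(e))$; this matches $\downarrow^G_N \phiZ^*_N \mpi_n(\Si{V}H_{G/N}\ul L)$ by \cref{DescriptionphiZ}(2).

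Assembling these two checks, both $\ulZ_G$-modules $\mpi_n(\HphiZ_N^*\Si{V}H_{G/N}\ul L)$ and $\phiZ^*_N\mpi_n(\Si{V}H_{G/N}\ul L)$ have the same image under $q_*$ and the same restriction to $N$, and since $N$ is a bottleneck subgroup these two pieces of data determine a $\ulZ_G$-module, giving the isomorphism; naturality in the input follows since every identification used is natural. The main obstacle I anticipate is not any single step but bookkeeping the naturality and making precise the claim that ``$q_*$ plus $\downarrow^G_N$ determines a $\ulZ_G$-module for $N$ a bottleneck subgroup'' — one wants to be careful that it is not merely the isomorphism classes but the functors that agree, which is why routing everything through \cref{DescriptionphiZ} and the adjunction of \cref{HZinflation} (rather than an ad hoc argument) is the cleanest path. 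An alternative, perhaps slicker, route would be to build a spectral sequence or to resolve $\ul L$ by projective $\ulZ_{G/N}$-modules, use exactness of $\phiZ^*_N$ (\cite{Z}*{Lemma~3.14}) and \eqref{PsiExtends} on each projective, and pass to homotopy — but the direct bottleneck argument above avoids convergence issues and is self-contained given the results already in hand.
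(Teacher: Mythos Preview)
Your proposal is correct and follows essentially the same approach as the paper: reduce via the bottleneck hypothesis to checking $q_*$ and $\downarrow^G_N$, compute the former using $N$-fixed points (the paper invokes \eqref{HtpyFixedPts} and \eqref{ProjectionHM} directly, you go through the adjunction), and compute the latter by observing that $q^*V$ restricts to a trivial $N$-representation of dimension $\dim V$. The paper's proof is terser---it simply writes $X \simeq \Si{q^*V} H_G \phiZ_N^* \ul{L}$ at the outset and reads off both pieces---and does not pause over the naturality concern you raise, but the substance is the same.
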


\begin{pf}
Let us write
$X = \HphiZ_N^* \Si{V} H_{G/N} \ul{L} \simeq \Si{q^* V} H_G \phiZ_N^* \ul{L}$.
Since $N$ is a bottleneck subgroup,  it is enough to describe $\downarrow^G_N \mpi_n X$ and $q_* \mpi_n X$. Now \[\downarrow^G_N \mpi_n X \iso \mpi_n \downarrow^G_N X = \mpi_n \Si{\dim V} H_N \ul{L}(N/N).
\]
This is a constant Mackey functor. On the other hand, by \cref{HtpyFixedPts} and \cref{ProjectionHM}, we have
\[
q_* \mpi_n X \iso \mpi_n (X^N) \iso \mpi_n (\Si{V} H_{G/N} \ul{L}).
\]
By \cref{DescriptionphiZ}, this agrees with $\phiZ_N^* \mpi_n \left( \Si{V} H_{G/N} \ul{L}\right)$.
\end{pf}

More generally, we have an extension of \cref{phiZgeometric} to $H\ulZ$-modules:

\begin{prop}
\label{HphiZgeometric}
Let $X \in \Mod_{H\ulZ_{G/N}}$ and let $N\unlhd G$ be a bottleneck subgroup.
If the underlying spectrum $\downarrow^{G/N}_e \! X$ is contractible, then  $\HphiZ_N^* (X) \simeq \phi_N^* X$.
\end{prop}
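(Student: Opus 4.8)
The plan is to reduce the statement to the module-level fact \cref{phiZgeometric} by peeling off the Eilenberg--Mac~Lane structure one homotopy group at a time. Since $X$ is an $H\ulZ_{G/N}$-module, it is built from its Postnikov tower, whose layers are of the form $\Si{n} H_{G/N}\mpi_n X$ for the Mackey functors $\mpi_n X$, each of which is canonically a $\ulZ_{G/N}$-module. The hypothesis that $\downarrow^{G/N}_e X$ is contractible means that $\mpi_n(X)(e) = \downarrow^{G/N}_e \mpi_n X = 0$ for every $n$; in other words, each $\mpi_n X$ is a $\ulZ_{G/N}$-module with trivial underlying abelian group.

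First I would check that $\HphiZ_N^*$ and $\phi_N^*$ agree on each Postnikov layer: for a $\ulZ_{G/N}$-module $\ul{M}$ with $\ul{M}(e)=0$, we have
\[
\HphiZ_N^* \Si{n} H_{G/N}\ul{M} \simeq \Si{n} H_G \phiZ_N^* \ul{M} \simeq \Si{n} H_G \phi_N^* \ul{M} \simeq \phi_N^* \Si{n} H_{G/N}\ul{M},
\]
where the first equivalence is \eqref{PsiExtends} from \cref{HZinflation} (valid since $N$ is a bottleneck subgroup, so $G/N$ has finite global dimension by \cref{ShapeGmodN}), the second is \cref{phiZgeometric}, and the third holds because $\phi_N^*$ commutes with suspension and $\phi_N^* H_{G/N}\ul{M} \simeq H_G \phi_N^* \ul{M}$ when $\ul{M}(e)=0$ (the geometric inflation of an Eilenberg--Mac~Lane spectrum concentrated over $N$ is again Eilenberg--Mac~Lane). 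Both $\HphiZ_N^*$ and $\phi_N^*$ are exact, i.e. preserve cofiber sequences: $\HphiZ_N^*$ is a left adjoint (\cref{HZinflation}) hence preserves cofiber sequences of spectra, and $\phi_N^* = \widetilde{E\cF[N]}\smsh q^*(-)$ is a smash product of exact functors. Moreover both preserve the relevant (co)limits needed to pass from the finite Postnikov stages to $X$ itself; in the bounded-above or bounded-below cases this is automatic, and in general one uses that both functors commute with the (co)limit defining the Postnikov tower since they are built from $q^*$, smashing, and — for $\HphiZ_N^*$ — a relative smash product over $q^*H\ulZ$, all of which preserve filtered colimits.

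Then I would induct up the Postnikov tower: assuming the two functors agree on $\tau_{\leq m} X$, the cofiber sequence $\Si{m+1}H_{G/N}\mpi_{m+1}X \to \tau_{\leq m+1} X \to \tau_{\leq m} X$ together with the layer computation above and exactness shows they agree on $\tau_{\leq m+1} X$; passing to the (co)limit over $m$ (and, if $X$ is unbounded below, also doing the analogous co-induction, or invoking convergence of the Postnikov tower for connective module spectra after a shift) gives $\HphiZ_N^*(X) \simeq \phi_N^*(X)$. The main obstacle is the bookkeeping around (co)limits for a possibly unbounded $X$: one must ensure that both inflation functors commute with the homotopy (co)limit assembling $X$ from its Postnikov sections, and that the comparison map they induce is compatible with the tower. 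For the applications in this paper $X$ is a suspension of $H_{Q_8/Z}\ulZ$ or a slice thereof, which is bounded, so in fact the finite induction suffices and the convergence issue does not arise; I would state the proposition in the generality needed and remark that boundedness makes the limit arguments trivial in all cases of interest.
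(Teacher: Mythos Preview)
Your approach---reducing to Postnikov layers and invoking \cref{phiZgeometric} and \cref{HZinflation} on each---is a different route from the paper's, and it is close to working, but the inductive step has a gap. Knowing that two exact functors agree \emph{as objects} on the fiber and base of a cofiber sequence does not force them to agree on the total space: the two cofiber sequences $F(A)\to F(B)\to F(C)$ and $G(A)\to G(B)\to G(C)$ need not be equivalent unless the identifications $F(A)\simeq G(A)$ and $F(C)\simeq G(C)$ are compatible with the attaching maps, i.e.\ come from a natural transformation. You gesture at ``the comparison map they induce'' but never construct one. A fix: first observe that when $\downarrow^{G/N}_e X \simeq *$ the localization map $q^* X \to \widetilde{E\mathcal{F}[N]}\smsh q^* X = \phi_N^* X$ is already an equivalence (its fiber is $E\mathcal{F}[N]_+ \smsh q^*X$, built from cells $G/H_+$ with $H$ not containing $N$, hence $H<N$ by the bottleneck hypothesis, and $\downarrow^G_H q^*X \simeq *$). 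This supplies a natural zigzag $\phi_N^* X \xleftarrow{\ \simeq\ } q^* X \to \HphiZ_N^*(X)$, and now your layer-by-layer argument legitimately shows the right-hand map is an equivalence.

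The paper avoids Postnikov towers altogether. After establishing $q^* X \simeq \phi_N^* X$ as above, it shows directly that the base-change $q^* X \to H_G\ulZ \smsh_{q^* H_{G/N}\ulZ} q^* X = \HphiZ_N^*(X)$ is an equivalence by proving that the ring map $q^* H_{G/N}\ulZ \to H_G\ulZ$ becomes an equivalence after smashing with the idempotent $\widetilde{E(G/N)}$; this is checked on geometric fixed points $\Phi^H$ for $N < H \leq G$ using \cref{GeoFixedBottleneck} and the projection formula. Their argument is shorter and sidesteps all convergence and naturality bookkeeping, while yours---once patched with the natural map above---has the merit of making the dependence on the algebraic \cref{phiZgeometric} transparent.
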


\begin{pf}
If the underlying spectrum of $X$ is contractible, then $X \simeq \widetilde{E (G/N)} \smsh X$.
The assumption that $N$ is a bottleneck subgroup implies that ${E (G/N)}=q^*(E(G/N))$ is the universal space for the family of subgroups of $N$, so that 
$\widetilde{E (G/N)} \smsh \widetilde{ E \mathcal{F}[N]} \simeq \widetilde{E (G/N)}$
and it follows that 
\[q^* X \simeq \widetilde{E(G/N)} \smsh q^* X \simeq \widetilde{E(G/N)} \smsh \phi_N^* \left(  X\right)  \simeq \phi_N^* X .\]
Now
\[\begin{split}
\HphiZ_N^* (X) &= H_G \ulZ \smsh_{q^* H_{G/N} \ulZ} q^*(X) \\
&\simeq  H_G \ulZ \smsh_{q^* H_{G/N} \ulZ} (\widetilde{E(G/N)} \smsh q^*(X)).
\end{split}\]
Since $\widetilde{E(G/N)}$ is smash idempotent, this can be rewritten as 
\[
\HphiZ_N^* (X) \simeq 
\widetilde{E(G/N)} \smsh H_G \ulZ \smsh_{\widetilde{E(G/N)} \smsh q^* H_{G/N} \ulZ} \widetilde{E(G/N)} \smsh q^*(X).
\]
It remains only to show that 
\[
\widetilde{E(G/N)} \smsh H_G \ulZ \simeq {\widetilde{E(G/N)} \smsh q^* H_{G/N} \ulZ}.
\]
Both sides restrict trivially to an $N$-equivariant spectrum, so it suffices to show an equivalence on $\Phi^H$, where $H$ properly contains $N$. Without loss of generality, we may suppose $H=G$. Since $\Phi^G( \widetilde{E(G/N)} ) \simeq S^0$, it suffices to show that 
\[ \Phi^G H_G \ulZ \simeq \Phi^G q^* H_{G/N} \ulZ.\]
According to \cref{GeoFixedBottleneck}, the left side is $\Phi^{G/N} H_{G/N}\ulZ$. Similarly, \cref{GeoFixedBottleneck} and the Projection Formula (\cref{ProjectionFormula}) show that the right side is
\[
\begin{split} 
\Phi^G q^* H_{G/N} \ulZ &\simeq \Phi^{G/N} \left( H_{G/N} \ulZ \smsh (S_G^0)^N \right) \\
& \simeq \Phi^{G/N} H_{G/N} \ulZ \smsh \Phi^{G/N} (S^0_G)^N \\
& \simeq \Phi^{G/N} H_{G/N} \ulZ.
\end{split} 
\]
\end{pf}

\begin{thm}
\label{MainSliceInflationTheorem}
Let $n\geq 0$ and let $N \unlhd G$ be a bottleneck subgroup of order $p$, a prime. 
Let $\ul{M} \in \Mod_{\ulZ_{G/N}}$ such that $P^n_n \Si{n} H_{G/N} \ul{M}$ is of the form $\Si{V} H_{G/N} \ul{L}$, for some $G/N$-representation $V$ and $\ul{L} \in \Mod_{\ulZ_{G/N}}$.
Then 
the nontrivial slices of the Eilenberg-Mac~Lane $G$-spectrum $\Si{n} H_G (\phiZ^*_N \ul{M})$, above level $pn$, are
\[ P^{pk}_{pk} \left( \Si{n}H_G (\phiZ^*_N \ul{M}) \right) \simeq \HphiZ_N^* P^k_k \left( \Si{n} H_{G/N} \ul{M} \right)
\simeq \phi_N^* P^k_k \left( \Si{n} H_{G/N} \ul{M} \right)
\]
for $k>n$. Furthermore,
\[
P^{pk}_n \left( \Si{n}H_G ( \phiZ^*_N \ul{M} ) \right) \simeq \HphiZ_N^* P^k_n \left( \Si{n} H_{G/N} \ul{M} \right).
\]
\end{thm}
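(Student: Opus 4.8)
The plan is to apply the $\ulZ$-module inflation functor $\HphiZ_N^*$ to the entire slice tower of $Y := \Si n H_{G/N}\ul M$ and to recognize the result as (most of) the slice tower of $X := \Si n H_G(\phiZ^*_N\ul M)$, where $X \simeq \HphiZ_N^* Y$ by \cref{HZinflation}. First I would record that $\HphiZ_N^* = H_G\ulZ \smsh_{q^* H_{G/N}\ulZ} q^*(-)$ is exact: the inflation $q^*$ is exact, and the relative smash product preserves cofiber sequences in its module variable. Applying $\HphiZ_N^*$ to $\{P^{\ge k}Y\}_{k\ge n}$, which begins at $P^{\ge n}Y = Y$ since $Y$ is slice $\ge n$, therefore produces a tower under $X$ whose successive cofibers are $\HphiZ_N^* P^k_k Y$.

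The next step treats every layer above the bottom one. Since the underlying spectrum of $Y$ is $\Si n \ul M(e)$, concentrated in degree $n$, the underlying slice tower of $Y$ is the Postnikov tower of $\Si n \ul M(e)$; hence $P^k_k Y$ for $k>n$, and also $P^{\ge n+1}Y$, have contractible underlying spectrum. As $\downarrow_e \HphiZ_N^*(-) \simeq \downarrow^{G/N}_e(-)$, \cref{HphiZgeometric} applies and yields $\HphiZ_N^* P^k_k Y \simeq \phi_N^* P^k_k Y$ for $k>n$ and $\HphiZ_N^* P^{\ge n+1}Y \simeq \phi_N^* P^{\ge n+1}Y$. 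By \cite{Ullman}*{Corollary~4-5}, applied with $|N|=p$, the first of these is a $pk$-slice and the second is slice $\ge p(n+1)$ (indeed $\simeq P^{\ge p(n+1)}\phi_N^* Y$). This already establishes the equivalence $\HphiZ_N^* P^k_k Y \simeq \phi_N^* P^k_k Y$ in the statement.

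To finish I would feed this back into the tower. The cofiber sequence $\HphiZ_N^* P^{\ge n+1}Y \to X \to \HphiZ_N^* P^n_n Y$ coming from Step 1 has slice-$\ge p(n+1)$ fiber and, by \cref{HZinflation}, cofiber $\HphiZ_N^* P^n_n Y \simeq \Si{q^*V}H_G(\phiZ^*_N\ul L)$. The crucial input is that this cofiber is slice $\ge n$ (its underlying spectrum is $\Si n \ul L(e)$) with all of its slices in degrees $\le pn$; this is exactly the statement about the slice tower of $\Si{q^*V}H_G(\phiZ^*_N\ul L)$ that is worked out explicitly in \cref{sec:phiZK4Zslicetowers}. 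Granting it, an octahedron/uniqueness argument comparing $\HphiZ_N^* P^{\ge n+1}Y \to X$ with $P^{\ge pn+1}X \to X$ forces $\HphiZ_N^* P^{\ge n+1}Y \simeq P^{\ge p(n+1)}X$ and $\HphiZ_N^* P^n_n Y \simeq P^{pn}_n X$ (and, incidentally, that $X$ has no slices in degrees $pn+1,\dots,p(n+1)-1$). Peeling off one layer at a time — each remaining layer $\HphiZ_N^* P^k_k Y \simeq \phi_N^* P^k_k Y$ being a $pk$-slice, with nothing in the intervening degrees — the same argument gives $\HphiZ_N^* P^{\ge k}Y \simeq P^{\ge pk}X$, hence $P^{pk}_{pk}X \simeq \phi_N^* P^k_k Y$, for every $k>n$. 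Finally, applying exactness of $\HphiZ_N^*$ to the map $Y \to P^k_n Y$ gives $P^{pk}_n X \simeq X/P^{\ge p(k+1)}X \simeq \HphiZ_N^*\bigl(Y/P^{\ge k+1}Y\bigr) \simeq \HphiZ_N^* P^k_n Y$ for all $k\ge n$, which is the last assertion.

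The serious content, and the main obstacle, is the boundedness claim invoked in the previous paragraph: that the inflated Eilenberg--Mac~Lane spectrum $\Si{q^*V}H_G(\phiZ^*_N\ul L)$ has slices only in degrees $n$ through $pn$. Its underlying spectrum is the Postnikov $n$-slice $\Si n \ul L(e)$, but $\ulZ$-module inflation does not commute with the slice filtration, so this requires genuine work: one must understand how the isotropy-separation cofiber sequence $EG_+\smsh(-)\to(-)\to\phi_N^*(-)$ for this spectrum interacts with the slice tower, and in particular why the connecting map cuts off the potential higher slices of its free part. This is precisely the analysis carried out (for the $V$ and $\ul L$ occurring for $G=Q_8$, $N=Z$) in \cref{sec:phiZK4Zslicetowers}, so here I would simply cite it; everything else above is formal.
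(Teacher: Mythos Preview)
Your overall strategy matches the paper's: apply $\HphiZ_N^*$ to the slice tower of $Y=\Si{n}H_{G/N}\ul{M}$, identify the layers for $k>n$ as $\phi_N^*P^k_kY$ (hence $pk$-slices) via \cref{HphiZgeometric}, and then argue that the bottom piece $\HphiZ_N^*P^n_nY\simeq\Si{q^*V}H_G(\phiZ_N^*\ul L)$ has no slices above level $pn$. Your treatment of the upper layers is fine.

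The genuine gap is the last step. You identify it correctly as ``the serious content'' and then defer to \cref{sec:phiZK4Zslicetowers}, but that section only treats the specific spectra arising for $G=Q_8$, $N=Z$, and particular $\ul L$. Citing it does not prove the theorem as stated, which is for an arbitrary bottleneck subgroup of prime order. The paper instead gives a short direct argument that you are missing. First, $\downarrow^G_N\Si{q^*V}H_G(\phiZ_N^*\ul L)\simeq\Si{n}H_N\ul{L(e)}$ has no $N$-slices above $pn$. Then, for $N<H\leq G$ (say $H=G$) and $k|G|>pn$, one splits $\rho_G\cong\rho_{G/N}\oplus U$ and uses the cofiber sequence
\[
S(kU)_+\smsh S^{k\rho_{G/N}}\rtarr S^{k\rho_{G/N}}\rtarr S^{k\rho_G}.
\]
Maps out of $S^{k\rho_{G/N}+r}$ are detected on $N$-fixed points, where the target becomes the $n$-slice $\Si{V}H_{G/N}\ul L$ and $k|G/N|>n$, so they vanish. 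For the remaining term, the hypothesis $|N|=p$ forces $N$ to act freely on $S(kU)$; since $N$ is bottleneck, the $G$-action on $S(kU)$ is free, and the vanishing follows from the connectivity $\dim k\rho_{G/N}>\dim V=n$. This is the missing idea, and it is considerably simpler than the isotropy-separation analysis you suggest would be needed.
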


\begin{pf}
Applying the functor $\HphiZ_N^*$ to the slice tower for $\Si{n}H_{G/N}  \ul{M}$ produces a tower of fibrations whose layers are $\HphiZ_N^* P^k_k \left( \Si{n} H_{G/N} \ul{M} \right)$ for $k \geq n$.
We wish to show that this is a partial slice tower for $\Si{n}H_G (\phiZ^*_N \ul{M})$.
For $k > n$, the $k$-slice $P^k_k \left( \Si{n}H_{G/N}  \ul{M} \right)$ has trivial underlying spectrum. It follows from \cref{HphiZgeometric} that
\[
 \HphiZ_N^* P^k_k \left( \Si{n} H_{G/N} \ul{M} \right)
\simeq 
\phi_N^* P^k_k \left( \Si{n} H_{G/N} \ul{M} \right)
 \]
 for $k > n$. As the geometric inflation of a $k$-slice, this is a $pk$-slice. 
 
It remains to show that 
\[\HphiZ_N^* P^n_n \left( \Si{n} H_{G/N} \ul{M} \right)
\simeq \HphiZ_N^* \Si{V} H_{G/N} \ul{L} \simeq \Si{V} H_G \phiZ_N^* \ul{L}
\]
has no slices above level $pn$.
First, note that the restriction of 
$\Si{V} H_G \phiZ_N^* \ul{L}$
to $N$ is the $N$-spectrum $\Si{n} H_N \ul{L(N)}$, where $\ul{L(N)}$ is being considered as a constant $N$-Mackey functor at the value $\ul{L}(G/N)$. It follows that this $N$-spectrum has no slices above dimension $|N|\cdot n = pn$. 
Therefore, to show that $\Si{V} H_G \phiZ_N^* \ul{L}$ is less than $pn$, it suffices to show that
\[
[ G_+\smsh_H S^{k\rho_H+r} ,  \Si{V} H_G \phiZ_N^* \ul{L}]^G=0
\]
for any $N < H \leq G$ and integers $r\geq 0$ and $k$ such that $k|H| > pn$. Without loss of generality we consider the case $H=G$.
 
 Denote by $U$ a complement of $\rho_{G/N}$ in $\rho_G$, so that 
 \[ 
 \rho_G \iso \rho_{G/N} \oplus U.
 \] 
We then have a cofiber sequence
\[ 
S(kU)_+ \smsh S^{k\rho_{G/N}} \rtarr S^{k\rho_{G/N}} \rtarr S^{k\rho_G}
\]
and a resulting exact sequence
\[\begin{split}
[\Si1 S(kU)_+ \smsh S^{k\rho_{G/N}+r} ,  \Si{V} H_G \phiZ_N^* \ul{L}]^G &\rtarr
[  S^{k\rho_G+r} ,  \Si{V} H_G \phiZ_N^* \ul{L}]^G \\
&\rtarr 
[  S^{k\rho_{G/N}+r} ,  \Si{V} H_G \phiZ_N^* \ul{L}]^G = 0.    
\end{split}
\]
We must show that the left term vanishes. Note that the $G$-action on $S(kU)$ is free, since $N$ is order $p$. Then the desired vanishing follows from the fact that $\Si1 S(kU)_+ \smsh S^{k\rho_{G/N}-V}$ is $G$-connected, since $\dim k\rho_{G/N} > \dim V = n$.
\end{pf}

\section{$Q_8$-Mackey functors and Bredon homology}
\label{Qsection}

We display a number of the $Q_8$-Mackey functors that will be relevant in \cref{tab-Q8Mackey}. 
In these Lewis diagrams, we are using the subgroup lattice of $Q_8$ as displayed in \cref{sec:notn}.
We will also often abuse notation and write the name for a $K_4$-Mackey functor, such as $\ulm$ or $\ul{mg}$, to denote the resulting inflated $Q_8$-Mackey functor. We will only write the symbol $\phi_Z^*$ when it is necessary to resolve an ambiguity, for instance between $\phi_Z^* \ulF$ and $\ulF$.

In \cite{HHR2}*{Section~2.1}, the authors introduce ``forms of $\ulZ$'' Mackey functors $\ulZ(i,j)$, where $i \geq j \geq 0$, in the case of $G=C_{p^n}$. From our point of view, $Q_8$ behaves very similarly to $C_8$, and we similarly write $\ulZ(i,j)$ for the Mackey functor that looks like $\ulZ^*$ between the subgroups of order $2^i$ and $2^j$ and looks like $\ulZ$ outside of this range. We will at times follow \cite{HHR2} in denoting by $\ul{B}(i,j)$ the cokernel of $\ulZ(i,j) \into \ulZ$, although we will often instead use the descriptions given in \cref{MackeySESExamples}.

\begin{table}
\caption{Some $Q_{8}$-Mackey functors}
\label{tab-Q8Mackey}
\begin{center}
{\renewcommand{\arraystretch}{1.5}
\begin{tabular}{|c|c|c|}
\hline 
$\square = \ulZ$ &
$\dbox = \ulZ^*$ & 
         $\circ=\ul B(3,0) $ \\
         \hline
         \begin{tikzcd}[bend right=2.5ex, swap]
         & \Z \ar[dl,"1", color=blue] \ar[d,"1", color=blue] \ar[dr,"1", color=blue] & \\
         \Z \ar[dr,"1", color=blue] \ar[ur,"2", color=orange]  & \Z \ar[d,"1", color=blue] \ar[u,"2", color=orange]  & \Z \ar[ul,"2", color=orange]  \ar[dl,"1", color=blue] \\
          & \Z \ar[d,"1", color=blue] \ar[ul,"2", color=orange] \ar[u,"2", color=orange] \ar[ur,"2", color=orange] \\
          & \Z \ar[u,"2"]
         \end{tikzcd}
         & 
         \begin{tikzcd}[bend right=2.5ex, swap]
         & \Z \ar[dl,"2", color=orange] \ar[d,"2", color=orange] \ar[dr,"2", color=orange] & \\
         \Z \ar[dr,"2", color=orange] \ar[ur,"1", color=blue]  & \Z \ar[d,"2", color=orange] \ar[u,"1", color=blue]  & \Z \ar[ul,"1", color=blue]  \ar[dl,"2", color=orange] \\
          & \Z \ar[d,"2", color=orange] \ar[ul,"1", color=blue] \ar[u,"1", color=blue] \ar[ur,"1", color=blue] \\
          & \Z \ar[u,"1", color=blue]
         \end{tikzcd}
         &
         \begin{tikzcd}[bend right=2.5ex, swap]
         & \Z/8 \ar[dl,"1"] \ar[d,"1"] \ar[dr,"1"] & \\
         \Z/4 \ar[dr,"1"] \ar[ur,"2"]  & \Z/4 \ar[d,"1"] \ar[u,"2"]  & \Z/4 \ar[ul,"2"]  \ar[dl,"1"] \\
          & \Z/2  \ar[ul,"2"] \ar[u,"2"] \ar[ur,"2"] \\
          & 0 
         \end{tikzcd}
         \\
         \hline
        $\ulZ(3,2) = \phiZ_Z^* \ulZ(2,1)$
        &
        $\ulZ(3,1) = \phiZ_Z^* \ulZ^*$
        &
         $\phiZM = \phi_Z^*(\ul B(2,0))$
         \\ \hline
         \begin{tikzcd}[bend right=2.5ex, swap]
         & \Z \ar[dl,"2", color=orange] \ar[d,"2", color=orange] \ar[dr,"2", color=orange] & \\
         \Z \ar[dr,"1", color=blue] \ar[ur,"1", color=blue]  & \Z \ar[d,"1", color=blue] \ar[u,"1", color=blue]  & \Z \ar[ul,"1", color=blue]  \ar[dl,"1", color=blue] \\
          & \Z \ar[d,"1", color=blue] \ar[ul,"2", color=orange] \ar[u,"2", color=orange] \ar[ur,"2", color=orange] \\
          & \Z \ar[u,"2", color=orange]
         \end{tikzcd}
         &
         \begin{tikzcd}[bend right=2.5ex, swap]
         & \Z \ar[dl,"2", color=orange] \ar[d,"2", color=orange] \ar[dr,"2", color=orange] & \\
         \Z \ar[dr,"2", color=orange] \ar[ur,"1", color=blue]  & \Z \ar[d,"2", color=orange] \ar[u,"1", color=blue]  & \Z \ar[ul,"1", color=blue]  \ar[dl,"2", color=orange] \\
          & \Z \ar[d,"1", color=blue] \ar[ul,"1", color=blue] \ar[u,"1", color=blue] \ar[ur,"1", color=blue] \\
          & \Z \ar[u,"2", color=orange]
         \end{tikzcd}
         &
         \begin{tikzcd}[bend right=2.5ex, swap]
         & \Z/4 \ar[dl,->>] \ar[d,->>] \ar[dr,->>] & \\
         \Z/2  \ar[ur,"2"]  & \Z/2 \ar[u,"2"] & \Z/2 \ar[ul,"2"]  \\
          & 0   \\
          & 0 
         \end{tikzcd}
        \\
        \hline 
        $\phiZF = \phi_Z^* \ulF$
        & 
         $\phiZFstar = \phi_Z^* \ulF^*$
        & 
         $\mysterysymbol = \mystery$
        \\ \hline
          \begin{tikzcd}[swap]
         & \F \ar[dl,"1"] \ar[d,"1"] \ar[dr,"1"'] & \\
         \F \ar[dr,"1"]  & \F \ar[d,"1"]   & \F  \ar[dl,"1"'] \\
          & \F   \\
          & 0 
         \end{tikzcd}
          &
        \begin{tikzcd}[ swap]
         & \F  & \\
         \F  \ar[ur,"1" swap]  & \F \ar[u,"1"]  & \F \ar[ul,"1"]  \\
          & \F  \ar[ul,"1" swap] \ar[u,"1"] \ar[ur,"1"] \\
          & 0 
         \end{tikzcd}
         &
        \begin{tikzcd}[bend right=2.5ex, swap]
         & \F^2 \ar[dl, "2p_1"] \ar[d, "2\nabla"] \ar[dr, "2p_2" pos=0.75] & \\
         \Z^\sigma\!/4 \ar[dr,->>] \ar[ur, "i_2 q"  pos=0.25 ]  & \Z^\sigma\!/4 \ar[d,->>] \ar[u, "\Delta q"]  & \Z^\sigma\!/4 \ar[ul, "i_1 q"]  \ar[dl, ->>] \\
          & \Z/2  \ar[ul,"2"] \ar[u,"2"] \ar[ur,"2"] \\
          & 0 
         \end{tikzcd}
         \\
         \hline
\end{tabular} }
\end{center}
\end{table}

These Mackey functors fit together in exact sequences as follows:

\begin{prop}
\label{MackeySESExamples}
There are exact sequences of Mackey functors
\begin{enumerate}
    \item $\ulZ(3,2) \into \ulZ \onto \ulg$
    \item $\ulZ(3,1) \into \ulZ \onto \phi_Z^* \ul B(2,0)$
    \item $\ulZ(3,1) \into \ulZ(3,2) \onto \ulm^*$
    \item $\ulZ(2,1) \into \ulZ \onto \ulm$
    \item $\ulZ(1,0) \into \ulZ \onto \phi_Z^* \ulF$
    \item $\ulZ^* \into \ulZ \onto  \ul B(3,0)$
    \item $\ul{mg} \into \mystery \onto \ul{w}$.
\end{enumerate}
\end{prop}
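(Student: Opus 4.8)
The plan is to verify each of the seven sequences level by level. Since $\Mack(Q_8)$ is abelian and a sequence is exact precisely when it is exact after evaluation at each conjugacy class of subgroups, it suffices, for each item, to (i) write down the asserted maps at the six levels $Q_8, L, D, R, Z, e$, reading the groups and structure maps off the Lewis diagrams of \cref{tab-C4Mackey,tab-K4Mackey,tab-Q8Mackey}; (ii) check that these assemble into a morphism of Mackey functors, i.e.\ commute with all restrictions, transfers, and Weyl actions; and (iii) check exactness of the resulting six sequences of abelian groups. In every case the maps involved are the obvious ones — an identity, a multiplication by a power of $2$, a projection or a fold, or the inclusion of a torsion subgroup — and the powers of $2$ appearing are forced by compatibility with the restrictions.

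For items (1)--(4) and (6) I would cut down the work using the exactness of the $\ulZ$-module inflation functor $\phiZ_Z^*$ (\cite{Z}*{Lemma~3.14}). Since $Z$ is a bottleneck subgroup, \cref{DescriptionphiZ} completely determines $\phiZ_Z^*\ul{N}$ for a $K_4$-Mackey functor $\ul{N}$: it has $q_*(\phiZ_Z^*\ul{N})=\ul{N}$ and $\downarrow^{Q_8}_Z(\phiZ_Z^*\ul{N})$ constant at $\ul{N}(e)$. From this (together with \cref{phiZgeometric} when $\ul{N}(e)=0$) one reads off, over $Q_8$, the identifications $\phiZ_Z^*\ulZ=\ulZ$, $\phiZ_Z^*\ulZ(2,1)=\ulZ(3,2)$, $\phiZ_Z^*\ulZ^*=\ulZ(3,1)$, $\phiZ_Z^*\ulZ(1,0)=\ulZ(2,1)$, $\phiZ_Z^*\ulg=\phi_Z^*\ulg=\ulg$, $\phiZ_Z^*\ul B(2,0)=\phi_Z^*\ul B(2,0)$, $\phiZ_Z^*\ulm=\ulm$, and $\phiZ_Z^*\ulm^*=\ulm^*$. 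Granting the four short exact sequences of $K_4$-Mackey functors
\[ \ulZ(2,1)\into\ulZ\onto\ulg, \qquad \ulZ^*\into\ulZ\onto\ul B(2,0), \qquad \ulZ^*\into\ulZ(2,1)\onto\ulm^*, \qquad \ulZ(1,0)\into\ulZ\onto\ulm, \]
each of which is a short level-wise verification over $K_4$, applying the exact functor $\phiZ_Z^*$ produces (1), (2), (3), and (4) respectively. Item (6) is essentially definitional: $\ulZ^*=\ulZ(3,0)$ and $\ul B(3,0)=\coker(\ulZ(3,0)\into\ulZ)$, the inclusion being multiplication by $8,4,2,1$ at the levels $Q_8,\{L,D,R\},Z,e$, so one need only check the displayed $\ul B(3,0)$ against this cokernel. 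Item (5), $\ulZ(1,0)\into\ulZ\onto\phi_Z^*\ulF$ (with $\phi_Z^*\ulF$ the \emph{geometric} inflation, distinct from $\phiZ_Z^*\ulF$), is not an inflation of a $K_4$-sequence since $\downarrow^{Q_8}_Z\ulZ(1,0)$ is $\ulZ^*$ rather than a constant functor; it is done by a direct level-wise computation over $Q_8$, with the inclusion multiplication by $2$ at $Q_8,L,D,R,Z$ and an isomorphism at $e$.

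The delicate item, which I would save for last, is (7), $\ul{mg}\into\mystery\onto\ulw$, because $\mystery$ takes the twisted value $\Z^\sigma/4$ at each order-$4$ subgroup, on which the Weyl group $W_{Q_8}(L)\iso C_2$ acts by $-1$, and its transfers carry an extra factor of $2$ (written $2p_1,2\nabla,2p_2$). I would take the inclusion $\ul{mg}\into\mystery$ to be the identity $\F^2=\F^2$ at level $Q_8$, the inclusion $\F\iso 2\cdot(\Z^\sigma/4)\into\Z^\sigma/4$ of the $2$-torsion subgroup at $L,D,R$, and the forced zero map $0\into\Z/2$ at $Z$ (since $\ul{mg}(Q_8/Z)=0$). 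The key point is that this is a morphism of Mackey functors exactly because the factor of $2$ in the transfers of $\mystery$ makes them land in $2\cdot(\Z^\sigma/4)$, where the $C_2$-action is trivial, so they agree with the transfers $p_1,\nabla,p_2$ of $\ul{mg}$ followed by the inclusion; dually the restrictions of $\ul{mg}$ are the restrictions of $\mystery$ on the $2$-torsion. Computing the cokernel level-wise then yields $0$ at $Q_8$, the reduction $\Z^\sigma/4\onto\F$ at each order-$4$ subgroup, $\Z/2\iso\F$ at $Z$, and $0$ at $e$ — which is precisely the Mackey functor $\ulw$ over $Q_8$ — and one matches this against the displayed $\ulw$. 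The only genuine difficulty anywhere is the bookkeeping of $2$-adic valuations in the structure maps of the various forms of $\ulZ$ and in the $\Z^\sigma/4$ entries of $\mystery$; once these are pinned down, exactness in each case is immediate.
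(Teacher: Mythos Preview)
The paper states \cref{MackeySESExamples} without proof, leaving it as an immediate verification from the Lewis diagrams in \cref{tab-K4Mackey,tab-Q8Mackey}. Your level-by-level check, organized for items (1)--(4) via the exactness of $\phiZ_Z^*$ together with the identifications $\phiZ_Z^*\ulZ(2,1)=\ulZ(3,2)$, $\phiZ_Z^*\ulZ^*=\ulZ(3,1)$, etc., is a correct and pleasantly systematic way to carry this out.

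There is one terminological slip in your handling of (7): in the Lewis diagram for $\mystery$ in \cref{tab-Q8Mackey}, the maps $2p_1,2\nabla,2p_2$ are the \emph{restrictions} from $Q_8$ down to $L,D,R$ (downward arrows in the convention used throughout the tables), not the transfers; likewise $p_1,\nabla,p_2$ in $\ul{mg}$ are restrictions, and the transfers in $\ul{mg}$ are zero. Your argument goes through unchanged once the words are swapped: compatibility with restriction reads $2p_1\circ\mathrm{id}_{\F^2}=(\text{include as }2\text{-torsion})\circ p_1$, while compatibility with transfer holds because the transfer $i_2 q\colon\Z^\sigma/4\to\F^2$ in $\mystery$ annihilates the $2$-torsion subgroup (since $q(2)=0$), matching the zero transfer in $\ul{mg}$. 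The Weyl-equivariance check you isolate, that the sign action on $2\cdot(\Z^\sigma/4)$ is trivial because $-2\equiv 2\pmod 4$, is indeed the one nontrivial point.
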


\subsection{$RO(Q_8)$-graded Mackey functor $\ulZ$-homology of a point}

We will now compute the homology of $S^{k\rho_Q}$, with coefficients in $\ulZ$, as a Mackey functor.
The starting point is that the regular representation of $Q$ splits as 
\[ \rho_Q \iso \QH \oplus \rho_K,\]
where $\QH$ is the 4-dimensional irreducible $Q$-representation given by 
the action of the unit quaternions on the  algebra of quaternions  
and $\rho_K$ is the regular representation of $K$, inflated to $Q$ along the quotient. We begin by computing the homology of $S^{k\QH}$. See also \cite{Lu}*{Section~2} for an alternative viewpoint.

First, \cref{ProjectionFormula} and \cite{Slone}*{Proposition~9.1} combine to yield the following.

\begin{prop}\label{HomologySkrhoK}
For $k\geq 0$, the nontrivial homotopy Mackey functors of $\Sigma^{k \rho_K} H_Q \ulZ$ are
\[ 
\mpi_n \left( \Sigma^{k \rho_K} H_Q \ulZ \right) \iso
\begin{cases}
\ulZ & n = 4k \\
 \ul{mg} & n = 4k-2 \\
\ulg^{\frac12(4k-n-1)} & n \in [2k,4k-3], n\ \text{odd} \\
\ulg^{\frac12(4k-n-4)} \oplus \phi^*_{LDR}\ulF & n \in [2k,4k-3], n\ \text{even} \\
\ulg^{n-k+1} & n \in [k,2k-1].
\end{cases}
\]
\end{prop}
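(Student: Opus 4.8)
The plan is to reduce the computation to the already-known $K_4$-case via the projection formula, using that $\rho_K = q^* \rho_{K_4}$ is inflated along the quotient $q\colon Q \onto K$. Since the constant Mackey functor $\ulZ$ for $Q$ pushes forward to the constant Mackey functor $\ulZ$ for $K$, the projection formula in the form \eqref{ProjectionHM} identifies the $Z$-fixed points:
\[
\left( \Si{k\rho_K} H_Q \ulZ \right)^Z \simeq \Si{k\rho_{K_4}} H_K \ulZ .
\]
Feeding this into \eqref{HtpyFixedPts} gives
\[
q_*\, \mpi_n\!\left( \Si{k\rho_K} H_Q \ulZ \right) \iso \mpi_n\!\left( \Si{k\rho_{K_4}} H_K \ulZ \right),
\]
and the right-hand side is exactly \cite{Slone}*{Proposition~9.1}, recorded in \cref{fig:HtpySigmarhoK4Z}. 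Since $q_*$ merely relabels each subgroup $H \supseteq Z$ by $H/Z$ and preserves restrictions, transfers, and Weyl actions among such subgroups, this pins down $\mpi_n \Si{k\rho_K} H_Q \ulZ$ on the whole subgroup lattice of $Q$ except for the value at $e$ and the maps into and out of it.

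To finish I would compute the bottom of the Mackey functor directly. Because $Z = \ker q$ acts trivially on $\rho_K$, we have $\downarrow^Q_Z \Si{k\rho_K} H_Q \ulZ \simeq \Si{4k} H_Z \ulZ$; hence $\mpi_n$ takes the value $\Z$ at $e$ when $n = 4k$ and $0$ otherwise, and $\downarrow^Q_Z \mpi_n$ is the constant Mackey functor $\ulZ$ for $Z$ when $n = 4k$ and vanishes otherwise. Now $Z$ is a bottleneck subgroup, so the only subgroup of $Q$ not containing $Z$ is the trivial one. For $n \neq 4k$ the Mackey functor $\mpi_n$ therefore vanishes at $e$ and is determined by $q_* \mpi_n$, i.e.\ it is the inflation of the corresponding $K_4$-Mackey functor (which by \cref{phiZgeometric} agrees with the $\ulZ$-module inflation, so matches the Lewis diagrams one wants). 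Reading the symbols $\ul{mg}$, $\ulg$, and $\phi^*_{LDR}\ulF$ off \cref{fig:HtpySigmarhoK4Z} with their multiplicities yields precisely the stated cases in the range $n < 4k$. For $n = 4k$, the value at $e$ is $\Z$, the restriction to $Z$ is $\ulZ$, and $q_* \mpi_{4k}$ is the constant Mackey functor $\ulZ$ for $K$ (value $\Z$ at every subgroup); since $\mpi_{4k}$ is the homotopy of an $H_Q\ulZ$-module, hence cohomological, these constraints force $\mpi_{4k} \iso \ulZ$, the constant $Q$-Mackey functor.

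The one genuinely fiddly part is bookkeeping rather than homotopy theory: one must translate the picture in \cref{fig:HtpySigmarhoK4Z} into the explicit index ranges $[2k,4k-3]$ (split by the parity of $n$) and $[k,2k-1]$, with exponents $\tfrac12(4k-n-1)$, $\tfrac12(4k-n-4)$, and $n-k+1$, and check that all the Weyl-group actions involved are trivial — which one can do, for instance, by further restricting along $\downarrow^Q_{C_4}\rho_K = 2 + 2\sigma$ and comparing with the $C_4$-tables of \cite{HHR}. There is no serious obstacle beyond this reorganization of known data.
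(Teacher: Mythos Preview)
Your proposal is correct and takes essentially the same approach as the paper: the paper's one-line proof invokes the Projection Formula (\cref{ProjectionFormula}) together with \cite{Slone}*{Proposition~9.1}, and you have simply unpacked what that means, supplying the detail that $q_*$ determines the Mackey functor above $Z$ while the restriction $\downarrow^Q_Z \Si{k\rho_K} H_Q\ulZ \simeq \Si{4k} H_Z\ulZ$ fixes the bottom. This is precisely the content of \cref{HtpyInflation} specialized to $\ul{L}=\ulZ$, so your argument and the paper's are the same in substance.
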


Next, we employ the cofiber sequence
\begin{equation} \label{SHcofib}
    S(\QH)_+ \rtarr S^0 \rtarr S^\QH
\end{equation} 
to obtain the homology of $S^{\rho_Q}$ from that of $S^{\rho_K}$.

\begin{figure}[h]
\ifDrawTikzPictures
\newcommand{\Xcolor}{blue}
\newcommand{\Ycolor}{green!50!black}
\newcommand{\Zcolor}{magenta}
\begin{tikzpicture}[very thick,inner sep={0pt},font={\small},scale=2.5]

\node (e) at (0,0) {};
\node (i) at (1,0) {};
\node (zi) at (-1,0) {};
\node (j) at (0,1) {};
\node (zj) at (0,-1) {};
\node (k) at (-0.2,0,0.98) {};
\node (zk) at (0.2,0,-0.98) {};

\draw (e) node[above left={2pt}]{$e$}; 
\draw (i) node[below right={2pt}]{$i$}; 
\draw (zi) node[above left={2pt}]{$zi$}; 
\draw (j) node[above left={2pt}]{$j$}; 
\draw (zj) node[below right={2pt}]{$zj$}; 
\draw (k) node[above={3.5pt},xshift={2.5pt}]{$k$}; 
\draw (zk) node[above left={2pt},yshift={1pt}]{$zk$}; 

\draw[\Xcolor] (e) to node[pos=0.4,above={1pt}] {$X$} (i);
\draw[\Xcolor,->] (i) to node[pos=0.5,above={1pt}] {$iX$} (1.75,0);
\begin{scope}
\clip(-1,-0.2) rectangle (-0.65,0.2);
\draw[\Xcolor] (zi) to (e);
\end{scope}
\begin{scope}
\clip(-0.55,-0.2) rectangle (0,0.2);
\draw[\Xcolor] (zi) to node[pos=0.7,above={1.5pt}] {$ziX$} (e);
\end{scope}
\draw[\Xcolor,->] (zi) to node[pos=0.5,above={1pt}] {$zX$} (-1.75,0);
\draw[\Xcolor,bend left] (k) to node[pos=0.65,above left={-1.5pt}] {$kX$} (j);
\draw[\Xcolor,bend left] (j) to node[pos=0.7, left={1.5pt}] {$jX$} (zk);
\begin{scope}
\clip(-1,-1) rectangle (1,-0.4);
\draw[\Xcolor,bend left] (zk) to  (zj);
\end{scope}
\begin{scope}
\clip(0,-0.3) rectangle (1,-0.05);
\draw[\Xcolor,bend left] (zk) to node[pos=0.35, left={1pt},xshift={1pt}] {$zkY$} (zj);
\end{scope}
\begin{scope}
\clip(0,0.05) rectangle (1,1);
\draw[\Xcolor,bend left] (zk) to (zj);
\end{scope}
\draw[\Xcolor,bend left] (zj) to node[pos=0.65, above right={-0.5pt}] {$zjX$} (k);

\draw[\Ycolor] (e) to node[pos=0.7,left={1pt}] {$Y$} (j);
\draw[\Ycolor,->] (j) to node[pos=0.5,right={1pt}] {$jY$} (0,1.75);
\begin{scope}
\clip(-1,0) rectangle (0.2,-0.4);
\draw[\Ycolor] (e) to (zj);
\end{scope}
\begin{scope}
\clip(-0.2,-0.52) rectangle (0.8,-1);
\draw[\Ycolor] (e) to node[pos=0.65,right={1pt}] {$zjY$}  (zj);
\end{scope}
\draw[\Ycolor,->] (zj) to node[pos=0.5,left={1pt}] {$zY$} (0,-1.75);
\draw[\Ycolor,bend left] (i) to node[pos=0.32,right={3.5pt}] {$iY$} (k);
\draw[\Ycolor,bend left] (k) to node[pos=0.5,above right={2pt},xshift={-2pt}] {$kY$} (zi);
\begin{scope}
\clip(-1,0) rectangle (-0.575,1);
\draw[\Ycolor,bend left] (zi) to node[pos=0.2,above={3pt}] {$ziY$} (zk);
\end{scope}
\begin{scope}
\clip(-0.475,0) rectangle (-0.05,1);
\draw[\Ycolor,bend left] (zi) to  (zk);
\end{scope}
\begin{scope}
\clip(0.05,0) rectangle (1,1);
\draw[\Ycolor,bend left] (zi) to  (zk);
\end{scope}
\draw[\Ycolor,bend left] (zk) to node[pos=0.825,left={1pt}] {$zkY$} (i);

\draw[\Zcolor] (e) to node[pos=0.3,left={2pt}] {$Z$} (k);
\draw[\Zcolor,bend left] (i) to node[pos=0.3,right={1pt}] {$iZ$} (zj);
\draw[\Zcolor,bend left] (j) to node[pos=0.3,above right={1pt}] {$jZ$} (i);
\begin{scope}
\clip(0,0) rectangle (0.73,1);
\draw[\Zcolor,->] (zk) to (0.4,0,-1.96);
\end{scope}
\begin{scope}
\clip(0.81,0) rectangle (2,2);
\draw[\Zcolor,->] (zk) to node[pos=0.85,below={3pt}] {$zZ$} (0.4,0,-1.96);
\end{scope}
\draw[\Zcolor,->] (k) to node[pos=0.85,right={2.5pt}] {$kZ$} (-0.4,0,1.96);
\draw[\Zcolor,bend left] (zi) to node[pos=0.6,left={1pt}] {$ziZ$} (j);
\begin{scope}
\clip(-1,0) rectangle (-0.5,-0.45);
\draw[\Zcolor,bend left] (zj) to (zi);
\end{scope}
\begin{scope}
\clip(-1,-0.55) rectangle (1,-2);
\draw[\Zcolor,bend left] (zj) to node[pos=0.2,below left={1pt}] {$zjZ$} (zi);
\end{scope}
\draw[\Zcolor] (zk) to node[pos=0.2,left={5pt}] {$zkZ$} (e);

\filldraw (e) circle (0.8pt);
\filldraw (i) circle (0.8pt);
\filldraw (zi) circle (0.8pt);
\filldraw (j) circle (0.8pt);
\filldraw (zj) circle (0.8pt);
\filldraw (k) circle (0.8pt);
\filldraw (zk) circle (0.8pt);

\end{tikzpicture}
\else
\includegraphics{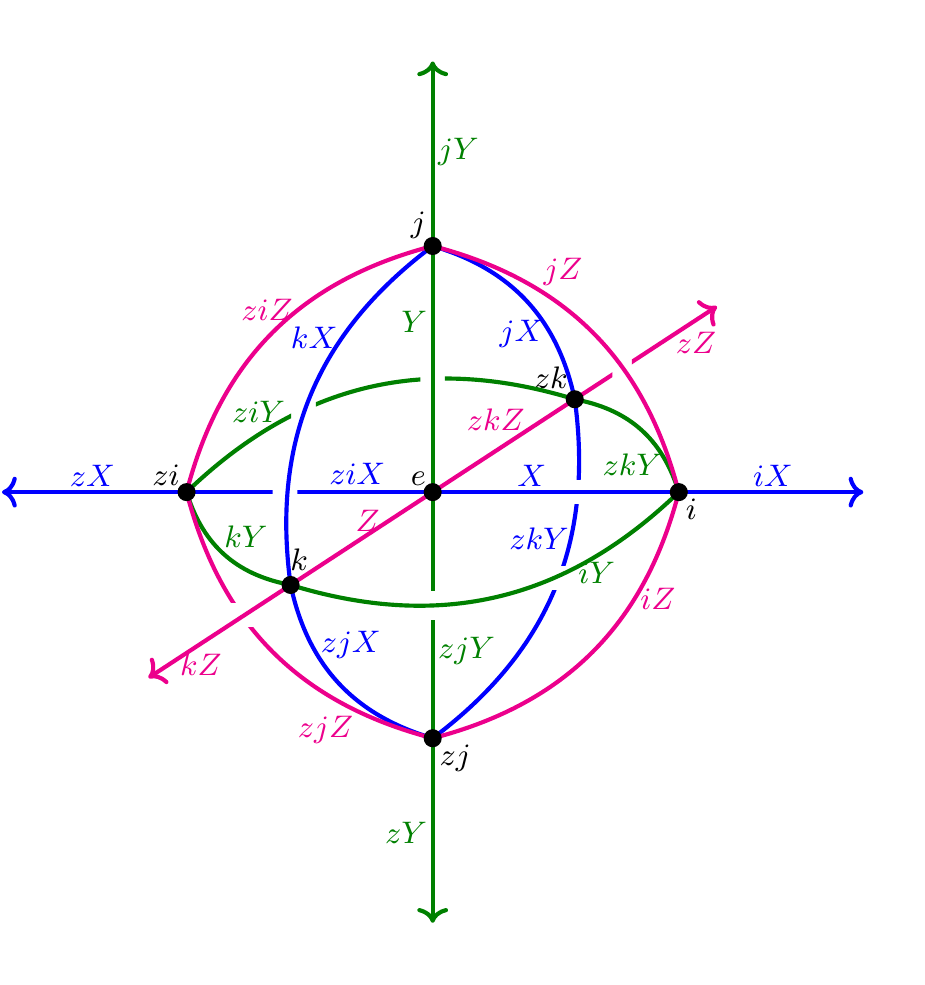}
\fi
\caption{The $1$-skeleton of $S(\QH)$.}
\label{QHFigure}
\end{figure}

\begin{prop}
\label{HomologyS(H)}
The nontrivial homotopy Mackey functors of $S(\QH) \smsh H_Q \ulZ$ are
\[
\mpi_n \left( S(\QH)_+ \smsh H_Q \ulZ \right) \iso
\begin{cases}
\ulZ & n = 3 \\
 \mystery & n = 1 \\
\ulZ^* & n = 0.
\end{cases}
\]
\end{prop}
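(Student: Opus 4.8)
The plan is to use that $S(\QH)$ is a \emph{free} $Q_8$-space. Concretely, $S(\QH)$ is the unit sphere $S^3$ in the quaternions, with $Q_8$ acting by left multiplication, and the eight unit quaternions $\{\pm1,\pm i,\pm j,\pm k\}$ are the vertices of a free $Q_8$-CW (indeed simplicial) structure on $S(\QH)$ --- the boundary of the $4$-dimensional cross-polytope --- with a single free orbit of $0$-cells, three orbits of $1$-cells (the edges $X\colon e\to i$, $Y\colon e\to j$, $Z\colon e\to k$ of \cref{QHFigure} together with their $Q_8$-translates), four orbits of $2$-cells, and two orbits of $3$-cells. Since every cell is free, $S(\QH)_+\smsh H_Q\ulZ$ is built out of $H_Q\ulZ$-module cells of the form $\Si{n}\bigl(Q_+\smsh H_Q\ulZ\bigr)\simeq\Si{n}H_Q(\uparrow_e^Q\ulZ)$, so $\mpi_*\bigl(S(\QH)_+\smsh H_Q\ulZ\bigr)$ is the homology of a chain complex of Mackey functors
\[ 0\to(\uparrow_e^Q\ulZ)^{\oplus 2}\xrtarr{\partial_3}(\uparrow_e^Q\ulZ)^{\oplus 4}\xrtarr{\partial_2}(\uparrow_e^Q\ulZ)^{\oplus 3}\xrtarr{\partial_1}\uparrow_e^Q\ulZ\to 0, \]
whose differentials are the simplicial boundary maps; in particular $\mpi_n=0$ for $n\notin\{0,1,2,3\}$, from \cref{QHFigure} one reads off $\partial_1=(i-1,\ j-1,\ k-1)$, and $\mpi_0$ is the cokernel of $\partial_1$.

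Freeness identifies the value of $\mpi_n$ at $Q_8/H$ with $\rH_n(S(\QH)/H;\Z)$, and $S(\QH)/H$ is a spherical space form: $S^3$, $\mathbb{RP}^3$, a lens space $L(4;1)$, or the quaternionic space form $S^3/Q_8$, according as $H$ is $e$, $Z$, an order-$4$ subgroup, or $Q_8$. The integral homology of these closed oriented $3$-manifolds in degrees $0,1,2,3$ is $(\Z,0,0,\Z)$, $(\Z,\Z/2,0,\Z)$, $(\Z,\Z/4,0,\Z)$, and $(\Z,K_4,0,\Z)$, where $K_4=Q_8^{\mathrm{ab}}=\F^2$ and $\rH_2=0$ because $\rH_2\iso\rH^1=\operatorname{Hom}(\pi_1,\Z)=0$ for a finite fundamental group. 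This gives $\mpi_2=0$ and the underlying groups of $\mpi_0,\mpi_1,\mpi_3$. For the structure maps, $\operatorname{tr}^H_K$ is induced by the covering projection $S(\QH)/K\to S(\QH)/H$ and $\operatorname{res}^H_K$ by its transfer; on $\rH_0$ these are $\times1$ and multiplication by $[H:K]$, giving $\mpi_0\iso\ulZ^*$, and on $\rH_3$ they are multiplication by $[H:K]$ and $\times1$ (the action is orientation preserving, since $Q_8\subset SU(2)$), giving $\mpi_3\iso\ulZ$. On $\rH_1=\pi_1^{\mathrm{ab}}$, $\operatorname{tr}^H_K$ is the inclusion-induced map $K^{\mathrm{ab}}\to H^{\mathrm{ab}}$ and $\operatorname{res}^H_K$ is the group transfer (Verlagerung) $H^{\mathrm{ab}}\to K^{\mathrm{ab}}$; together with the residual Weyl action of $Q_8/C_4\iso C_2$ on $\rH_1(L(4;1))=\Z/4$ by inversion (conjugation in $Q_8$ sends $i\mapsto -i$), which turns it into $\Z^\sigma/4$, this matches $\mpi_1$ with the Mackey functor $\mystery$ of \cref{tab-Q8Mackey}.

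I expect the main obstacle to be this last identification for $\mpi_1$: one must compute all of its restriction maps --- in particular the Verlagerung homomorphisms $K_4\to C_4$ and $C_4\to\Z/2$ and the relevant surjections --- and confirm the $C_2$-twist at the order-$4$ level, so as to pin $\mpi_1$ down as $\mystery=\ul{mgw}$ rather than another extension with the same values. Equivalently, one can bypass the space-form bookkeeping by making $\partial_2$ and $\partial_3$ explicit from the combinatorics of the cross-polytope and computing the homology of the displayed chain complex of Mackey functors directly, or one can invoke equivariant Atiyah duality $D(S(\QH)_+)\simeq\Si{-3}S(\QH)_+$ to replace $\mpi_n$ by the Bredon cohomology $\rH^{3-n}_{Q_8}(S(\QH);\ulZ)$, whose structure maps are the more transparent pullbacks and cohomological transfers on $\rH^\ast(S(\QH)/H;\Z)$. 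The cases $\mpi_0$, $\mpi_3$, and the vanishing of $\mpi_2$ and of $\mpi_n$ for $n\notin\{0,1,2,3\}$ are immediate from the first two paragraphs.
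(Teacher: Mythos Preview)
Your proposal is correct and shares the paper's setup---the same free $Q_8$-CW structure on $S(\QH)$ (the boundary of the $16$-cell) and hence the same chain complex of induced Mackey functors with orbit pattern $1$--$3$--$4$--$2$---but diverges in how the homology is extracted. The paper simply writes down explicit $\Z[Q]$-matrices for all three differentials (not just $\partial_1$) and computes the Mackey-functor homology directly from them; it also records a smaller $1$--$2$--$2$--$1$ resolution in a remark. You instead exploit freeness to identify $\mpi_n(Q/H)$ with $\rH_n(S(\QH)/H;\Z)$ for the space forms $S^3$, $\mathbb{RP}^3$, $L(4;1)$, $S^3/Q_8$, and read off the Mackey structure maps from covering-space transfers and the Verlagerung on $\rH_1=\pi_1^{\mathrm{ab}}$. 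Your route is more conceptual and makes the values and the $C_2$-twist on $\Z/4$ transparent, while the paper's explicit matrices are self-contained, reusable (the same complex is dualized later for $\ul\rH^*(S(\QH);\ulZ)$), and avoid the bookkeeping you flag in the last paragraph. Either method pins down $\mpi_1$ as $\mystery$; if you pursue yours, the Verlagerung computations $Q_8^{\mathrm{ab}}\to C_4$ and $C_4\to C_2$ together with the conjugation action are exactly what is needed and are routine, so there is no genuine gap.
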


\begin{pf}
Since the action of $Q$ on $S(\QH)$ is free, we can write down an equivariant cell structure using only free cells. 
Viewing $S(\QH)$ as the one-point compactification of $\R^3$, there is a straight-forward cell structure in which the subgroups $L$, $D$, and $R$ act freely on the $x$, $y$, and $z$-axes, respectively. We display the 1-skeleton in \cref{QHFigure}, and the cell structure is described by the following complex of $\Z[Q]$-modules:
\[ 
\Z[Q]^2 \xrtarr{\scalebox{0.6}{$
\begin{pmatrix}
e & j \\ 
-e & -i \\
e & k \\
-e & -e
\end{pmatrix}
$}}
\Z[Q]^4 \xrtarr{\scalebox{0.6}{$
\begin{pmatrix}
k & e & e & k \\ 
-e & -e & i & i \\
e & -j & -e & j
\end{pmatrix}
$}}
\Z[Q]^3 \xrtarr{(i-e\ j-e\ k-e)} \Z[Q].
\]
This yields an associated complex of induced Mackey functors
\[
\ul{\Z[Q]}^2 \rtarr
\ul{\Z[Q]}^4 \rtarr
\ul{\Z[Q]}^3 \rtarr
\ul{\Z[Q]}
\]
leading to the claimed homology Mackey functors.
\end{pf}

\begin{rmk}
A smaller chain complex for computing the homology of $S(\QH)$ is given by
\[ 
\Z[Q] \xrtarr{\scalebox{0.6}{$\begin{pmatrix}i-e \\ e-k\end{pmatrix}$}}
\Z[Q]^2 \xrtarr{\scalebox{0.6}{$\begin{pmatrix}e+i  & e+k \\ -e-j & -e+i\end{pmatrix}$}}
\Z[Q]^2 \xrtarr{(i-e\ j-e)} \Z[Q].
\]
We gave a less efficient chain complex in the proof of \cref{HomologyS(H)} for geometric reasons.
\end{rmk}

Using \cref{SHcofib}, this immediately yields the following.

\begin{cor}\label{HomologyS^H} 
The nontrivial homotopy Mackey functors of $\Sigma^\QH H_Q \ulZ$ are
\[ 
\mpi_n \left( \Sigma^{\QH} H_Q \ulZ \right) \iso
\begin{cases}
\ulZ & n = 4 \\
 \mystery & n = 2 \\
\ul B(3,0) & n = 0.
\end{cases}
\]
\end{cor}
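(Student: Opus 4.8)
The plan is to smash the cofiber sequence \cref{SHcofib} with $H_Q\ulZ$ and to read off the homotopy Mackey functors of $\Sigma^\QH H_Q\ulZ$ from the resulting long exact sequence. Smashing produces a cofiber sequence
\[
S(\QH)_+ \smsh H_Q\ulZ \rtarr H_Q\ulZ \rtarr \Sigma^\QH H_Q\ulZ,
\]
and hence a long exact sequence of homotopy Mackey functors. The two inputs are \cref{HomologyS(H)}, which says that $\mpi_*(S(\QH)_+\smsh H_Q\ulZ)$ is $\ulZ$ in degree $3$, $\mystery$ in degree $1$, $\ulZ^*$ in degree $0$, and trivial otherwise, together with the evident fact that $\mpi_* H_Q\ulZ$ is $\ulZ$ concentrated in degree $0$.

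For $n\notin\{0,1,2,4\}$ the two terms flanking $\mpi_n(\Sigma^\QH H_Q\ulZ)$ in the long exact sequence vanish, so $\mpi_n(\Sigma^\QH H_Q\ulZ)=0$; in particular this disposes of $n=3$. In degree $4$ the sequence reads $0\rtarr\mpi_4(\Sigma^\QH H_Q\ulZ)\rtarr\ulZ\rtarr 0$, giving $\ulZ$, and in degree $2$ it reads $0\rtarr\mpi_2(\Sigma^\QH H_Q\ulZ)\rtarr\mystery\rtarr 0$, giving $\mystery$. So the only remaining work is in degrees $0$ and $1$, where the long exact sequence collapses to
\[
0\rtarr\mpi_1(\Sigma^\QH H_Q\ulZ)\rtarr\ulZ^*\xrtarr{\partial}\ulZ\rtarr\mpi_0(\Sigma^\QH H_Q\ulZ)\rtarr 0,
\]
with $\partial$ induced by the collapse $S(\QH)_+\to S^0$.

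I would finish by identifying $\partial$ with the canonical monomorphism $\ulZ^*\into\ulZ$ of part (6) of \cref{MackeySESExamples}; then $\mpi_1(\Sigma^\QH H_Q\ulZ)=\ker\partial=0$ and $\mpi_0(\Sigma^\QH H_Q\ulZ)=\coker\partial\iso\ul B(3,0)$, which completes the proof. This identification is the only point that needs an argument, and it is a short one: on the underlying level $\partial$ is the map $\pi_0(S^3_+\smsh H\Z)\to\pi_0(H\Z)$ induced by $S^3\to\ast$, which is an isomorphism $\Z\to\Z$; and since every transfer in $\ulZ^*$ is an isomorphism, $\ulZ^*$ is generated by its underlying value, so any map out of $\ulZ^*$ is determined by its underlying component. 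Hence $\partial$ agrees up to sign with the inclusion $\ulZ^*\into\ulZ$, whose cokernel is $\ul B(3,0)$. The main (and minor) obstacle is thus just this bookkeeping with the connecting homomorphism; everything else is formal.
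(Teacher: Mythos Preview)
Your proof is correct and follows exactly the approach the paper takes: the paper's proof of this corollary is the single sentence ``Using \cref{SHcofib}, this immediately yields the following,'' and your argument is simply a careful unpacking of that sentence. The only minor quibble is notational: the map you call $\partial$ is not a connecting homomorphism but rather the map on $\mpi_0$ directly induced by the collapse $S(\QH)_+\to S^0$; your substantive argument treats it correctly regardless.
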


We will use this to compute the homology of $S^{\rho_Q}$, using the following periodicity result.

\begin{prop}[\cite{Waner}*{Proposition~4.1}]
\label{periodicity}
For any orientable representation $V$ of dimension $d$ and free $Q$-space $X$, the orientation $u_V \in \rH_d(S^V;\ulZ)$ induces an equivalence
\[
\Sigma^d X_+ \smsh H_Q \ulZ \simeq
\Sigma^V X_+ \smsh H_Q \ulZ
\]
\end{prop}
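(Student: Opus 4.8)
The plan is to realize the asserted equivalence as multiplication by the Thom class $u_V$ and then use freeness of $X$ to reduce everything to an underlying, nonequivariant statement.

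First I would set up the map. Viewing $u_V$ as an element of $\pi_d^Q(S^V \smsh H_Q\ulZ) = [S^{d-V}, H_Q\ulZ]^Q$ and using the $H_Q\ulZ$-module structure, it determines a map of $H_Q\ulZ$-modules
\[ \mu_V \colon \Si{d} H_Q\ulZ \rtarr \Si{V} H_Q\ulZ, \]
and smashing with $X_+$ yields $\mathrm{id}_{X_+}\smsh\mu_V \colon \Si{d} X_+ \smsh H_Q\ulZ \rtarr \Si{V} X_+ \smsh H_Q\ulZ$, which is the map in question. It remains to check this is an equivalence.

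The only place the geometry enters is the existence of $u_V$, and this is where orientability of $V$ is used: orientability says exactly that the $Q$-action on $\widetilde{H}_d(S^V;\Z)\iso\Z$ is trivial, so after choosing a $Q$-CW structure on $S^V$ the fundamental cycle is $Q$-invariant and hence promotes to a class $u_V\in\widetilde{\rH}_d^Q(S^V;\ulZ)$ whose restriction along $\pi_d^Q\rtarr\pi_d^e$ generates $\widetilde{H}_d(S^d;\Z)\iso\Z$. (Equivalently one may simply invoke the standard existence of equivariant Thom classes for $H\ulZ$ on oriented representations.) I expect this step --- producing $u_V$, or recognizing that it is all that is needed --- to be the crux; the remainder is formal.

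Granting $u_V$, I would restrict $\mu_V$ to the trivial subgroup. Since the underlying space of $S^V$ is $S^d$, the map $\downarrow^Q_e\mu_V\colon\Si{d}H\Z\rtarr\Si{d}H\Z$ is multiplication by the underlying class of $u_V$, a generator of $\widetilde{H}_d(S^d;\Z)\iso\Z$, hence $\pm1$ and in particular an equivalence. To finish, note that smashing with $X_+$ takes underlying equivalences to genuine equivalences when $X$ is free: the class of $Q$-spectra $A$ for which $A\smsh\mu_V$ is an equivalence is closed under cofiber sequences, retracts, suspension, and filtered homotopy colimits, and it contains $Q_+$ because the untwisting (shearing) isomorphism identifies $Q_+\smsh\mu_V$ (diagonal action) with $Q_+$ smashed with $\downarrow^Q_e\mu_V$ (action on the first factor only), an equivalence by the previous step. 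Since the free $Q$-CW complex $X_+$ is built from free cells $Q_+\smsh S^n$, it lies in this class, so $\mathrm{id}_{X_+}\smsh\mu_V$ is an equivalence.
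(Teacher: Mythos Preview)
Your argument is correct and is the standard one: build the map from the orientation class and then reduce to the underlying statement using a free cell induction (or, equivalently, the fact that smashing with $EQ_+$ turns underlying equivalences into genuine ones). There is nothing to compare against here, since the paper does not supply its own proof of this proposition; it is simply quoted from \cite{Waner}*{Proposition~4.1}.
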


We now compute the homology of $S^{\rho_Q}$.

\begin{prop}
The nontrivial homotopy Mackey functors of $\Sigma^{\rho_Q} H_Q \ulZ$ are
\[ 
\mpi_n \left( \Sigma^{\rho_Q} H_Q \ulZ \right) \iso
\begin{cases}
\ulZ & n = 8 \\
\mystery & n = 6 \\
\ul B(3,0) & n = 4 \\
 \ul{mg} & n = 2 \\
\ulg & n=1.
\end{cases}
\]
\end{prop}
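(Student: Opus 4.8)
The plan is to obtain the homology of $S^{\rho_Q}$ from that of $S^{\QH}$ and $S^{\rho_K}$, both of which have already been computed, by exploiting the splitting $\rho_Q \iso \QH \oplus \rho_K$ together with the freeness of $S(\QH)$. Smashing the cofiber sequence \eqref{SHcofib} with $\Si{\rho_K} H_Q \ulZ$ produces
\[
S(\QH)_+ \smsh \Si{\rho_K} H_Q \ulZ \rtarr \Si{\rho_K} H_Q \ulZ \rtarr \Si{\QH}\Si{\rho_K} H_Q \ulZ \simeq \Si{\rho_Q} H_Q \ulZ .
\]
Since $Q$ acts freely on $S(\QH)$ and $\rho_K$ is an orientable $4$-dimensional representation (it is inflated from $K$, and the product of the characters of the irreducible summands of $\rho_K$ is trivial, so every element of $K$ acts with determinant $1$), \cref{periodicity} gives an equivalence $S(\QH)_+ \smsh \Si{\rho_K} H_Q\ulZ \simeq \Si{4}\bigl( S(\QH)_+ \smsh H_Q\ulZ\bigr)$. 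By \cref{HomologyS(H)}, the left-hand term of the cofiber sequence then has nontrivial homotopy Mackey functors $\ulZ$ in degree $7$, $\mystery$ in degree $5$, and $\ulZ^*$ in degree $4$; and by \cref{HomologySkrhoK} with $k=1$, the middle term has nontrivial homotopy $\ulZ$ in degree $4$, $\ul{mg}$ in degree $2$, and $\ulg$ in degree $1$.

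Next I would run the long exact sequence of homotopy Mackey functors associated with this cofiber sequence. In every degree other than $4$ and $5$ one of the two outer groups vanishes, and the sequence immediately yields $\mpi_8 \iso \ulZ$, $\mpi_6 \iso \mystery$, $\mpi_2 \iso \ul{mg}$, $\mpi_1 \iso \ulg$, and $\mpi_n = 0$ for $n \notin \{1,2,4,5,6,8\}$. In degrees $4$ and $5$ the sequence degenerates to a four-term exact sequence $0 \rtarr \mpi_5\bigl(\Si{\rho_Q}H_Q\ulZ\bigr) \rtarr \ulZ^* \xrtarr{\alpha} \ulZ \rtarr \mpi_4\bigl(\Si{\rho_Q}H_Q\ulZ\bigr) \rtarr 0$ for a single connecting map $\alpha\colon \ulZ^* \rtarr \ulZ$, so it remains only to identify $\alpha$.

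This identification is the main point. The underlying spectrum of $\Si{\rho_Q}H_Q\ulZ$ is $\Si{8}H\Z$, since $\rho_Q$ has underlying dimension $8$; in particular $\mpi_4$ and $\mpi_5$ restrict trivially to the trivial subgroup, so by exactness of restriction $\alpha$ must restrict to an isomorphism on underlying groups. On the other hand, a direct check with compatibility of a map of $Q$-Mackey functors under restrictions and transfers shows that any $\alpha\colon \ulZ^* \rtarr \ulZ$ is determined by its value $a_e$ on the bottom level, its value on a subgroup $H$ being forced to equal $a_e\cdot|H|$; hence a map which is a unit on the bottom level is necessarily $\pm$ the standard inclusion $\ulZ^* \into \ulZ$ of \cref{MackeySESExamples}(6). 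Therefore $\alpha$ is, up to sign, this inclusion, so $\ker\alpha = 0$ and $\coker\alpha \iso \ul B(3,0)$, giving the stated list. I expect the only delicate part to be the bookkeeping in the long exact sequence — confirming that the relevant connecting map genuinely has source $\mpi_4\bigl(S(\QH)_+\smsh\Si{\rho_K}H_Q\ulZ\bigr)\iso\ulZ^*$ and target $\mpi_4\bigl(\Si{\rho_K}H_Q\ulZ\bigr)\iso\ulZ$ — everything else being immediate from the vanishing of neighbouring groups.
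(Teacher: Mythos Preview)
Your proposal is correct and follows essentially the same approach as the paper: both use the cofiber sequence $S(\QH)_+ \smsh \Si{\rho_K} H_Q\ulZ \rtarr \Si{\rho_K} H_Q\ulZ \rtarr \Si{\rho_Q} H_Q\ulZ$ together with \cref{periodicity} (applied via the orientability of $\rho_K$) to reduce to the known computations \cref{HomologyS(H)} and \cref{HomologySkrhoK}. The paper simply asserts that the result then follows from the cofiber sequence, whereas you supply the missing detail by explicitly identifying the connecting map $\alpha\colon \ulZ^* \rtarr \ulZ$ in degree~4 via its forced behaviour on underlying abelian groups; this is a useful elaboration, and your argument that any such $\alpha$ is determined levelwise by $a_H = |H|\cdot a_e$ is correct.
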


\begin{pf}
The representation $\rho_K$ is orientable. For example, using the basis $\{1,i,j,k\}$ for $\rho_K = \R[K]$, the matrix $\rho_K(i)$ is given by 
\[
\rho_K(i) = 
\begin{pmatrix} 
0 & -1 & 0 & 0 \\ 
1 & 0 & 0 & 0 \\
0 & 0 & 0 & -1 \\
0 & 0 & 1 & 0 
\end{pmatrix},\]
which has determinant equal to 1.
By \cref{periodicity}, we have
\[ 
\mpi_n \left( S(\QH)_+ \smsh \Sigma^{\rho_K} H_Q \ulZ \right) \iso
\begin{cases}
\ulZ & n = 7 \\
\mystery & n = 5 \\
\ulZ^* & n = 4.
\end{cases}
\]
The result then follows from the cofiber sequence
\[ S(\QH)_+ \smsh \Sigma^{\rho_K} H_Q \ulZ \rtarr \Sigma^{\rho_K} H_Q \ulZ \rtarr \Sigma^{\rho_Q} H_Q \ulZ.\]
\end{pf}

\cref{HomologyS^H} generalizes as follows.

\begin{prop}\label{HomologyS^nH} 
The nontrivial homotopy Mackey functors of $\Sigma^{k\QH} H_Q \ulZ$, for $k > 0 $ are
\[ 
\mpi_n \left( \Sigma^{k\QH} H_Q \ulZ \right) \iso
\begin{cases}
\ulZ & n = 4k \\
\mystery & 0 < n < 4k,  n\equiv 2\pmod4 \\
\ul B(3,0) & 0 \leq n < 4k, n\equiv 0\pmod4.
\end{cases}
\]
\end{prop}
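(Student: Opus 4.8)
The plan is to induct on $k$, with base case $k=1$ supplied by \cref{HomologyS^H}. For the inductive step, I would smash the cofiber sequence \eqref{SHcofib} with $\Sigma^{(k-1)\QH} H_Q \ulZ$ to obtain the cofiber sequence
\[
S(\QH)_+ \smsh \Sigma^{(k-1)\QH} H_Q \ulZ \rtarr \Sigma^{(k-1)\QH} H_Q \ulZ \rtarr \Sigma^{k\QH} H_Q \ulZ,
\]
exactly as in the computation of $\mpi_\ast \Sigma^{\rho_Q} H_Q \ulZ$ above. Since each element of $Q$ acts on $\QH \iso \R^4$ through $\mathrm{SO}(4)$, the representation $(k-1)\QH$ is orientable, so \cref{periodicity} gives
\[
S(\QH)_+ \smsh \Sigma^{(k-1)\QH} H_Q \ulZ \simeq \Sigma^{4(k-1)} \left( S(\QH)_+ \smsh H_Q \ulZ \right).
\]
By \cref{HomologyS(H)}, the nontrivial homotopy Mackey functors of this spectrum are $\ulZ$ in degree $4k-1$, $\mystery$ in degree $4k-3$, and $\ulZ^*$ in degree $4k-4$.

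I would then run the long exact sequence of homotopy Mackey functors associated to this cofiber sequence. For $n > 4k$ all three terms vanish. In the range $4k-2 \leq n \leq 4k$ the middle term $\Sigma^{(k-1)\QH} H_Q \ulZ$ is zero, and the connecting homomorphisms identify $\mpi_{4k}\left(\Sigma^{k\QH} H_Q \ulZ\right) \iso \ulZ$, $\mpi_{4k-1} = 0$, and $\mpi_{4k-2} \iso \mystery$. In the range $0 \leq n \leq 4k-5$ the free-cell term contributes nothing in degrees $n$ and $n-1$, so $\mpi_n\left(\Sigma^{k\QH} H_Q \ulZ\right) \iso \mpi_n\left(\Sigma^{(k-1)\QH} H_Q \ulZ\right)$; these are given by the inductive hypothesis and fall into the claimed pattern since $4k-5 < 4(k-1)$.

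The one delicate point is the behavior near degree $4k-4$, where the long exact sequence reads
\[
0 \rtarr \mpi_{4k-3}\left(\Sigma^{k\QH} H_Q \ulZ\right) \rtarr \ulZ^* \xrtarr{f} \ulZ \rtarr \mpi_{4k-4}\left(\Sigma^{k\QH} H_Q \ulZ\right) \rtarr 0.
\]
I would argue that $f$ is, up to a unit, the canonical injection $\ulZ^* \into \ulZ$ of \cref{MackeySESExamples}(6). First, $\mathrm{Hom}(\ulZ^*,\ulZ)$ is infinite cyclic generated by that injection: reading off the Lewis diagrams, the restriction maps of $\ulZ^*$ together with the transfer maps of $\ulZ$ determine every component of such a map from its value on $Q/e$. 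Next, restricting the cofiber sequence to the trivial subgroup turns $f$ into the map $\rH_0(S(\QH);\Z) \rtarr \rH_0(\mathrm{pt};\Z)$ induced by $S(\QH) \rtarr \mathrm{pt}$, which is an isomorphism; hence $f$ is a unit times the canonical injection. Therefore $\mpi_{4k-3}\left(\Sigma^{k\QH} H_Q \ulZ\right) = \ker f = 0$ and $\mpi_{4k-4}\left(\Sigma^{k\QH} H_Q \ulZ\right) = \coker f \iso \ul B(3,0)$.

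Assembling these pieces gives exactly the claimed list: $\ulZ$ in degree $4k$, $\mystery$ in the degrees $n$ with $0 < n < 4k$ and $n \equiv 2 \pmod 4$, and $\ul B(3,0)$ in the degrees $n$ with $0 \leq n < 4k$ and $n \equiv 0 \pmod 4$. The main obstacle is the identification of the connecting map $f$ near degree $4k-4$; everything else is a mechanical run of the long exact sequence.
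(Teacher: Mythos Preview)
Your proposal is correct and follows essentially the same route as the paper: induction on $k$ using the cofiber sequence $S(\QH)_+ \smsh S^{(k-1)\QH} \to S^{(k-1)\QH} \to S^{k\QH}$ together with \cref{periodicity} applied to the orientable representation $(k-1)\QH$. The paper leaves the long exact sequence analysis implicit, and in particular does not spell out the identification of the map $f\colon \ulZ^* \to \ulZ$ at degree $4k-4$; your argument that $\mathrm{Hom}(\ulZ^*,\ulZ)\iso\Z$ and that $f$ is a generator because it is an isomorphism on underlying homotopy is a clean way to fill this in. An alternative, closer in spirit to how the paper obtains \cref{HomologyS^H} from \cref{HomologyS(H)}, is to observe that $f$ is induced by the very same collapse map $S(\QH)_+ \to S^0$ that was already identified with the canonical inclusion $\ulZ^*\hookrightarrow\ulZ$ in the base case.
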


\begin{pf}
This follows by induction, using the cofiber sequence
\[ S(\QH)_+ \smsh S^{(k-1)\QH} \rtarr S^{(k-1) \QH} \rtarr S^{k\QH}\]
and \cref{periodicity}. The latter applies since $\QH$, and therefore also $(k-1)\QH$, is orientable. 
\end{pf}

Combining this with the cofiber sequence
\[ S(k\QH)_+ \smsh \Sigma^{k\rho_K} H_Q \ulZ \rtarr \Sigma^{k\rho_K} H_Q \ulZ \rtarr \Sigma^{k\rho_Q} H_Q \ulZ\]
and 
\cref{periodicity} 
gives the following result.

\begin{prop}\label{HomologySkrhoQ}
The nontrivial homotopy Mackey functors of $\Sigma^{k\rho_Q} H_Q \ulZ$, for $k>0$, are
\[ 
\mpi_n \left( \Sigma^{k\rho_Q} H_Q \ulZ \right) \iso
\begin{cases}
\ulZ & n = 8k \\
\mystery & 4k <n < 8k, n\equiv 2\pmod4 \\
\ul B(3,0) &  4k \leq n < 8k, n\equiv 0\pmod4  \\
\phi_Z^* \mpi_n \left( \Sigma^{k\rho_K} H_K \ulZ \right) & n < 4k,\\
\end{cases}
\]
where the latter Mackey functors are listed in \cref{HomologySkrhoK}.
\end{prop}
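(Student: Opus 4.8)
The plan is to exploit the splitting $k\rho_Q \iso k\QH \oplus k\rho_K$ together with Waner's periodicity (\cref{periodicity}) and the homotopy of $\Si{k\QH} H_Q\ulZ$ and $\Si{k\rho_K} H_Q\ulZ$ already established. The backbone is the cofiber sequence
\[ S(k\QH)_+ \smsh \Si{k\rho_K} H_Q\ulZ \rtarr \Si{k\rho_K} H_Q\ulZ \rtarr \Si{k\rho_Q} H_Q\ulZ, \]
gotten by smashing $S(k\QH)_+ \rtarr S^0 \rtarr S^{k\QH}$ with $\Si{k\rho_K}H_Q\ulZ$.

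First I would compute $\mpi_*\bigl(S(k\QH)_+ \smsh H_Q\ulZ\bigr)$. Smashing $S(k\QH)_+ \rtarr S^0 \rtarr S^{k\QH}$ with $H_Q\ulZ$ and using that $\mpi_*(H_Q\ulZ)$ is concentrated in degree $0$, the long exact sequence collapses to isomorphisms $\mpi_n \iso \mpi_{n+1}\bigl(\Si{k\QH}H_Q\ulZ\bigr)$ for $1 \leq n \leq 4k-1$ — fed by \cref{HomologyS^nH} — and, in degree $0$, to a short exact sequence $0 \rtarr \mpi_0 \rtarr \ulZ \rtarr \ul B(3,0) \rtarr 0$, which by \cref{MackeySESExamples}(6) identifies $\mpi_0 \iso \ulZ^*$ and identifies the map $\mpi_0 \to \mpi_0(H_Q\ulZ)=\ulZ$ with the canonical inclusion. (This recovers \cref{HomologyS(H)} when $k=1$.) Since $\rho_K$, and therefore $k\rho_K$, is orientable, \cref{periodicity} applied to the free $Q$-space $S(k\QH)$ gives $S(k\QH)_+ \smsh \Si{k\rho_K}H_Q\ulZ \simeq \Si{4k}\bigl(S(k\QH)_+ \smsh H_Q\ulZ\bigr)$; hence this spectrum is $(4k-1)$-connected, with $\mpi_{4k}\iso\ulZ^*$, $\mpi_{8k-1}\iso\ulZ$, and, in between, $\mystery$ in degrees $\equiv 1 \pmod 4$ lying in $[4k+1,8k-3]$ and $\ul B(3,0)$ in degrees $\equiv 3 \pmod 4$ lying in $[4k+3,8k-5]$.

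The one step demanding care is pinning down the map $f\colon \mpi_{4k}\bigl(S(k\QH)_+ \smsh \Si{k\rho_K}H_Q\ulZ\bigr) = \ulZ^* \rtarr \mpi_{4k}\bigl(\Si{k\rho_K}H_Q\ulZ\bigr) = \ulZ$ induced by $S(k\QH)_+ \rtarr S^0$: I claim it is the canonical inclusion, hence injective with cokernel $\ul B(3,0)$. To see $\downarrow^Q_e f$ is an isomorphism, restrict the whole cofiber sequence to the trivial subgroup, where it becomes $S^{4k-1}_+ \smsh \Si{4k}H\Z \rtarr \Si{4k}H\Z \rtarr \Si{8k}H\Z$; a count of degrees in its long exact sequence forces $\downarrow^Q_e f\colon \Z \to \Z$ to be an isomorphism. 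As the restriction maps of $\ulZ^*$ are injective, $\ker f$ restricts injectively into $\ker(\downarrow^Q_e f) = 0$, so $f$ is injective; and compatibility of $f$ with restriction to $e$ forces the value of $\coker f$ at each level $H$ to be $\Z/|H|$, i.e.\ $\coker f \iso \ul B(3,0)$.

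Finally I would run the long exact sequence of the displayed cofiber sequence, writing $A$, $B$, $C$ for its three terms. By \cref{HomologySkrhoK}, the homotopy of $B = \Si{k\rho_K}H_Q\ulZ$ is supported in degrees $[k,4k]$, while that of $A$ is supported in $\{4k\}\cup[4k+1,8k-1]$, so the two supports meet only at $4k$. Thus: for $n > 4k+1$ one gets $\mpi_n C \iso \mpi_{n-1}A$, which yields $\ulZ$ at $n = 8k$, $\mystery$ at $n\equiv 2\pmod 4$ in $(4k,8k)$, $\ul B(3,0)$ at $n\equiv 0\pmod 4$ in $(4k,8k)$, and $0$ otherwise; for $n = 4k+1$ one gets $\mpi_{4k+1}C = \ker f = 0$; for $n = 4k$ one gets, using $\mpi_{4k-1}A = 0$, that $\mpi_{4k}C = \coker f = \ul B(3,0)$; and for $n < 4k$ one gets $\mpi_n C \iso \mpi_n B$, which is recorded in \cref{HomologySkrhoK} and agrees with $\phi_Z^* \mpi_n\bigl(\Si{k\rho_K}H_K\ulZ\bigr)$ via the projection-formula identification behind that proposition (compare \cref{HtpyInflation}). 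Assembling the cases gives the stated formula. The main obstacle is exactly the middle step: outside degrees $4k$ and $4k+1$ everything is bookkeeping with congruence classes and ranges, but the boundary behavior hinges on identifying the \emph{map} $f$ rather than merely the groups, which is what rules out an a priori possible nontrivial extension there.
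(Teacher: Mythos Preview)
Your proof is correct and follows essentially the same approach as the paper: the same cofiber sequence, the same appeal to \cref{periodicity} via orientability of $k\rho_K$, and the same reduction to \cref{HomologySkrhoK} and \cref{HomologyS^nH}. The paper's proof is a one-line sketch, whereas you have carefully filled in the only nontrivial point, namely identifying the connecting map $f\colon \ulZ^* \to \ulZ$ in degree $4k$ as the canonical inclusion (so that $\ker f = 0$ and $\coker f = \ul B(3,0)$ by \cref{MackeySESExamples}(6)); your argument for this via restriction to the underlying spectrum and injectivity of the restriction maps in $\ulZ^*$ is clean and complete.
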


The homotopy Mackey functors of $\Sigma^{k\rho_Q} H_Q \ulZ$ are displayed in
\cref{fig:HtpySigmarhoQZ}.
When $k$ is negative, the computation follows the same strategy. The initial input, which can again be computed using the chain complex given in \cref{HomologyS(H)}, 
is that
\begin{equation}
\label{CohomS(H)}
\ul{\rH}^n ( S(\QH); \ulZ) \iso
\mpi_{-n} \left( F\big(S(\QH)_+ , H_Q \ulZ \big) \right) \iso
\begin{cases}
\ulZ^* & n = 3 \\
\mystery & n = 2 \\
\ulZ & n = 0.
\end{cases}
\end{equation}

Using this and \cite{Slone}*{Proposition~9.2} leads to the following answer.

\begin{prop}\label{HomologySnegkrhoQ}
The nontrivial homotopy Mackey functors of $\Sigma^{-k\rho_Q} H_Q \ulZ$, for $k>0$, are
\[ 
\mpi_{-n} \left( \Sigma^{-k\rho_Q} H_Q \ulZ \right) \iso
\begin{cases}
\ulZ^* & n = 8k \\
\mystery & n\in [4k,8k], n\equiv 3\pmod4 \\
\ul B(3,0) &  n \in [4k+5,8k], n\equiv 1\pmod4  \\
\phi_Z^* \ul B(2,0) & n=4k+1 \\
\ul{mg}^* & n = 4k-1 \\
\ulg^{\frac{4k-n}2} & n \in [2k+4,4k-2], n\equiv0\pmod2 \\
\ulg^{\frac{4k-n-3}2} \oplus \phi_{LDR}^* \ulF^* & n \in [2k+3,4k-2], n\equiv1\pmod2 \\
\ulg^{n-k-3} & n\in [k+4,2k+2].
\end{cases}
\]
\end{prop}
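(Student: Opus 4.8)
The plan is to run the argument in the proof of \cref{HomologySkrhoQ} with the relevant inputs dualized, using three pieces of data. First I would compute the $\ulZ$-homology of $S(k\QH)$: combining \eqref{CohomS(H)} (the case $k=1$, which recovers \cref{HomologyS(H)}) with \cref{HomologyS^nH} and the cofiber sequence $S(k\QH)_+\rtarr S^0\rtarr S^{k\QH}$, whose long exact sequence collapses since $\mpi_\ast H_Q\ulZ$ is concentrated in degree $0$, one finds that the nontrivial $\mpi_n\big(S(k\QH)_+\smsh H_Q\ulZ\big)$ are $\ulZ$ for $n=4k-1$, $\ulZ^\ast$ for $n=0$, $\mystery$ for $1\le n\le 4k-3$ with $n\equiv1\pmod4$, and $\ul{B}(3,0)$ for $3\le n\le 4k-5$ with $n\equiv3\pmod4$; the induction on $k$ uses \cref{periodicity}, which applies because $\QH$ is orientable. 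Second, since $\rho_K$ is inflated from $K$ and $\downarrow^Q_Z\rho_K$ is trivial, the spectrum $\Si{-k\rho_K}H_Q\ulZ$ restricts on $Z$ to $\Si{-4k}H_Z\ulZ$, while $\big(\Si{-k\rho_K}H_Q\ulZ\big)^Z\simeq\Si{-k\rho_K}H_K\ulZ$ by \eqref{HtpyFixedPts} and \eqref{ProjectionHM}; since $Z$ is a bottleneck subgroup, \cref{DescriptionphiZ} then gives $\mpi_{-n}\big(\Si{-k\rho_K}H_Q\ulZ\big)\iso\phiZ_Z^\ast\,\mpi_{-n}\big(\Si{-k\rho_K}H_K\ulZ\big)$ for $n\ne4k$ and $\mpi_{-4k}\big(\Si{-k\rho_K}H_Q\ulZ\big)\iso\ulZ(3,1)$, where the homotopy of $\Si{-k\rho_K}H_K\ulZ$ is recorded in \cite{Slone}*{Proposition~9.2}. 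This already supplies the $\ul{mg}^\ast$, the $\ulg^{\bullet}$, and the $\phi^\ast_{LDR}\ulF^\ast$ terms, in the range $k+4\le n\le 4k-1$.

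The third ingredient is the cofiber sequence obtained by smashing $S(k\QH)_+\smsh S^{-k\QH}\rtarr S^{-k\QH}\rtarr S^0$ with $\Si{-k\rho_K}H_Q\ulZ$ and applying \cref{periodicity} to the orientable representation $k\rho_Q=k\QH\oplus k\rho_K$ (so that $S(k\QH)_+\smsh\Si{-k\rho_Q}H_Q\ulZ\simeq\Si{-8k}S(k\QH)_+\smsh H_Q\ulZ$):
\[
\Si{-8k}S(k\QH)_+\smsh H_Q\ulZ\rtarr\Si{-k\rho_Q}H_Q\ulZ\rtarr\Si{-k\rho_K}H_Q\ulZ.
\]
The first term has homotopy in degrees $-n$ with $4k+1\le n\le 8k$, and the third in degrees $-n$ with $k+4\le n\le 4k$, so the supports are disjoint. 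Hence the associated long exact sequence of homotopy Mackey functors degenerates into isomorphisms $\mpi_{-n}\big(\Si{-k\rho_Q}H_Q\ulZ\big)\iso\mpi_{8k-n}\big(S(k\QH)_+\smsh H_Q\ulZ\big)$ for $n\ge 4k+2$ (giving the $\ulZ^\ast$, $\mystery$, and $\ul{B}(3,0)$ terms in $[4k+3,8k]$) and $\mpi_{-n}\big(\Si{-k\rho_Q}H_Q\ulZ\big)\iso\mpi_{-n}\big(\Si{-k\rho_K}H_Q\ulZ\big)$ for $n\le 4k-1$, together with a single four-term exact sequence
\[
0\rtarr\mpi_{-4k}\big(\Si{-k\rho_Q}H_Q\ulZ\big)\rtarr\ulZ(3,1)\xrtarr{\ \partial\ }\ulZ\rtarr\mpi_{-4k-1}\big(\Si{-k\rho_Q}H_Q\ulZ\big)\rtarr 0
\]
that governs the two remaining degrees.

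The crux — and what I expect to be the main obstacle — is to identify the connecting map $\partial\colon\ulZ(3,1)\rtarr\ulZ$ with the monomorphism of \cref{MackeySESExamples}(2); granting this, the four-term sequence forces $\mpi_{-4k}\big(\Si{-k\rho_Q}H_Q\ulZ\big)=\ker\partial=0$ (so $n=4k$ is absent from the list) and $\mpi_{-4k-1}\big(\Si{-k\rho_Q}H_Q\ulZ\big)=\coker\partial=\phi_Z^\ast\ul{B}(2,0)$, as claimed. To pin down $\partial$ I would restrict the cofiber sequence to $Z$: here $\downarrow^Q_Z\Si{-k\rho_Q}H_Q\ulZ\simeq\Si{-4k}\Si{-4k\sigma}H_{C_2}\ulZ$, and the known homotopy of $\Si{-m\sigma}H_{C_2}\ulZ$ shows its $\mpi_{-4k}$ vanishes; since $\ulZ(3,1)$ has no nonzero sub-Mackey functor vanishing at $Z$, this forces $\mpi_{-4k}\big(\Si{-k\rho_Q}H_Q\ulZ\big)=0$ and hence $\partial$ injective. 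The cokernel $\coker\partial$ is then identified with $\phi_Z^\ast\ul{B}(2,0)$ via \cref{MackeySESExamples}(2) — for instance by further restricting to $Z$, or by passing to $Z$-fixed points where, by \eqref{ProjectionHM}, the sequence matches the $K_4$-computation of \cite{Slone}*{Proposition~9.2}. The remaining work — checking that the congruence ranges and the multiplicities of the $\ulg$-summands agree with the list — is then routine bookkeeping on the output of \cite{Slone}*{Proposition~9.2}.
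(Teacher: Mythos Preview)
Your argument is correct and follows essentially the same route as the paper: both reduce to the $K_4$-computation of \cite{Slone}*{Proposition~9.2} together with the (co)homology of $S(k\QH)$, spliced via the cofiber sequence coming from $S(k\QH)_+\to S^0\to S^{k\QH}$ and \cref{periodicity}. The only cosmetic difference is that the paper feeds in the \emph{cohomology} of $S(\QH)$ from \eqref{CohomS(H)} and iterates, whereas you use the \emph{homology} of $S(k\QH)$ (obtained from \cref{HomologyS(H)} and \cref{HomologyS^nH}); these are interchangeable by Poincar\'e duality for the free $Q$-manifold $S(k\QH)$, which is exactly what \cref{periodicity} encodes. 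Your extra care in identifying the connecting map $\partial\colon\ulZ(3,1)\to\ulZ$ as the inclusion of \cref{MackeySESExamples}(2) --- by restricting to $Z$ to force injectivity, and then checking the cokernel vanishes at $e$ --- fills in a step the paper leaves implicit. One small wording slip: \eqref{CohomS(H)} does not literally ``recover'' \cref{HomologyS(H)}; the two are dual, not equal.
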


\begin{rmk}
The ``Gap Theorem''
\cite{Kervaire}*{Proposition~3.20} 
predicts that the groups $\pi^Q_n \Sigma^{-k\rho} H\ulZ$ vanish for $k\geq 0$ and $n\in [-3,-1]$, as indicated in \cref{fig:HtpySigmarhoQZ}. Actually, for $k\geq 2$ the argument there proves more. It tells us that 
for $k\geq 2$, the cohomology groups $\rH^n_Q(S^{k\rho}; \ul{M})$ vanish for positive $n\leq k+1$. This is equivalent to saying that $\pi^Q_{-n} \Sigma^{-k\rho} H \ul{M}$ vanishes, with the same conditions on $k$ and $n$.
\end{rmk}

\subsection{Additional homology calculations}

We will also need the following auxiliary calculations in \cref{sec:Q8slice}.

\begin{prop}
\label{AuxZ}
The nontrivial homotopy Mackey functors of
$\Si{\rho_K-\QH}H_Q \ulZ$ are
\[
\mpi_n\left(
\Si{\rho_K-\QH}H_Q \ulZ
\right) \iso
\begin{cases}
\phi_Z^* \ulF & n=1 \\
\ulZ^* & n=0.
\end{cases}
\]
\end{prop}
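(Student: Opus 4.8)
The plan is to read off $\mpi_*\bigl(\Si{\rho_K-\QH}H_Q\ulZ\bigr)$ from $\mpi_*\bigl(\Si{\rho_K}H_Q\ulZ\bigr)$ --- computed in \cref{HomologySkrhoK} --- by means of the cofiber sequence \eqref{SHcofib}. Smashing $S(\QH)_+ \rtarr S^0 \rtarr S^\QH$ with $\Si{\rho_K-\QH}H_Q\ulZ$ produces
\[
S(\QH)_+ \smsh \Si{\rho_K-\QH}H_Q\ulZ \rtarr \Si{\rho_K-\QH}H_Q\ulZ \rtarr \Si{\rho_K}H_Q\ulZ .
\]
Since $\QH$ and $\rho_K$ are both orientable $4$-dimensional $Q$-representations (for $\QH$ this is noted in the proof of \cref{HomologyS^nH}; for $\rho_K$ it is the determinant computation carried out when computing the homology of $\Si{\rho_Q}H_Q\ulZ$), \cref{periodicity}, applied to the free $Q$-space $S(\QH)$ for both $\rho_K$ and $\QH$, identifies the left-hand term with $S(\QH)_+ \smsh \Si{4-4}H_Q\ulZ \simeq S(\QH)_+ \smsh H_Q\ulZ$, whose homotopy Mackey functors are recorded in \cref{HomologyS(H)}.

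I would then run the long exact sequence of homotopy Mackey functors, feeding in $\mpi_n(S(\QH)_+ \smsh H_Q\ulZ)$ equal to $\ulZ, \mystery, \ulZ^*$ for $n=3,1,0$, and $\mpi_n\bigl(\Si{\rho_K}H_Q\ulZ\bigr)$ equal to $\ulZ, \ul{mg}, \ulg$ for $n=4,2,1$ (all other values zero). Outside degrees $0$ and $1$ the sequence collapses to $0\to\mpi_4\to\ulZ\xrtarr{\partial_4}\ulZ\to\mpi_3\to0$ together with $0\to\mpi_2\to\ul{mg}\xrtarr{\partial_2}\mystery$, so the vanishing of $\mpi_n\bigl(\Si{\rho_K-\QH}H_Q\ulZ\bigr)$ for $n\neq0,1$ reduces to showing $\partial_4$ is an isomorphism and $\partial_2$ is injective. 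The map $\partial_4$ is detected on underlying spectra, where the cofiber sequence becomes $S^3_+\smsh H\Z\rtarr H\Z\rtarr S^4\smsh H\Z$ and forces the corresponding boundary $\Z\to\Z$ to be an isomorphism; since $\mathrm{End}(\ulZ)=\Z$, $\partial_4$ is itself an isomorphism, giving $\mpi_3=\mpi_4=0$ (and the terms in degrees $\geq5$ and $\leq-1$ vanish for degree reasons). For the bottom two lines one also needs the boundary $\ulg\xrtarr{\partial_1}\ulZ^*$, which vanishes because $\mathrm{Hom}\bigl(\ulg(Q/Q),\ulZ^*(Q/Q)\bigr)=\mathrm{Hom}(\F,\Z)=0$; hence $\mpi_0=\coker(\partial_1)=\ulZ^*$, and the degree-$1$ line becomes a short exact sequence $0\to\coker(\partial_2)\to\mpi_1\to\ulg\to0$.

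What remains, and what I expect to be the main obstacle, is to show that $\partial_2\colon\ul{mg}\to\mystery$ is the monomorphism of \cref{MackeySESExamples}(7) --- so that $\coker(\partial_2)\iso\ulw$ --- and that the resulting extension $0\to\ulw\to\mpi_1\to\ulg\to0$ is the one realizing $\phi_Z^*\ulF$ (it is routine to verify, level by level over the subgroup lattice of $Q$, that $\phi_Z^*\ulF$ does sit in such an extension). I would pin $\partial_2$ down by restricting the whole cofiber sequence: to $L$, $D$, $R$, where $\downarrow_{C_4}\QH=2\lambda$, $\downarrow_{C_4}\rho_K=2+2\sigma$, and $\downarrow_{C_4}S(\QH)=S(2\lambda)$ is a free $C_4$-space, so the three outer terms are identified with classical $C_4$-computations (cf.\ \cite{HHR}); and to $Z\iso C_2$, where $\downarrow_Z(\rho_K-\QH)=4-4\sigma$. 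Because $Z$ is a bottleneck subgroup, a $Q$-Mackey functor is determined by its restriction to $Z$ together with its $Z$-fixed-point $K_4$-Mackey functor, and the latter for $\Si{\rho_K-\QH}H_Q\ulZ$ is computable from the projection formula \eqref{ProjectionHM}, since $\rho_K$ is inflated from $K_4$. This subgroup-wise bookkeeping, rather than any conceptual difficulty, is the real work.
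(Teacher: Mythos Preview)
Your approach is correct and closely parallels the paper's for the long exact sequence: the cofiber sequence you obtain by smashing \eqref{SHcofib} with $\Si{\rho_K-\QH}H_Q\ulZ$, and the paper's fiber sequence
\[
\Si{\rho_K-\QH}H_Q\ulZ \rtarr \Si{\rho_K}H_Q\ulZ \rtarr F\bigl(S(\QH)_+,\Si{\rho_K}H_Q\ulZ\bigr)\simeq\Si4 F\bigl(S(\QH)_+,H_Q\ulZ\bigr),
\]
are related by Atiyah duality for the free $Q$-manifold $S(\QH)\simeq S^3$ and produce the same long exact sequence. The paper likewise shows $\mpi_2=0$ by restricting to $C_4$ (citing \cite{Z}*{Theorem~6.10} for the vanishing of $\mpi_2\bigl(\Si{2+2\sigma-2\lambda}H_{C_4}\ulZ\bigr)$), which forces $\partial_2$ to be injective and hence $\coker\partial_2\iso\ulw$.

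The genuine difference is how the extension $\ulw\into\mpi_1\onto\ulg$ is resolved. Rather than your subgroup-wise bookkeeping, the paper smashes the cofiber sequence $Q/D_+\to S^0\to S^{m^*\sigma}$ with $\Si{1+p_1^*\sigma+p_2^*\sigma-\QH}H_Q\ulZ$ to obtain
\[
\uparrow_D^Q \Si{1+2\sigma-2\lambda} H_{C_4}\ulZ \rtarr \Si{1+p_1^*\sigma+p_2^*\sigma-\QH}H_Q\ulZ \rtarr \Si{\rho_K-\QH}H_Q\ulZ,
\]
and reads off that $\mpi_1$ injects into $\mpi_0$ of the induced term, namely $\uparrow_D^Q\phi_{C_2}^*\ulF$. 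In that induced Mackey functor the restriction from $Q$ to $D$ is injective, whereas in the split candidate $\ulw\oplus\ulg$ it is zero; this rules out the split extension in one stroke. Your plan---computing $q_*\mpi_1$ via the projection formula---would also succeed, since $q_*(\phi_Z^*\ulF)\iso\ulF_K$ while $q_*(\ulw\oplus\ulg)\iso\ulw_K\oplus\ulg_K$ and these already differ as $K_4$-Mackey functors; but carrying it out requires computing the $K$-spectrum $(\Si{-\QH}H_Q\ulZ)^Z$, which the paper's argument sidesteps entirely.
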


\begin{pf}
The fiber sequence
\[
\Si{\rho_K-\QH} H_Q\ulZ \rtarr \Si{\rho_K}H_Q\ulZ \rtarr F(S(\QH)_+,\Si{\rho_K}H_Q\ulZ) \simeq \Si4 F(S(\QH)_+,H_Q\ulZ)
\]
 yields an isomorphism $\mpi_0 \left(
\Si{\rho_K-\QH}H_Q \ulZ
\right) \iso \ulZ^*$ and shows that the homotopy vanishes for $n$ outside of $[0,2]$. Given that the restriction to any $C_4$, which is the $C_4$-spectrum $\Si{2+2\sigma-2\lambda} H_{C_4} \ulZ$,
has a trivial $\mpi_2$ \cite{Z}*{Theorem~6.10}, the long exact sequence further shows that $\mpi_2$ vanishes as well, and it implies that we have an extension
\[ \ulw \into \mpi_1 \left(
\Si{\rho_K-\QH}H_Q \ulZ
\right) \onto \ulg. \]
It remains to show this is not the split extension.
The fiber sequence
\[
\uparrow_D^Q \Si{1+2\sigma-2\lambda} H_{C_4}\ulZ \rtarr \Si{1+p_1^* \sigma+p_2^* \sigma-\QH} H_Q \ulZ \rtarr \Si{\rho_K-\QH}H_Q \ulZ
\]
shows that $\mpi_1 \left(
\Si{\rho_K-\QH}H_Q \ulZ
\right)$ injects into
\[
\mpi_0 \left( \uparrow_D^Q \Si{1+2\sigma-2\lambda} H_{C_4}\ulZ \right) \iso \uparrow_D^Q \phi_{C_2}^* \ulF.
\]
It follows that $\mpi_1 \left(
\Si{\rho_K-\QH}H_Q \ulZ
\right) \iso \phi_Z^* \ulF$
\end{pf}

\begin{prop}
\label{AuxZ32}
The nontrivial homotopy Mackey functors of
$\Si{\rho_K-\QH}H_Q \ulZ(3,2)$ are
\[
\mpi_n\left(
\Si{\rho_K-\QH}H_Q \ulZ(3,2)
\right) \iso
\begin{cases}
\ulw & n=1 \\
\ulZ^* & n=0.
\end{cases}
\]
\end{prop}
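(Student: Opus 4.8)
The plan is to deduce this from \cref{AuxZ} using the short exact sequence $\ulZ(3,2)\into\ulZ\onto\ulg$ of \cref{MackeySESExamples}(1). Applying $\Si{\rho_K-\QH}H_Q(-)$ gives a cofiber sequence
\[
\Si{\rho_K-\QH}H_Q\ulZ(3,2) \rtarr \Si{\rho_K-\QH}H_Q\ulZ \rtarr \Si{\rho_K-\QH}H_Q\ulg .
\]
By \cref{AuxZ} the middle term has $\mpi_1\iso\phi_Z^*\ulF$ and $\mpi_0\iso\ulZ^*$. For the third term, the inflated Mackey functor $\ulg$ is the geometric Mackey functor $\phi_{Q_8}^*\F$, with $\ulg(e)=0$, so $H_Q\ulg\simeq\phi_{Q_8}^*H\F$ (compare \cref{HphiZgeometric}); since $\rho_K^{Q_8}\iso\R$ and $\QH^{Q_8}=0$, this gives $\Si{\rho_K-\QH}H_Q\ulg\simeq\phi_{Q_8}^*\Si1 H\F$, whose only nontrivial homotopy Mackey functor is $\ulg$, in degree $1$. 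Writing $\alpha\colon\phi_Z^*\ulF\to\ulg$ for the map that $\mpi_1$ assigns to $\ulZ\onto\ulg$, the long exact sequence shows that $\mpi_n$ of the left-hand term vanishes for $n\notin\{0,1\}$, that $\mpi_1\iso\ker(\alpha)$, and that $\mpi_0$ is an extension of $\ulZ^*$ by $\coker(\alpha)$.

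Now $\operatorname{Hom}(\phi_Z^*\ulF,\ulg)\iso\F$, the nonzero element being the surjection with kernel $\ulw$ (this is the extension $\ulw\into\phi_Z^*\ulF\onto\ulg$ implicit at the end of the proof of \cref{AuxZ}). Hence it is enough to prove $\alpha\neq 0$: then $\alpha$ is that surjection, so $\mpi_1\iso\ulw$, $\coker(\alpha)=0$, and $\mpi_0\iso\ulZ^*$. So the whole proposition comes down to showing that $\alpha$ is nonzero on $Q_8/Q_8$, and I expect this to be the main difficulty---it plays the role of the non-splitting argument in \cref{AuxZ}.

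I would prove $\alpha\neq 0$ by applying geometric fixed points $\Phi^{Q_8}$ to the cofiber sequence above. Since $Z$ is a bottleneck subgroup, \cref{GeoFixedBottleneck} together with \eqref{HtpyFixedPts} identifies
\[
\Phi^{Q_8}\!\big(\Si{\rho_K-\QH}H_Q\ulZ\big)\simeq\Si1\Phi^{K_4}H_K\ulZ,\qquad
\Phi^{Q_8}\!\big(\Si{\rho_K-\QH}H_Q\ulg\big)\simeq\Si1 H\F,
\]
and the middle map becomes, on $\pi_1$, the canonical map $\pi_0\Phi^{K_4}H_K\ulZ\iso\Z/2\to\F$ induced by $\ulZ\onto\ulg$, which is an isomorphism. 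It remains to match this with the $Q_8/Q_8$-component of $\alpha$, which follows by naturality of the comparison $\pi_1^{Q_8}(-)\to\pi_1\Phi^{Q_8}(-)$: this comparison is automatically an isomorphism on $\Si{\rho_K-\QH}H_Q\ulg$ (concentrated over $Q_8$), and one checks via the isotropy separation sequence that it is also an isomorphism on $\Si{\rho_K-\QH}H_Q\ulZ$, because the $\cF[Q_8]$-local part contributes nothing in the relevant range. Alternatively, one can rerun the fiber-sequence argument of \cref{AuxZ} with $\ulZ(3,2)$-coefficients---the periodicity input \cref{periodicity} and the computation of $F(S(\QH)_+,\Si{\rho_K}H_Q(-))$ carry over since $S(\QH)$ is $Q$-free and $\ulZ(3,2)$ restricts to $H\Z$ on the trivial group---and then distinguish $\ulw$ from $\phi_Z^*\ulF$ by their $Q_8/Q_8$-values.
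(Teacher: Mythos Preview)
Your setup is exactly the paper's: both arguments start from the cofiber sequence coming from $\ulZ(3,2)\into\ulZ\onto\ulg$, identify $\Si{\rho_K-\QH}H_Q\ulg\simeq\Si1 H_Q\ulg$, and reduce everything to showing that the induced map $\alpha\colon\phi_Z^*\ulF\to\ulg$ on $\mpi_1$ is nonzero (hence surjective with kernel~$\ulw$). The only difference is in how this last point is verified.

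The paper's argument is shorter and avoids geometric fixed points altogether. By naturality of $a_\QH\colon S^{-\QH}\to S^0$ (which becomes an equivalence after smashing with the geometric spectrum $H_Q\ulg$), the map $\Si{\rho_K-\QH}H_Q\ulZ\to\Si1 H_Q\ulg$ factors as
\[
\Si{\rho_K-\QH}H_Q\ulZ \rtarr \Si{\rho_K}H_Q\ulZ \rtarr \Si1 H_Q\ulg.
\]
Both factors are already understood on $\mpi_1$: the first is an epimorphism by the fiber sequence in the proof of \cref{AuxZ} (its cofiber $\Si4 F(S(\QH)_+,H_Q\ulZ)$ has $\mpi_1\iso\ulZ^*$, and any map $\ulg\to\ulZ^*$ vanishes), and the second is an isomorphism by \cref{HomologySkrhoK}. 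That is the whole proof.

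Your geometric-fixed-point route is workable, but the sentence ``one checks via the isotropy separation sequence that it is also an isomorphism on $\Si{\rho_K-\QH}H_Q\ulZ$'' is where the actual content hides, and it is not as immediate as you suggest. Concretely, you must show that the comparison $\pi_1^{Q_8}(X)\to\pi_1\Phi^{Q_8}(X)$ is nonzero, which amounts to showing that the generator of $(\phi_Z^*\ulF)(Q_8/Q_8)$ is not killed by the image of $\pi_1^{Q_8}(E\mathcal{P}_+\smsh X)$. This can be done (for instance, by observing that all transfers into the top level of $\phi_Z^*\ulF$ vanish, since they are inherited from the constant $K_4$-Mackey functor $\ulF$), but it is an extra computation. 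Note also that $\Phi^{K_4}H_K\ulZ$ is not simply $H\F$; it has higher homotopy, though you only need its $\pi_0$. In short, your argument can be completed, but the paper's factorization reuses the fiber sequence already set up in \cref{AuxZ} and gets there in one line.
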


\begin{pf}
The short exact sequence
\[ \ulZ(3,2) \into \ulZ \onto \ulg \]
gives rise to a cofiber sequence
\[ \Si{\rho_K-\QH} H_Q \ulZ(3,2) \rtarr \Si{\rho_K-\QH} H_Q \ulZ \rtarr \Si{\rho_K-\QH} H_Q \ulg  \simeq \Si1 H_Q \ulg. \]
Using a naturality square, the second map factors as
\[ \Si{\rho_K-\QH} H_Q \ulZ \rtarr \Si{\rho_K} H_Q \ulZ \rtarr \Si1 H_Q \ulg, \]
where the first map is an epimorphism on $\mpi_1$ by the proof of \cref{AuxZ} and the second is an isomorphism on $\mpi_1$. The conclusion follows.
\end{pf}

\begin{prop}
\label{AuxZ20}
The nontrivial homotopy Mackey functors of
$\Si{\QH-\rho_K}H_Q \ulZ(2,0)$ are
\[
\mpi_n\left(
\Si{\QH-\rho_K}H_Q \ulZ(2,0)
\right) \iso
\begin{cases}
\ulZ & n=0 \\
\ulw^* & n=-2.
\end{cases}
\]
\end{prop}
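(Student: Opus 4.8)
The plan is to deduce this from \cref{AuxZ32} by Anderson duality. Write $\mathrm{I}_{\Z}$ for the Anderson dualizing functor on $Q$-spectra. Recall that it is compatible with suspension, $\mathrm{I}_{\Z}(\Si{V}Y)\simeq\Si{-V}\mathrm{I}_{\Z}Y$ for a $Q$-representation $V$, and that for any $Q$-spectrum $Y$ there is a natural exact sequence of Mackey functors
\[
0\rtarr\mathrm{Ext}^1_{\Z}\big(\mpi_{-n-1}Y,\Z\big)\rtarr\mpi_n\mathrm{I}_{\Z}Y\rtarr\mathrm{Hom}_{\Z}\big(\mpi_{-n}Y,\Z\big)\rtarr0,
\]
where $\mathrm{Hom}_{\Z}$ and $\mathrm{Ext}^1_{\Z}$ are applied level by level and hence interchange restriction and transfer maps.

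The first step is to identify $\mathrm{I}_{\Z}H_Q\ulZ(3,2)$. Since $\ulZ(3,2)$ is levelwise free, the $\mathrm{Ext}^1$ term vanishes and the sequence above gives an equivalence $\mathrm{I}_{\Z}H_Q\ulZ(3,2)\simeq H_Q\big(\ulZ(3,2)^{\vee}\big)$, where $\ulZ(3,2)^{\vee}$ denotes the levelwise $\Z$-linear dual. Reading off the Lewis diagrams, passing to this dual interchanges the ``$\ulZ^*$-type'' and ``$\ulZ$-type'' edges: the single ``$\ulZ^*$-type'' edge of $\ulZ(3,2)$, between $Q$ and the order-$4$ subgroups, becomes ``$\ulZ$-type'', while the two lower ``$\ulZ$-type'' edges become ``$\ulZ^*$-type''. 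This is precisely the defining description of $\ulZ(2,0)$, so $\ulZ(3,2)^{\vee}\iso\ulZ(2,0)$, and therefore
\[
\mathrm{I}_{\Z}\Big(\Si{\rho_K-\QH}H_Q\ulZ(3,2)\Big)\simeq\Si{\QH-\rho_K}H_Q\ulZ(2,0).
\]

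The second step is to feed \cref{AuxZ32} into the universal coefficients sequence for $Y=\Si{\rho_K-\QH}H_Q\ulZ(3,2)$, whose only nontrivial homotopy Mackey functors are $\mpi_1 Y\iso\ulw$ and $\mpi_0 Y\iso\ulZ^*$. As $\ulZ^*$ is levelwise free, $\mpi_0\mathrm{I}_{\Z}Y\iso\mathrm{Hom}_{\Z}(\ulZ^*,\Z)\iso\ulZ$ and there is no contribution in degree $-1$. As $\ulw$ is levelwise a direct sum of copies of $\F$, with $\mathrm{Ext}^1_{\Z}(\F,\Z)\iso\F$ and the variance of the structure maps reversed, $\mpi_{-2}\mathrm{I}_{\Z}Y\iso\mathrm{Ext}^1_{\Z}(\ulw,\Z)\iso\ulw^*$; all remaining homotopy Mackey functors vanish. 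Combined with the equivalence of the first step, this is exactly the asserted computation.

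The step I expect to require the most care is the identification $\mathrm{I}_{\Z}H_Q\ulZ(3,2)\simeq H_Q\ulZ(2,0)$: one needs both that the Anderson dual of the Eilenberg-Mac~Lane spectrum on a levelwise-free Mackey functor is the Eilenberg-Mac~Lane spectrum on its levelwise dual (immediate once one knows $H_Q\ulZ(3,2)$ is concentrated in degree $0$) and the Mackey-functor identity $\ulZ(3,2)^{\vee}\iso\ulZ(2,0)$, which is the $Q_8$-analogue of the duality among the $\ulZ(i,j)$ for $C_{p^n}$ in \cite{HHR2}. One should likewise confirm $\mathrm{Ext}^1_{\Z}(\ulw,\Z)\iso\ulw^*$ directly from the Lewis diagrams, but this is routine. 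An alternative staying closer to the methods of \cref{AuxZ} and \cref{AuxZ32} is to use the short exact sequence $\ulZ^*\into\ulZ(2,0)\onto\ulg$, which reduces the problem to $\Si{\QH-\rho_K}H_Q\ulZ^*$ (the Anderson dual of \cref{AuxZ}); there the crux is instead to show that the connecting homomorphism $\ulg\rtarr\phi_Z^*\ulF^*$ is nonzero.
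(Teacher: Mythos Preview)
Your proof is correct and follows essentially the same route as the paper: deduce the result from \cref{AuxZ32} via Anderson duality, using the identifications $\ulZ(3,2)^{\vee}\iso\ulZ(2,0)$ and $I_{\Z}(H_Q\ulw)\simeq\Si{-1}H_Q\ulw^*$. The paper phrases this as dualizing the fiber sequence $\Si1 H_Q\ulw\to\Si{\rho_K-\QH}H_Q\ulZ(3,2)\to H_Q\ulZ^*$ rather than invoking the universal coefficient sequence directly, but the content is identical.
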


\begin{pf}
This follows from \cref{AuxZ32} by duality. In more detail, \cref{AuxZ32} gives a fiber sequence
\[
\Si1 H_Q\ulw \rtarr \Si{\rho_K-\QH} H_Q \ulZ(3,2) \rtarr H_Q\ulZ^*.
\]
Applying Anderson duality (see \cite{Slone}*{Section~2.2}) gives a fiber sequence
\[
I(\Si1 H_Q \ulw) \ltarr I\left(\Si{\rho_K-\QH} H_Q \ulZ(3,2)\right) \ltarr I(H_Q\ulZ^*),
\]
or in other words
\[
\Si{-1} I(H_Q\ulw) \ltarr \Si{\QH-\rho_K} H_Q \ulZ(2,0) \ltarr  H_Q\ulZ.
\]
But as the Mackey functor $\ulw$ is torsion, the Anderson dual is the desuspension of the Brown-Comenetz dual. In other words, $I(H_Q\ulw) \simeq \Si{-1} I_{\Q/\Z} H_Q \ulw \simeq \Si{-1} H_Q \ulw^*$.
\end{pf}

\begin{figure}
\ifDrawTikzPictures
\begin{tikzpicture}[scale=0.35,mytrap/.style={
  trapezium, fill,inner xsep=2pt,scale=0.8},mypent/.style={fill = black, regular polygon, regular polygon sides=5, 
 minimum width=0pt, 
 inner sep = 0.3ex,scale=1.7}]
\draw [step = 1,shift={(0.5,0.5)}] (-17,-11) grid (16,14);
\foreach \x in {-16,...,16}
 \node[anchor=north] at (\x+0,-10.5) {\tiny \x};
\foreach \y in {-10,...,14}
 \node[anchor=east] at (-16.5,\y) {\tiny \y};

\node at (16,8) {$\circ$};
\node at (14,10) {$\mysterysymbol$};
\node at (12,12) {$\circ$};
\node[mytrap] at (10,14) {};
\node at (16,0) {$\square$};
\node at (14,2) {$\mysterysymbol$};
\node at (12,4) {$\circ$};
\node at (10,6) {$\mysterysymbol$};
\node at (8,8) {$\circ$};
\node[mytrap] at (6,10) {};
\node at (5,11) {$\bullet$};
\node[mypent] at (4,12) {};
\node[encirc] at (3,13) {2};
\node at (2,14) {$\bullet$};
\node at (8,0) {$\square$};
\node at (6,2) {$\mysterysymbol$};
\node at (4,4) {$\circ$};
\node[mytrap,scale=0.9] at (2,6) {};
\node at (1,7) {$\bullet$};
\node at (0,0) {$\square$};
 \node at (-8,0) {$\dbox$};
 \node at (-7,-1) {$\mysterysymbol$};
 \node at (-5,-3) {$\phiZM$};
 \node at (-16,0) {$\dbox$};
 \node at (-15,-1) {$\mysterysymbol$};
 \node at (-13,-3) {$\circ$};
\node at (-11,-5) {$\mysterysymbol$};
 \node at (-9,-7) {$\phiZM$};
 \node at (-7,-9) {$\atrap$};
\node at (-6,-10) {$\bullet$};
\node at (-15,-9) {$\mysterysymbol$};
\draw[color=gray, fill=gray!40, thick, rounded corners] (-0.625, -10.375) -- (-0.625, 14.375) -- (-4.375,14.375) -- (-4.375,-10.375) -- cycle;
\node at (-2.5,2) {\tiny The ``gap''};
\end{tikzpicture}
\else
\includegraphics{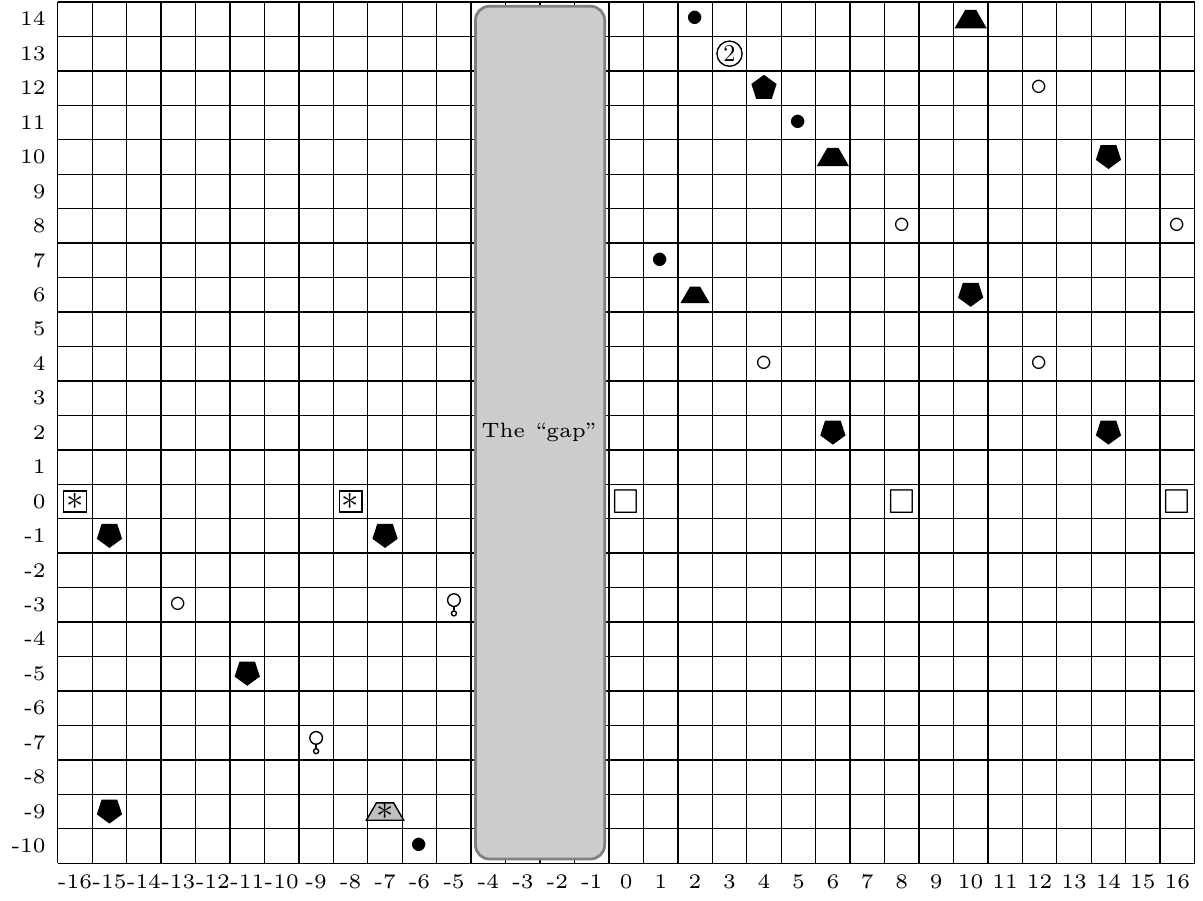}
\fi
\caption{
The homotopy Mackey functors of $\bigvee_n \Sigma^{n\rho} H_{Q} \ulZ$. The Mackey functor $\mpi_k \Sigma^{n\rho} H_{Q} \ulZ$ appears in position $(k,8n-k)$.
}
\label{fig:HtpySigmarhoQZ}
\end{figure}

\section{Review of the $C_4$-slices of $\Sigma^n H\Z$}
\label{sec:C4slice}

In this section, we review the slices of $\Si{n}H_{C_4}\ulZ$ from \cite{Yarn}. Note that the slices as listed in \cite{Yarn} are written using the classical slice filtration, whereas we use the regular slice filtration. The only difference is a suspension by one.
The Mackey functors that appear here were introduced in \cref{tab-C4Mackey}.

According to \cite[Section~4.2]{Yarn}, the $C_4$-spectrum $\Sigma^n H_{C_4}\ulZ$ is an $n$-slice for $0\leq n \leq 4$. For \(n\geq 5\), $\Sigma^n H_{C_4}\ulZ$ has a nontrivial slice tower. 
Yarnall's method for determining these slice towers is to splice together suspensions of the cofiber sequences
\[
\Si{-1} H_{C_4} \ulg \rtarr
\Si{2}H_{C_4} \ulZ \rtarr \Si{2\sigma} H_{C_4} \ulZ,
\]
\[
\Si{-1} H_{C_4} \phi_{C_2}^* \ulF^* \rtarr 
\Si{2} H_{C_4} \ulZ \rtarr \Si{\lambda} H_{C_4} \ulZ(2,1),
\]
and
\[
\Si{-1} H_{C_4} \ul{B}(2,0) \rtarr 
\Si{2} H_{C_4} \ulZ \rtarr \Si{\lambda} H_{C_4} \ulZ
\]
in combination with the equivalences
\[
\Si{2} H_{C_4} \ulZ \simeq \Si{2\sigma} H_{C_4} \ulZ(2,1)\]
and
\[ \Si{-1} H_{C_4} \phi_{C_2}^* \ulF^* \simeq \Si{-\sigma} H_{C_4} \phi_{C_2}^* \ulf \simeq \Si{1-2\sigma} H_{C_4} \phi_{C_2}^* \ulF.
\] 

We first review these slices for odd \(n\).

\begin{prop}{\cite[Theorem 4.2.6]{Yarn}}
Let $n\geq 5$ be odd. The bottom slice of $\SI^n H_{C_4} \ulZ$ is
\begin{align*}
    P^{n}_{n}( \SI^n H_{C_4}\ulZ) &\simeq \left\lbrace \begin{array}{ll}
        \SI^{\frac{n-5}{4}\rho + 4+\sigma} H_{C_4}\ulZ & n\equiv 1\pmod 8 \\
        \SI^{\frac{n-3}{4}\rho + 3} H_{C_4}\ulZ & n\equiv 3\pmod 8 \\
        \SI^{\frac{n-5}{4}\rho + 3+2\sigma} H_{C_4}\ulZ & n\equiv 5\pmod 8 \\
        \SI^{\frac{n-3}{4}\rho + 2+\sigma} H_{C_4}\ulZ & n\equiv 7\pmod 8.
    \end{array}\right.
\end{align*}
\end{prop}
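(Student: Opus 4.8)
The plan is to follow Yarnall's inductive peeling strategy: build the slice tower of $\SI^n H_{C_4}\ulZ$, and in particular its bottom slice, by induction on $n$, splicing together integer- and representation-suspensions of the three cofiber sequences displayed above and using the two displayed equivalences to trade integer suspensions for representation suspensions.

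First I would take as the base of the induction the range $0 \le n \le 4$, where $\SI^n H_{C_4}\ulZ$ is already an $n$-slice. For the inductive step, I would exploit that $\SI^2 H_{C_4}\ulZ$ is a genuine $2$-slice and that each of the three displayed cofiber sequences has $\SI^2 H_{C_4}\ulZ$ as its middle term, with a once-desuspended $\Phi$-type Eilenberg-Mac~Lane spectrum ($\SI^{-1}H_{C_4}\ulg$, $\SI^{-1}H_{C_4}\phi_{C_2}^*\ulF^*$, or $\SI^{-1}H_{C_4}\ul{B}(2,0)$) as fiber and a representation-suspension $\SI^V H_{C_4}\ul{L}$ --- with $V$ two-dimensional and $\ul{L}$ a form of $\ulZ$ --- as cofiber. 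Suspending a suitably chosen such sequence by an appropriate representation and splicing it into the tower of a lower $\SI^m H_{C_4}\ulZ$ produces the next stage of the tower of $\SI^n H_{C_4}\ulZ$; after finitely many steps the residual term at the bottom of the tower is of the form $\SI^W H_{C_4}\ulZ$ with $\dim W = n$, and one checks, via the $C_4$-slice recognition criterion coming from the Slice Theorem of \cite{Kervaire} (as carried out in \cite{Yarn}), that this is a genuine $n$-slice, hence equals $P^n_n(\SI^n H_{C_4}\ulZ)$. Carrying $W$ through the four residues of $n$ modulo $8$ produces the four displayed formulas; the $\rho$-coefficients $\tfrac{n-5}{4}$ and $\tfrac{n-3}{4}$ reflect that the whole construction is periodic modulo $8$ up to smashing with $S^{2\rho_{C_4}}$, which increases both the underlying dimension and the slice degree by $8$.

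The hard part will be the bookkeeping: specifying, for each residue of $n$ modulo $8$, exactly which cofiber sequence (and which suspension of it) is spliced in at each stage, and --- the real obstacle --- verifying at every stage the slice-connectivity estimates, namely that the $\Phi$-type term being split off lies strictly above level $n$ in the slice filtration (so that it does not contribute to $P^n_n$) and that the residual representation sphere $\SI^W H_{C_4}\ulZ$ has no slices above level $n$. The delicate estimates are those for the isotropic terms $\SI^j H_{C_4}\ulg$, $\SI^j H_{C_4}\phi_{C_2}^*\ulF^*$, and $\SI^j H_{C_4}\ul{B}(2,0)$; it is in checking these that the congruence conditions on $n$ are genuinely used.
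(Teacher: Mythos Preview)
Your proposal is correct and follows exactly the approach the paper describes. The paper does not give its own proof of this statement but cites it from \cite{Yarn} and summarizes Yarnall's method as precisely the splicing strategy you outline: suspending the three displayed cofiber sequences and using the two displayed equivalences to iteratively peel off geometric layers until a representation-sphere suspension $\SI^W H_{C_4}\ulZ$ with $\dim W = n$ remains as the $n$-slice. Your identification of the base case, the inductive mechanism, the mod-$8$ periodicity via $S^{2\rho_{C_4}}$, and the slice-connectivity verifications as the substantive content matches Yarnall's argument as summarized here.
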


\begin{prop}{\cite[Lemma 4.2.5]{Yarn}}
\label{4kSlicesC4nodd}
Let \(n\geq 5\) be odd. The nontrivial  \(4k\)-slices of \(\SI^{n} H_{C_4}\ulZ\) are
\begin{align*}
    P^{4k}_{4k}( \SI^n H_{C_4}\ulZ) &\simeq \left\lbrace \begin{array}{ll}
        \SI^{k\rho} H_{C_4}\ul B(2,0) & 4k\in[n+1, 2(n-3)], \ k \text{ even} \\
        \SI^{k\rho} H_{C_4} \phi^*\ul f & 4k\in[n+1, 2(n-3)], \ k \text{ odd} \\
        \SI^{k\rho} H_{C_4}\ulg & 4k\in[2(n-1), 4(n-3)], \ k \text{ even}.
    \end{array}\right.
\end{align*}
\end{prop}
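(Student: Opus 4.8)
\emph{The plan.} I would reconstruct Yarnall's computation by the method indicated above: induct on odd $n\geq 5$ and build the slice tower of $\Sigma^n H_{C_4}\ulZ$ by splicing suitable suspensions of the three displayed cofiber sequences, using the two displayed equivalences to rewrite the intermediate terms. Suitably suspended, each cofiber sequence exhibits $\Sigma^{j+2}H_{C_4}\ulZ$ as the fiber of a map $\Sigma^{j+V}H_{C_4}\ul N\rtarr\Sigma^j H_{C_4}\ul M$ with $V\in\{\lambda,2\sigma\}$, $\ul N\in\{\ulZ,\ulZ(2,1)\}$, and $\ul M\in\{\ul B(2,0),\phi_{C_2}^*\ulf,\ulg\}$; alternating these lets me accumulate one copy of $\rho=1+\sigma+\lambda$ per pass, at the cost of a single correction layer $\Sigma^{k\rho+(\text{bounded integer})}H_{C_4}\ul M$. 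Running the recursion until the residual integer suspension drops into $[0,4]$, where $\Sigma^m H_{C_4}\ulZ$ is already an $m$-slice, yields a finite filtration; its bottom piece is the slice of the preceding Proposition, and the remaining layers should be the claimed $4k$-slices.

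\emph{Identifying the layers.} The next step is to verify that each correction term $\Sigma^{k\rho+(\text{bounded})}H_{C_4}\ul M$, $\ul M\in\{\ul B(2,0),\phi_{C_2}^*\ulf,\ulg\}$, is a $4k$-slice (and is contractible in the ranges where the statement lists nothing at level $4k$). For $\ulg$, which has vanishing restriction to the proper subgroups of $C_4$, the spectrum $\Sigma^{k\rho}H_{C_4}\ulg$ is $\widetilde{E\mathcal{F}[C_4]}$-local with $\Phi^{C_4}(\Sigma^{k\rho}H_{C_4}\ulg)\simeq\Sigma^k H\F$ concentrated in degree $k$, hence a $4k$-slice; this $\ulg$-family is exactly the part lying above level $2n$, and for it \cref{MainSliceInflationTheorem} applies directly with $G=C_4$, $N=C_2$, identifying these slices with geometric inflations of the $C_2$-slices of $\Sigma^n H_{C_2}\ulZ$, from which both the appearance of $\ulg$ and the restriction to $k$ even are inherited. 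For $\ul B(2,0)$ and $\phi_{C_2}^*\ulf$, which contribute the remaining $4k$-slices (those below level $2n$, in the range $4k\in[n+1,2(n-3)]$), one checks that the spliced layers are $4k$-slices using geometric fixed points over the order-two subgroup $C_2\leq C_4$ together with the doubling of slice dimension under geometric inflation along an order-two quotient (\cite{Ullman}*{Corollary~4-5}); the $\ul B(2,0)$-versus-$\phi_{C_2}^*\ulf$ alternation there is produced by the $\Sigma^{2\sigma}$-twist in $\Sigma^{-1}H_{C_4}\phi_{C_2}^*\ulF^*\simeq\Sigma^{1-2\sigma}H_{C_4}\phi_{C_2}^*\ulF$, which toggles the starred and unstarred forms each time a further $\rho$ is accumulated.

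\emph{Main obstacle.} The hard part will not be writing down a tower with these layers, but certifying that it is the slice tower. This requires sharp slice-coconnectivity ($\leq$) estimates at every stage — so that consecutive layers are slices of exactly the advertised dimension, with no further truncation needed — maintained in parallel with the parity- and range-dependent bookkeeping that pins down which of $\ul B(2,0)$, $\phi_{C_2}^*\ulf$, $\ulg$, or $0$ occurs at each level $4k$. The slice-$\geq$ direction is comparatively easy (it follows from connectivity of the relevant unit-sphere terms, much as in the proof of \cref{MainSliceInflationTheorem}); the slice-$\leq$ direction, which ultimately rests on $\Sigma^{k\rho}H_{C_4}\ulZ$ being slice-$\leq 4k$, is where the real work lies.
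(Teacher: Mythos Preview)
The paper does not supply its own proof of this proposition: it is stated as a citation of \cite[Lemma~4.2.5]{Yarn}, and the only additional remark is the sentence immediately following it, ``The $4k$-slices can also be read off of \cite{HHR}*{Figure~3}.'' Your proposal is a faithful reconstruction of Yarnall's splicing argument as summarized at the start of \cref{sec:C4slice}, and the outline is correct; in particular your use of \cref{MainSliceInflationTheorem} with $G=C_4$, $N=C_2$ to handle the $\ulg$-layers above level $2n$ is exactly the philosophy the paper develops for $Q_8$.

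That said, the paper is pointing to a much shorter route for the $4k$-slices specifically. For any finite group $G$ of order $d$, the $dk$-slice of a $G$-spectrum $X$ is canonically $\Sigma^{k\rho_G} H_G\,\mpi_{k\rho_G}(X)$, since $S^{k\rho_G}$ generates the localizing subcategory of $dk$-slices. Thus for $C_4$ one has $P^{4k}_{4k}(\Sigma^n H_{C_4}\ulZ)\simeq \Sigma^{k\rho}H_{C_4}\,\mpi_{k\rho-n}(H_{C_4}\ulZ)$, and the Mackey functors $\mpi_{k\rho-n}(H_{C_4}\ulZ)$ are tabulated in \cite{HHR}*{Figure~3}. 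Reading that table immediately gives $\ul B(2,0)$, $\phi^*\ul f$, or $\ulg$ in the indicated ranges with the indicated parities, and zero elsewhere. This bypasses entirely the inductive tower construction and the slice-$\leq$ bookkeeping you flag as the main obstacle; those are needed to produce the full slice \emph{tower} (and hence the bottom $n$-slice of the previous proposition), but not to identify the $4k$-slices in isolation.
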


The $4k$-slices can also be read off of \cite{HHR}*{Figure~3}.
When $n$ is odd, these are the only nontrivial slices of \(\SI^{n} H_{C_4}\ulZ\).

We now recall the slices of \(\SI^{n} H_{C_4}\ulZ\) for even \(n\).

\begin{prop}{\cite[Theorem 4.2.9]{Yarn}}
\label{nsliceSInZK}
Let $n\geq 6$ be even. The bottom slice of $\SI^n H_{C_4} \ulZ$ is
\begin{align*}
    P^{n}_{n}( \SI^n H_{C_4}\ulZ) &\simeq \left\lbrace \begin{array}{ll}
        \SI^{\frac{n-4}{4}\rho + 3+\sigma} H_{C_4}\ulZ & n\equiv 0\pmod 8 \\
        \SI^{\frac{n-6}{4}\rho + 3+3\sigma} H_{C_4}\ulZ & n\equiv 2\pmod 8 \\
        \SI^{\frac{n-4}{4}\rho + 4} H_{C_4}\ulZ & n\equiv 4\pmod 8 \\
        \SI^{\frac{n-6}{4}\rho + 4+2\sigma} H_{C_4}\ulZ & n\equiv 6\pmod 8.
    \end{array}\right.
\end{align*}
\end{prop}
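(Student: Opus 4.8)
The statement is Yarnall's \cite{Yarn}*{Theorem~4.2.9}, so the plan is to reproduce his argument, shifting everything by the single suspension that converts his classical-filtration conventions into the regular filtration used here. The overall shape of the proof is to produce, for even $n\ge 6$, a cofiber sequence
\[
Q_n \rtarr \SI^n H_{C_4}\ulZ \rtarr \SI^{W} H_{C_4}\ulZ
\]
in which $W$ is the representation named in the statement (so $\dim W=n$), the spectrum $\SI^{W}H_{C_4}\ulZ$ is a \emph{pure} $n$-slice, and $Q_n$ has all of its slices strictly above level $n$; the identification $P^n_n(\SI^n H_{C_4}\ulZ)\simeq \SI^{W}H_{C_4}\ulZ$ then follows at once. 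The two things to be checked are thus that such a $W$ can be reached and that the resulting $\SI^{W}H_{C_4}\ulZ$ is pure.

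To reach $W$, I would run an induction lowering $n$ by $2$ at a time, splicing in suitably suspended copies of the three displayed cofiber sequences
\[
\SI^{-1}H_{C_4}\ulg \rtarr \SI^{2}H_{C_4}\ulZ \rtarr \SI^{2\sigma}H_{C_4}\ulZ, \quad
\SI^{-1}H_{C_4}\phi_{C_2}^*\ulF^* \rtarr \SI^{2}H_{C_4}\ulZ \rtarr \SI^{\lambda}H_{C_4}\ulZ(2,1), \quad
\SI^{-1}H_{C_4}\ul B(2,0) \rtarr \SI^{2}H_{C_4}\ulZ \rtarr \SI^{\lambda}H_{C_4}\ulZ
\]
together with the rewriting equivalences $\SI^{2}H_{C_4}\ulZ\simeq\SI^{2\sigma}H_{C_4}\ulZ(2,1)$ and $\SI^{-1}H_{C_4}\phi_{C_2}^*\ulF^*\simeq\SI^{-\sigma}H_{C_4}\phi_{C_2}^*\ulf\simeq\SI^{1-2\sigma}H_{C_4}\phi_{C_2}^*\ulF$. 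At each step one suspends the appropriate sequence by the appropriate integer-plus-representation sphere so that its middle term is the current $\SI^{(\cdots)}H_{C_4}\ulZ$; this converts two integer suspensions into a two-dimensional representation ($2\sigma$ or $\lambda$) while splitting off a suspension of $H_{C_4}\ulg$, $H_{C_4}\phi_{C_2}^*\ulf$, or $H_{C_4}\ul B(2,0)$. One checks that each split-off piece, with the suspension it has acquired, has all its slices above level $n$, so that $P^n_n$ is undisturbed. Iterating until every excess integer suspension has been packaged into copies of $\rho_{C_4}$ leaves $\SI^{k\rho+W_0}H_{C_4}\ulZ$, with $(k,W_0)$ determined by $n\bmod 8$; the split-off pieces are precisely the higher slices, which one can also read off of \cite{HHR}*{Figure~3}.

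It then remains to see that $\SI^{k\rho+W_0}H_{C_4}\ulZ$ really is a pure $n$-slice. Here one uses that $\SI^{\rho_{C_4}}$ shifts the regular slice filtration by $4=|C_4|$ (\cite{Ullman}*{Corollary~4-5}, or directly from \cite{HHR}*{Figure~3}), so that $\SI^{k\rho}$ sends an $m$-slice to a $(4k+m)$-slice; since $\SI^m H_{C_4}\ulZ$ is a pure $m$-slice for $0\le m\le 4$ by \cite{Yarn}*{Section~4.2}, the question reduces to the finitely many small representations $W_0\in\{3+\sigma,\,3+3\sigma,\,4,\,4+2\sigma\}$. These are handled by the same cofiber sequences (equivalently, one verifies the purity on restriction to $C_2$, on $\Phi^{C_4}$, and on underlying spectra, using the Mackey functors of \cref{tab-C4Mackey}).

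The step I expect to be the genuine obstacle is the bookkeeping inside the induction: one must check at every stage that the split-off term really lies above level $n$ (equivalently, that each quotient map one writes down is a $P^n$-equivalence), and one must track precisely how the representation evolves as integer suspensions are traded for $2\sigma$, $\lambda$, and eventually $\rho$. That tracking is exactly the congruence-mod-$8$ case analysis, and it is what distinguishes the ``$3+\sigma$'' from the ``$3+3\sigma$'' form and the ``$4$'' from the ``$4+2\sigma$'' form; I would cross-check the outcome against the restriction of $\SI^n H_{C_4}\ulZ$ to $C_2$ and against the known $\mpi_{k\rho}H_{C_4}\ulZ$ from \cite{HHR}.
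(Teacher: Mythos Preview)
Your proposal is correct and follows exactly the approach the paper attributes to Yarnall: the paper does not give its own proof of this proposition but simply cites \cite{Yarn}*{Theorem~4.2.9}, and the method you describe---splicing together suspensions of the three listed cofiber sequences together with the two equivalences, then using that $\Si{\rho_{C_4}}$ shifts slices by $4$---is precisely what the paper summarizes at the beginning of \cref{sec:C4slice} as ``Yarnall's method.''
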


\begin{prop}{\cite[Lemma 4.2.7]{Yarn}}
\label{4kSlicesC4neven}
Let \(n\geq 6\) be even. The nontrivial \(4k\)-slices of \(\SI^{n} H_{C_4}\ulZ\) are
\[
 P^{4k}_{4k}( \SI^n H_{C_4}\ulZ) \simeq 
 \SI^{k} H_{C_4}\ulg, \quad  k \ \text{odd} 
\]
for $4k$ in the range $[n+2,4n-12]$.
\end{prop}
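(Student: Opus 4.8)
The plan is to reconstruct Yarnall's slice tower for $\Si{n} H_{C_4}\ulZ$ by splicing suitably suspended copies of the three cofiber sequences recalled above, and to read off its layers.

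First I would record a lemma: for every $j\geq 0$, $\Si{j} H_{C_4}\ulg$ is a $4j$-slice. Since $\ulg$ restricts to $0$ on every proper subgroup of $C_4$, the spectrum $\Si{j} H_{C_4}\ulg$ is concentrated over $C_4$, with $\Phi^{C_4}\Si{j} H_{C_4}\ulg\simeq\Si{j} H\F$; moreover $\Si{\sigma} H_{C_4}\ulg\simeq\Si{\lambda} H_{C_4}\ulg\simeq H_{C_4}\ulg$, because $\sigma$ and $\lambda$ have no $C_4$-fixed vectors and $S(\sigma)_+\smsh H_{C_4}\ulg\simeq S(\lambda)_+\smsh H_{C_4}\ulg\simeq\ast$. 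Hence whether $\Si{j} H_{C_4}\ulg$ is $\le m$ or $\ge m$ is detected only by the slice cells $S^{k\rho_{C_4}}$, and the statement reduces to the non-equivariant fact that $\Si{j} H\F$ is a $j$-slice. (In particular $\Si{k\rho} H_{C_4}\ulg\simeq\Si{k} H_{C_4}\ulg$, reconciling the form of the layers here with those in \cref{4kSlicesC4nodd}.)

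Granting this lemma, and using that $P^n_n(\Si{n} H_{C_4}\ulZ)$ is the representation-sphere suspension of $H_{C_4}\ulZ$ furnished by \cref{nsliceSInZK}, it suffices by uniqueness of slice towers to build a finite tower of fibrations with top $\Si{n} H_{C_4}\ulZ$, bottom $P^n_n(\Si{n} H_{C_4}\ulZ)$, and intermediate fibers $\Si{k} H_{C_4}\ulg$ precisely for the odd $k$ with $4k\in[n+2,4n-12]$; the fiber dimensions then increase from bottom to top, so this tower is the slice tower. To build it, write $\Si{n} H_{C_4}\ulZ=\Si{n-2}\bigl(\Si{2} H_{C_4}\ulZ\bigr)$ and repeatedly process a trailing copy of $\Si{2} H_{C_4}\ulZ$ by the moves $\Si{2} H_{C_4}\ulZ\rightsquigarrow\Si{2\sigma} H_{C_4}\ulZ$ (cofiber sequence (i), which spins off a suspension of $H_{C_4}\ulg$), $\Si{2} H_{C_4}\ulZ\simeq\Si{2\sigma} H_{C_4}\ulZ(2,1)$, and $\Si{2} H_{C_4}\ulZ\rightsquigarrow\Si{\lambda} H_{C_4}\ulZ$ or $\Si{\lambda} H_{C_4}\ulZ(2,1)$ (cofiber sequences (iii), (ii)), continuing until the bottom term is the one in \cref{nsliceSInZK}. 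The $\phi_{C_2}^*\ulF^*$- and $\ul{B}(2,0)$-error terms coming from (ii) and (iii) are not slices; for even $n$ one resolves them further using the short exact sequences of \cref{tab-C4Mackey} relating them to $\ulg$ and the $\ulF$-type Mackey functors, and after collapsing $\sigma$-suspensions via $\Si{\sigma} H_{C_4}\ulg\simeq H_{C_4}\ulg$ every surviving fiber is forced to have the form $\Si{k} H_{C_4}\ulg$. A degree count then pins the surviving $k$ to the odd integers with $4k\in[n+2,4n-12]$; restricting the whole tower to each $C_2\le C_4$, applying $\Phi^{C_4}$, or comparing with the $RO(C_4)$-graded homotopy of $H_{C_4}\ulZ$ displayed in \cite{HHR}*{Figure~3} provides cross-checks.

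The main obstacle is this bookkeeping: tracking which copy of $\Si{2} H_{C_4}\ulZ$ is traded at each stage and the homological degree of the resulting $\ulg$-fiber, verifying that the $\phi_{C_2}^*\ulF^*$- and $\ul{B}(2,0)$-terms — which for odd $n$ genuinely survive as the $\Si{k\rho} H_{C_4}\phi^*\ulf$ and $\Si{k\rho} H_{C_4}\ul{B}(2,0)$ slices of \cref{4kSlicesC4nodd} — reorganize into $\ulg$'s when $n$ is even, and confirming the endpoints $n+2$ and $4n-12$ of the range together with the parity constraint that $k$ be odd.
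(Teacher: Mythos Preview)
The paper does not prove this statement; it simply cites Yarnall's thesis and remarks that the $4k$-slices can alternatively be read off of \cite{HHR}*{Figure~3}, i.e.\ from the values of $\mpi_{k\rho_{C_4}}(\Si{n}H_{C_4}\ulZ)$. Your lemma that $\Si{j}H_{C_4}\ulg$ is a $4j$-slice is correct, and the splicing strategy you describe is indeed Yarnall's method as summarized at the start of \cref{sec:C4slice}.

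There is, however, a genuine error in your proposed construction. You claim that for even $n$ the tower you build has intermediate fibers $\Si{k}H_{C_4}\ulg$ \emph{only}, and that the $\phi_{C_2}^*\ulF^*$ error terms from cofiber sequence (ii) ``resolve into $\ulg$'s''. This is false: for even $n$ the slice tower of $\Si{n}H_{C_4}\ulZ$ has, in addition to the $4k$-slices of the present proposition, nontrivial $(4k+2)$-slices of the form $\Si{1+2k\rho}H_{C_4}\phi^*\ulF$ and $\Si{3+2k\rho}H_{C_4}\phi^*\ulF$ (see \cref{4kPlus2SlicesC4neven}). These arise precisely from the $\phi^*\ulF^*$ terms via the equivalence $\Si{-1}H_{C_4}\phi^*\ulF^*\simeq\Si{1-2\sigma}H_{C_4}\phi^*\ulF$ recorded in \cref{sec:C4slice}; they do not collapse to $\ulg$'s. (The $\ul{B}(2,0)$ sequence (iii) is the one relevant to \emph{odd} $n$, not even $n$.) Consequently a tower with only $\ulg$-fibers cannot be the slice tower, and your appeal to uniqueness fails. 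The fix is straightforward: build the full tower with both the $\ulg$-layers and the $\phi^*\ulF$-layers, verify that each is a slice of the stated dimension, and then simply read off the $4k$-layers. Alternatively, bypass the tower entirely and compute $\mpi_{k\rho_{C_4}}(\Si{n}H_{C_4}\ulZ)\iso\mpi_{-n}(\Si{-k\rho_{C_4}}H_{C_4}\ulZ)$ from \cite{HHR}*{Figure~3}, as the paper suggests.
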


Again, the $4k$-slices can also be read off of \cite{HHR}*{Figure~3}.

\begin{prop}{\cite[Theorem 4.2.9]{Yarn}}
\label{4kPlus2SlicesC4neven}
Let \(n\geq 6\) be even. 
The \((4k+2)\)-slices of \(\SI^{n} H_{C_4}\ulZ\) are
\begin{align*}
    P^{8k+2}_{8k+2}( \SI^n H_{C_4}\ulZ) &\simeq \SI^{1+2k\rho} H\phi^*\ulF \\
    P^{8k+6}_{8k+6}( \SI^n H_{C_4}\ulZ) &\simeq \SI^{3+2k\rho} H\phi^*\ulF.
\end{align*}
for $8k+2$ or $8k+6$ in the range $[n+2,2n-6]$
\end{prop}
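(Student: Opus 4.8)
The plan is to run Yarnall's splicing argument and track the fibers that are suspensions of $H\phi^*\ulF$. Using $\Sigma^{-1}H\phi^*\ulF^*\simeq\Sigma^{1-2\sigma}H\phi^*\ulF$, rewrite the second cofiber sequence recalled above as
\[
\Sigma^{1-2\sigma}H_{C_4}\phi^*\ulF \rtarr \Sigma^2 H_{C_4}\ulZ \rtarr \Sigma^\lambda H_{C_4}\ulZ(2,1);
\]
suspending this by $2k\rho+2\sigma$, respectively by $2+2k\rho+2\sigma$, turns the fiber into $\Sigma^{1+2k\rho}H\phi^*\ulF$, respectively $\Sigma^{3+2k\rho}H\phi^*\ulF$, which are exactly the spectra appearing in the statement.

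First I would organize the downward induction that produces the slice tower of $\Sigma^n H_{C_4}\ulZ$: starting at the top, one repeatedly splices in a suitably suspended copy of one of the three basic cofiber sequences, so as to move the integer-plus-representation suspension of $H_{C_4}\ulZ$ in the middle term toward a multiple of $\rho_{C_4}$, peeling off a single slice at each stage, while using $\Sigma^2 H_{C_4}\ulZ\simeq\Sigma^{2\sigma}H_{C_4}\ulZ(2,1)$ and $\Sigma^{-1}H\phi^*\ulF^*\simeq\Sigma^{-\sigma}H\phi^*\ulf$ to recognize the intermediate terms. The $4k$-slices arise from the first and third cofiber sequences (\cref{4kSlicesC4neven}), and the $(4k+2)$-slices from the second, as above. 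A dimension count then pins down which $(4k+2)$-slices occur: below level $n+2$ one has already reached the bottom slice (\cref{nsliceSInZK}), while above level $2n-6$ the representation content needed to produce a $\phi^*\ulF$-fiber from the second cofiber sequence has been exhausted, which leaves exactly the range $[n+2,2n-6]$.

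Next I would verify that $\Sigma^{1+2k\rho}H\phi^*\ulF$ and $\Sigma^{3+2k\rho}H\phi^*\ulF$ really are slices of degrees $8k+2$ and $8k+6$. The subgroup $C_2\unlhd C_4$ is a bottleneck subgroup, and $\phi^*\ulF$ is the geometric inflation of the constant $C_2$-Mackey functor $\ulF$; consequently $H_{C_4}\phi^*\ulF$ is the spectrum-level geometric inflation of $H_{C_2}\ulF$, as is $\Sigma^j H_{C_4}\phi^*\ulF$ of $\Sigma^j H_{C_2}\ulF$ for any integer $j$ (geometric inflation commutes with integer suspension and carries Eilenberg--Mac~Lane spectra to Eilenberg--Mac~Lane spectra on the inflated Mackey functor). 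Geometric inflation along an order-$2$ normal subgroup doubles slice degree (\cite{Ullman}*{Corollary~4-5}, used in the proof of \cref{MainSliceInflationTheorem}), and $\Sigma^{2k\rho_{C_4}}$ raises slice degree by $8k$, so the claim reduces to the statements that $\Sigma^1 H_{C_2}\ulF$ is a $1$-slice (\cite{Ullman}) and $\Sigma^3 H_{C_2}\ulF$ is a $3$-slice over $C_2$; the latter follows from the standard fact that $\Sigma^m H_{C_2}\ulF$ is an $m$-slice for every $m\geq0$, proved by induction on $m$ by smashing the basic $C_2$-cofiber sequence with $H_{C_2}\ulF$.

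The last point to check is that the spliced tower is genuinely the slice tower, i.e.\ that each successive fiber is sufficiently highly connected; this is where the ``regular'' slice condition enters and is the real content of \cite{Yarn}*{Theorem~4.2.9}. The main obstacle is precisely this bookkeeping --- tracking which cofiber sequence is spliced at each stage and verifying the connectivity estimates --- rather than the identification of the individual layers, which the geometric-inflation perspective of \cref{sec:Inflation} makes transparent.
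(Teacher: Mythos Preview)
Your proposal is correct and follows exactly the approach the paper attributes to Yarnall: this proposition appears in the paper's review section (\cref{sec:C4slice}) without its own proof, simply citing \cite{Yarn}*{Theorem~4.2.9}, but the paper does sketch Yarnall's method at the start of that section as splicing the three cofiber sequences together with the equivalences $\Si{2}H_{C_4}\ulZ\simeq\Si{2\sigma}H_{C_4}\ulZ(2,1)$ and $\Si{-1}H_{C_4}\phi^*\ulF^*\simeq\Si{1-2\sigma}H_{C_4}\phi^*\ulF$, which is precisely what you do. Your additional verification that $\Si{1+2k\rho}H\phi^*\ulF$ and $\Si{3+2k\rho}H\phi^*\ulF$ are genuine $(8k+2)$- and $(8k+6)$-slices via geometric inflation from $C_2$ is a nice touch that the paper's later machinery (\cref{MainSliceInflationTheorem}) also supports.
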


We may also view these slices through the perspective of the \(\ulZ\)-module inflation functor. By \cref{MainSliceInflationTheorem},
\begin{align*}
    \HphiZ_{C_2}^*: \text{Mod}_{H_{C_2}\ulZ} \rtarr \text{Mod}_{H_{C_4}\ulZ}
\end{align*}
will provide all slices of \(\Si{n} H_{C_4}\) above level \(2n\). Let \(r\equiv n\pmod 4\) with \(3\leq r\leq 6\). It follows from \cite[Proposition 3.5]{Slone} that the slices of \(\Si{n} H_{C_4}\ulZ\) in level at least \(2n+2r-4\)  are
\begin{align*}
    P^{4k}_{4k} \left( \Si{n} H_{C_4} \ulZ \right) \simeq    \HphiZ_{C_2}^* \Si{k} H_{C_2}\ulg \simeq \Si{k} H_{C_4}\ulg
\end{align*}
for \(4k\in [2n+2r-4,4(n-3)]\). The rest of the slices then follow from determining the slices of
\begin{align*}
    \HphiZ_{C_2}^* \Si{\frac{n-r}{2}\rho_{C_2} + r} H_{C_2}\ulZ \simeq \Si{\frac{n+r}{2} + \frac{n-r}{2}\sigma} H_{C_4}\ulZ.
\end{align*}
The slice tower for this $C_4$-spectrum can be found by splicing together the cofiber sequences listed at the start of this section.

\section{$Q_8$-slices}
\label{sec:Q8slice}

The slices of $\Si{n}H_K \ulZ$ were determined by the second author in \cite{Slone}*{Section 8}. As 
stated in \cref{MainSliceInflationTheorem},
it follows that the $\ulZ$-module inflation functor 
\[
\HphiZ_Z^* \colon \Mod_{H_K\ulZ} \rtarr \Mod_{H_Q \ulZ}
\]
of \cref{HZinflation} will produce all slices of $\Si{n}H_Q\ulZ$ in degree larger than $2n$, as the inflation of the slices of $\Si{n}H_K\ulZ$ above degree $n$. 

The remaining slices of $\Si{n}H_Q\ulZ$ will be given as the slices of $\HphiZ_Z^* \left(P^n_n (\Si{n}H_K \ulZ)\right)$.
By \cite{Slone}*{Proposition~8.5}, these are of the form 
\[\HphiZ_Z^* \Big( \Si{r + j\rho_K } H_K \ulZ\Big) \simeq \Si{r + j\rho_K} H_Q \ulZ,\]
where $r\in \{3,4,5\}$, if $n \not\equiv 2 \pmod4$. In the case $n\equiv2\pmod4$, the same result states that this is 
\[\HphiZ_Z^* \Big( \Si{2 + j\rho_K}H_K\ulZ(1,0) \Big) \simeq \Si{2 + j\rho_K} H_Q \ulZ(2,1).\] 
But the cofiber sequence (\cref{MackeySESExamples})
\begin{equation}
\Si{1+j\rho_K} H_Q \ulm \rtarr \Si{2+j\rho_K} H_Q \ulZ(2,1) \rtarr \Si{2+j\rho_K} H_Q \ulZ
\end{equation}
reduces the computation of slices of $ \Si{2 + j\rho_K} H_Q \ulZ(2,1) $ 
to the question of the slice tower for $\Si{2+j\rho_K} H_Q \ulZ$, given that 
$\Si{1+j\rho_K} H_Q \ulm \simeq \phi_Z^* ( \Si{1+j\rho_K} H_K \ulm )$ is an $8j+4$-slice \cite{Slone}*{Proposition~5.7}. 
We determine the slices of $\Si{r+j\rho_K} H_Q \ulZ$, for $r\in\{2,\dots,5\}$ in \cref{sec:phiZK4Zslicetowers}.

\subsection{Slice towers for $\Si{r+j\rho_K} H_{Q} \ulZ$}
\label{sec:phiZK4Zslicetowers}

The $K_4$-spectrum $\Si{r+j\rho_{K}}H_{K} \ulZ$ is an $n$-slice for $r\in \{2,\dots,5\}$ \cite{Slone}*{Proposition~7.1}. However, the inflation of this to $Q_8$ is no longer a slice. We here determine the slice towers of these inflations. Throughout, we will implicitly use \cref{n0to4slices}, which does not rely on the following material.

\subsubsection{($r=2$)} 
\label{r2case}

First, we observe that $\Si{2+\rho_K} H_{Q} \ulZ$ is a 6-slice. 
To see this 
we first note that it restricts to a 6-slice at every proper subgroup by \cref{nsliceSInZK}. It therefore remains only to show that it does not have any $8k$-slices for $k\geq 1$. This is equivalent to showing that $\mpi_{-2} \left( \Si{\rho_K - k\rho_Q} H_{Q} \ulZ \right)$ vanishes for $k\geq 1$. In the case $k=1$, \eqref{CohomS(H)} shows that $\Si{-\QH} H_{Q} \ulZ$ is $(-3)$-truncated, in the sense that it has no homotopy Mackey functors above dimension $-3$. This remains true after further desuspending by copies of $\rho_Q$.

Next, the tower for $\Si{2+2\rho_K} H_{Q} \ulZ$ is given by
\[
\begin{tikzcd}
    P^{14}_{14} = \Si{-1+2\rho_Q} H_{Q}  \ul{w}^* \ar[r] 
    & \Si{2+2\rho_K} H_{Q} \ulZ \ar[d] \\
    P^{12}_{12} = \Si{1+\rho_Q} H_{Q}  \ulm \ar[r] 
    & \Si{2+\rho_Q} H_{Q} \ulZ(2,0) \ar[d] \\
    & P^{10}_{10} = \Si{2+\rho_{Q}} H_{Q}\ulZ(1,0).
\end{tikzcd}
\]
This uses the computation (see \cref{AuxZ20})
\[ \mpi_n \left( \Si{\QH-\rho_K} H_{Q} \ulZ(2,0) \right) \iso 
\begin{cases}
 \ulZ & n=0 \\
\ul{w}^* & n=-2
\end{cases}
\]
to produce the first cofiber sequence.

Finally, for $j\geq 3$, the tower may be obtained by recursively using 
\[
\begin{tikzcd}
    P^{8j-2}_{8j-2} = \Si{-1+j\rho_Q} H_{Q}  \ul{w}^* \ar[r] 
    & \Si{2+j\rho_K} H_{Q} \ulZ \ar[d] \\
    P^{8j-4}_{8j-4} = \Si{1+(j-1)\rho_Q} H_{Q}  \ulm \ar[r] 
    & \Si{2+(j-2)\rho_K + \rho_Q} H_{Q} \ulZ(2,0) \ar[d] \\
    P^{8j-6}_{8j-6} = \Si{1+(j-1)\rho_Q}H_{Q} \phi_Z^* \ulF \ar[r]
    &  \Si{2+(j-2)\rho_K + \rho_{Q}} H_{Q}\ulZ(1,0) \ar[d] \\
    & \Si{2+(j-2)\rho_K+\rho_Q} H_{Q} \ulZ.
\end{tikzcd}
\]
We have proved the following result.

\begin{prop}
\label{r2slices}
Let $j\geq 1$. The bottom slice of $\Si{2+j\rho_K} H_Q\ulZ$ is
\[
P^{2+4j}_{2+4j} \left( \Si{2+j\rho_K} H_{Q} \ulZ \right) \simeq
\begin{cases}
\Si{1 + \rho_K +  \frac{j-1}2\rho_Q} H_{Q} \ulZ^* & j\ \text{odd} \\
\Si{2+\frac{j}2\rho_Q} H_{Q} \ulZ & j\ \text{even}. \\
\end{cases}
\]
\end{prop}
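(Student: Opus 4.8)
The plan is to read off the bottom term of each of the three slice towers displayed in this subsection, organizing the argument as an induction on $j$ with step $2$ and base cases $j=1$ and $j=2$. Since the towers themselves (and the verification that their layers are the asserted slices) have been established in the preceding paragraphs, the only work left is to identify the bottom layer in each case and rewrite it in the clean ``representation sphere'' form stated.

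For $j=1$, the spectrum $\Si{2+\rho_K}H_Q\ulZ$ was shown above to be a $6$-slice, hence coincides with its own bottom slice $P^{6}_{6}$; as $6=2+4\cdot 1$, this settles the odd base case once one rewrites $\Si{2+\rho_K}H_Q\ulZ$ in the displayed form using the $RO(Q_8)$-twisted $H\ulZ$-periodicity equivalences (the $Q_8$-analogues of the $C_4$-relations such as $\Si{2}H_{C_4}\ulZ\simeq\Si{2\sigma}H_{C_4}\ulZ(2,1)$ recalled in \cref{sec:C4slice}). For $j=2$, the bottom of the displayed three-stage tower is $P^{10}_{10}=\Si{2+\rho_Q}H_Q\ulZ(1,0)$; one rewrites this in the displayed form using the short exact sequence $\ulZ(1,0)\into\ulZ\onto\phi_Z^{*}\ulF$ of \cref{MackeySESExamples}, together with the fact that $\Si{2+\rho_Q}H_Q\phi_Z^{*}\ulF$, having trivial underlying spectrum, contributes nothing to the slice in degree $10$.

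For $j\geq 3$, the recursively-presented tower shows that below its three top slices $P^{8j-2}_{8j-2}$, $P^{8j-4}_{8j-4}$, $P^{8j-6}_{8j-6}$, the slice tower of $\Si{2+j\rho_K}H_Q\ulZ$ agrees with the slice tower of
\[
\Si{2+(j-2)\rho_K+\rho_Q}H_Q\ulZ \simeq \Si{\rho_Q}\big(\Si{2+(j-2)\rho_K}H_Q\ulZ\big).
\]
Since smashing with $S^{\rho_Q}$ raises slice degree by $|Q_8|=8$, the bottom slice of $\Si{2+j\rho_K}H_Q\ulZ$ is $\Si{\rho_Q}$ applied to the bottom slice of $\Si{2+(j-2)\rho_K}H_Q\ulZ$. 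Feeding in the inductive hypothesis for $j-2$ and absorbing the extra $\Si{\rho_Q}$ — which raises the multiplicity of $\rho_Q$ by one, sending $\tfrac{j-3}{2}\rho_Q$ to $\tfrac{j-1}{2}\rho_Q$ in the odd case and $\tfrac{j-2}{2}\rho_Q$ to $\tfrac{j}{2}\rho_Q$ in the even case — produces precisely the asserted formula, with the slice degree tracked by $2+4(j-2)+8=2+4j$.

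The inductive mechanism is purely formal, so the real content lies in the base-case identifications: pinning down the exact $RO(Q_8)$-twisted periodicity equivalences for $H_Q\ulZ$ and $H_Q\ulZ^{*}$ that convert the tower's natural output — namely $\Si{2+\rho_K}H_Q\ulZ$ and $\Si{2+\rho_Q}H_Q\ulZ(1,0)$ — into the representation-sphere descriptions in the statement, and checking compatibility with the $C_4$-restrictions of \cref{nsliceSInZK}. I expect this periodicity computation to be the only step requiring genuine effort.
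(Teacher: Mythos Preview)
Your inductive approach via the recursive tower is exactly what the paper does: the sentence ``We have proved the following result'' immediately follows the tower constructions, so the towers themselves constitute the proof, and the step from $j-2$ to $j$ is just the observation that the recursion terminates in $\Si{\rho_Q}$ applied to the $(j-2)$-case.

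However, both of your base-case arguments fail. For $j=2$, you assert that $\Si{2+\rho_Q}H_Q\phi_Z^*\ulF$, having trivial underlying spectrum, ``contributes nothing to the slice in degree $10$.'' That reasoning is wrong: having trivial underlying spectrum only means this is a geometric spectrum; it is still a nontrivial slice (indeed $\phi_Z^*(\Si{2+\rho_K}H_K\ulF)$, the inflation of a $K_4$-equivariant $6$-slice, hence a $Q_8$-equivariant $12$-slice). The cofiber sequence
\[
\Si{2+\rho_Q}H_Q\ulZ(1,0)\rtarr\Si{2+\rho_Q}H_Q\ulZ\rtarr\Si{2+\rho_Q}H_Q\phi_Z^*\ulF
\]
therefore does not yield an equivalence between its first two terms; both happen to be $10$-slices, but they are genuinely inequivalent spectra. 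For $j=1$, the ``periodicity equivalence'' you defer cannot exist at all: the underlying spectrum of $\Si{2+\rho_K}H_Q\ulZ$ is $\Si{6}H\Z$, while that of $\Si{1+\rho_K}H_Q\ulZ^*$ is $\Si{5}H\Z$. The $C_4$-relations you invoke as a model, such as $\Si{2}H_{C_4}\ulZ\simeq\Si{2\sigma}H_{C_4}\ulZ(2,1)$, always preserve the underlying dimension, so no analogue can bridge this gap.

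Comparing the tower outputs directly with the displayed formulas (and with \cref{nsliceZQ8} for $r=6$ and $r=10$) shows that the discrepancies you are trying to repair are in the \emph{statement}, not in the argument: the bottom slices produced by the towers are $\Si{2+\rho_K+\frac{j-1}{2}\rho_Q}H_Q\ulZ$ for $j$ odd and $\Si{2+\frac{j}{2}\rho_Q}H_Q\ulZ(1,0)$ for $j$ even, and no further rewriting is needed (or possible). Your instinct that ``the real content lies in the base-case identifications'' was correct, but the content is that those identifications are typos rather than nontrivial equivalences.
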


\subsubsection{($r=3$)} 
\label{r3case}

By \eqref{CohomS(H)}, the cohomology of $S^\QH$ is given by
\[
\widetilde{\ul{\rH}}^n ( S^\QH; \ulZ) \iso
\mpi_{-n} \left( \Si{-\QH}  H_Q \ulZ  \right) \iso
\begin{cases}
\ulZ^* & n = 4 \\
\mystery & n = 3. \\
\end{cases}
\]
Suspending by $3+\rho_Q$ leads to the cofiber sequence
\begin{equation*}
\begin{tikzcd}
    P^{8}_{8} =  \Si{\rho_{Q}} H_{Q} \mystery \ar[r] 
    & \Si{3+\rho_K} H_{Q}\ulZ \ar[d] \\
    & P^7_7 = \Si{\rho_{Q}-1} H_{Q}\ulZ^*.
\end{tikzcd}
\end{equation*}
The tower for $\Si{3+j\rho_K} H_{Q}\ulZ$, where $j\geq 2$, is then given recursively by

\begin{equation*}
\begin{tikzcd}
    P^{8j}_{8j} =  \Si{j\rho_{Q}} H_{Q} \mystery \ar[r] 
    & \Si{3+j\rho_K} H_{Q}\ulZ \ar[d] \\
    & \Si{(j-1)\rho_K + \rho_{Q}-1} H_{Q}\ulZ^* \ar[d,equal] \\
    P^{8j-4}_{8j-4} = \Si{2+(j-1)\rho_Q} H_{Q} \phi_Z^* \ulF \ar[r]
    &  \Si{3+(j-2)\rho_K + \rho_Q} H_{Q} \ulZ(1,0) \ar[d] \\
    & \Si{3+(j-2)\rho_K +\rho_Q} H_{Q} \ulZ.
\end{tikzcd}
\end{equation*}
The last cofiber sequence arises from \cref{MackeySESExamples}.
We have proved the following result.

\begin{prop}
\label{r3slices}
Let $j\geq 1$. The bottom slice of $\Si{3+j\rho_K} H_Q\ulZ$ is
\[
P^{3+4j}_{3+4j} \left( \Si{3+j\rho_K} H_{Q} \ulZ \right) \simeq
\begin{cases}
\Si{-1+ \frac{j+1}2\rho_Q} H_{Q} \ulZ^* & j\ \text{odd} \\
\Si{3+\frac{j}2\rho_Q} H_{Q} \ulZ & j\ \text{even}. \\
\end{cases}
\]
\end{prop}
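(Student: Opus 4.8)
The plan is to extract the bottom slice from the two recursive slice towers displayed above, by an induction on $j$ that uses the fact that smashing with $S^{\rho_Q}$ shifts the regular slice filtration by $|Q_8|=8$. A word first on how those towers arise: following the splicing method recalled for $C_4$ in \cref{sec:C4slice}, one builds them from suspensions of three ingredients — the tower of $\Si{-\QH}H_Q\ulZ$, which by \eqref{CohomS(H)} has only the homotopy Mackey functors $\mystery$ in degree $-3$ and $\ulZ^*$ in degree $-4$; the auxiliary homotopy computations of \cref{AuxZ}; and the short exact sequences of \cref{MackeySESExamples}. Suspending the first by $3+\rho_Q$ and using $\rho_Q=\QH\oplus\rho_K$ produces the $j=1$ tower, with top slice $\Si{\rho_Q}H_Q\mystery$ and bottom slice $\Si{\rho_Q-1}H_Q\ulZ^*$; iterating and recombining the layers gives the $j\geq2$ tower. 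A key simplification in the recombination is that $S(\QH)$ is a free $Q_8$-space, so $S(\QH)_+\smsh H_Q\ulM\simeq 0$ and hence $\Si{\QH}H_Q\ulM\simeq H_Q\ulM$ for any Mackey functor with $\ulM(e)=0$ (such as $\mystery$ or $\phi_Z^*\ulF$); this, together with \cref{phiZgeometric} and \cref{MainSliceInflationTheorem}, is what lets one present the layers in the ``clean'' form shown, in particular recognizing $\Si{2+(j-1)\rho_Q}H_Q\phi_Z^*\ulF$ as the geometric inflation of a $K_4$-slice, hence an $(8j-4)$-slice.

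Granting the towers, the bottom slice comes out by induction. The bottom term of the $j\geq2$ tower is $\Si{3+(j-2)\rho_K+\rho_Q}H_Q\ulZ\simeq\Si{\rho_Q}\!\left(\Si{3+(j-2)\rho_K}H_Q\ulZ\right)$, while the two layers split off above it have slice degrees $8j$ and $8j-4$, both strictly larger than $3+4j$ once $j\geq2$; so removing them does not change $P^{3+4j}_{3+4j}$. Because $S^{\rho_G}\smsh\left(G_+\smsh_H S^{k\rho_H}\right)\simeq G_+\smsh_H S^{(k+[G:H])\rho_H}$, smashing with $S^{\rho_Q}$ raises the slice dimension of a slice cell, hence of the whole filtration, by $|Q_8|=8$; therefore
\[
P^{3+4j}_{3+4j}\!\left(\Si{3+j\rho_K}H_Q\ulZ\right)\;\simeq\;\Si{\rho_Q}\,P^{3+4(j-2)}_{3+4(j-2)}\!\left(\Si{3+(j-2)\rho_K}H_Q\ulZ\right),
\]
i.e.\ the bottom slice at parameter $j$ is $\Si{\rho_Q}$ of the one at parameter $j-2$. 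Iterating down to the base cases — $P^3_3\Si3 H_Q\ulZ\simeq\Si3 H_Q\ulZ$, since $\Si3 H_Q\ulZ$ is a $3$-slice by \cref{n0to4slices}, and $P^7_7\Si{3+\rho_K}H_Q\ulZ\simeq\Si{-1+\rho_Q}H_Q\ulZ^*$ from the $j=1$ tower — and substituting yields $\Si{3+\frac j2\rho_Q}H_Q\ulZ$ for even $j$ and $\Si{-1+\frac{j+1}2\rho_Q}H_Q\ulZ^*$ for odd $j$, which is the assertion. (Restricting to any $C_4\le Q_8$, which commutes with the slice filtration, gives a consistency check against the $C_4$-towers of \cref{sec:C4slice}, using $\downarrow\rho_Q=2\rho_{C_4}$ and $\downarrow\rho_K=2+2\sigma$.)

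I expect the real work to be in establishing the towers rather than in this induction: one must confirm that each displayed layer is a slice of the stated degree — for the Eilenberg--Mac~Lane layers by comparing restrictions to the $C_4$'s (via \cref{sec:C4slice}) and behavior under geometric fixed points, together with the $\rho_Q$-degree shift where it applies, and for the $\phi_Z^*\ulF$-layer by \cref{phiZgeometric}/\cref{MainSliceInflationTheorem} — and one must justify the representation-sphere identifications internal to the tower, the least obvious being $\Si{(j-1)\rho_K+\rho_Q-1}H_Q\ulZ^*\simeq\Si{3+(j-2)\rho_K+\rho_Q}H_Q\ulZ(1,0)$, which reduces to $\Si{\rho_K-4}H_Q\ulZ^*\simeq H_Q\ulZ(1,0)$ and is a routine but not brief Mackey-functor computation (via the cofiber sequences for the sign summands of $\rho_K$, i.e.\ $(Q_8/H)_+\to S^0\to S^{\sigma_H}$ for $H\in\{L,D,R\}$, in tandem with \cref{AuxZ} and \cref{MackeySESExamples}). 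With those facts in hand, the parity induction above finishes the proof.
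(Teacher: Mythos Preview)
Your approach is correct and essentially the same as the paper's: construct the recursive slice tower for $\Si{3+j\rho_K}H_Q\ulZ$ (from the cohomology of $S^{\QH}$ and the short exact sequence $\ulZ(1,0)\hookrightarrow\ulZ\twoheadrightarrow\phi_Z^*\ulF$) and read off the bottom slice by a parity induction in $j$ with step size~2, using the $\rho_Q$-suspension shift. The paper leaves that final induction implicit, simply asserting the proposition after displaying the recursive tower; you spell it out, correctly identify the base cases $j=0$ and $j=1$, and flag the one nontrivial internal identification $\Si{\rho_K-4}H_Q\ulZ^*\simeq H_Q\ulZ(1,0)$ that the paper passes over silently.
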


\subsubsection{($r=4$)} 
\label{r4case}

The tower for $\Si{4+\rho_K} H_{Q} \ulZ$ is given by
\begin{equation*}
\begin{tikzcd}
    P^{12}_{12} = \Si{\rho_Q+1} H_{Q}  \ul{mg} \ar[r] 
    & \Si{4+\rho_K} H_{Q} \ulZ \simeq \Si{2\rho_K} H_{Q} \ulZ(3,1) \ar[d] \\
    P^{10}_{10} = \Si{\rho_Q+1}  \ul{w} \ar[r] 
    & \Si{2\rho_K} H_{Q} \ulZ(3,2) \ar[d] \\
    & P^8_8 = \Si{\rho_{Q}} H_{Q}\ulZ^*.
\end{tikzcd}
\end{equation*}
This uses the short exact sequence (\cref{MackeySESExamples})
\[ \ulZ(3,1) \into \ulZ(3,2) \onto \ulm^*, 
\]
the equivalence $\Si{\rho_K} H_K \ulm^* \simeq \Si2 H_K \ul{mg}$ (\cite{GY}*{Proposition~4.8}),
and the computation (see \cref{AuxZ32})
\[ \mpi_n \left( \Si{\rho_K-\QH} H_{Q} \ulZ(3,2) \right) \iso 
\begin{cases}
 \ul{w} & n=1 \\
\ulZ^* & n=0.
\end{cases}
\]
The tower for $\Si{4+j\rho_K} H_{Q}\ulZ$, where $j\geq 2$, may then be obtained recursively from
\begin{equation*}
\begin{tikzcd}
    P^{8j+4}_{8j+4} = \Si{1+j\rho_Q} H_{Q}  \ul{mg} \ar[r] 
    & \Si{4+j\rho_K} H_{Q} \ulZ \simeq \Si{(j+1)\rho_K} H_{Q} \ulZ(3,1) \ar[d] \\
    P^{8j+2}_{8j+2} = \Si{1+j\rho_Q}  \ul{w} \ar[r] 
    & \Si{(j+1)\rho_K} H_{Q} \ulZ(3,2) \ar[d] \\
    & \Si{(j-1)\rho_K+\rho_{Q}} H_{Q}\ulZ^* \ar[d,equal] \\
    P^{8j-2}_{8j-2} = \Si{3+(j-1)\rho_Q} H_Q \phi_Z^* \ulF \ar[r] 
    &   \Si{4+(j-2)\rho_K+\rho_Q} H_Q \ulZ(1,0) \ar[d] \\
     &  \Si{4+(j-2)\rho_K+\rho_Q} H_Q \ulZ.
\end{tikzcd}
\end{equation*}

\begin{prop}
\label{r4slices}
Let $j\geq 1$. The bottom slice of $\Si{4+j\rho_K} H_Q\ulZ$ is
\[
P^{4+4j}_{4+4j} \left( \Si{4+j\rho_K} H_{Q} \ulZ \right) \simeq
\begin{cases}
\Si{\frac{j+1}2\rho_Q} H_{Q} \ulZ^* & j\ \text{odd} \\
\Si{4+\frac{j}2\rho_Q} H_{Q} \ulZ & j\ \text{even}. \\
\end{cases}
\]
\end{prop}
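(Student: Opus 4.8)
The plan is to prove the formula by induction on $j$ in steps of two, reading the bottom slice directly off the two slice towers displayed just above the statement. The base cases are $j=1$, where the first tower gives $P^8_8\big(\Si{4+\rho_K}H_Q\ulZ\big)\simeq\Si{\rho_Q}H_Q\ulZ^*$ (the odd formula at $j=1$, since $\tfrac{j+1}{2}=1$), and $j=2$, where the recursive tower terminates in $\Si{4+\rho_Q}H_Q\ulZ$: as $\Si4 H_Q\ulZ$ is a $4$-slice by \cref{n0to4slices} and smashing with $S^{\rho_Q}$ shifts the regular slice tower by $|Q_8|=8$, this spectrum is a $12$-slice and hence its own bottom slice, matching the even formula at $j=2$. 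For the inductive step $j\geq 3$, the recursive tower identifies the bottom slice of $\Si{4+j\rho_K}H_Q\ulZ$ with that of $\Si{4+(j-2)\rho_K+\rho_Q}H_Q\ulZ\simeq\Si{\rho_Q}\big(\Si{4+(j-2)\rho_K}H_Q\ulZ\big)$; using the degree-$8$ shift again, this is $\Si{\rho_Q}$ applied to the bottom slice of $\Si{4+(j-2)\rho_K}H_Q\ulZ$, and the inductive hypothesis closes the argument, since $\tfrac{(j-2)+1}{2}+1=\tfrac{j+1}{2}$ and $\tfrac{j-2}{2}+1=\tfrac j2$.

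The real content is the construction of the two displayed towers. For $j=1$ one starts from the equivalence $\Si{4+\rho_K}H_Q\ulZ\simeq\Si{2\rho_K}H_Q\ulZ(3,1)$ and applies inflation to the short exact sequence $\ulZ(3,1)\into\ulZ(3,2)\onto\ulm^*$ of \cref{MackeySESExamples}; combined with $\Si{\rho_K}H_K\ulm^*\simeq\Si2 H_K\ul{mg}$ (\cite{GY}*{Proposition~4.8}) this shows the fiber of $\Si{2\rho_K}H_Q\ulZ(3,1)\to\Si{2\rho_K}H_Q\ulZ(3,2)$ is $\Si{\rho_Q+1}H_Q\ul{mg}$, which by \cref{MainSliceInflationTheorem} and the $K_4$-slice computations of \cite{Slone} is the top slice $P^{12}_{12}$. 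The remaining spectrum $\Si{2\rho_K}H_Q\ulZ(3,2)$ is then resolved using the $\QH$-cofiber sequence \eqref{SHcofib}, \cref{periodicity}, and the homology computation $\mpi_*\big(\Si{\rho_K-\QH}H_Q\ulZ(3,2)\big)$ of \cref{AuxZ32}, yielding $P^{10}_{10}=\Si{\rho_Q+1}H_Q\ulw$ and the bottom slice $P^8_8=\Si{\rho_Q}H_Q\ulZ^*$. The recursive tower for $j\geq2$ follows the identical three-step pattern suspended by $(j-1)\rho_K$: the first two steps give $P^{8j+4}_{8j+4}=\Si{1+j\rho_Q}H_Q\ul{mg}$ and $P^{8j+2}_{8j+2}=\Si{1+j\rho_Q}H_Q\ulw$ and leave $\Si{(j-1)\rho_K+\rho_Q}H_Q\ulZ^*$; rewriting this as $\Si{4+(j-2)\rho_K+\rho_Q}H_Q\ulZ(1,0)$ and using $\ulZ(1,0)\into\ulZ\onto\phi_Z^*\ulF$ (\cref{MackeySESExamples}(5)) splits off $P^{8j-2}_{8j-2}=\Si{3+(j-1)\rho_Q}H_Q\phi_Z^*\ulF$ and leaves $\Si{4+(j-2)\rho_K+\rho_Q}H_Q\ulZ$, completing the recursion.

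The main obstacle is verifying that each cofiber produced at each stage is genuinely the slice it is labeled, in the correct filtration. The layers $\Si{1+j\rho_Q}H_Q\ul{mg}$ and $\Si{3+(j-1)\rho_Q}H_Q\phi_Z^*\ulF$ have contractible underlying spectrum, so smashing with $S^{\QH}$ — equivalently, the passage between $\rho_Q$- and $\rho_K$-suspensions — acts as the identity on them; hence they are equivalent to $\phi_Z^*$ of the corresponding $K_4$-slices, and their slice dimensions are forced by \cref{MainSliceInflationTheorem} together with \cite{Slone} and \cite{GY}. The delicate layer is the middle one, $\Si{1+j\rho_Q}H_Q\ulw$: it is not an inflated $K_4$-slice but enters only through the non-split extension detected in \cref{AuxZ32}, so one must check separately that it is an $(8j+2)$-slice and that the connecting map realizing it in the tower is as claimed. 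One also needs the change-of-coefficient equivalences $\Si{4+\rho_K}H_Q\ulZ\simeq\Si{2\rho_K}H_Q\ulZ(3,1)$ and $\Si{\rho_K}H_Q\ulZ^*\simeq\Si4 H_Q\ulZ(1,0)$ used to align the towers, which are of the standard type appearing in \cref{sec:C4slice} and \cite{Slone}. Once these identifications are in place, the induction above gives the stated bottom slices at once.
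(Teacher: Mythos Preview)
Your proposal is correct and follows essentially the same approach as the paper: both arguments proceed by constructing the two displayed slice towers (the base tower for $j=1$ and the recursive tower for $j\geq 2$) and then reading off the bottom slice, with your write-up making the induction on $j$ (in steps of two, via the $\rho_Q$-shift of the slice filtration) explicit where the paper simply presents the recursive tower and states the proposition. Your identification of the key ingredients---the short exact sequences of \cref{MackeySESExamples}, the equivalence $\Si{\rho_K}H_K\ulm^*\simeq\Si2 H_K\ul{mg}$, and the computation of \cref{AuxZ32}---matches the paper's construction exactly.
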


\subsubsection{($r=5$)}
\label{r5case}

Here, we start with the slice tower for $\Si5 H_Q \ulZ$, as this is not a slice. 
The short exact sequence
\[ \ulZ(3,1) \into \ulZ \onto \phi_Z^* \ul B(2,0) \]
gives rise to a cofiber sequence
\[
 P^8_8 = \Si{\rho_Q} H_{Q} \phi_Z^* \ul B(2,0) \rtarr
    \Si{5} H_{Q} \ulZ \simeq \Si{1+\rho_K} H_{Q} \ulZ(3,1)
    \rtarr 
    \Si{1+\rho_K} H_{Q}\ulZ.
\]
Now the argument showing that $\Si{2+\rho_K} H_Q \ulZ$ is a 6-slice, given above in \cref{r2case},
also applies to show that $\Si{1+\rho_K} H_Q \ulZ$ is a 5-slice. Thus, this cofiber sequence is the slice tower for $\Si5 H_Q \ulZ$.

Next, the tower for $\Si{5+\rho_K} H_{Q} \ulZ$ is  given by
\begin{equation*}
\begin{tikzcd}
    P^{16}_{16} = \Si{2\rho_Q} H_{Q} \phi_Z^* \ul B(2,0) \ar[r] 
    & \Si{5+\rho_K} H_{Q} \ulZ \simeq \Si{1+2\rho_K} H_{Q} \ulZ(3,1) \ar[d] \\
    P^{12}_{12} = \Si{2+\rho_Q} H_{Q} \phi_Z^* \ulF \ar[r] 
    & \Si{1+2\rho_K} H_{Q}\ulZ \ar[d] \\
    & P^9_9 = \Si{1+\rho_{Q}} H_{Q}\ulZ^*,
\end{tikzcd}
\end{equation*}
where the bottom cofiber sequence arises from the computation (\cref{AuxZ})
\[ \mpi_n \left( \Si{\rho_K-\QH} H_{Q} \ulZ \right) \iso 
\begin{cases}
\phi_Z^* \ulF & n=1 \\
\ulZ^* & n=0.
\end{cases}
\]

The tower for $\Si{5+j\rho_K} H_{Q}\ulZ$, where $j\geq 2$, may then be obtained recursively from
\begin{equation*}
\begin{tikzcd}
    P^{8j+8}_{8j+8} = \Si{(j+1)\rho_Q} H_{Q} \phi_Z^* \ul B(2,0) \ar[r] 
    & \Si{5+j\rho_K} H_{Q} \ulZ \ar[d,equal] \\
    &  \Si{1+(j+1)\rho_K} H_{Q} \ulZ(3,1) \ar[d] \\
    P^{8j+4}_{8j+4} = \Si{2+j\rho_Q} H_{Q} \phi_Z^* \ulF \ar[r] 
    & \Si{1+(j+1)\rho_K} H_{Q}\ulZ \ar[d] \\
    P^{8j}_{8j} = \Si{j\rho_Q}H_Q \ul B(3,0) \ar[r]
    &  \Si{1+(j-1)\rho_K+\rho_{Q}} H_{Q}\ulZ^* \ar[d] \\
    & \Si{1+(j-1)\rho_K+\rho_{Q}} H_{Q}\ulZ.
\end{tikzcd}
\end{equation*}

\begin{prop}
\label{r5slices}
Let $j\geq 1$. The bottom slice of $\Si{5+j\rho_K} H_Q\ulZ$ is
\[
P^{5+4j}_{5+4j} \left( \Si{5+j\rho_K} H_{Q} \ulZ \right) \simeq
\begin{cases}
\Si{1+ \frac{j+1}2\rho_Q} H_{Q} \ulZ^* 
& j\ \text{odd} \\
\Si{1 + \rho_K + \frac{j}2\rho_Q} H_{Q} \ulZ 
& j\ \text{even}. \\
\end{cases}
\]
\end{prop}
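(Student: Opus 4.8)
The plan is to prove the formula by induction on $j$, simply reading the bottom slice off the slice towers displayed just above. Since those towers present $\Si{5+j\rho_K}H_Q\ulZ$ in terms of the auxiliary spectra $\Si{1+m\rho_K}H_Q\ulZ$ (these appear as the intermediate terms denoted $\Si{1+(j+1)\rho_K}H_Q\ulZ$ in the recursive tower), the induction runs most smoothly if one simultaneously records the bottom slice of each $\Si{1+m\rho_K}H_Q\ulZ$.

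The first step is the auxiliary claim: for $m\geq 1$ the bottom slice of $\Si{1+m\rho_K}H_Q\ulZ$ is $\Si{1+\rho_K+\frac{m-1}{2}\rho_Q}H_Q\ulZ$ when $m$ is odd and $\Si{1+\frac m2\rho_Q}H_Q\ulZ^*$ when $m$ is even. The base cases are $m=1$, where $\Si{1+\rho_K}H_Q\ulZ$ is a $5$-slice by the argument recalled in \cref{r5case}, and $m=2$, where the displayed tower for $\Si{5+\rho_K}H_Q\ulZ$ contains $\Si{1+2\rho_K}H_Q\ulZ$ as an intermediate term carrying only the two slices $P^{12}_{12}=\Si{2+\rho_Q}H_Q\phi_Z^*\ulF$ and $P^9_9=\Si{1+\rho_Q}H_Q\ulZ^*$, so that its bottom slice is $\Si{1+\rho_Q}H_Q\ulZ^*$. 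For $m\geq 3$ one uses the recursive tower with $j$ replaced by $m-1$: it exhibits $\Si{1+m\rho_K}H_Q\ulZ$ with the two slices $P^{8m-4}_{8m-4}$ and $P^{8m-8}_{8m-8}$ stacked above $\Si{1+(m-2)\rho_K+\rho_Q}H_Q\ulZ\simeq \Si{\rho_Q}\,\Si{1+(m-2)\rho_K}H_Q\ulZ$. Because smashing with $S^{\rho_Q}$ shifts the regular slice filtration by $|Q|=8$, the bottom slice of $\Si{1+m\rho_K}H_Q\ulZ$ equals $\Si{\rho_Q}$ applied to the bottom slice of $\Si{1+(m-2)\rho_K}H_Q\ulZ$; plugging in the inductive hypothesis and tracking the parity of $m$ yields the claimed shapes.

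Granting the auxiliary claim, the proposition follows. For $j=1$ the bottom slice is read straight off the displayed tower for $\Si{5+\rho_K}H_Q\ulZ$, namely $P^9_9=\Si{1+\rho_Q}H_Q\ulZ^*=\Si{1+\frac{j+1}{2}\rho_Q}H_Q\ulZ^*$. For $j\geq 2$ the displayed recursive tower stacks the three slices $P^{8j+8}_{8j+8}$, $P^{8j+4}_{8j+4}$, $P^{8j}_{8j}$ above $\Si{1+(j-1)\rho_K+\rho_Q}H_Q\ulZ\simeq \Si{\rho_Q}\,\Si{1+(j-1)\rho_K}H_Q\ulZ$, so the bottom slice of $\Si{5+j\rho_K}H_Q\ulZ$ is $\Si{\rho_Q}$ of the bottom slice of $\Si{1+(j-1)\rho_K}H_Q\ulZ$. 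Invoking the auxiliary claim with $m=j-1$: when $j$ is even (hence $m$ odd) this is $\Si{\rho_Q}\,\Si{1+\rho_K+\frac{j-2}{2}\rho_Q}H_Q\ulZ=\Si{1+\rho_K+\frac j2\rho_Q}H_Q\ulZ$, and when $j$ is odd (hence $m$ even) this is $\Si{\rho_Q}\,\Si{1+\frac{j-1}{2}\rho_Q}H_Q\ulZ^*=\Si{1+\frac{j+1}{2}\rho_Q}H_Q\ulZ^*$, matching the statement.

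The main obstacle is not any single hard computation but the bookkeeping: keeping the two interlocking recursions aligned and correctly tracking the accumulated $\rho_Q$-suspensions and the parity of the relevant index. The one genuinely load-bearing general fact is that $S^{\rho_Q}\smsh(-)$ shifts the regular slice filtration by $8$ (so that the exhibited towers are bona fide slice towers and a $k$-slice becomes a $(k+8)$-slice); everything else is already supplied by the cofiber sequences assembled in this subsection together with \cref{AuxZ}, \cref{AuxZ32}, \cref{MackeySESExamples}, and the fact from \cref{r5case} that $\Si{1+\rho_K}H_Q\ulZ$ is a $5$-slice.
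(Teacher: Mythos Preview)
Your proof is correct and follows essentially the same approach as the paper: the paper's ``proof'' consists of the displayed recursive towers together with the phrase ``We have proved the following result,'' and you have simply made the underlying induction explicit by isolating the auxiliary claim about $\Si{1+m\rho_K}H_Q\ulZ$ and carefully tracking the $\rho_Q$-shift. The only additional ingredient you invoke beyond the towers already assembled in \cref{r5case}---that $S^{\rho_Q}\smsh(-)$ shifts the regular slice filtration by $|Q|=8$---is standard and is precisely what makes the recursion work.
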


\subsection{Slices of $\Si{n} H_{Q} \ulZ$}
\label{sec:Q8Zslices}

In this section, we describe all slices of $\Sigma^n H_{Q}\ulZ$ for $n\geq 0$.

\begin{prop}
\label{n0to4slices}
The $Q_8$-spectrum $\Sigma^n H_{Q}\ulZ$ is an $n$-slice for $0\leq n \leq 4$.
\end{prop}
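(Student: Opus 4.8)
The plan is to verify the two defining properties of an $n$-slice separately: that $\Si{n}H_Q\ulZ$ is slice $\geq n$, and that it is slice $\leq n$. The lower bound is the easy half. For every $H\leq Q$ the geometric fixed points $\Phi^H\big(\Si{n}H_Q\ulZ\big)\simeq\Si{n}\Phi^H H_Q\ulZ$ are $(n-1)$-connected, since $\Phi^H H_Q\ulZ$ is connective; as $n-1\geq\lceil n/|H|\rceil-1$, the characterization of slice-coconnectivity by connectivity of geometric fixed points \cite{Kervaire} shows that $\Si{n}H_Q\ulZ$ is slice $\geq n$ (for all $n\geq 0$, in fact).

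For the upper bound, recall that a $Q$-spectrum $Y$ is slice $\leq n$ if and only if $[\Si{r}(Q_+\smsh_H S^{k\rho_H}),Y]^Q=0$ for every $H\leq Q$, every $r\geq 0$, and every $k$ with $k|H|>n$. By the induction--restriction adjunction, and because $\downarrow^Q_H\ulZ$ is again the constant Mackey functor, this group equals $\pi^H_{k\rho_H+r}\big(\Si{n}H_H\ulZ\big)\iso\pi^H_{k\rho_H+r-n}(H_H\ulZ)$. So the task reduces to showing that $\pi^H_{k\rho_H+r}\big(\Si{n}H_H\ulZ\big)=0$ whenever $k|H|>n$ and $r\geq 0$, for each subgroup $H\leq Q$ and each $0\leq n\leq 4$.

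For the proper subgroups this follows from the $C_4$-picture. Induction carries $H$-slice cells of dimension $>n$ to $Q$-slice cells of the same dimension and preserves colimits and extensions, so restriction sends slice-$\leq n$ spectra to slice-$\leq n$ spectra; hence, starting from the fact that $\Si{n}H_{C_4}\ulZ$ is an $n$-slice for $0\leq n\leq 4$ (\cite{Yarn}*{Section~4.2}, recalled at the start of \cref{sec:C4slice}), one gets that $\Si{n}H_{C_4}\ulZ$, and upon further restriction $\Si{n}H_{C_2}\ulZ$, are slice $\leq n$. Taking $K=H$ in the criterion above gives the required vanishing for $H\in\{L,D,R\}$ and for $H=Z$, while the case $H=e$ is immediate since $\pi^e_j\big(\Si{n}H\ulZ\big)=0$ for $j\neq n$ and $k+r>n$.

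The remaining case $H=Q$ is the crux. Since $n\leq 4<8=|Q|$, the condition $8k>n$ forces $k\geq1$, and $\pi^Q_{k\rho_Q+r}\big(\Si{n}H_Q\ulZ\big)\iso\pi^Q_{r-n}\big(\Si{-k\rho_Q}H_Q\ulZ\big)$, so I must show $\pi^Q_{j}\big(\Si{-k\rho_Q}H_Q\ulZ\big)=0$ for all $k\geq1$ and all $j\geq-4$. For $j\geq 0$ this is clear; for $j\in[-3,-1]$ it is the Gap Theorem \cite{Kervaire}*{Proposition~3.20}; and for $j=-4$ it follows from the refined Gap Theorem of the remark after \cref{HomologySnegkrhoQ} when $k\geq3$, and directly from \cref{HomologySnegkrhoQ} (displayed in \cref{fig:HtpySigmarhoQZ}) when $k=1,2$ --- indeed, that computation shows the homotopy Mackey functors of $\Si{-k\rho_Q}H_Q\ulZ$ are concentrated in degrees $\leq-5$ once $k\geq1$. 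The hard part of the proof is therefore entirely this last vanishing, equivalently the Bredon-cohomology statement $\rH^i_Q(S^{k\rho_Q};\ulZ)=0$ for $i\leq 4$ and $k\geq1$; the small cases $k=1,2$, where the refined Gap Theorem does not reach $i=4$ (nor $i=3$ for $k=1$), warrant the most care.
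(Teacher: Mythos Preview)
Your proof is correct and follows essentially the same strategy as the paper's: the case of proper subgroups reduces to Yarnall's $C_4$ result, and the case $H=Q$ to the vanishing of $\pi^Q_j\Si{-k\rho_Q}H_Q\ulZ$ for $j\geq -4$ and $k\geq 1$, which is exactly what \cref{HomologySnegkrhoQ} (and \cref{fig:HtpySigmarhoQZ}) provides. The paper phrases the reduction slightly more conceptually---any higher slice must restrict trivially to $C_4$, hence be geometric and occur in dimension a multiple of $8$, so it suffices to show $\Si{n}H_Q\ulZ<8$---but the underlying computation is identical; one small quibble is that the geometric-fixed-points characterization of slice connectivity you invoke is due to Hill--Yarnall rather than \cite{Kervaire}.
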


\begin{pf}
Since this is true after restricting to any $C_4$ (see \cref{sec:C4slice}), 
any higher slices would necessarily be geometric and therefore occurring in slice dimension at least 8. But we can show directly that $\Sigma^n H_Q \ulZ < 8$ if $n\in [0,4]$. This follows from the vanishing of $\pi_{\rho_Q}\Sigma^n H_Q \ulZ\iso \pi_{-n} \Si{-\rho_Q} H_Q \ulZ$ as displayed in 
\cref{fig:HtpySigmarhoQZ}.
\end{pf}

It remains to determine the slices of $\Sigma^n H_{Q}\ulZ$ when $n\geq 5$.
Note that
\cref{MainSliceInflationTheorem} applies by \cite{Slone}*{Proposition~8.5}.
We first describe the bottom slice.

\begin{prop}[The $n$-slice]
\label{nsliceZQ8}
For $n\geq 5$, write $n=8k+r$, where $r\in[5,12]$.
Then the $n$-slice of $\Si{n} H_{Q} \ulZ$ is
\[
P^n_n \left( \Si{n} H_{Q} \ulZ \right) \simeq
\begin{cases}
\Si{1+\rho_K +k\rho_Q} H_{Q} \ulZ & r=5 \\
\Si{2+\rho_K +k\rho_Q} H_{Q} \ulZ(3,2) & r=6 \\
\Si{-1+ (k+1)\rho_Q} H_{Q} \ulZ^* & r=7 \\
\Si{(k+1)\rho_Q} H_{Q} \ulZ^* & r=8 \\
\Si{1+(k+1)\rho_Q} H_{Q} \ulZ^* & r=9 \\
\Si{2+(k+1)\rho_Q} H_{Q} \ulZ(1,0) & r=10 \\
\Si{3+(k+1)\rho_Q} H_{Q} \ulZ & r=11 \\
\Si{4+(k+1)\rho_Q} H_{Q} \ulZ & r=12. \\
\end{cases}
\]
\end{prop}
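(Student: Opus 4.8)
The plan is to identify $P^n_n\!\left(\Si{n} H_Q\ulZ\right)$ with the bottom slice of the slice tower of $\HphiZ_Z^*\!\left(P^n_n\Si{n} H_K\ulZ\right)$ and then to read that bottom slice off the slice-tower computations of \cref{sec:phiZK4Zslicetowers}. Since \cref{MainSliceInflationTheorem} applies here (by \cite{Slone}*{Proposition~8.5}), every slice of $\Si{n} H_Q\ulZ$ in degree above $2n$ is the geometric inflation of a higher slice of $\Si{n} H_K\ulZ$, while the remainder of the slice tower — and in particular its bottom $n$-slice — coincides with the slice tower of $\HphiZ_Z^*\!\left(P^n_n\Si{n} H_K\ulZ\right)$, which itself has no slices above degree $2n$. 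So the whole problem reduces to computing the bottom slice of this inflated spectrum.

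To make that spectrum explicit, write $n=8k+r$ with $r\in[5,12]$. By Slone's description of $P^n_n\Si{n} H_K\ulZ$ (\cite{Slone}*{Proposition~8.5}): when $n\not\equiv2\pmod4$ (i.e.\ $r\in\{5,7,8,9,11,12\}$) one has $P^n_n\Si{n} H_K\ulZ\simeq\Si{s+j\rho_K}H_K\ulZ$ with $s\in\{3,4,5\}$, $s\equiv n\pmod4$, and $j$ fixed by $n=s+4j$; when $n\equiv2\pmod4$ (i.e.\ $r\in\{6,10\}$) one has $P^n_n\Si{n} H_K\ulZ\simeq\Si{2+j\rho_K}H_K\ulZ(1,0)$ with $n=2+4j$. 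Applying $\HphiZ_Z^*$ — using \eqref{PsiExtends} and the compatibility of $\HphiZ_Z^*$ with suspension by inflated representations (\cref{HZinflation}) — the first case becomes $\Si{s+j\rho_K}H_Q\ulZ$ and the second becomes $\Si{2+j\rho_K}H_Q\ulZ(2,1)$ (see \cref{sec:Q8slice}). Solving $n=s+4j$ and $n=2+4j$ yields $j=2k$ for $r=5$, $j=2k+1$ for $r\in\{6,7,8,9\}$, and $j=2k+2$ for $r\in\{10,11,12\}$.

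For $s=3,4,5$ the bottom slice of $\Si{s+j\rho_K}H_Q\ulZ$ is given by \cref{r3slices}, \cref{r4slices}, and \cref{r5slices} respectively; substituting the relevant $j$ and collapsing $\tfrac{j}{2}$ or $\tfrac{j\pm1}{2}$ to $k$ or $k+1$ then reproduces the stated formulas in the six residues $r\in\{5,7,8,9,11,12\}$ (the single instance $n=5$, where $j=0$ lies outside the hypothesis $j\geq1$ of \cref{r5slices}, is exactly the base case computed at the start of \cref{r5case}). For the remaining residues $r\in\{6,10\}$, I would analyze $\Si{2+j\rho_K}H_Q\ulZ(2,1)$ through the cofiber sequence of \cref{MackeySESExamples},
\[
\Si{1+j\rho_K}H_Q\ulm\rtarr\Si{2+j\rho_K}H_Q\ulZ(2,1)\rtarr\Si{2+j\rho_K}H_Q\ulZ,
\]
whose fiber $\Si{1+j\rho_K}H_Q\ulm\simeq\phi_Z^*\!\left(\Si{1+j\rho_K}H_K\ulm\right)$ is an $(8j+4)$-slice by \cite{Slone}*{Proposition~5.7}. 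Splicing this slice together with the slice tower of \cref{r2slices} for $\Si{2+j\rho_K}H_Q\ulZ$ identifies the bottom slice, which one then rewrites — via the short exact sequences of \cref{MackeySESExamples} and the $RO(Q_8)$-graded equivalences already used in \cref{r2case} — as $\Si{2+\rho_K+k\rho_Q}H_Q\ulZ(3,2)$ when $r=6$ ($j=2k+1$) and as $\Si{2+(k+1)\rho_Q}H_Q\ulZ(1,0)$ when $r=10$ ($j=2k+2$).

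The main obstacle is this last, $n\equiv2\pmod4$ step: one must track the coefficient Mackey functor carefully through the splicing and verify that the bottom slice emerges with the twisted coefficient $\ulZ(3,2)$ (respectively $\ulZ(1,0)$) rather than $\ulZ^*$ (respectively $\ulZ$), which uses the explicit $RO(Q_8)$-graded identifications behind \cref{r2slices}, not merely its statement. Everything else is bookkeeping: matching each residue $r\in\{5,\dots,12\}$ to the correct pair $(s,j)$ and verifying the identity $n=s+4j$.
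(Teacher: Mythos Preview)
Your approach is essentially the same as the paper's: use \cref{MainSliceInflationTheorem} and \cite{Slone}*{Proposition~8.5} to reduce to the bottom slice of $\Si{s+j\rho_K}H_Q\ulZ$ (or $\Si{2+j\rho_K}H_Q\ulZ(2,1)$ when $n\equiv 2\pmod4$), then invoke \cref{r2slices}--\cref{r5slices}. For the $n\equiv2\pmod4$ case the paper is slightly more direct than your flagged ``rewriting'' step: since the fiber $\Si{1+j\rho_K}H_Q\ulm$ in the cofiber sequence is an $(8j+4)$-slice with $8j+4>4j+2=n$, one immediately gets $P^n_n\bigl(\Si{2+j\rho_K}H_Q\ulZ(2,1)\bigr)\simeq P^n_n\bigl(\Si{2+j\rho_K}H_Q\ulZ\bigr)$ and simply cites \cref{r2slices}, with no separate coefficient-tracking argument.
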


\begin{pf}
By \cref{MainSliceInflationTheorem}, the $n$-slice of $\Si{n} H_{Q} \ulZ$ is the $n$-slice of the $\ulZ$-module inflation of the $n$-slice of $\Si{n} H_{K} \ulZ$. 
By \cite{Slone}*{Proposition~8.5}, writing $n=4j+r_4$ with $r_4\in \{2,3,4,5\}$, we have
\[
\HphiZ_Z^* P^n_n \left( \Sigma^n H_{K_4} \ulZ \right) \simeq
\begin{cases}
\Si{2+j\rho_K} H_Q \ulZ(2,1) & n\equiv 2 \pmod4 \\
\Si{r_4+j\rho_K} H_Q \ulZ & \text{else}.
\end{cases}
\]
If $n\not\equiv 2 \pmod4$, the slice tower was given in \cref{sec:phiZK4Zslicetowers}.
For the case of $n\equiv2$, 
since  
$\Si{1+j\rho_K} H_Q \ulm \simeq \phi_Z^* ( \Si{1+j\rho_K} H_K \ulm )$ is an $8j+4$-slice \cite{Slone}*{Proposition~5.7},
the cofiber sequence (\cref{MackeySESExamples})
\begin{equation}
\label{Z21reduction}
\Si{1+j\rho_K} H_Q \ulm \rtarr \Si{2+j\rho_K} H_Q \ulZ(2,1) \rtarr \Si{2+j\rho_K} H_Q \ulZ,
\end{equation}
combines with the work of \cref{r2case} to 
to show that 
\[
P^n_n \left( \Si{2+j\rho_K} H_Q \ulZ(2,1) \right) \simeq P^n_n \left( \Si{2+j\rho_K} H_Q \ulZ\right). 
\]
The latter is given in \cref{r2slices}.
\end{pf}

\begin{prop}[The $8k$-slices]
\label{8kslicesZQ8}
For $n\geq 5$ and $8k>n$, the $8k$-slice of $\Si{n} H_{Q} \ulZ$ is
\[
P^{8k}_{8k}\left( \Si{n} H_{Q} \ulZ \right) \simeq
\begin{cases}
\Si{k} H_{Q} \ulg^{n-k-3} & 8k \in [4n-8,8n-32] \\
\Si{k\rho_Q} H_{Q} \ulg^{\frac{4k-n}2} & 8k \in [2n+4,4n-16]\\
 & \quad \text{and} \quad n\equiv0\pmod2 \\
\Si{k\rho_Q} H_{Q} \ulg^{\frac{4k-n-3}2}\oplus \phi_{LDR}^* \ulF^* & 8k \in [2n+4,4n-12]  \\
 & \quad \text{and} \quad n\equiv1\pmod2 \\
\Si{k\rho_Q} H_{Q} \ul{mg}^* & 8k=2n+2 \\
\Si{k\rho_Q} H_{Q} \phi_Z^* \ul B(2,0) & 8k = 2n-2 \\
\Si{k\rho_Q} H_{Q} \ul B(3,0) & 8k \in [n+3,2n-10] \\
 & \quad \text{and} \quad  n\equiv1\pmod4 \\
\Si{k\rho_Q} H_{Q} \mystery & 8k \in [n+1,2n] \\
 & \quad \text{and} \quad n\equiv3\pmod4. \\
\end{cases}
\]
\end{prop}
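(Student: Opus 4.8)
The plan is to read off the $8k$-slices from the two-part description of the higher slices of $\Si{n}H_{Q}\ulZ$ given at the start of \cref{sec:Q8slice}: the slices in dimension $>2n$ are the geometric inflations of the slices of $\Si{n}H_{K}\ulZ$, while those in dimension $\le2n$ are exactly the layers of the slice tower of $\HphiZ_{Z}^{*}P^{n}_{n}(\Si{n}H_{K}\ulZ)$ worked out in \cref{sec:phiZK4Zslicetowers}. Since we assume $8k>n$, the bottom slice---which lands at a multiple of $8$ only when $8\mid n$---is already given by \cref{nsliceZQ8} and may be set aside.

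First I would handle the $8k$-slices with $8k>2n$. Here \cref{MainSliceInflationTheorem}, applied with $G=Q_{8}$, $N=Z$, $p=2$, and $\ul{M}=\ulZ$, gives $P^{8k}_{8k}(\Si{n}H_{Q}\ulZ)\simeq\phi_{Z}^{*}P^{4k}_{4k}(\Si{n}H_{K}\ulZ)$ whenever $4k>n$. I would then substitute the $4k$-slices of $\Si{n}H_{K_{4}}\ulZ$ from \cite{Slone}*{Section~8} (visible also in \cref{fig:HtpySigmarhoK4Z}): on the relevant ranges of $4k$ these are $\Si{k}H_{K}\ulg^{n-k-3}$, then $\Si{k\rho_{K}}H_{K}\ulg^{(4k-n)/2}$ or $\Si{k\rho_{K}}H_{K}(\ulg^{(4k-n-3)/2}\oplus\phi_{LDR}^{*}\ulF^{*})$ according as $n$ is even or odd, and finally $\Si{k\rho_{K}}H_{K}\ul{mg}^{*}$ at $4k=n+1$. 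Every Mackey functor appearing here vanishes on the trivial subgroup, so by \cref{phiZGeometricSpectrumExample} the functor $\phi_{Z}^{*}$ merely turns $H_{K}$ into $H_{Q}$ applied to the inflated Mackey functor; and since $\downarrow^{Q}_{Z}\QH=4\sigma$ has no trivial summand we have $\QH^{H}=0$ for every $H\supseteq Z$, whence $\Si{\QH}H_{Q}\ul{M}\simeq H_{Q}\ul{M}$ for any $\ul{M}$ with $\ul{M}(e)=0$, so that $\Si{k\rho_{K}}H_{Q}\ul{M}\simeq\Si{k\rho_{Q}}H_{Q}\ul{M}$. Rewriting the inflated slices accordingly produces the first four rows, with the doubled ranges $[4n-8,8n-32]$, $[2n+4,4n-16]$ or $[2n+4,4n-12]$, and $\{2n+2\}$; the value $8k=2n+2$ is divisible by $8$ precisely when $n\equiv3\pmod4$, which is why that row carries no congruence condition.

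Next I would handle the $8k$-slices with $n<8k\le2n$. Write $n=4j+r$ with $r\in\{2,\dots,5\}$. By \cite{Slone}*{Proposition~8.5}, $\HphiZ_{Z}^{*}P^{n}_{n}(\Si{n}H_{K}\ulZ)$ equals $\Si{r+j\rho_{K}}H_{Q}\ulZ$ when $r\ne2$ and $\Si{2+j\rho_{K}}H_{Q}\ulZ(2,1)$ when $r=2$; in the $r=2$ case the cofiber sequence \eqref{Z21reduction}, together with the fact that $\Si{1+j\rho_{K}}H_{Q}\ulm$ is an $(8j+4)$-slice, shows that these two spectra have the same layers in dimensions divisible by $8$. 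It then remains to pick out the layers $P^{m}_{m}$ with $8\mid m$ from the slice towers of $\Si{r+j\rho_{K}}H_{Q}\ulZ$ constructed (recursively) in \cref{r2slices,r3slices,r4slices,r5slices}. Tracking the suspensions, the $\phi_{Z}^{*}\ulF$-, $\ulw$-, $\ulw^{*}$-, $\ulm$-, and $\ul{mg}$-layers always occur in dimensions $\equiv2,4,6\pmod8$, so the only contributions come from the $\mystery$-, $\ul{B}(3,0)$-, and $\phi_{Z}^{*}\ul{B}(2,0)$-layers: for $r=3$ (that is, $n\equiv3\pmod4$) one obtains copies of $\Si{k\rho_{Q}}H_{Q}\mystery$ at the multiples of $8$ in $[n+1,2n]$; for $r=5$ (that is, $n\equiv1\pmod4$) one obtains $\Si{k\rho_{Q}}H_{Q}\phi_{Z}^{*}\ul{B}(2,0)$ at $8k=2n-2$ together with copies of $\Si{k\rho_{Q}}H_{Q}\ul{B}(3,0)$ at the multiples of $8$ in $[n+3,2n-10]$; and for $r\in\{2,4\}$ there are none. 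Since the towers of \cref{sec:phiZK4Zslicetowers} account for all slices of $\HphiZ_{Z}^{*}P^{n}_{n}(\Si{n}H_{K}\ulZ)$, combining this with the preceding paragraph exhausts the $8k$-slices and yields the stated list.

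I expect the hard part to be purely organizational: matching the (doubled) ranges of \cite{Slone} and the endpoints of the recursive $Q_{8}$-towers against the four congruence classes of $n$ modulo $4$, and keeping careful track of how $\phi_{Z}^{*}$ and the insertion of copies of $\QH$ rewrite the representation spheres. No new conceptual input is needed beyond the tower analysis of \cref{sec:phiZK4Zslicetowers}, which is already available.
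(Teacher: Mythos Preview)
Your approach is correct and coincides with the paper's ``alternative'' proof: the paper likewise splits into slices above $2n$ (via \cref{MainSliceInflationTheorem} and \cite{Slone}*{Proposition~8.6}) and slices at or below $2n$ (via the towers of \cref{sec:phiZK4Zslicetowers}), exactly as you do. Your justification for $\Si{k\rho_K}H_Q\ul{M}\simeq\Si{k\rho_Q}H_Q\ul{M}$ when $\ul{M}(e)=0$ is a little loose---the clean argument is that $S^{k\QH}/S^0$ is built from free $Q$-cells (since $Q$ acts freely on $S(\QH)$), which smash trivially with $H_Q\ul{M}$---but the conclusion is right.

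The paper's \emph{primary} proof, however, is a one-line translation of \cref{HomologySnegkrhoQ}: using the general fact that for $k|G|>n$ one has $P^{k|G|}_{k|G|}(\Si{n}H_G\ulZ)\simeq\Si{k\rho_G}H_G\big(\mpi_{k\rho_G}\Si{n}H_G\ulZ\big)$, the $8k$-slice is $\Si{k\rho_Q}H_Q\big(\mpi_{-n}\Si{-k\rho_Q}H_Q\ulZ\big)$, and the Mackey functors in \cref{HomologySnegkrhoQ} are precisely those appearing in the statement. This route avoids the case-by-case recursion through the $r\in\{2,3,4,5\}$ towers and the range bookkeeping you anticipated as the hard part; on the other hand, your approach makes the provenance of each row transparent and uses nothing beyond what is already assembled in \cref{sec:Q8slice}.
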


\begin{pf}
This is a translation of \cref{HomologySnegkrhoQ}.
Alternatively, the slices above dimension $2n$ follow from \cref{MainSliceInflationTheorem} and \cite{Slone}*{Proposition~8.6}. The slices in dimensions $2n$ and lower follow from the towers computed in \cref{sec:phiZK4Zslicetowers}.
\end{pf}

\begin{prop}[The $8k+4$-slices]
\label{8kplus4slicesZQ8}
For $n\geq 5$ and $8k+4 > n$, the $8k+4$-slices of $\Si{n}H_{Q} \ulZ$ are
\[
P^{8k+4}_{8k+4} \left( \Si{n} H_{Q} \ulZ \right) \simeq
\begin{cases}
\Si{3+k\rho_Q} H_{Q} \phi_{LDR}^* \ulF & 8k+4 \in [2n+4, 4n-12], \quad n\ \text{even} \\
\Si{2+k\rho_Q} H_{Q} \phi_Z^* \ulF & 8k+4 \in [n+1,2n-4], \quad n \ \text{odd} \\
\Si{1+k\rho_Q} H_{Q} \ulm & 8k+4 \in [n+2, 2n], \quad n \equiv 2 \pmod4 \\
\Si{1+k\rho_Q} H_{Q} \ul{mg} & 8k+4 \in [n+4, 2n-4], \quad n \equiv 0 \pmod4 \\
\end{cases}
\]
\end{prop}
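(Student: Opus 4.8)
The plan is to split the $(8k+4)$-slices according to the threshold in \cref{MainSliceInflationTheorem}: those lying in slice dimension greater than $2n$ will come by inflation from $K_4$, and those in dimension at most $2n$ will be read off the slice towers built in \cref{sec:phiZK4Zslicetowers}.

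For $8k+4>2n$, I would apply \cref{MainSliceInflationTheorem} with $G=Q$, $N=Z$, $p=2$ and $\ulM=\ulZ$ (using $\phiZ_Z^*\ulZ\iso\ulZ$, which follows from \cref{DescriptionphiZ} since $Z$ is a bottleneck subgroup): this identifies $P^{8k+4}_{8k+4}\!\left(\Si{n}H_Q\ulZ\right)$ with $\phi_Z^*P^{4k+2}_{4k+2}\!\left(\Si{n}H_K\ulZ\right)$. Quoting the $K_4$-slice computation of \cite{Slone}*{Proposition~8.6} (the $K_4$-analogue of \cref{4kPlus2SlicesC4neven}), these $(4k+2)$-slices are, for $n$ even, of the form $\Si{\epsilon+2m\rho_K}H_K\,\phi^*_{LDR}\ulF$ with $\epsilon\in\{1,3\}$, living in dimensions $[n+2,2n-6]$, and for $n$ odd there are none above dimension $2n$. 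Applying $\phi_Z^*$, using $q^*\rho_K=\rho_K$, and rewriting the representation sphere then yields the first case over $8k+4\in[2n+4,4n-12]$. The rewriting relies on the fact that $H_Q\phi^*_{LDR}\ulF$ restricts trivially to $Z$, hence is pulled back along $\phi_Z^*$, so $S^{\QH}$ acts on it as the identity because $\Phi^Z S^{\QH}\simeq S^0$; this is what lets one trade a copy of $\rho_K$ for $\QH$ at will.

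For $8k+4\le 2n$, I would use the reduction recorded at the start of \cref{sec:Q8slice}: these slices coincide with the slices of $\HphiZ_Z^*P^n_n\!\left(\Si{n}H_K\ulZ\right)$, which by \cite{Slone}*{Proposition~8.5} is one of $\Si{r+j\rho_K}H_Q\ulZ$ with $r\in\{2,\dots,5\}$, or $\Si{2+j\rho_K}H_Q\ulZ(2,1)$ when $n\equiv2\pmod4$, with slice towers in \cref{r2case,r3case,r4case,r5case}. Unwinding those towers (each stage of the recursion trades a copy of $\rho_K$ for $\QH$, lowers the slice dimension by $12$, and contributes one middle layer of each geometric type), I would collect the layers in dimension $\equiv4\pmod8$: the $r=3$ and $r=5$ towers give the slices $\Si{2+k\rho_Q}H_Q\phi_Z^*\ulF$ (these occur precisely for $n$ odd), the $r=2$ tower gives $\Si{1+k\rho_Q}H_Q\ulm$ (precisely for $n\equiv2\pmod4$), and the $r=4$ tower gives $\Si{1+k\rho_Q}H_Q\ul{mg}$ (precisely for $n\equiv0\pmod4$). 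When $n\equiv2\pmod4$ there is one further $\ulm$-slice: by \cref{MackeySESExamples} the fiber of \eqref{Z21reduction} is $\Si{1+j\rho_K}H_Q\ulm$, which is an $8j+4$-slice by \cite{Slone}*{Proposition~5.7} and satisfies $\Si{1+j\rho_K}H_Q\ulm\simeq\Si{1+j\rho_Q}H_Q\ulm$ (again since $\Phi^Z S^{\QH}\simeq S^0$), while $\Si{2+j\rho_K}H_Q\ulZ$ carries no slice in that dimension; this supplies the top of the range $[n+2,2n]$ in the third case.

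The hard part will be the bookkeeping rather than the ideas: one must translate the layer indices of the $(r,j)$-parametrized towers of \cref{sec:phiZK4Zslicetowers} into the $(n,k)$-indexing of $\Si{n}H_Q\ulZ$ and verify that the first and last stages of each unwound recursion, together with the inflation regime, produce exactly the four advertised intervals with no gap and no overlap; and for $n\equiv2\pmod4$ one must reconcile the two independent sources of $\ulm$-slices (the $\ulZ(2,1)$-reduction \eqref{Z21reduction} and the $r=2$ tower). The representation-sphere identifications needed to normalize everything into $k\rho_Q$ form are routine, using only $\Phi^Z S^{\QH}\simeq S^0$ and $q^*\rho_K=\rho_K$.
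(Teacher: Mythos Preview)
Your approach is essentially the paper's own: the proof there reads in full ``The first case follows from \cite{Slone}*{Proposition~8.7}. The remaining cases follow from \cref{Z21reduction} and \cref{sec:phiZK4Zslicetowers}.'' You have simply unpacked this, correctly splitting at slice level $2n$, invoking \cref{MainSliceInflationTheorem} above and reading off the towers of \cref{sec:phiZK4Zslicetowers} (together with the extra $\ulm$-layer contributed by \eqref{Z21reduction} when $n\equiv 2\pmod4$) below. One small correction: the relevant $K_4$ input for the first case is \cite{Slone}*{Proposition~8.7}, not~8.6 (the latter records the $4k$-slices and feeds into \cref{8kslicesZQ8}); also, when you say the $K_4$ $(4k+2)$-slices ``for $n$ odd'' vanish ``above dimension $2n$'', you mean above dimension $n$ on the $K_4$ side.
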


\begin{pf}
The first case follows from \cite{Slone}*{Proposition~8.7}.
The remaining cases follow from \cref{Z21reduction} and \cref{sec:phiZK4Zslicetowers}.
\end{pf}

\begin{prop}[The $4k+2$-slices]
Let $n\geq 5$.
If $n$ is odd, then 
$\Si{n}H_{Q} \ulZ$ 
has no nontrivial $4k+2$-slices if 
$4k+2>n$. 
If $n$ is even and $8k+2 > n$, 
then the $8k+2$-slice of $\Si{n}H_{Q} \ulZ$  is nontrivial only if 
$8k+2 \in [n+1,2n]$, in which case the slice is
\[
P^{8k+2}_{8k+2} \left( \Si{n} H_{Q} \ulZ \right) \simeq
\begin{cases}
\Si{1+k\rho_Q} H_{Q} \ulw & n \equiv 0 \pmod4 \\
\Si{1+k\rho_Q} H_{Q} \phi_Z^* \ulF & n \equiv 2 \pmod4 \\
\end{cases}
\]
Similarly, if $n$ is even and $8k-2 > n$, the $8k-2$-slice is nontrivial only if
$8k-2 \in [n+1,2n]$, 
in which case the slice is
\[
P^{8k-2}_{8k-2} \left( \Si{n} H_{Q} \ulZ \right) \simeq
\begin{cases}
\Si{-1+k\rho_Q} H_{Q} \phi_Z^*\ulF^* & n \equiv 0 \pmod4 \\
\Si{-1+k\rho_Q} H_{Q} \ulw^* & n \equiv 2 \pmod4.  \\
\end{cases}
\]
\end{prop}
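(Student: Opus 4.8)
The $4k+2$-slices under discussion are exactly the slices of $\Si{n} H_Q\ulZ$ in dimensions $\equiv 2\pmod 4$ — the $8k+2$- and $8k-2$-slices — the $8k$- and $8k+4$-slices having already been identified in \cref{8kslicesZQ8} and \cref{8kplus4slicesZQ8}. The plan is to treat separately the such slices lying above slice level $2n$ and those lying in levels $n+1$ through $2n$.

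For a slice in a dimension $D>2n$, I would appeal to \cref{MainSliceInflationTheorem}, which is available by \cite{Slone}*{Proposition~8.5}: one has $P^D_D(\Si{n} H_Q\ulZ)\simeq \phi_Z^* P^{D/2}_{D/2}(\Si{n} H_K\ulZ)$ when $D$ is even with $D/2>n$, and $P^D_D(\Si{n} H_Q\ulZ)=0$ otherwise. If $D\equiv 2\pmod 4$ then $D/2$ is odd, whereas the nontrivial slices of $\Si{n} H_K\ulZ$ above its bottom slice all lie in even dimensions (by the determination of the $K_4$-slices in \cite{Slone}*{Section~8}); hence $P^D_D(\Si{n} H_Q\ulZ)=0$. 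This disposes of every $4k+2$-slice with $4k+2>2n$ when $n$ is odd, and of the $8k\pm2$-slices with $8k\pm2>2n$ when $n$ is even.

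For the slices in levels $n+1$ through $2n$, I would use the equivalence $P^{2n}_n(\Si{n} H_Q\ulZ)\simeq \HphiZ_Z^* P^n_n(\Si{n} H_K\ulZ)$ of \cref{MainSliceInflationTheorem} together with the slice towers computed in \cref{sec:phiZK4Zslicetowers}. When $n$ is odd, write $n=4j+r$ with $r\in\{3,5\}$; then by \cite{Slone}*{Proposition~8.5} the relevant spectrum is $\Si{r+j\rho_K}H_Q\ulZ$, whose slice towers (\cref{r3case} and \cref{r5case}), read recursively via the fact that suspension by $\rho_Q$ raises slice degree by $8$, display only slices in dimensions divisible by $4$ apart from the bottom $n$-slice — so no $4k+2$-slices occur above level $n$. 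When $n\equiv 0\pmod 4$, write $n=4j+4$; then $P^n_n(\Si{n} H_K\ulZ)$ inflates to $\Si{4+j\rho_K}H_Q\ulZ$, and the recursive tower of \cref{r4case} exhibits its $8k+2$-slices as $\Si{1+k\rho_Q}H_Q\ulw$ and its $8k-2$-slices as $\Si{3+(k-1)\rho_Q}H_Q\phi_Z^*\ulF$, all lying in the stated range. When $n\equiv 2\pmod 4$, write $n=4j+2$; the relevant spectrum is $\Si{2+j\rho_K}H_Q\ulZ(2,1)$, and because the fiber $\Si{1+j\rho_K}H_Q\ulm$ of the cofiber sequence \eqref{Z21reduction} is an $(8j+4)$-slice by \cite{Slone}*{Proposition~5.7}, its $8k\pm2$-slices coincide with those of $\Si{2+j\rho_K}H_Q\ulZ$, whose recursive tower (\cref{r2case}) has $8k-2$-slices $\Si{-1+k\rho_Q}H_Q\ulw^*$ and $8k+2$-slices $\Si{1+k\rho_Q}H_Q\phi_Z^*\ulF$. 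Reading off the endpoints of the intervals appearing in each of these towers then yields the asserted ranges $[n+1,2n]$.

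I expect the main friction to come from two places. First, keeping straight at which stage of each recursive tower a given $8k\pm2$-slice appears — and hence the exact endpoints of the interval over which each slice is nontrivial — is a matter of careful bookkeeping. Second, the tower of \cref{r4case} produces the $8k-2$-slice (for $n\equiv 0\pmod 4$) in the form $\Si{3+(k-1)\rho_Q}H_Q\phi_Z^*\ulF$, which must be rewritten as $\Si{-1+k\rho_Q}H_Q\phi_Z^*\ulF^*$; since $\QH$ has no nonzero $Z$-fixed vectors, suspension by $\rho_Q$ acts on $H_Q\phi_Z^*(-)$ the way suspension by $\rho_K$ does, so this comes down to an $RO(Q_8)$-periodicity $\Si4 H_Q\phi_Z^*\ulF\simeq \Si{\rho_K}H_Q\phi_Z^*\ulF^*$ of the same flavour as the $C_4$-level identities recalled in \cref{sec:C4slice}, checkable by applying $\Phi^Z$ and restricting to $L$, $D$, $R$. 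Beyond these, the argument invokes nothing not already assembled.
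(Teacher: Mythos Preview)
Your proposal is correct and follows essentially the same approach as the paper's own proof: above level $2n$ you invoke \cref{MainSliceInflationTheorem} together with the fact that $\Si{n}H_K\ulZ$ has no odd-dimensional slices above its bottom slice, and in levels $n+1$ through $2n$ you read off the towers of \cref{sec:phiZK4Zslicetowers} (together with \eqref{Z21reduction} in the $n\equiv 2\pmod4$ case). The only point where you hesitate---rewriting $\Si{3+(k-1)\rho_Q}H_Q\phi_Z^*\ulF$ as $\Si{-1+k\rho_Q}H_Q\phi_Z^*\ulF^*$---is in fact the equivalence $\Si{\rho_Q}H_Q\phi_Z^*\ulF^*\simeq\Si4 H_Q\phi_Z^*\ulF$ recorded in \cref{HtpyjrhoQphiF*}, so no additional verification is needed.
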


\begin{pf}
According to \cite{Slone}, the $K_4$-spectrum $\Si{n} H_K \ulZ$ does not have any nontrivial slices in odd dimensions, except for the $n$-slice. By \cref{MainSliceInflationTheorem}, this implies that $\Si{n} H_Q \ulZ$ does not have any $4k+2$-slices above dimension $2n$. The slices in dimensions below $2n$ are given by \cref{sec:phiZK4Zslicetowers}.
\end{pf}

\subsection{Slice towers for $\Si{n} H_{Q} \ulZ$}
\label{sec:Q8Zslicetowers}

By \cref{n0to4slices}, $\Si{n}H_{Q} \ulZ$ is an $n$-slice for $n\in \{0,\dots,4\}$.
The slice tower for $\Si5 H_Q \ulZ$ was given in \cref{r5case}. We now display a few more examples of slice towers.

\begin{eg}
\label{Si6ZQtower}
The slice tower for $\Si6 H_{Q}\ulZ$ is 
\begin{equation*}
\begin{tikzcd}
    P^{16}_{16} = \Si{2} H_{Q} \ulg \ar[r] 
    & \Si6 H_{Q}\ulZ \ar[d] \\
    P^{12}_{12} = \Si{1+\rho} H_{Q} \ulm \ar[r] 
    & \Si{2+\rho_K} H_{Q}\ulZ(2,1) \ar[d] \\
    & P^6_6 = \Si{2+\rho_K} H_{Q}\ulZ.
\end{tikzcd}
\end{equation*}
This follows immediately from combining \cite{Slone}*{Example~8.2}, \cref{Z21reduction}, and \cref{r2case}.
\end{eg}

\begin{eg}
The slice tower for $\Si7 H_{Q}\ulZ$ is 
\begin{equation*}
\begin{tikzcd}
    P^{24}_{24} = \Si{3} H_{Q} \ulg \ar[r] 
    & \Si7 H_{Q}\ulZ \ar[d] \\
    P^{16}_{16} = \Si{2+\rho_{Q}} H_{Q}  \ulm \ar[r] 
    & \Si{3+\rho_K} H_{Q}\ulZ(2,1) \ar[d] \\
    P^{8}_{8} =  \Si{\rho_{Q}} H_{Q} \mystery \ar[r] 
    & \Si{3+\rho_K} H_{Q}\ulZ \ar[d] \\
    & P^7_7 = \Si{\rho_{Q}-1} H_{Q_8}\ulZ^*.
\end{tikzcd}
\end{equation*}
This follows immediately from combining \cite{Slone}*{Example~8.3} and \cref{r3case}.
\end{eg}

\begin{eg}
The slices, but not the slice tower, for $\Si8 H_K \ulZ$ were determined in \cite{Slone}*{Section 8}.
Let us denote by $F$ the fiber of the map $H_{Q} \ulZ \rtarr H_{Q} \phi_{LDR} \ulF$ induced by the map of $Q_8$-Mackey functors $\ulZ \rtarr \phi_{LDR} \ulF$ that is surjective at $L$, $D$, and $R$. 
Then the nontrivial homotopy Mackey functors of $F$ are $\mpi_0(F) \simeq \ulZ(2,1)$ and $\mpi_{-1}(F) \iso \ulg^2$.
The slice tower for $\Si8 H_{Q}\ulZ$ is 
\begin{equation*}
\begin{tikzcd}
    P^{32}_{32} = \Si4 H_{Q} \ulg \ar[r] 
    & \Si8 H_{Q}\ulZ \simeq \Si{4+\rho_K} H_Q \ulZ(3,1)
    \ar[d] \\
    P^{24}_{24} = \Si3 H_{Q} \ulg^2 \ar[r] 
    & \Si{4+\rho_K} H_{Q}\ulZ(2,1) \ar[d] \\
    P^{20}_{20} = \Si{3+\rho_Q}H_{Q} \phi_{LDR}^*\ulF \ar[r] 
    & \Si{4+\rho_K} F \ar[d] \\
    P^{12}_{12} = \Si{1 + \rho_Q} H_{Q}  \ul{mg} \ar[r] 
    & \Si{4+\rho_K} H_{Q} \ulZ \simeq \Si{2\rho_K} H_{Q} \ulZ(3,1) \ar[d] \\
    P^{10}_{10} = \Si{\rho_Q+1}  \ul{w} \ar[r] 
    & \Si{2\rho_K} H_{Q} \ulZ(3,2) \ar[d] \\
    & P^8_8 = \Si{\rho_{Q}} H_{Q}\ulZ^*,
\end{tikzcd}
\end{equation*}
where the bottom of the tower comes from \cref{r4case}.
\end{eg}

\section{Homology calculations}
\label{sec:Homology}

In \cref{sec:Q8slice}, we described the slices of $\Si{n}H_Q \ulZ$. In \cref{sec:SpectralSequences} below, we will give the corresponding slice spectral sequences. The $E_2$-pages of those spectral sequences are given by the homotopy Mackey functors of the slices. We describe those homotopy Mackey functors here.

\subsection{The $n$-slice}

We start with the \(n\)-slices in the order listed in \cref{nsliceZQ8}. The homotopy Mackey functors of \(\Si{j\rho_Q} H_Q\ulZ\) were calculated in \cref{HomologySkrhoQ}. We use the same methods to determine the homotopy Mackey functors of \(\Si{\rho_K + j\rho_Q} H_Q\ulZ\).

\begin{prop}
\label{Htpynslicer5}
For \(j\geq 1\), the  homotopy Mackey functors of \(\Si{\rho_K+j\rho_Q} H_Q\ulZ\) are
\begin{align*}
    \ul\pi_{i}( \Si{\rho_K + j\rho_Q} H_Q\ulZ) &\cong \left\lbrace \begin{array}{ll}
        \ulZ & i=8j+4 \\
        \mystery & \begin{aligned} i\in[4j+4,8j+3], \\ i\equiv 2\pmod 4 \end{aligned} \\
        \ul B(3,0) & \begin{aligned} i\in[4j+4,8j+3], \\ i\equiv 0\pmod 4 \end{aligned} \\
        \phi^*_Z \ul\pi_{i}(\Si{(j+1)\rho_K} H_K\ulZ) & i\in[j+1,4j+3].
    \end{array}\right.
\end{align*}
\end{prop}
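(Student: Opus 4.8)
The plan is to run the argument used for \cref{HomologySkrhoQ}, comparing $\Si{\rho_K+j\rho_Q}H_Q\ulZ$ with $\Si{(j+1)\rho_K}H_Q\ulZ$ by means of the sphere of $j\QH$. Since $\rho_Q\iso\QH\oplus\rho_K$, we may rewrite $\rho_K+j\rho_Q\iso(j+1)\rho_K+j\QH$. First I would record the homotopy of the comparison spectrum: $\Si{(j+1)\rho_K}H_Q\ulZ\simeq\HphiZ_Z^*\Si{(j+1)\rho_K}H_K\ulZ$ because $\phiZ_Z^*\ulZ_K\iso\ulZ_Q$, so by \cref{HtpyInflation} its homotopy Mackey functors are $\phiZ_Z^*\mpi_i(\Si{(j+1)\rho_K}H_K\ulZ)$. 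By \cite{Slone}*{Proposition~9.1} (see \cref{HomologySkrhoK}) these are concentrated in degrees $j+1\le i\le 4j+4$, with $\mpi_{4j+4}\iso\ulZ$, and for $i\le 4j+3$ the Mackey functors that occur vanish on the trivial subgroup, so $\phiZ_Z^*$ may be replaced by $\phi^*_Z$ there (\cref{phiZgeometric}).

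Next I would smash the cofiber sequence $S(j\QH)_+\rtarr S^0\rtarr S^{j\QH}$ with $\Si{(j+1)\rho_K}H_Q\ulZ$ to obtain
\[
S(j\QH)_+\smsh\Si{(j+1)\rho_K}H_Q\ulZ\rtarr\Si{(j+1)\rho_K}H_Q\ulZ\rtarr\Si{\rho_K+j\rho_Q}H_Q\ulZ.
\]
Since $Q$ acts freely on $S(j\QH)$ and $(j+1)\rho_K$ is orientable of dimension $4j+4$ (as in the proof of \cref{HomologySkrhoQ}, using the orientation of $\rho_K$), \cref{periodicity} identifies the left-hand term with $\Si{4j+4}\bigl(S(j\QH)_+\smsh H_Q\ulZ\bigr)$. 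I would then read off $\mpi_*(S(j\QH)_+\smsh H_Q\ulZ)$ from \cref{HomologyS^nH} via the same cofiber sequence $S(j\QH)_+\rtarr S^0\rtarr S^{j\QH}$: this gives $\mpi_0\iso\ulZ^*$, $\mpi_{4j-1}\iso\ulZ$, $\mpi_m\iso\mystery$ for $m\equiv1\pmod4$ in $[1,4j-3]$, $\mpi_m\iso\ul B(3,0)$ for $m\equiv3\pmod4$ in $[3,4j-5]$, and $0$ otherwise, recovering \cref{HomologyS(H)} when $j=1$.

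Finally I would splice the long exact sequence of the displayed cofiber sequence. After the periodicity shift, the homotopy of $\Si{4j+4}(S(j\QH)_+\smsh H_Q\ulZ)$ is concentrated in degrees $[4j+4,8j+3]$ while that of $\Si{(j+1)\rho_K}H_Q\ulZ$ lies in $[j+1,4j+4]$; the only common degree is $4j+4$, where the map runs $\ulZ^*\to\ulZ$. As in \cref{HomologySkrhoQ}, this map is the canonical inclusion $\ulZ^*\into\ulZ$: its underlying map on $\pi_{4j+4}$ is an isomorphism, since it comes from the collapse $S(j\QH)_+\rtarr S^0$, which is an isomorphism on $\tilde\rH_0$, and the only maps $\ulZ^*\to\ulZ$ with that property are $\pm$ the inclusion, whose cokernel is $\ul B(3,0)$. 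Consequently: for $j+1\le i\le 4j+3$ we get $\mpi_i(\Si{\rho_K+j\rho_Q}H_Q\ulZ)\iso\phi^*_Z\mpi_i(\Si{(j+1)\rho_K}H_K\ulZ)$; in degree $4j+4$ we get $\coker(\ulZ^*\into\ulZ)\iso\ul B(3,0)$; for $4j+5\le i\le 8j+3$ we get $\mpi_{i-1}$ of $\Si{4j+4}(S(j\QH)_+\smsh H_Q\ulZ)$, with the copy of $\ulZ^*$ in degree $4j+5$ killed by the injection, which after unwinding the mod‑$4$ bookkeeping yields $\mystery$ for $i\equiv2\pmod4$, $\ul B(3,0)$ for $i\equiv0\pmod4$, and $0$ for odd $i$; and in degree $8j+4$ we get the top class $\ulZ$. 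This is exactly the asserted list.

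The only non-formal point is the identification of the connecting map in degree $4j+4$ as the inclusion $\ulZ^*\into\ulZ$, rather than some other map with a different cokernel, since this is what pins down both the $\ul B(3,0)$ there and the vanishing of $\mpi_{4j+5}$; it is handled, as in the proof of \cref{HomologySkrhoQ}, by reducing along the periodicity equivalence to the degree‑$0$ statement of \cref{HomologyS(H)}. Everything else is a routine splice of the two input calculations \cref{HomologyS^nH} and \cite{Slone}*{Proposition~9.1}.
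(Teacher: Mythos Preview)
Your argument is correct and is precisely the approach the paper indicates: it states only that the computation uses ``the same methods'' as \cref{HomologySkrhoQ}, and your cofiber sequence $S(j\QH)_+\smsh\Si{(j+1)\rho_K}H_Q\ulZ\rtarr\Si{(j+1)\rho_K}H_Q\ulZ\rtarr\Si{\rho_K+j\rho_Q}H_Q\ulZ$, periodicity identification, and splice of the long exact sequence carry that out in detail. Your handling of the one nontrivial point---identifying the degree-$(4j+4)$ map as $\pm$ the inclusion $\ulZ^*\into\ulZ$ via the underlying isomorphism---is exactly what is needed.
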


See \cref{HomologySkrhoK} or \cref{fig:HtpySigmarhoK4Z} for the homotopy Mackey functors of $\Si{(j+1)\rho_K} H_K\ulZ$.

We may now use \cref{Htpynslicer5} and the exact sequence \(\ulZ(3,2) \hookrightarrow \ulZ \twoheadrightarrow \ulg\) to get the homotopy Mackey functors of \(\Si{\rho_K+j\rho_Q} H_Q\ulZ(3,2)\).

\begin{prop}
For \(j\geq 1\), the  homotopy Mackey functors of \(\Si{\rho_K+j\rho_Q} H_Q\ulZ(3,2)\) are
\begin{align*}
    \ul\pi_{i}( \Si{\rho_K + j\rho_Q} H_Q\ulZ(3,2)) &\cong \left\lbrace \begin{array}{ll}
        \ulZ & i=8j+4 \\
        \mystery & \begin{aligned} i\in[4j+4,8j+3], \\ i\equiv 2\pmod 4 \end{aligned} \\
        \ul B(3,0) & \begin{aligned} i\in[4j+4,8j+3], \\ i\equiv 0\pmod 4 \end{aligned} \\
        \phi^*_Z \ul\pi_{i}(\Si{(j+1)\rho_K} H_K\ulZ) & i\in[j+2,4j+3].
    \end{array}\right.
\end{align*}
\end{prop}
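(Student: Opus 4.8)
The plan is to apply $\Si{\rho_K+j\rho_Q}H_Q(-)$ to the short exact sequence $\ulZ(3,2)\into\ulZ\onto\ulg$ of \cref{MackeySESExamples}, obtaining a cofiber sequence
\[
\Si{\rho_K+j\rho_Q}H_Q\ulZ(3,2)\rtarr \Si{\rho_K+j\rho_Q}H_Q\ulZ\rtarr \Si{\rho_K+j\rho_Q}H_Q\ulg,
\]
and to extract the homotopy of the left-hand term from the associated long exact sequence, using \cref{Htpynslicer5} for the homotopy of the middle term. The first task is to identify the right-hand term. Since $\ulg$ is concentrated at level $Q/Q$, the spectrum $H_Q\ulg$, and therefore $\Si{\rho_K+j\rho_Q}H_Q\ulg$, restricts trivially to every proper subgroup of $Q$, so its homotopy Mackey functors are all concentrated at $Q/Q$, where $\mpi_i$ computes $\widetilde{\rH}_i^Q(S^{\rho_K+j\rho_Q};\ulg)$. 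Writing $\rho_K+j\rho_Q=U\oplus U'$ with $U$ the $(j+1)$-dimensional trivial summand and $(U')^Q=0$, a $Q$-CW structure on $S^{\rho_K+j\rho_Q}$ has a single non-induced cell, a trivial cell in dimension $j+1$; since $\ulg$ vanishes on induced cells, this Bredon homology is $\ulg$ in degree $j+1$ and $0$ otherwise, so $\Si{\rho_K+j\rho_Q}H_Q\ulg\simeq \Si{j+1}H_Q\ulg$.

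Feeding this into the long exact sequence immediately yields $\mpi_i(\Si{\rho_K+j\rho_Q}H_Q\ulZ(3,2))\iso \mpi_i(\Si{\rho_K+j\rho_Q}H_Q\ulZ)$ for every $i\notin\{j,j+1\}$, which are the Mackey functors listed; note that by \cref{Htpynslicer5} and \cref{HomologySkrhoK} the value $\mpi_{j+1}$ of the middle term is $\phi_Z^*\ulg=\ulg$, matching $\mpi_{j+1}\Si{j+1}H_Q\ulg=\ulg$. The entire statement therefore reduces to showing that the boundary map
\[
\partial\colon \mpi_{j+1}\bigl(\Si{\rho_K+j\rho_Q}H_Q\ulZ\bigr)\rtarr \mpi_{j+1}\bigl(\Si{j+1}H_Q\ulg\bigr)
\]
is an isomorphism. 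Both source and target are $\ulg$, which takes the value $\F$ at $Q/Q$ and $0$ elsewhere, so it is enough to prove $\partial\neq 0$; this nonvanishing is the crux of the argument.

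To establish it I would show that $\mpi_{j+1}$ of the left-hand term $A:=\Si{\rho_K+j\rho_Q}H_Q\ulZ(3,2)$ vanishes: this forces $\partial$ injective, hence an isomorphism, and then the long exact sequence also gives $\mpi_j(A)=0$, completing the identification. Because $\ulZ(3,2)$ and $\ulZ$ agree on every proper subgroup (their cokernel $\ulg$ being concentrated at $Q/Q$), $A$ restricts on each proper subgroup to $\Si{\rho_K+j\rho_Q}H_Q\ulZ$, so by \cref{Htpynslicer5} the values $\mpi_i(A)(Q/H)$ vanish for all proper $H$ and all $i\leq j+1$. Applying $(-)^Q$ to the isotropy-separation cofiber sequence $E\mathcal{P}_+\smsh A\rtarr A\rtarr \widetilde{E\mathcal{P}}\smsh A$, where $\mathcal{P}$ is the family of proper subgroups and $(\widetilde{E\mathcal{P}}\smsh A)^Q\simeq \Phi^Q A$, the term $(E\mathcal{P}_+\smsh A)^Q$ is assembled from the spectra $A^H$, $H$ proper, via the cells of $E\mathcal{P}$, and hence has $\pi_j=\pi_{j+1}=0$; thus $\pi_{j+1}^Q A\iso \pi_{j+1}\Phi^Q A$. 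Finally $\Phi^Q A\simeq \Si{j+1}\Phi^Q H_Q\ulZ(3,2)$, and $\pi_0\Phi^Q H_Q\ulZ(3,2)$ is $\ulZ(3,2)(Q/Q)$ modulo the images of the transfers from $L$, $D$, and $R$; as $\ulZ(3,2)$ looks like $\ulZ^*$ between $Q$ and the order-$4$ subgroups, these transfers are multiplication by $1$, so this quotient is $0$. Hence $\pi_{j+1}^Q A=0$, and since $\mpi_{j+1}(A)$ already vanishes on proper subgroups, $\mpi_{j+1}(A)=0$, as required. (One could instead bypass geometric fixed points by combining \cref{HtpyInflation} for $\Si{(j+1)\rho_K}H_Q\ulZ(3,2)\simeq \HphiZ_Z^*\Si{(j+1)\rho_K}H_K\ulZ(2,1)$ with the $S(j\QH)_+$ cofiber sequence, as in the proof of \cref{HomologySkrhoQ}, but that is more laborious.)
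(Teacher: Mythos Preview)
Your approach is exactly the paper's: both use the short exact sequence $\ulZ(3,2)\into\ulZ\onto\ulg$ together with \cref{Htpynslicer5}. The paper's proof is a single sentence pointing to this method and noting that the effect is to remove the $\ulg$ in degree $j+1$; you supply the details the paper omits, in particular the geometric-fixed-points argument showing $\mpi_{j+1}$ of the $\ulZ(3,2)$-term vanishes. One small terminological point: the map you call $\partial$ is induced by the quotient $\ulZ\onto\ulg$, not a connecting homomorphism, so ``boundary map'' is a slight misnomer, though this does not affect the argument.
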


 The key point here is that the homotopy Mackey functors of \(\Si{\rho_K+j\rho_Q} H_Q\ulZ(3,2)\) are the same as that of \(\Si{\rho_K+j\rho_Q} H_Q\ulZ\), except that the \(\ulg\) in degree \(j+1\) has been removed.

In \cref{HomologySnegkrhoQ} we list the homotopy Mackey functors of \(\Si{-j\rho_Q} H_Q\ulZ\). Anderson duality then provides us with the homotopy Mackey functors of \(\Si{j\rho_Q} H_Q\ulZ^*\).

\begin{prop}
\label{HtpyZdual}
For \(j\geq 1\), the  homotopy Mackey functors of \(\Si{j\rho_Q} H_Q \ulZ^*\) are
\begin{align*}
    \ul\pi_i( \Si{j\rho_Q} H_Q\ulZ^*) &\cong \left\lbrace \begin{array}{ll}
        \ulZ & i=8j \\
        \mystery & \begin{aligned} i\in[4j+1, 8j-1], \\ i\equiv 2\mod 4 \end{aligned} \\
        \ul B(3,0) & \begin{aligned} i\in[4j+1,8j-1], \\ i\equiv 0\mod 4\end{aligned} \\
        \phi^*_Z\ul B(2,0) & i=4j \\
        \phi_Z^* \mpi_{i-4} (\Si{(j-1)\rho_K} H_K \ulZ) & i\in[j+3,4j-1].
    \end{array}\right.
\end{align*}
\end{prop}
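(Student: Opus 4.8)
The plan is to read this off from \cref{HomologySnegkrhoQ} by Anderson duality, exactly as in the proof of \cref{AuxZ20}. Recall that $I_\Z(H_Q\ulZ) \simeq H_Q\ulZ^*$ and that Anderson duality commutes with smashing against a representation sphere, so that
\[
I_\Z\!\left( \Si{-j\rho_Q} H_Q \ulZ \right) \simeq \Si{j\rho_Q} H_Q \ulZ^*.
\]
By \cref{HomologySnegkrhoQ} the only non-torsion homotopy Mackey functor of $\Si{-j\rho_Q}H_Q\ulZ$ is the bottom one, $\mpi_{-8j}\iso \ulZ^*$. Let $F$ be the fiber of the truncation map $\Si{-j\rho_Q}H_Q\ulZ \rtarr \Si{-8j} H_Q\ulZ^*$, so that $\mpi_\ast F$ is torsion, concentrated in degrees $[-8j+1,\,-(j+4)]$, and agrees with $\mpi_\ast(\Si{-j\rho_Q}H_Q\ulZ)$ there. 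Applying $I_\Z$ to the cofiber sequence $F \rtarr \Si{-j\rho_Q}H_Q\ulZ \rtarr \Si{-8j}H_Q\ulZ^*$ produces a cofiber sequence
\[
\Si{8j} H_Q\ulZ \ltarr \Si{j\rho_Q} H_Q \ulZ^* \ltarr I_\Z(F).
\]

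Since $\mpi_\ast F$ is (levelwise) torsion, $I_\Q F \simeq 0$, so $I_\Z F \simeq \Si{-1} I_{\Q/\Z} F$ and hence $\mpi_m(I_\Z F) \iso \left( \mpi_{-m-1}F\right)^*$, the Brown--Comenetz (equivalently, torsion) dual. As $I_\Z F$ has homotopy in degrees $\leq 8j-2$ and $\Si{8j}H_Q\ulZ$ has homotopy only in degree $8j$, the long exact sequence of the displayed cofiber sequence degenerates and yields $\mpi_{8j}(\Si{j\rho_Q}H_Q\ulZ^*) \iso \ulZ$ together with
\[
\mpi_m\!\left( \Si{j\rho_Q}H_Q\ulZ^* \right) \iso \left( \mpi_{-m-1}\!\left( \Si{-j\rho_Q}H_Q\ulZ\right)\right)^*, \qquad m \leq 8j-2 .
\]
It remains to substitute the values from \cref{HomologySnegkrhoQ} (with $n = m+1$) and to identify the resulting torsion duals. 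For the Mackey functors that occur one checks directly on the Lewis diagrams of \cref{tab-Q8Mackey} and \cref{tab-K4Mackey}, using that the torsion dual interchanges restrictions and transfers, that $\ulg^* \iso \ulg$, $\mystery^* \iso \mystery$, $\ul B(3,0)^* \iso \ul B(3,0)$, $(\phi_Z^*\ul B(2,0))^* \iso \phi_Z^*\ul B(2,0)$, $(\ul{mg}^*)^* \iso \ul{mg}$, and $(\phi_{LDR}^*\ulF^*)^* \iso \phi_{LDR}^*\ulF$.

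With these identifications the high-range terms $\mystery$ and $\ul B(3,0)$ of \cref{HomologySnegkrhoQ} reappear, shifted into the claimed ranges, and $\phi_Z^*\ul B(2,0)$ lands in degree $4j$; the bottom-range terms $\ulg^{\,\bullet}$ and $\ulg^{\,\bullet}\oplus\phi_{LDR}^*\ulF^*$ get carried to $\ulg^{\,\bullet}$ and $\ulg^{\,\bullet}\oplus\phi_{LDR}^*\ulF$. A short bookkeeping check of the exponents, comparing against \cref{HomologySkrhoK} with $k = j-1$ (the degree shift by $4 = \dim\QH$ being exactly what makes the indices line up), then shows that for $m\in[j+3,4j-1]$ these assemble into $\phi_Z^*\mpi_{m-4}\!\left(\Si{(j-1)\rho_K}H_K\ulZ\right)$, with the $\ul{mg}$ at degree $4j-2$ coming from the torsion dual of the $\ul{mg}^*$ at $n = 4j-1$. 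The only real obstacle is this final bookkeeping — keeping the parity conditions and endpoint cases straight and confirming the self-duality of each small Mackey functor; no extension problems arise, since in every short exact sequence in play one of the two outer terms vanishes.
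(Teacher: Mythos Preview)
Your proof is correct and takes essentially the same approach as the paper: the paper's entire argument is the one sentence ``Anderson duality then provides us with the homotopy Mackey functors of \(\Si{j\rho_Q} H_Q\ulZ^*\)'' from \cref{HomologySnegkrhoQ}, and you have simply unpacked this in detail, including the splitting into the torsion-free bottom class and the torsion part together with the identification of the relevant torsion duals. The self-duality checks for $\mystery$, $\ul B(3,0)$, and $\phi_Z^*\ul B(2,0)$ and the index bookkeeping against \cref{HomologySkrhoK} are exactly the verifications the paper leaves implicit.
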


Finally, the homotopy Mackey functors of \(\Si{j\rho_Q} H_Q\ulZ(1,0)\) follow from the exact sequence \(\ulZ(1,0) \hookrightarrow \ulZ \twoheadrightarrow \phi^*_Z\ulF\).

\begin{prop}
For \(j\geq 1\), the  homotopy Mackey functors of \(\Si{j\rho_Q} H_Q\ulZ(1,0)\) are
\begin{align*}
    \ul\pi_{i}( \Si{j\rho_Q} H_Q\ulZ(1,0)) &\cong \left\lbrace \begin{array}{ll}
        \ulZ & i=8j \\
        \mystery & \begin{aligned} i\in[4j+1,8j-2], \\ i\equiv 2\pmod 4 \end{aligned} \\
        \ul B(3,0) & \begin{aligned} i\in[4j+1,8j-2], \\ i\equiv 0\pmod 4 \end{aligned} \\
        \phi^*_Z\ul B(2,0) & i=4j \\
        \phi_Z^* \mpi_i (\Si{j\rho_K} H_K \ulZ) & i\in[j,4j-1].
    \end{array}\right.
\end{align*}
\end{prop}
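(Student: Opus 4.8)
The plan is to extract the answer from the short exact sequence $\ulZ(1,0)\into\ulZ\onto\phi^*_Z\ulF$ of \cref{MackeySESExamples}(5), in the same spirit as the preceding propositions of this section are obtained from their defining exact sequences. Smashing the corresponding fiber sequence of $H_Q\ulZ$-modules with $S^{j\rho_Q}$ gives a cofiber sequence
\[
\Si{j\rho_Q}H_Q\ulZ(1,0)\rtarr\Si{j\rho_Q}H_Q\ulZ\rtarr\Si{j\rho_Q}H_Q\,\phi^*_Z\ulF,
\]
and hence a long exact sequence of homotopy Mackey functors. The homotopy of the middle term is \cref{HomologySkrhoQ}, so there are two things to do: identify the homotopy of the third term, and then resolve the extension problems in the long exact sequence.

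For the third term, note that $\phi^*_Z\ulF$ is a geometric Mackey functor, so $H_Q\phi^*_Z\ulF$ has contractible underlying spectrum and is $\widetilde{E\mathcal{F}[Z]}$-local. Since $S(j\QH)$ is a free $Q$-space, hence an $\mathcal{F}[Z]$-space, smashing the cofiber sequence $S(j\QH)_+\smsh S^{j\rho_K}\rtarr S^{j\rho_K}\rtarr S^{j\rho_Q}$ with $H_Q\phi^*_Z\ulF$ turns its last map into an equivalence, and \cref{phiZGeometricSpectrumExample} then gives
\[
\Si{j\rho_Q}H_Q\,\phi^*_Z\ulF\simeq\Si{j\rho_K}H_Q\,\phi^*_Z\ulF\simeq\phi^*_Z\big(\Si{j\rho_K}H_K\ulF\big).
\]
Its homotopy Mackey functors are therefore the geometric inflations $\phi^*_Z\mpi_i(\Si{j\rho_K}H_K\ulF)$; these are read off \cref{fig:HtpySigmarhoK4}, vanish for $i>4j$, and equal $\phi^*_Z\ulF$ in degree $4j$.

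The extension analysis then proceeds degree by degree. The surjection $\ulZ\onto\phi^*_Z\ulF$ factors through the constant Mackey functor $\ulF$, so the comparison map $\Si{j\rho_Q}H_Q\ulZ\rtarr\Si{j\rho_Q}H_Q\,\phi^*_Z\ulF$ factors through $\Si{j\rho_Q}H_Q\ulF$; this, together with restriction to the order-$4$ subgroups — where the statement reduces to the known $C_4$-computation of the slices of $\Si{*}H_{C_4}\ulZ(1,0)$ recorded in \cref{sec:C4slice} — and with application of $\Phi^Z$, which returns the corresponding $K_4$-level statement, forces every connecting homomorphism to vanish. Consequently the long exact sequence splits into short exact sequences
\[
0\rtarr\mpi_i(\Si{j\rho_Q}H_Q\ulZ(1,0))\rtarr\mpi_i(\Si{j\rho_Q}H_Q\ulZ)\rtarr\mpi_i(\Si{j\rho_Q}H_Q\,\phi^*_Z\ulF)\rtarr0,
\]
so $\mpi_i(\Si{j\rho_Q}H_Q\ulZ(1,0))$ is the kernel of this surjection. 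That kernel coincides with $\mpi_i(\Si{j\rho_Q}H_Q\ulZ)$ of \cref{HomologySkrhoQ} in every degree except $i=4j$, where the surjection is the map $\ul B(3,0)\onto\phi^*_Z\ulF$ with kernel $\phi^*_Z\ul B(2,0)$ (a splicing of \cref{MackeySESExamples}(2) and (6)); removing this $i=4j$ endpoint shrinks the $\mystery$- and $\ul B(3,0)$-ranges from $[4j,8j)$ to $[4j+1,8j-2]$ and replaces $\ul B(3,0)$ there by $\phi^*_Z\ul B(2,0)$, which gives the displayed formula.

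The step I expect to be the genuine obstacle is the vanishing of the connecting homomorphisms: a priori the long exact sequence could carry nontrivial maps relating the $\mystery$ and $\ul B(3,0)$ classes of $\Si{j\rho_Q}H_Q\ulZ$ to the classes of $\Si{j\rho_Q}H_Q\,\phi^*_Z\ulF$, and relating the $\ulg$- and $\phi^*_{LDR}\ulF$-summands in degrees $[j,4j-1]$ to the corresponding inflated classes. These are ruled out by restricting along $Z<L\le Q$ to reduce to the $C_4$-calculation and by applying $\Phi^Z$ to reduce to the $K_4$-calculation, but verifying it in each degree is where the real work lies.
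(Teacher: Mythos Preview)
Your setup is exactly the paper's: the short exact sequence $\ulZ(1,0)\into\ulZ\onto\phi^*_Z\ulF$ and the resulting long exact sequence after smashing with $S^{j\rho_Q}$. Your identification of $\Si{j\rho_Q}H_Q\phi^*_Z\ulF\simeq\phi^*_Z(\Si{j\rho_K}H_K\ulF)$ is also correct.

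The error is the claim that all connecting homomorphisms vanish. This is internally inconsistent with your own conclusion: you assert that the long exact sequence breaks into short exact sequences
\[
0\rtarr\mpi_i(\Si{j\rho_Q}H_Q\ulZ(1,0))\rtarr\mpi_i(\Si{j\rho_Q}H_Q\ulZ)\rtarr\mpi_i(\Si{j\rho_Q}H_Q\phi^*_Z\ulF)\rtarr0
\]
and then that the kernel equals $\mpi_i(\Si{j\rho_Q}H_Q\ulZ)$ for $i\neq 4j$. But that forces the right-hand term to vanish for $i\neq 4j$, whereas you have already (correctly) observed that $\mpi_i(\Si{j\rho_Q}H_Q\phi^*_Z\ulF)\cong\phi^*_Z\mpi_i(\Si{j\rho_K}H_K\ulF)$ is nonzero throughout $i\in[j,4j]$, by \cref{fig:HtpySigmarhoK4} or \cref{HtpyjrhoQphiF}.

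What actually happens in the range $i\in[j,4j-1]$ is the opposite of what you claim: the map $\mpi_i(\Si{j\rho_Q}H_Q\ulZ)\to\mpi_i(\Si{j\rho_Q}H_Q\phi^*_Z\ulF)$ is a monomorphism (indeed, it is $\phi^*_Z$ applied to the mod-2 reduction $\mpi_i(\Si{j\rho_K}H_K\ulZ)\to\mpi_i(\Si{j\rho_K}H_K\ulF)$, which is injective on these torsion Mackey functors), so the connecting maps are nonzero. The identification $\mpi_i(\Si{j\rho_Q}H_Q\ulZ(1,0))\cong\mpi_i(\Si{j\rho_Q}H_Q\ulZ)$ in this range is therefore \emph{not} realized by the map in the fiber sequence; rather, it comes from unwinding the long exact sequence, with the cokernel of each injection feeding isomorphically into the next $\mpi_{i-1}(\Si{j\rho_Q}H_Q\ulZ(1,0))$. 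Your proposed checks---restriction to $C_4$ and application of $\Phi^Z$---would in fact detect nonzero connecting maps, not rule them out. At $i=4j$ your analysis is fine: there $\ul B(3,0)\onto\phi^*_Z\ulF$ is surjective with kernel $\phi^*_Z\ul B(2,0)$.
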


\subsection{The $8k$-slices}
We now move on to the  \(8k\)-slices.

\begin{prop}
\label{HtypjrhophiB20}
For \(j=1\), the homotopy Mackey functors of \(\Si{j\rho_Q} H_Q\phi^*_Z \ul B(2,0)\) are
\begin{align*}
    \ul\pi_i( \Si{\rho_Q} H_Q\phi^*_Z\ul B(2,0)) &\cong \left\lbrace \begin{array}{ll}
        \ul{mg} & i=2 \\
        \ulg & i=1.
    \end{array}\right.
\end{align*}
For \(j\geq 2\), they are
\begin{align*}
    \ul\pi_i( \Si{j\rho_Q} H_Q\phi^*_Z\ul B(2,0)) &\cong \left\lbrace \begin{array}{ll}
        \phi^*_{LDR}\ulF & i=2j \\
        \ulg^3 & i\in[j+2,2j-1] \\
        \ulg^2 & i=j+1 \\
        \ulg & i=j.
    \end{array}\right.
\end{align*}
\end{prop}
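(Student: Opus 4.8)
The plan is to push the computation down to $K_4$ through the $\ulZ$-module inflation functor, and then to resolve the $K_4$-side with a short exact sequence of Mackey functors.

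\emph{Step 1 (reduction to $K_4$).} Since $\ul B(2,0)(e)=0$, \cref{phiZgeometric} gives $\phi^*_Z\ul B(2,0)\iso\phiZ^*_Z\ul B(2,0)$, so by \eqref{PsiExtends} the spectrum $H_Q\phi^*_Z\ul B(2,0)$ is $\HphiZ^*_Z H_K\ul B(2,0)$, which has contractible underlying spectrum. Using $\rho_Q\iso\QH\oplus\rho_K$ with $\rho_K$ inflated from $K$, together with the fact that $\HphiZ^*_Z$ commutes with suspension by an inflated representation, one gets
\[
\Si{j\rho_Q}H_Q\phi^*_Z\ul B(2,0)\simeq\Si{j\QH}\HphiZ^*_Z\bigl(\Si{j\rho_K}H_K\ul B(2,0)\bigr).
\]
The group $Q$ acts freely on $S(j\QH)$, and the spectrum on the right has contractible underlying spectrum, so smashing the cofiber sequence $S(j\QH)_+\to S^0\to S^{j\QH}$ with it shows that the $\Si{j\QH}$ is invisible. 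Hence $\Si{j\rho_Q}H_Q\phi^*_Z\ul B(2,0)\simeq\HphiZ^*_Z(\Si{j\rho_K}H_K\ul B(2,0))$, and as $\ul B(2,0)$ is a quotient of $\ulZ$ (hence a $\ulZ_{K_4}$-module), \cref{HtpyInflation} yields
\[
\mpi_i\bigl(\Si{j\rho_Q}H_Q\phi^*_Z\ul B(2,0)\bigr)\iso\phiZ^*_Z\,\mpi_i\bigl(\Si{j\rho_K}H_K\ul B(2,0)\bigr).
\]

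\emph{Step 2 (the $K_4$-computation).} It remains to compute $\mpi_i(\Si{j\rho_K}H_K\ul B(2,0))$ and then apply $\phiZ^*_Z$, for which we use that $\phiZ^*_Z\ulg=\ulg$, $\phiZ^*_Z\ul{mg}=\ul{mg}$, and $\phiZ^*_Z(\phi^*_{LDR}\ulF)=\phi^*_{LDR}\ulF$. I would use the short exact sequence of $K_4$-Mackey functors
\[
\ulg\into\ul B(2,0)\onto\ulm
\]
coming from $2\ul B(2,0)=\ulg$ and $\ul B(2,0)/2=\ulm$; smashing with $\Si{j\rho_K}H_K(-)$ gives a cofiber sequence. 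The homotopy of $\Si{j\rho_K}H_K\ulg$ is concentrated in degree $j$, where it is $\ulg$, because $\ulg$ is supported at $K_4$ and $\Phi^{K_4}(\Si{j\rho_K}H_K\ulg)\simeq\Si{j}H\F$; and the homotopy of $\Si{j\rho_K}H_K\ulm$ can be read off from \cite{GY}. The long exact sequence then determines $\mpi_i(\Si{j\rho_K}H_K\ul B(2,0))$. Equivalently one could start from $\ulZ^*\into\ulZ\onto\ul B(2,0)$, combining \cref{HomologySkrhoK} with the homotopy of $\Si{j\rho_K}H_K\ulZ^*$, the latter obtained by Anderson duality from \cite{Slone}*{Proposition~9.2}.

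\emph{Step 3 (extensions and the case $j=1$).} The substantive work is the bookkeeping in the long exact sequence: identifying the connecting homomorphism that hits the $\ulg$ in degree $j$ --- it has to be surjective, and this is precisely what drops the $\ulg^3$ in degree $j+1$ to $\ulg^2$ --- and checking that the remaining extensions are split, so that the answer is a sum of copies of $\ulg$ (capped by $\phi^*_{LDR}\ulF$ at the top). The case $j=1$ must be handled separately: there the degree-$2j$ and degree-$(j+1)$ contributions collide, and they combine into $\ul{mg}$ in place of $\phi^*_{LDR}\ulF$ and $\ulg^2$ appearing separately. The main obstacle is thus the connecting-map analysis in Step 2, which is what fixes the precise shape of the answer and, in particular, the discrepancy between the $j=1$ and $j\geq2$ cases.
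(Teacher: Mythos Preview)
Your reduction to $K_4$ in Step~1 is correct and is what the paper does, though the paper states it more briefly: since $\phi^*_Z\ul B(2,0)$ has trivial underlying value, $\Si{j\rho_Q}H_Q\phi^*_Z\ul B(2,0)\simeq\Si{j\rho_K}H_Q\phi^*_Z\ul B(2,0)$, and the computation becomes $K$-equivariant.

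The substantive difference is the short exact sequence. You use $\ulg\into\ul B(2,0)\onto\ulm$; the paper uses the dual sequence $\ulm^*\into\ul B(2,0)\onto\ulg$. This is not a cosmetic change. With your sequence the long exact sequence leaves exactly the connecting map
\[
\partial\colon\mpi_{j+1}\bigl(\Si{j\rho_K}H_K\ulm\bigr)\rtarr\mpi_j\bigl(\Si{j\rho_K}H_K\ulg\bigr)
\]
to determine, and you identify this as the ``main obstacle'' but do not resolve it. With the paper's sequence that obstacle never arises: by \cite{GY}*{Proposition~4.8} one has $\Si{\rho_K}H_K\ulm^*\simeq\Si2 H_K\ul{mg}$, so $\mpi_i(\Si{j\rho_K}H_K\ulm^*)\iso\mpi_{i-2}(\Si{(j-1)\rho_K}H_K\ul{mg})$, and \cite{GY}*{Proposition~7.4} then shows this already equals the claimed answer for $\ul B(2,0)$ in every degree $i\geq j+1$ and vanishes for $i\leq j$. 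The long exact sequence therefore gives $\mpi_i(\ul B(2,0))\iso\mpi_i(\ulm^*)$ for $i>j$ and $\mpi_j(\ul B(2,0))\iso\ulg$ directly, with no connecting map or extension to analyze; the $j=1$ case also falls out immediately since $\Si{\rho_K}H_K\ulm^*\simeq\Si2 H_K\ul{mg}$ has homotopy concentrated in degree~$2$.

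So your overall strategy is the paper's, but your choice of resolution leaves a genuine gap that the paper's choice of the dual resolution eliminates.
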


\begin{proof}
Because \(\phi^*_Z\ul B(2,0)\) is a pullback,
\begin{align*}
    \Si{j\rho_Q} H_Q\phi^*_Z \ul B(2,0) \simeq \Si{j\rho_K} H_Q\phi^*_Z \ul B(2,0).
\end{align*}
The exact sequence of \(K\)-Mackey functors \(\ul m^*\rtarr \ul B(2,0) \rtarr \ulg\) provides us with \(\Si{j\rho_K} H_K\ul m^* \rtarr \Si{j\rho_K} H_K\ul B(2,0) \rtarr \Si{j\rho_K} H_K\ulg\). The conclusion follows from \cite{GY}*{Propositions~4.8 and 7.4} and the resulting long exact sequence in homotopy.
\end{proof}

We may again use this strategy of reducing the calculations from \(Q\) to \(K\) for determining the homotopy Mackey functors of \(\Si{j\rho_Q} H_Q\ul B(3,0)\).

\begin{prop} \label{HtpySigmakRhoHQM}
For \(j=1\) the homotopy Mackey functors of \(\Si{j\rho_Q} H_Q\ul B(3,0)\) are
\begin{align*}
    \ul\pi_i( \Si{\rho_K} H_K\ul B(3,0)) &\cong \left\lbrace \begin{array}{ll}
        \phi^*_Z\ulF & i=4 \\
        \ul{mg} & i=2 \\
        \ulg & i=1.
    \end{array}\right.
\end{align*}
For \(j\geq 2\), the homotopy Mackey functors of \(\Si{j\rho_Q} H_Q \ul B(3,0)\) are
\begin{align*}
    \ul\pi_i( \Si{j\rho_Q} H_Q\ul B(3,0)) &\cong \left\lbrace \begin{array}{ll}
        \phi^*_Z\ulF & i=4j \\
        \ul{mg} & i=4j-1 \\
        \phi^*_{LDR}\ulF\oplus g^{4j-2-i} & i\in [2j+2,4j-2] \\
        \ulg^{2(k-2)+1} & i=2j+1 \\
        \phi^*_{LDR}\ulF\oplus \ulg^{2(j-3)+1} & i=2j \\
        \ulg^{2(i-j-1)} & i\in[j+3,2j-1] \\
        \ulg^{i-j+1} & i\in[j,j+2].
    \end{array}\right.
\end{align*}
\end{prop}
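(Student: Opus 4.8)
The plan is to exploit the short exact sequence of $Q_8$-Mackey functors $\ulZ^* \into \ulZ \onto \ul B(3,0)$ recorded in \cref{MackeySESExamples}. Smashing the corresponding cofiber sequence of Eilenberg--Mac~Lane spectra with $S^{j\rho_Q}$ gives
\[
\Si{j\rho_Q} H_Q\ulZ^* \xrtarr{\iota} \Si{j\rho_Q} H_Q\ulZ \rtarr \Si{j\rho_Q} H_Q\ul B(3,0),
\]
and I would read off $\mpi_*\Si{j\rho_Q}H_Q\ul B(3,0)$ from the associated long exact sequence, feeding in \cref{HtpyZdual} for the left-hand term and \cref{HomologySkrhoQ} (together with \cref{HomologySkrhoK}) for the middle term. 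The same argument handles both $j=1$ and $j\geq 2$; in the first case the $k=1$ inputs are much smaller and produce the shorter list.

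The first task is to understand $\iota$ on homotopy. Above degree $4j$ the spectra $\Si{j\rho_Q}H_Q\ulZ$ and $\Si{j\rho_Q}H_Q\ulZ^*$ have identical homotopy Mackey functors ($\ulZ$ in degree $8j$, and $\mystery$ and $\ul B(3,0)$ alternating in the intermediate even degrees), and I would check that $\iota$ is an isomorphism there by restricting to the subgroups $L$, $D$, $R$, where the statement becomes the corresponding $C_4$-assertion that can be read off Yarnall's computation (\cref{sec:C4slice}), and to $Z$, where it becomes $\Si{4j}$ of $H_{C_2}\ulZ^*\into H_{C_2}\ulZ$; the rigidity of the Mackey functors $\mystery$ and $\ul B(3,0)$ (a map iso on all proper levels and compatible with their restrictions and transfers is an iso) then upgrades this to an isomorphism over $Q_8$. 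Consequently $\Si{j\rho_Q}H_Q\ul B(3,0)$ is concentrated in degrees $\leq 4j$. In degree exactly $4j$ the relevant map is $\mpi_{4j}(\iota)\colon \phi_Z^*\ul B(2,0)\to\ul B(3,0)$, which I claim is the evident inclusion with cokernel $\phi_Z^*\ulF$ (a short check on Lewis diagrams, or the observation that it is $\mpi_{4j}$ applied to $\ulZ^*\into\ulZ$); since $\mpi_{4j-1}\Si{j\rho_Q}H_Q\ulZ^*=0$ by \cref{HtpyZdual}, this yields $\mpi_{4j}\Si{j\rho_Q}H_Q\ul B(3,0)\iso\phi_Z^*\ulF$.

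In the remaining range $j\leq i<4j$ both $\Si{j\rho_Q}H_Q\ulZ$ and $\Si{j\rho_Q}H_Q\ulZ^*$ are, by \cref{HomologySkrhoQ} and \cref{HtpyZdual}, given by $\phi_Z^*$ of (suitably shifted) Klein-four homotopy groups, so the long exact sequence reduces to a comparison of such $K_4$-computations, with the relevant $K_4$-input already available from \cite{Slone} and \cite{GY}. The bookkeeping is then of exactly the kind carried out in the proof of \cref{HtypjrhophiB20}: the induced maps vanish on the $\ul{mg}$-summands, producing the $\ul{mg}$ in degree $4j-1$ and, together with the connecting maps on the $\ulg$-summands, the $\phi^*_{LDR}\ulF$-terms through the (non-split) extension $\ul{mg}\into\phi^*_{LDR}\ulF\onto\ulg$; the maps on the stable $\ulg$-summands then combine to give the stated exponents $2(i-j-1)$, $i-j+1$, and so on. The main obstacle I anticipate is precisely tracking these connecting homomorphisms and resolving the resulting extension problems accurately enough to land on the stated direct sums rather than nontrivial extensions; as in the $\ul B(2,0)$ case this should be forced by restricting to $L$, $D$, $R$ and comparing with Yarnall's $C_4$-computation, which rigidifies the Mackey-functor structure and removes any ambiguity.
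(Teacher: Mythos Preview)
Your overall strategy---feeding the short exact sequence $\ulZ^*\into\ulZ\onto\ul B(3,0)$ into a long exact sequence after suspending by $j\rho_Q$---is exactly the route the paper takes, and your treatment of the top degree and of the range $i>4j$ (via the rigidity of $\mystery$ and $\ul B(3,0)$ under restriction) can be made to work. The paper, however, first observes that since $\ul B(3,0)(e)=0$ and $Q_8$ acts freely on $S(\QH)$, one has $\Si{j\rho_Q}H_Q\ul B(3,0)\simeq\Si{j\rho_K}H_Q\ul B(3,0)$; moreover $\ul B(3,0)$ is pulled back from a $K$-Mackey functor $\ul B$, so the entire computation is carried out with $\Si{j\rho_K}H_K\ul B$ and the $K$-level sequence $\ulZ^*\xrtarr{2}\ulZ\to\ul B$. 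This eliminates the degrees above $4j$ from the outset and reduces everything to the already-tabulated $K_4$-homotopy of $\Si{j\rho_K}H_K\ulZ$ and its dual, so there is no need for your separate high-degree isomorphism check.

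There is a genuine gap in your handling of the extension problems. The paper isolates a single nontrivial extension, in degree $4j-2$, of the form $\ul{mg}\into\,?\,\onto\ulg$, and shows it is nonsplit (yielding $\phi^*_{LDR}\ulF$) by invoking the auxiliary exact sequence $\ul B(2,0)\into\ul B\onto\ulF$ of $K$-Mackey functors. Your proposed method---restricting to the $C_4$-subgroups $L$, $D$, $R$ and comparing with the known $C_4$-answer---cannot distinguish the two candidates: both $\ul{mg}\oplus\ulg$ and $\phi^*_{LDR}\ulF$ (after applying $\phi_Z^*$) restrict to the same $C_4$-Mackey functor, namely the one concentrated at the top with value $\F$, because $\ulg$ restricts trivially to every proper subgroup. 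So restriction carries no information here, and you would need an independent argument---such as the paper's second short exact sequence, or a direct analysis of the connecting map---to pin down this extension.
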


\begin{proof}
Because the underlying spectrum of \( H_Q\ul B(3,0)\) is contractible,
\begin{align*}
    \Si{\rho_Q} H_Q\ul B(3,0) \simeq \Si{\rho_K} H_Q\ul B(3,0).
\end{align*}
Now, we may consider \(\ul B(3,0)\) as a pullback \(\phi^*_Z B:= \ul B(3,0)\), thus the calculation is reduced to one of $K$-Mackey functors. The sequence of \(K\)-Mackey functors \(\ulZ^*\xrtarr{2}\ulZ \rtarr \ul B\) provides us with
\begin{align*}
    \Si{j\rho_K} H_K\ulZ^* \rtarr \Si{j\rho_K} H_K\ulZ \rtarr \Si{j\rho_K} H_K\ul B.
\end{align*}
Except for \(i=4j-2\), the result follows from the associated long exact sequence in homotopy. In degree \(4j-2\) we have an extension 
\begin{align*}
    \ul{mg} \rtarr \ul\pi_{4j-2}(\Si{j\rho_K} H\ul B) \rtarr \ulg.
\end{align*}
We need to show this is not the split extension. This follows from the exact sequence \(\ul B(2,0)\rtarr \ul B \rtarr \ulF\) of $K$-Mackey functors.
\end{proof}

\begin{prop}
\label{Htpyjrhomgw}
For \(j=1\) and \(j=2\), the homotopy Mackey functors of \(\Si{j\rho_Q} H_Q \mystery \) are
\[
\ul\pi_i( \Si{\rho_Q} H_Q \mystery) \cong
    \begin{cases}
    \phi_Z^* \ulF & i=4 \\
    \phi_Z^* \ul{B}(2,0) & i=2.
    \end{cases}
\]
and
\[
\ul\pi_i( \Si{2\rho_Q} H_Q \mystery) \cong
    \begin{cases}
    \phi_Z^* \ulF & i=8 \\
    \ul{mg} & i=7 \\
    \phi_{LDR} \ulF & i=6 \\
    \ulg & i=5 \\
    \ul{mg} & i=4 \\
    \ulg & i=3.
    \end{cases}
\]
For \(j\geq 3\), the homotopy Mackey functors of \(\Si{j\rho_Q} H_Q \mystery\) are
\[
\mpi_i ( \Si{j\rho_Q} H_Q \mystery ) \iso 
\begin{cases}
\phi_Z^* \ulF & i=4j \\
\ul{mg} & i=4j-1 \\
\phi_{LDR} \ulF \oplus \ulg^{4j-i-2} & i \in [2j+2,4j-2] \\
\ulg^{2j-3} & i=2j+1 \\
\ulg^{2j-5} \oplus \phi_{LDR} \ulF & i=2j \\
\ulg^{2(i-j)-2} & i \in [j+2,2j-1] \\
\ulg & i=j+1
\end{cases}
\]
\end{prop}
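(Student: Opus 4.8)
The plan is to follow the template of \cref{HtypjrhophiB20} and \cref{HtpySigmakRhoHQM}: first reduce the suspension from $\rho_Q$ to $\rho_K$, then feed the short exact sequence \cref{MackeySESExamples}(7) into a long exact sequence in homotopy, and finally resolve the resulting extension problems. Since $\mystery(e)=0$, the underlying spectrum of $H_Q\mystery$ is contractible, so $H_Q\mystery\simeq\widetilde{EQ}\smsh H_Q\mystery$. As $S(j\QH)$ is a free $Q$-CW complex, $S(j\QH)_+\smsh\widetilde{EQ}\simeq\ast$, so smashing the cofiber sequence $S(j\QH)_+\rtarr S^0\rtarr S^{j\QH}$ with $\Si{j\rho_K}H_Q\mystery$ shows that $\Si{j\rho_Q}H_Q\mystery\simeq\Si{j\rho_K+j\QH}H_Q\mystery\simeq\Si{j\rho_K}H_Q\mystery$, exactly as in \cref{HtpySigmakRhoHQM}. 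It therefore suffices to compute $\mpi_*\bigl(\Si{j\rho_K}H_Q\mystery\bigr)$.

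Next, the short exact sequence of $Q_8$-Mackey functors $\ul{mg}\into\mystery\onto\ulw$ from \cref{MackeySESExamples}(7) yields a cofiber sequence
\[
\Si{j\rho_K}H_Q\ul{mg}\rtarr\Si{j\rho_K}H_Q\mystery\rtarr\Si{j\rho_K}H_Q\ulw,
\]
and both outer terms are governed by Klein-four computations. The Mackey functor $\ul{mg}$ is the geometric inflation of the Klein-four $\ul{mg}$, so by \cref{phiZGeometricSpectrumExample} we have $\Si{j\rho_K}H_Q\ul{mg}\simeq\phi_Z^*\bigl(\Si{j\rho_K}H_K\ul{mg}\bigr)$; since $Z$ is a bottleneck subgroup, the argument of \cref{HtpyInflation} together with \cref{DescriptionphiZ} identifies its homotopy Mackey functors with $\phi_Z^*\mpi_*\bigl(\Si{j\rho_K}H_K\ul{mg}\bigr)$. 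For $\ulw$, whose underlying spectrum is again contractible, I would use that $\bigl(\Si{j\rho_K}H_Q\ulw\bigr)^Z\simeq\Si{j\rho_K}H_K(q_*\ulw)\simeq\Si{j\rho_K}H_K\ulw$ while $\downarrow^Q_Z\Si{j\rho_K}H_Q\ulw\simeq\Si{4j}H_{C_2}\phi^*\ulF$, so that the bottleneck property of $Z$ recovers $\mpi_*\bigl(\Si{j\rho_K}H_Q\ulw\bigr)$ from these. The relevant Klein-four homotopy groups of $\Si{j\rho_K}H_K\ul{mg}$ and $\Si{j\rho_K}H_K\ulw$ are recorded in \cite{GY}; alternatively one may split these further via the short exact sequence $\ulm\into\ul{mg}\onto\ulg$ over $K_4$ and its analogue for $\ulw$.

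With the two outer terms in hand, the bulk of the proof is the long exact sequence in homotopy: in each degree $i$ the Mackey functor $\mpi_i\bigl(\Si{j\rho_K}H_Q\mystery\bigr)$ sits in an extension whose outer terms are sums of copies of $\ulg$, $\phi^*_{LDR}\ulF$, $\ul{mg}$, and $\phi_Z^*\ulF$, and the connecting homomorphisms must be pinned down. The main obstacle I anticipate is precisely the non-splitting of certain of these extensions — exactly the step that required extra work in \cref{HtpySigmakRhoHQM} and \cref{HtypjrhophiB20} — for instance the extensions feeding into $\mpi_{4j-1}$ and $\mpi_{2j}$. I would resolve these the same way, by comparing the cofiber sequence above with the cofiber sequences coming from the other parts of \cref{MackeySESExamples} and, where convenient, with the cell structure of $S(\QH)$ from \cref{HomologyS(H)} (which realizes $\mystery$ as $\mpi_1(S(\QH)_+\smsh H_Q\ulZ)$), reducing every extension question to one already settled at the Klein-four level in \cite{GY} and \cite{Slone}. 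The degenerate cases $j=1$ and $j=2$ have collapsed index ranges and are handled by the same long exact sequence; here one can instead read $\mpi_*(\Si{\rho_K}H_Q\mystery)$ and $\mpi_*(\Si{2\rho_K}H_Q\mystery)$ off of \cref{HomologyS^H} and the low-degree part of \cref{HtpySigmakRhoHQM}, and check agreement with the stated formulas.
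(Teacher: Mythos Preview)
Your reduction from $\rho_Q$ to $\rho_K$ and the use of the short exact sequence $\ul{mg}\into\mystery\onto\ulw$ are fine, and this is indeed parallel to \cref{HtypjrhophiB20} and \cref{HtpySigmakRhoHQM}. But the paper takes a quite different route, and the difference matters because the extension problems you wave at are the entire content of the proof.

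For $j=1$ the paper does \emph{not} use the sequence from \cref{MackeySESExamples}(7); it uses instead $\ulw^*\into\mystery\onto\ul{mg}^*$, together with \cref{HomologySkrhoQwstar} and \cref{Htpyjrhomgstar}, which immediately gives $\phi_Z^*\ulF$ in degree~$4$ and an extension of $\ulm$ by $\ulg$ in degree~$2$. The point is that deciding this extension---whether it is $\phi_Z^*\ul{B}(2,0)$ or $\ulg\oplus\ulm$---is genuinely delicate: it asks whether the top value is $\Z/4$ or $\F^2$. The paper settles this by an indirect argument, desuspending the Postnikov tower of $\Si{-\rho_Q}H_Q\ulZ$ (from \cref{HomologySnegkrhoQ}) by $\rho_Q$, comparing with the known $\mpi_*(\Si{-2\rho_Q}H_Q\ulZ)$, and reading off $\mpi_{-9}(\Si{-7-\rho_Q}H_Q\mystery)$ before dualizing. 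None of the comparisons you propose (other parts of \cref{MackeySESExamples}, the cell structure of $S(\QH)$, or ``reading off of \cref{HomologyS^H}'') give this; in your long exact sequence you must first show the connecting maps $\ul{mg}\to\phi_{LDR}^*\ulF$ and $\phi_{LDR}^*\ulF\to\ulg^2$ are nonzero and then still resolve the same $\Z/4$-versus-$\F^2$ question.

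For $j\geq 2$ the paper again does something different and more efficient: having established the $j=1$ case, it packages it as the fiber sequence $\Si4 H_Q\phi_Z^*\ulF\to\Si{\rho_Q}H_Q\mystery\to\Si2 H_Q\phi_Z^*\ul{B}(2,0)$ and then suspends this by $(j-1)\rho_Q$, using \cref{HtpyjrhoQphiF} and \cref{HtypjrhophiB20} as input and inducting on $j$. The only extension that arises is of the form $\ulg^m\into\mf{E}\onto\phi_{LDR}^*\ulF$, and the paper disposes of all of these at once with a short $\ulZ$-module argument showing every such extension splits. Your approach, by contrast, runs the $\ul{mg}/\ulw$ long exact sequence afresh for each $j$, with several connecting maps to determine in each case, and you give no mechanism for doing so.
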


\begin{pf}
We first deal with the case $j=1$. The short exact sequence of Mackey functors
\[ \ul{w}^* \into \mystery \onto \ul{mg}^*\]
combines with \cref{HomologySkrhoQwstar} and \cref{Htpyjrhomgstar} to show that the only nontrivial Mackey functors are $\phi_Z^* \ulF$ in degree 4 and an extension of $\ulm$ by $\ulg$ in degree 2. It remains to see that this extension is $\phi_Z^* \ul{B}(2,0)$.
According to \cref{HomologySnegkrhoQ}, the Postnikov tower for $\Si{-\rho_Q} H_Q \ulZ$ is
\[
\begin{tikzcd}
\Si{-5} H_Q \phi_Z^* \ul{B}(2,0) \ar[r] & \Si{-\rho_Q} H_Q \ulZ \ar[d] \\
\Si{-7} H_Q \mystery \ar[r] & X \ar[d] \\
 & \Si{-8} H_Q \ulZ^*.
\end{tikzcd}
\]
Desuspending this diagram once by $\rho_Q$
gives a tower for computing the homotopy Mackey functors of $\Si{-2\rho_Q} H_Q \ulZ$.
The homotopy Mackey functors for $\Si{-8-\rho_Q} H_Q \ulZ^*$ and $\Si{-5\rho_Q} H_Q \phiZ^* \ul{B}(2,0)$ follow, using Anderson duality, from   \cref{HomologySkrhoQ} and \cref{HtypjrhophiB20}.
Long exact sequences in homotopy then imply that 
\[\mpi_{-9} ( \Si{-7-\rho_Q} H_Q \mystery ) \iso \phi_Z^* \ul{B}(2,0).\]
Dualizing gives that $\mpi_2 ( \Si{\rho_Q} H_Q \mystery )$ is $\phi_Z^* \ul{B}(2,0)$.

We now have a fiber sequence
\begin{equation}
\label{DecompRHOmgw}
\Si4 H_Q \phi_Z^* \ulF \rtarr \Si{\rho_Q} H_Q \mystery \rtarr \Si2 H_Q \phi_Z^* \ul{B}(2,0).     
\end{equation} 
Suspending this sequence by $\rho_Q$ immediately gives the homotopy Mackey functors of $\Si{2\rho_Q} H_Q \mystery$.
The same is true in the case $j=3$, except that we have an extension
\[ \ulg \into \mpi_6 \Si{3\rho_Q} H_Q \mystery \onto \phi_{LDR} \ulF.
\]
We claim that, more generally, any extension of $\ulZ$-modules
\[ 
\ulg^m \into \mf{E} \onto \phi_{LDR}\ulF
\]
is necessarily the split extension. To see this, first note that $\phi_{LDR}\ulF$ is, by definition, the direct sum $\phi_L^* \ulF \oplus \phi_D^* \ulF \oplus \phi_R^* \ulF$. It therefore suffices to show that the only $\ulZ$-module extension of $\phi_L^* \ulF$ by $\ulg^m$ is the split extension. Since any such extension will vanish at the subgroups $D$ and $R$, the $\ulZ$-module structure forces the value at $Q$ to be 2-torsion and therefore equal to $\ulF^{m+1}$. Since there is a nontrivial restriction to the subgroup $L$, the $\ulZ$-module structure forces the transfer from $L$ to vanish. Thus the extension must be the split extension. 

The suspension by $(j-1)\rho_Q$ of \cref{DecompRHOmgw} gives the homotopy Mackey functors of $\Si{j\rho_Q}H_Q\mystery$ in degrees $2j+1$ and higher. 
Now we argue by induction that the Mackey functors for $\Si{j\rho_Q} H_Q \mystery$ are as claimed, for $j\geq 3$. 
For instance, since the bottom Mackey functor is
\[
\mpi_j ( \Si{(j-1)\rho_Q} H_Q \mystery ) \iso \ulg,
\]
we see by decomposing $\Si{(j-1)\rho_Q} H_Q \mystery$ using the Postnikov tower that \[\mpi_{j+1} ( \Si{j\rho_Q} H_Q \mystery ) \iso \ulg.\]
The values of the Mackey functors $\mpi_i$, for $i \leq 2j-2$, follow in a similar way. The values
\[
\mpi_{2j-2} ( \Si{(j-1)\rho_Q} H_Q \mystery ) \iso \ulg^{2j-7} \oplus \phi_{LDR} \ulF,\]
and
\[
\mpi_{2j-1} ( \Si{(j-1)\rho_Q} H_Q \mystery ) \iso \ulg^{2j-5} 
\]
give that
\[
\mpi_{2j-1} ( \Si{j\rho_Q} H_Q \mystery ) \iso \ulg^{2j-4} 
\]
and that we have an extension of $\ulZ$-modules
\[
\ulg^{2j-5} \into \mpi_{2j} ( \Si{j\rho_Q} H_Q \ulZ ) \onto \phi_{LDR} \ulF.
\]
By the argument given above, this must be the split extension.
\end{pf}

The homotopy Mackey functors for the remaining \(8k\)-slices 
follow from \cite[Propositions 9.5, 9.8]{Slone}. 

\begin{prop}[{\cite[Proposition 9.5]{Slone}}, {\cite[Proposition 4.8]{GY}}]
\label{Htpyjrhomgstar}
We have the equivalence \(\Si{\rho_Q} H_Q\ul{mg}^* \simeq \Si{2} H_Q\ul m\). For \(j\geq 2\), the homotopy Mackey functors of \(\Si{j\rho_Q} H_Q \ul{mg}^*\) are
\begin{align*}
    \ul\pi_i( \Si{j\rho_Q} H_Q\ul{mg}^*) &\cong \left\lbrace \begin{array}{ll}
        \phi^*_{LDR}\ulF & i=2j \\
        \ulg^3 & i\in[j+2,2j-1] \\
        \ulg & i=j+1.
    \end{array}\right.
\end{align*}
\end{prop}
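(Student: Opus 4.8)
The plan is to deduce this from the corresponding $K_4$-statements by inflation, following the pattern already used in the proofs of \cref{HtypjrhophiB20} and \cref{HtpySigmakRhoHQM}. The first step is to replace $\rho_Q$ by $\rho_K$. Since $\ul{mg}^*(e)=0$, the underlying spectrum of $H_Q\ul{mg}^*$ is contractible, so $\Si{j\rho_K}H_Q\ul{mg}^*$ is a module over $\widetilde{E\mathcal{F}[Z]}$. The action of $Q_8$ on $S(\QH)$ is free (as noted in the proof of \cref{HomologyS(H)}), hence $S(\QH)_+\smsh\widetilde{E\mathcal{F}[Z]}\simeq *$; feeding this into \eqref{SHcofib} shows $S^{\QH}\smsh\widetilde{E\mathcal{F}[Z]}\simeq\widetilde{E\mathcal{F}[Z]}$, and inductively $S^{j\QH}\smsh\widetilde{E\mathcal{F}[Z]}\simeq\widetilde{E\mathcal{F}[Z]}$. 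Using $\rho_Q\iso\QH\oplus\rho_K$, this gives $\Si{j\rho_Q}H_Q\ul{mg}^*\simeq\Si{j\rho_K}H_Q\ul{mg}^*$.

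The second step passes to $K_4$. Since $\ul{mg}^*(e)=0$, \cref{phiZgeometric} together with \cref{HZinflation} identify $H_Q\ul{mg}^*$ with $\HphiZ_Z^*H_K\ul{mg}^*$, and since $\HphiZ_Z^*$ commutes with smashing by inflated representation spheres, $\Si{j\rho_K}H_Q\ul{mg}^*\simeq\HphiZ_Z^*\!\bigl(\Si{j\rho_K}H_K\ul{mg}^*\bigr)$. Because $\ul{mg}^*$ is an $\F$-module, hence a $\ulZ$-module, \cref{HtpyInflation} applies and yields
\[
\mpi_i\bigl(\Si{j\rho_Q}H_Q\ul{mg}^*\bigr)\iso\phiZ_Z^*\,\mpi_i\bigl(\Si{j\rho_K}H_K\ul{mg}^*\bigr).
\]
As the homotopy Mackey functors on the right all vanish at $e$, \cref{phiZgeometric} lets us replace $\phiZ_Z^*$ by the geometric inflation $\phi_Z^*$ here.

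It then remains to invoke the $K_4$-computation and identify the inflated Mackey functors. By \cite{GY}*{Proposition~4.8}, $\Si{\rho_K}H_K\ul{mg}^*\simeq\Si2H_K\ulm$, which inflates (via the first step, with $j=1$) to $\Si{\rho_Q}H_Q\ul{mg}^*\simeq\Si2H_Q\ulm$. For $j\geq2$, \cite{Slone}*{Proposition~9.5} gives $\mpi_i\bigl(\Si{j\rho_K}H_K\ul{mg}^*\bigr)$ as $\phi^*_{LDR}\ulF$ for $i=2j$, as $\ulg^3$ for $i\in[j+2,2j-1]$, and as $\ulg$ for $i=j+1$. Applying $\phi_Z^*$ we use $\phi_Z^*\ulg=\ulg$ (the convention that a $Q_8$-symbol denotes the inflated $K_4$-Mackey functor) and $\phi_Z^*\phi^*_{LDR}\ulF=\phi^*_{LDR}\ulF$ (geometric inflations compose, $\phi_Z^*\circ\phi^*_{L/Z}=\phi^*_L$ and likewise for $D$ and $R$), which produces exactly the claimed formula.

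The genuine content is imported from \cite{Slone}*{Proposition~9.5} and \cite{GY}*{Proposition~4.8}; within the argument above the only steps requiring care are the freeness of $S(\QH)$ — which is what makes $\Si{\QH}$ act as the identity on $\widetilde{E\mathcal{F}[Z]}$-modules — and verifying the hypotheses of \cref{HtpyInflation} and \cref{phiZgeometric}, all of which are straightforward. I do not anticipate a serious obstacle.
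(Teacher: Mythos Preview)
Your proof is correct and is precisely the inflation argument the paper relies on implicitly: the paper simply cites \cite{Slone}*{Proposition~9.5} and \cite{GY}*{Proposition~4.8} with no further proof, using the standing convention that a $K_4$-Mackey functor name denotes its $Q_8$-inflation. Your write-up makes the passage from $\rho_Q$ to $\rho_K$ (via contractibility of the underlying spectrum and freeness of $S(\QH)$) and the application of \cref{HtpyInflation} explicit, which is exactly the mechanism behind the paper's abuse of notation.
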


\begin{prop}[{\cite[Proposition 9.8]{Slone}}]
We have  equivalences
\begin{align*}
\Si{j\rho_Q} H\phi^*_{LDR}\ulF^* \simeq \left\lbrace \begin{array}{ll}
	    \Si{2} H\phi^*_{LDR}\ul f & j=1 \\
	    \Si{4} H\phi^*_{LDR}\ulF & j=2.
	\end{array}\right.
\end{align*}
Then for \(j\geq 3\), the nontrivial homotopy Mackey functors of \(\Si{j\rho_Q} H\phi^*_{LDR} \ulF^*\) are
\begin{align*}
	\ul\pi_{i} ( \Si{j\rho_Q} H_Q\phi^*_{LDR} \ulF^* ) = \left\lbrace \begin{array}{ll}
	\phi^*_{LDR} \ulF & i=2j\\
	\ulg^{3}  & i\in [j+2,2j-1].
	\end{array}\right.
\end{align*}
\end{prop}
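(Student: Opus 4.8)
The plan is to deduce this from the corresponding $K_4$-equivariant computation \cite{Slone}*{Proposition~9.8} via the inflation machinery of \cref{Inflationsection}. The key point is that $\phi^*_{LDR}\ulF^*$ — which, by our notational convention, denotes the inflation to $Q_8$ of the identically named $K_4$-Mackey functor — vanishes at the trivial subgroup. Consequently $H_Q\phi^*_{LDR}\ulF^*$ (and likewise $H_Q\phi^*_{LDR}\ul f$) has contractible underlying spectrum, so the geometric and $\ulZ$-module inflation functors of \cref{Inflationsection} agree on the relevant spectra.

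First I would reduce the suspension by $j\rho_Q$ to a suspension by the inflated representation $j\rho_K$. Using $\rho_Q \iso \QH \oplus \rho_K$, write $\Si{j\rho_Q} H_Q\phi^*_{LDR}\ulF^* \simeq \Si{j\QH}\big(\Si{j\rho_K} H_Q\phi^*_{LDR}\ulF^*\big)$ and smash the cofiber sequence $S(j\QH)_+ \rtarr S^0 \rtarr S^{j\QH}$ with $Y := \Si{j\rho_K} H_Q\phi^*_{LDR}\ulF^*$. Since the $Q_8$-action on $S(\QH)$, and hence on $S(j\QH)$, is free, and since the underlying spectrum of $Y$ is contractible, we have $S(j\QH)_+ \smsh Y \simeq *$; therefore $\Si{j\rho_K} H_Q\phi^*_{LDR}\ulF^* \simeq \Si{j\rho_Q} H_Q\phi^*_{LDR}\ulF^*$. (This is the same device used in the proofs of \cref{HtypjrhophiB20} and \cref{HtpySigmakRhoHQM}.)

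Next I would identify $\Si{j\rho_K} H_Q\phi^*_{LDR}\ulF^*$ as a geometric inflation. Since $\Si{j\rho_K} H_K\phi^*_{LDR}\ulF^*$ has contractible underlying spectrum, \cref{HphiZgeometric} (together with \eqref{PsiExtends}, or directly \cref{phiZGeometricSpectrumExample}) gives
\[
\Si{j\rho_K} H_Q\phi^*_{LDR}\ulF^* \simeq \HphiZ_Z^*\big(\Si{j\rho_K} H_K\phi^*_{LDR}\ulF^*\big) \simeq \phi_Z^*\big(\Si{j\rho_K} H_K\phi^*_{LDR}\ulF^*\big),
\]
and \cref{HtpyInflation} (with \cref{phiZgeometric}) identifies $\mpi_i$ of this spectrum with $\phi_Z^*\,\mpi_i\big(\Si{j\rho_K} H_K\phi^*_{LDR}\ulF^*\big)$. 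It then remains to plug in \cite{Slone}*{Proposition~9.8}: the $K_4$-level equivalences $\Si{\rho_K} H_K\phi^*_{LDR}\ulF^* \simeq \Si2 H_K\phi^*_{LDR}\ul f$ and $\Si{2\rho_K} H_K\phi^*_{LDR}\ulF^* \simeq \Si4 H_K\phi^*_{LDR}\ulF$ inflate directly to the asserted $Q_8$-level equivalences (the suspensions by $2$ and $4$ are by trivial representations, and the inflations of the relevant geometric $K_4$-Mackey functors are the identically named $Q_8$-Mackey functors), while for $j\geq 3$ the homotopy Mackey functors $\phi^*_{LDR}\ulF$ in degree $2j$ and $\ulg^3$ in degrees $[j+2,2j-1]$ satisfy $\phi_Z^*\phi^*_{LDR}\ulF \iso \phi^*_{LDR}\ulF$ and $\phi_Z^*\ulg \iso \ulg$, yielding the claim.

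I do not expect a serious obstacle here; the only care needed is in bookkeeping the inflation functors and checking at each step that the hypothesis ``contractible underlying spectrum'' is in force so that \cref{phiZgeometric} and \cref{HphiZgeometric} apply. If anything is delicate, it is verifying that the special $j=1,2$ equivalences of \cite{Slone}*{Proposition~9.8}, stated over $K_4$ in terms of $\phi^*_{LDR}\ul f$ and $\phi^*_{LDR}\ulF$, transport correctly along $Q_8 \rtarr K_4$ — but this again reduces to the fact that such inflation is computed by $\phi_Z^*$ on geometric data.
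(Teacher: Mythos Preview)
Your proposal is correct and is precisely the argument the paper leaves implicit: the paper simply cites \cite{Slone}*{Proposition~9.8} and the surrounding remark that ``the homotopy Mackey functors for the remaining $8k$-slices follow from \cite{Slone}*{Propositions~9.5,~9.8}'', relying on the same reduction from $\rho_Q$ to $\rho_K$ (via contractibility of the underlying spectrum) and identification with the geometric inflation that you have spelled out. Your write-up just makes explicit the details the paper omits.
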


\subsection{The $8k+4$-slices}
Similarly, the homotopy Mackey functors of the \((8k+4)\)-slices follow from  \cite[Proposition 9.8]{Slone} and \cite[Corollary 7.2, Propositions 7.3, 7.4]{GY}.

\begin{prop}[{\cite[Proposition 3.6]{GY}}]
For \(j\geq 1\), the homotopy Mackey functors of \(\Si{j\rho_Q} H_Q\phi^*_{LDR} \ulF\) are
\begin{align*}
    \ul\pi_i( \Si{j\rho_Q} H_Q\phi^*_{LDR} \ulF) &\cong \left\lbrace \begin{array}{ll}
        \phi^*_{LDR}\ulF & i=2j \\
        \ulg^3 & i\in[j,2j-1].
    \end{array}\right.
\end{align*}
\end{prop}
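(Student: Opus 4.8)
The plan is to reduce the $Q_8$ computation to the corresponding $K_4$ statement --- exactly as in the proofs of \cref{HtypjrhophiB20} and \cref{HtpySigmakRhoHQM} --- and then quote \cite{GY}*{Proposition~3.6}. First I would note that, by our abuse of notation, $\phi^*_{LDR}\ulF$ is the inflation along $q\colon Q\rtarr K$ of the $K_4$-Mackey functor of the same name (\cref{tab-K4Mackey}), which vanishes at the trivial subgroup. Consequently $H_Q\phi^*_{LDR}\ulF$ has contractible underlying spectrum and so is $\widetilde{E\mathcal{F}[Z]}$-local. Since $\rho_Q\iso\QH\oplus\rho_K$ and $\QH$ has trivial fixed points under every nontrivial subgroup of $Q$ (its restriction to any $C_4$ is $2\lambda$), smashing with $S^{j\QH}$ does nothing to an $\widetilde{E\mathcal{F}[Z]}$-local spectrum; this yields
\[
\Si{j\rho_Q}H_Q\phi^*_{LDR}\ulF\simeq\Si{j\rho_K}H_Q\phi^*_{LDR}\ulF.
\]

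Next I would identify the homotopy Mackey functors of the right-hand side with the inflations of those computed over $K_4$. Because $\rho_K$ is inflated from $K$ and $\phi^*_{LDR}\ulF=q^*\phi^*_{LDR}\ulF$, the projection formula \cref{ProjectionHM} identifies $(\Si{j\rho_K}H_Q\phi^*_{LDR}\ulF)^Z$ with $\Si{j\rho_K}H_K\phi^*_{LDR}\ulF$, so \cref{HtpyFixedPts} computes $q_*\mpi_i$; on the other hand the restriction of $\Si{j\rho_K}H_Q\phi^*_{LDR}\ulF$ to $Z$ is contractible. Because $Z$ is a bottleneck subgroup, a $Q$-Mackey functor vanishing at $Z$ and at $e$ is determined by its image under $q_*$, so these two facts give
\[
\mpi_i\big(\Si{j\rho_Q}H_Q\phi^*_{LDR}\ulF\big)\iso q^*\,\mpi_i\big(\Si{j\rho_K}H_K\phi^*_{LDR}\ulF\big).
\]
This is the same mechanism as \cref{HtpyInflation}, which I cannot invoke verbatim only because $\phi^*_{LDR}\ulF$ is not a $\ulZ$-module (its proof still applies).

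Finally, \cite{GY}*{Proposition~3.6} states that the homotopy Mackey functors of $\Si{j\rho_K}H_K\phi^*_{LDR}\ulF$ are $\phi^*_{LDR}\ulF$ in degree $2j$ and $\ulg^3$ in degrees $[j,2j-1]$; inflating (all flavors of inflation agree here, since these Mackey functors vanish at $e$) gives the claimed answer. The step I expect to need the most care is the middle one --- verifying that $\mpi_i$ is recovered from $q_*$ together with the vanishing restriction to $Z$ --- where the bottleneck property of $Z$ is essential; in contrast to \cref{HtpySigmakRhoHQM}, no extension problems arise.
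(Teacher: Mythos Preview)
Your argument is correct and matches the paper's approach: the paper simply cites \cite{GY}*{Proposition~3.6}, leaving implicit the reduction from $Q$ to $K$ that you spell out. One minor inaccuracy worth noting: $\phi^*_{LDR}\ulF$ \emph{is} a $\ulZ$-module (the transfers vanish and every relevant index $[H:K]$ is even, so the cohomological condition $\mathrm{tr}\circ\mathrm{res}=[H:K]\cdot\mathrm{id}$ holds as $0=0$ on $\F$-vector spaces), so you may invoke \cref{HtpyInflation} verbatim rather than reproving its mechanism.
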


\begin{prop}[{\cite[Corollary 7.2]{GY}}] \label{HtpyjrhoQphiF}
For \(j\geq 1\), the homotopy Mackey functors of \(\Si{j\rho_Q} H_Q \phi^*_Z\ulF\) are
\begin{align*}
    \ul\pi_i( \Si{j\rho_Q} H_Q\phi^*_Z \ulF) &\cong \left\lbrace \begin{array}{ll}
        \phi^*_Z\ulF & i=4j \\
        \ul{mg} & i=4j-1 \\
        \phi^*_{LDR}\ulF\oplus g^{4j-2-i} & i\in[2j,4j-2] \\
        \ulg^{2(i-j)+1} & i\in[j,2j-1].
    \end{array}\right.
\end{align*}
\end{prop}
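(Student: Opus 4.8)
The strategy is to reduce to the Klein-four computation of \cite{GY}*{Corollary~7.2} (the Mackey functors displayed in \cref{fig:HtpySigmarhoK4}) by exploiting that $Z$ is a bottleneck subgroup of $Q$, in the same spirit as the proof of \cref{HtpySigmakRhoHQM}. The first step is to strip off the $\QH$-summand of $\rho_Q$: since $(\phi_Z^*\ulF)(e)=0$, the $Q$-spectrum $\Si{j\rho_K}H_Q\phi_Z^*\ulF$ has contractible underlying spectrum, while $\QH^H=0$ for every nontrivial $H\leq Q$, so $Q$ acts freely on $S(j\QH)$ and hence $S(j\QH)_+\smsh\Si{j\rho_K}H_Q\phi_Z^*\ulF\simeq\ast$. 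Smashing the cofiber sequence $S(j\QH)_+\rtarr S^0\rtarr S^{j\QH}$ with $\Si{j\rho_K}H_Q\phi_Z^*\ulF$ and using $\rho_Q\iso\QH\oplus\rho_K$ then gives $\Si{j\rho_Q}H_Q\phi_Z^*\ulF\simeq\Si{j\rho_K}H_Q\phi_Z^*\ulF$, so it remains to compute the homotopy Mackey functors of the latter.

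I would pin those down from their restrictions to, and fixed points under, $Z$. The representation $\rho_K$ restricts to $Z$ as the trivial $4$-dimensional representation, and $\downarrow^Q_Z\phi_Z^*\ulF$ is the $C_2$-Mackey functor $\ulg$ (value $\F$ at $Z$ and $0$ at $e$), so $\downarrow^Q_Z\Si{j\rho_K}H_Q\phi_Z^*\ulF\simeq\Si{4j}H_Z\ulg$; thus $\downarrow^Q_Z\mpi_i$ vanishes for $i\neq 4j$ and equals $\ulg$ for $i=4j$. On the other hand $q_*\phi_Z^*\ulF$ is the constant Klein-four Mackey functor $\ulF$, so the projection formula \eqref{ProjectionHM} gives $\bigl(\Si{j\rho_K}H_Q\phi_Z^*\ulF\bigr)^Z\simeq\Si{j\rho_K}H_K\ulF$, and \eqref{HtpyFixedPts} identifies $q_*\mpi_i\bigl(\Si{j\rho_K}H_Q\phi_Z^*\ulF\bigr)$ with $\mpi_i\bigl(\Si{j\rho_K}H_K\ulF\bigr)$, which is exactly \cite{GY}*{Corollary~7.2}.

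To conclude: since $Z$ is a bottleneck subgroup, a $Q$-Mackey functor $\ul N$ is determined by the pair $\bigl(q_*\ul N,\downarrow^Q_Z\ul N\bigr)$ (cf.\ the discussion after \cref{DescriptionphiZ}), and when $\downarrow^Q_Z\ul N=0$, comparison with $q^*(q_*\ul N)$ forces $\ul N\iso q^*\bigl(\ul N^Z\bigr)$. Applying this to $\mpi_i$ for $i\neq 4j$ gives $\mpi_i\bigl(\Si{j\rho_K}H_Q\phi_Z^*\ulF\bigr)\iso q^*\mpi_i\bigl(\Si{j\rho_K}H_K\ulF\bigr)$; since inflation carries the Klein-four Mackey functors $\ulg$, $\phi^*_{LDR}\ulF$, $\ul{mg}$ (all of which vanish at the trivial subgroup) to the $Q_8$-Mackey functors of the same names, \cref{fig:HtpySigmarhoK4} yields precisely the asserted values for $i\in[j,4j-1]$. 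For $i=4j$, the data $q_*\mpi_{4j}=\ulF$ and $\downarrow^Q_Z\mpi_{4j}=\ulg$ characterize $\phi_Z^*\ulF$. The one point that needs care is the reconstruction of the restriction and transfer maps across the level $Z$, but this is automatic here because all the Mackey functors involved vanish at $e$; I therefore do not anticipate a genuine obstacle beyond bookkeeping the Klein-four answer and translating it through the inflation dictionary.
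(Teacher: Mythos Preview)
Your proposal is correct and matches the paper's (implicit) approach: the paper gives no separate proof here but simply cites \cite{GY}*{Corollary~7.2}, relying on exactly the reduction you spell out---replace $\rho_Q$ by $\rho_K$ using that $(\phi_Z^*\ulF)(e)=0$, then identify the computation with the $K_4$-equivariant one via the bottleneck property of $Z$, precisely as in the proofs of \cref{HtpySigmakRhoHQM} and \cref{HomologySkrhoQw}. Your bookkeeping of the $i=4j$ case (where $\downarrow^Q_Z\mpi_{4j}\iso\ulg$ rather than zero) is the one place the paper leaves to the reader, and you handle it correctly.
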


\begin{prop}[{\cite[Proposition 7.3]{GY}}]
For \(j\geq 1\), the homotopy Mackey functors of \(\Si{j\rho_Q} H_Q \ul m\) are
\begin{align*}
    \ul\pi_i( \Si{j\rho_Q} H_Q\ul m) &\cong \left\lbrace \begin{array}{ll}
        \phi^*_{LDR}\ulF & i=2j \\
        \ulg^{3} & i\in[j+1,2j-1] \\
        \ulg & i=j.
    \end{array}\right.
\end{align*}
\end{prop}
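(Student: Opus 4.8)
The plan is to reduce the computation to the $K_4$-case of \cite{GY}*{Proposition~7.3} by means of the inflation machinery of \cref{Inflationsection}. The relevant preliminary facts are that the $K_4$-Mackey functor $\ulm$ is cohomological, hence a $\ulZ_K$-module, and has trivial value at the identity subgroup. By \cref{phiZgeometric}, applied to the bottleneck subgroup $Z\unlhd Q$, the associated $Q_8$-Mackey functor (which we also denote $\ulm$) equals $\phiZ_Z^*\ulm = \phi_Z^*\ulm$, and in particular vanishes at $e$. So $H_Q\ulm$, and hence $\Si{V}H_Q\ulm$ for any $Q_8$-representation $V$, has contractible underlying spectrum; that is, it is $\widetilde{E\cF[Z]}$-local, where $\cF[Z]=\{e\}$ since $Z$ is the unique minimal subgroup of $Q_8$.

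First I would strip off the $\QH$-summand of $\rho_Q$. Writing $\rho_Q\iso\QH\oplus\rho_K$, we have $\Si{j\rho_Q}H_Q\ulm\simeq\Si{j\QH}\left(\Si{j\rho_K}H_Q\ulm\right)$. The $Q_8$-action on $S(\QH)$ is free (as used in \cref{HomologyS(H)}), hence so is the action on $S(j\QH)$, so $\widetilde{E\cF[Z]}\smsh S(j\QH)_+\simeq\ast$ and therefore $\widetilde{E\cF[Z]}\smsh S^{j\QH}\simeq\widetilde{E\cF[Z]}$; thus smashing with $S^{j\QH}$ is the identity on $\widetilde{E\cF[Z]}$-local objects. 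This gives $\Si{j\rho_Q}H_Q\ulm\simeq\Si{j\rho_K}H_Q\ulm$. Since $\rho_K$ is pulled back from $K_4$, \cref{phiZGeometricSpectrumExample} (or directly the formula $\HphiZ_Z^*(-)=H_Q\ulZ\smsh_{q^*H_K\ulZ}q^*(-)$ together with $q^*\rho_K=\rho_K$) identifies this $Q_8$-spectrum with $\HphiZ_Z^*\Si{j\rho_K}H_K\ulm$.

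Now \cref{HtpyInflation}, with $N=Z$ and $\ul L=\ulm$, yields
\[
\mpi_i\left(\Si{j\rho_Q}H_Q\ulm\right)\iso\phiZ_Z^*\,\mpi_i\left(\Si{j\rho_K}H_K\ulm\right),
\]
and by \cite{GY}*{Proposition~7.3} the homotopy Mackey functors of $\Si{j\rho_K}H_K\ulm$ are the $K_4$-Mackey functors $\phi^*_{LDR}\ulF$ in degree $2j$, $\ulg^3$ in degrees $j+1,\dots,2j-1$, and $\ulg$ in degree $j$. To conclude, I would verify that $\phiZ_Z^*$ carries these to the like-named $Q_8$-Mackey functors. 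Each has trivial underlying value, so $\phiZ_Z^*=\phi_Z^*$ on them by \cref{phiZgeometric}; that $\phi_Z^*\ulg=\ulg$ is immediate from \cref{DescriptionphiZ}, which determines $\phiZ_Z^*$ completely for the bottleneck $Z$; and $\phi_Z^*\phi^*_{LDR}\ulF=\phi^*_{LDR}\ulF$ follows from the functoriality of geometric inflation, namely $\phi_Z^*\phi^*_L\simeq\phi^*_L$ over $Q$ because $Q\onto Q/L$ factors through $K$, and similarly for $D$ and $R$. The one step that requires genuine care is the vanishing of the $\QH$-suspension in the second paragraph; the rest is bookkeeping with results already in hand.
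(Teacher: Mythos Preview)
Your proposal is correct and follows the same approach the paper takes implicitly: the paper gives no explicit proof but simply cites \cite{GY}*{Proposition~7.3}, relying on the reduction to $K_4$ that it has already set up (the underlying spectrum of $H_Q\ulm$ is contractible, so $\Si{j\rho_Q}H_Q\ulm\simeq\Si{j\rho_K}H_Q\ulm$, and $\ulm$ is inflated from $K$, so \cref{HtpyInflation} applies). You have simply spelled out this reduction in detail; compare the analogous arguments given in the proofs of \cref{HtypjrhophiB20}, \cref{HtpySigmakRhoHQM}, and \cref{HomologySkrhoQw}.
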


\begin{prop}[{\cite[Proposition 7.4]{GY}}]
For \(j\geq 1\), the homotopy Mackey functors of \(\Si{j\rho_Q} H_Q \ul {mg}\) are
\begin{align*}
    \ul\pi_i( \Si{j\rho_Q} H_Q\ul {mg}) &\cong \left\lbrace \begin{array}{ll}
        \phi^*_{LDR}\ulF & i=2j \\
        \ulg^{3} & i\in[j+1,2j-1]. \\
        \ulg^2 & i=j.
    \end{array}\right.
\end{align*}
\end{prop}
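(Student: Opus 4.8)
The plan is to deduce this from the corresponding $K_4$-computation \cite{GY}*{Proposition~7.4} together with the inflation machinery of \cref{Inflationsection}, in exactly the style of \cref{HtypjrhophiB20} and \cref{HtpySigmakRhoHQM}. First, since $\ul{mg}$ is inflated from $K_4$ and $\ul{mg}(e)=0$, the spectrum $\Si{j\rho_K}H_Q\ul{mg}$ has contractible underlying spectrum. As $\rho_Q\iso\QH\oplus\rho_K$ and the $Q$-action on $S(j\QH)$ is free, smashing the cofiber sequence $S(j\QH)_+\rtarr S^0\rtarr S^{j\QH}$ with $\Si{j\rho_K}H_Q\ul{mg}$ annihilates the first term (it is built from free cells smashed against a spectrum with contractible underlying spectrum), so $\Si{j\rho_Q}H_Q\ul{mg}\simeq\Si{j\rho_K}H_Q\ul{mg}$.

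Next I would identify this as a $Z$-inflation. Because $\ul{mg}(e)=0$ and $Z$ is a bottleneck subgroup, \cref{phiZgeometric} gives $\phiZ^*_Z\ul{mg}\iso\phi^*_Z\ul{mg}$, which is what we are calling $\ul{mg}$ as a $Q_8$-Mackey functor; and since $q^*$ is strong symmetric monoidal (so it commutes with $\Si{j\rho_K}$, a representation inflated from $K$), \eqref{PsiExtends} yields $\HphiZ_Z^*\bigl(\Si{j\rho_K}H_K\ul{mg}\bigr)\simeq\Si{j\rho_K}H_Q\ul{mg}$, which \cref{HphiZgeometric} further identifies with the geometric inflation $\phi^*_Z\bigl(\Si{j\rho_K}H_K\ul{mg}\bigr)$. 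The argument of \cref{HtpyInflation} — whose only essential inputs here are that $Z$ is bottleneck and that the underlying spectrum is contractible — then gives
\[
\mpi_i\bigl(\Si{j\rho_Q}H_Q\ul{mg}\bigr)\iso\phi^*_Z\,\mpi_i\bigl(\Si{j\rho_K}H_K\ul{mg}\bigr),
\]
where, by \cref{DescriptionphiZ} (applicable since $Z$ is bottleneck), $\phi^*_Z$ of a $K_4$-Mackey functor with vanishing value at $e$ simply records $\ul M(H/Z)$ at each $H\geq Z$ and $0$ at the trivial subgroup.

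Finally I would substitute \cite{GY}*{Proposition~7.4}, which computes $\mpi_i(\Si{j\rho_K}H_K\ul{mg})$ to be $\phi^*_{LDR}\ulF$ for $i=2j$, $\ulg^{3}$ for $i\in[j+1,2j-1]$, and $\ulg^{2}$ for $i=j$ (as $K_4$-Mackey functors), and observe from the subgroup lattices of \cref{sec:notn} that $\phi^*_Z$ carries the $K_4$-Mackey functor $\ulg$ to the $Q_8$-Mackey functor $\ulg$ and $\phi^*_{LDR}\ulF$ to the $\phi^*_{LDR}\ulF$ of \cref{tab-Q8Mackey}; this produces the stated list. I do not expect a genuine obstacle: the one point carrying actual content is that $\mpi_i$ commutes with $\phi^*_Z$, which rests on the bottleneck hypothesis, but this is precisely the reduction already used repeatedly in this section. (If one wishes to avoid citing \cite{GY}, one can instead splice a short exact sequence of $Q_8$-Mackey functors relating $\ul{mg}$, $\ulm$, and $\ulg$ with the already-computed $\mpi_\ast\Si{j\rho_Q}H_Q\ulm$; this handles the degrees $i>j$ at once, and the residual extension of $\ulg$ by $\ulg$ in degree $j$ is then checked to split.)
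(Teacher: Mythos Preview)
Your proposal is correct and matches the paper's approach: the paper provides no explicit proof here but simply cites \cite{GY}*{Proposition~7.4}, relying on the same reduction to $K_4$ (via the contractibility of the underlying spectrum and the inflation machinery of \cref{Inflationsection}) that you spell out in detail and that is used throughout this section, for instance in \cref{HtypjrhophiB20} and \cref{HtpySigmakRhoHQM}.
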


\subsection{The $4k+2$-slices}

The homotopy Mackey functors of the $(4k+2)$-slice \linebreak $\Si{1+k\rho_Q} H_Q \phi_Z^* \ulF$ are given in \cref{HtpyjrhoQphiF}. The homotopy Mackey functors of the remaining \((4k+2)\)-slices are as follows.

\begin{prop}[{\cite[Proposition 4.8, Corollary 7.2]{GY}}] \label{HtpyjrhoQphiF*}
We have the equivalence \(\Si{\rho_Q} H_Q\phi^*_Z\ulF^* \simeq \Si{4} H_Q\phi^*_Z\ulF\). For \(j\geq 2\), the homotopy Mackey functors of \(\Si{j\rho_Q} H_Q \phi^*_Z\ulF^*\) are
\begin{align*}
    \ul\pi_i( \Si{j\rho_Q} H_Q\phi^*_Z\ulF^*) &\cong \left\lbrace \begin{array}{ll}
        \phi^*_Z\ulF & i=4j \\
        \ul{mg} & i=4j-1 \\
        \phi^*_{LDR}\ulF\oplus g^{4j-2-i} & i\in[2j+2,4j-2] \\
        \ulg^{2(i-j)-5} & i\in[j+3,2j+1].
    \end{array}\right.
\end{align*}
\end{prop}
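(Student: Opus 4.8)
The plan is to compute the homotopy over $K_4$ and transport it to $Q_8$ along the $Z$-geometric fixed points functor, just as in the treatment of the companion statement \cref{HtpyjrhoQphiF}. Write $W = \Si{j\rho_Q} H_Q \phi^*_Z\ulF^*$. Since the $Q_8$-Mackey functor $\phi^*_Z\ulF^*$ vanishes at the trivial subgroup, the underlying spectrum of $H_Q\phi^*_Z\ulF^*$, and hence of $W$, is contractible; in particular $\downarrow^Q_e \mpi_i W = 0$. Because $Z$ is a bottleneck subgroup, $\mathcal F[Z] = \{e\}$, so $Q_{8+}\smsh W \simeq \ast$ and $W$ is $\widetilde{E\mathcal F[Z]}$-local; consequently $W^Z \simeq \Phi^Z W$.

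Next I would identify $\Phi^Z W$ as a $K_4$-spectrum. As $\Phi^Z$ is monoidal with $\Phi^Z S^V \simeq S^{V^Z}$, and $(\rho_Q)^Z = \rho_K$ — here $\QH^Z = 0$ while $Z$ acts trivially on the inflated representation $\rho_K$ — one gets $\Phi^Z W \simeq \Si{j\rho_{K_4}} \Phi^Z(H_Q \phi^*_Z\ulF^*)$, with $\rho_K$ now read as the regular representation $\rho_{K_4}$. Since $H_Q\phi^*_Z\ulF^*$ is $\widetilde{E\mathcal F[Z]}$-local, its geometric $Z$-fixed points agree with its genuine $Z$-fixed points, so $\Phi^Z(H_Q\phi^*_Z\ulF^*) \simeq (H_Q\phi^*_Z\ulF^*)^Z \simeq H_{K_4}(q_*\phi^*_Z\ulF^*)$ by \eqref{HtpyFixedPts}; and $q_*\phi^*_Z = \mathrm{id}$, so $q_*\phi^*_Z\ulF^* = \ulF^*$ and $\Phi^Z W \simeq \Si{j\rho_{K_4}} H_{K_4}\ulF^*$. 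Applying \eqref{HtpyFixedPts} once more gives $q_*\mpi_i W \iso \mpi_i(\Si{j\rho_{K_4}} H_{K_4}\ulF^*)$; together with $\downarrow^Q_e \mpi_i W = 0$ and the bottleneck property of $Z$ (cf. \cref{DescriptionphiZ}), this forces $\mpi_i W \iso \phi^*_Z\, \mpi_i(\Si{j\rho_{K_4}} H_{K_4}\ulF^*)$.

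Finally I would feed in the $K_4$-input. For $j=1$, \cite{GY}*{Proposition~4.8} gives $\Si{\rho_{K_4}} H_{K_4}\ulF^* \simeq \Si4 H_{K_4}\ulF$, so by the previous paragraph $\mpi_i W$ is $\phi^*_Z\ulF$ for $i=4$ and zero otherwise, i.e. $W \simeq \Si4 H_Q\phi^*_Z\ulF$. For $j\geq 2$, \cite{GY}*{Corollary~7.2} computes $\mpi_i(\Si{j\rho_{K_4}} H_{K_4}\ulF^*)$; since $\phi^*_Z$ sends the occurring $K_4$-Mackey functors $\ulF$, $\ul{mg}$, $\phi^*_{LDR}\ulF$, and $\ulg$ to $\phi^*_Z\ulF$, $\ul{mg}$, $\phi^*_{LDR}\ulF$, and $\ulg$ respectively, this reproduces the four cases in the statement. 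The main obstacle is the middle step: one must be careful that $\Phi^Z(H_Q\phi^*_Z\ulF^*) \simeq H_{K_4}\ulF^*$, which rests on using the contractibility of the underlying spectrum to replace geometric fixed points by genuine fixed points, and one must check that the cited results of \cite{GY} are stated in precisely this form — in particular the degree-$4$ suspension in \cite{GY}*{Proposition~4.8}. Everything else is bookkeeping; an alternative to the middle step, paralleling the proof of \cref{HtpyjrhoQphiF}, is to splice suspensions of the short exact sequences of \cref{MackeySESExamples} relating $\ulF^*$, $\ulF$, and the ``geometric'' Mackey functors, but the $\Phi^Z$ route above is shorter.
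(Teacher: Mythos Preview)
Your argument is correct and is essentially the reduction the paper intends by its citation to \cite{GY}: since $\phi^*_Z\ulF^*$ has contractible underlying spectrum, the computation is carried entirely by the $K_4$-quotient, where \cite{GY}*{Proposition~4.8, Corollary~7.2} give the answer. The paper does not spell out a proof here; in the neighboring \cref{HomologySkrhoQw,HomologySkrhoQwstar} it uses the equivalent but slightly more direct observation that contractibility of the underlying spectrum gives $\Si{j\rho_Q}H_Q\phi^*_Z\ulF^* \simeq \Si{j\rho_K}H_Q\phi^*_Z\ulF^*$ (the $\QH$-suspension is trivial on free cells), after which the computation is $K$-equivariant --- this is the same idea as your $\Phi^Z$ route, just without invoking geometric fixed points explicitly.
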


Finally, we have the homotopy of \(\Si{j\rho_Q} H_Q\ul w\) and \(\Si{j\rho_Q} H_Q\ul w^*\).

\begin{prop} \label{HomologySkrhoQw}
For \(j\geq 1\), the homotopy Mackey functors of \(\Si{j\rho_Q} H_Q\ul w\) are
\begin{align*}
    \ul\pi_i( \Si{j\rho_Q} H_Q\ul w) &\cong \left\lbrace \begin{array}{ll}
        \phi^*_Z\ulF & i=4j \\
        \ul{mg} & i=4j-1 \\
        \phi^*_{LDR}\ulF\oplus g^{4j-2-i} & i\in[2j,4j-2] \\
        \ulg^{2(i-j)+1} & i\in[j+1,2j-1].
    \end{array}\right.
\end{align*}
\end{prop}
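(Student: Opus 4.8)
The plan is to compare $\Si{j\rho_Q} H_Q \ul w$ with $\Si{j\rho_Q} H_Q \phi^*_Z\ulF$, whose homotopy Mackey functors are already recorded in \cref{HtpyjrhoQphiF}. The input is the evident short exact sequence of $K_4$-Mackey functors $\ul w \into \ulF \onto \ulg$ (compare \cref{tab-K4Mackey}); since geometric inflation of Mackey functors along $Q_8 \rtarr K_4$ is exact, it produces a short exact sequence $\ul w \into \phi^*_Z \ulF \onto \ulg$ of $Q_8$-Mackey functors, and hence a cofiber sequence
\[
\Si{j\rho_Q} H_Q \ul w \rtarr \Si{j\rho_Q} H_Q \phi^*_Z \ulF \rtarr \Si{j\rho_Q} H_Q \ulg .
\]
I would then read off the homotopy Mackey functors of $\Si{j\rho_Q} H_Q \ul w$ from the associated long exact sequence.

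The first step is to compute $\mpi_*(\Si{j\rho_Q} H_Q \ulg)$. As $\ulg$ is concentrated at $Q_8$ — with value $\F$ and $0$ on every proper subgroup — the spectrum $H_Q \ulg$ restricts to a contractible spectrum over each proper subgroup, so $H_Q \ulg \simeq \widetilde{E\mathcal{P}_{Q_8}} \smsh q^* H\F$; and since $\dim \rho_Q^{Q_8} = 1$, the inclusion $S^{j} \into S^{j\rho_Q}$ becomes an equivalence after smashing with $\widetilde{E\mathcal{P}_{Q_8}}$. Thus $\Si{j\rho_Q} H_Q \ulg \simeq \Si{j} H_Q \ulg$, so $\mpi_i(\Si{j\rho_Q} H_Q \ulg) \iso \ulg$ if $i = j$ and $0$ otherwise. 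Feeding this into the long exact sequence, for every $i \geq j+1$ both $\mpi_i$ and $\mpi_{i+1}$ of $\Si{j\rho_Q} H_Q \ulg$ vanish, so $\mpi_i(\Si{j\rho_Q} H_Q \ul w) \iso \mpi_i(\Si{j\rho_Q} H_Q \phi^*_Z \ulF)$; by \cref{HtpyjrhoQphiF} this already yields the asserted values for $i \geq j+1$.

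It remains to show $\Si{j\rho_Q} H_Q \ul w$ is $(j{+}1)$-connective: since the homotopy of $\Si{j\rho_Q} H_Q \phi^*_Z \ulF$ in degrees $\leq j$ consists of exactly one copy of $\ulg$ in degree $j$ (again \cref{HtpyjrhoQphiF}), the long exact sequence then deletes precisely this $\ulg$ and leaves the claimed answer. I would verify connectivity subgroup by subgroup: the underlying spectrum is contractible; the restriction to $Z$ is $\Si{4j + 4j\sigma} H_{C_2} \ulg \simeq \Si{4j} H_{C_2} \ulg$, concentrated in degree $4j$; the restriction to each order-$4$ subgroup $C_4$ is $\Si{2j + 2j\sigma + 2j\lambda} H_{C_4} \phi^*_{C_2}\ulF$ (using $\downarrow^{Q_8}_{C_4}\rho_Q = 2 + 2\sigma + 2\lambda$ and $\downarrow^{Q_8}_{C_4}\ul w = \phi^*_{C_2}\ulF$, see \cref{tab-C4Mackey}), which is $2j$-connective because its geometric fixed points $\Si{2j} \Phi^{C_4}(H_{C_4}\phi^*_{C_2}\ulF)$ are connective and its restriction to $C_2$ is concentrated in degree $4j$; and, by \cref{GeoFixedBottleneck} together with $q_*\phi^*_Z = \mathrm{id}$ (\cref{DescriptionphiZ}),
\[
\Phi^{Q_8}\!\left(\Si{j\rho_Q} H_Q \ul w\right) \simeq \Si{j}\, \Phi^{K_4}(H_{K_4} \ul w),
\]
which is $(j{+}1)$-connective since $\Phi^{K_4}(H_{K_4}\ul w)$ is connective with $\mpi_0 \iso \ul w(K_4) = 0$. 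Assembling these bounds through the isotropy-separation cofiber sequence
\[
\left(E\mathcal{P}_{Q_8 +} \smsh \Si{j\rho_Q} H_Q \ul w\right)^{Q_8} \rtarr \left(\Si{j\rho_Q} H_Q \ul w\right)^{Q_8} \rtarr \Phi^{Q_8}\!\left(\Si{j\rho_Q} H_Q \ul w\right)
\]
— whose left-hand term is built from the restrictions to $Z$, $L$, $D$, $R$, each at least $2j$-connective — gives $(j{+}1)$-connectivity at the $Q_8$-level, hence everywhere. The one point needing real care is exactly this last connectivity estimate at the $Q_8$-fixed-point level; the other subgroup checks and the comparison with \cref{HtpyjrhoQphiF} are routine bookkeeping. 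A slicker but less transparent alternative would be to argue directly that the connecting map $\mpi_j(\Si{j\rho_Q} H_Q \phi^*_Z\ulF) \rtarr \mpi_j(\Si{j\rho_Q} H_Q \ulg)$ is an isomorphism.
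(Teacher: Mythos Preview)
Your argument is correct and uses the same short exact sequence $\ul w \into \ulF \onto \ulg$ and resulting long exact sequence as the paper. The only difference is packaging: the paper first replaces $\rho_Q$ by $\rho_K$ (since $\ul w(e)=0$) and then runs the cofiber sequence entirely in $K_4$-spectra, whereas you stay in $Q_8$-spectra and compare to $\Si{j\rho_Q}H_Q\phi^*_Z\ulF$ via \cref{HtpyjrhoQphiF}; these are equivalent via \cref{HtpyInflation}. Your explicit connectivity check at degree $j$ fills in what the paper dismisses with ``follows immediately,'' so in that one place you are more careful than the original.
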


\begin{proof}
The underlying spectrum of \(\Si{j\rho_Q} H_Q\ul w\) is contractible; thus,
\begin{align*}
    \Si{j\rho_Q} H_Q\ul w \simeq \Si{j\rho_K} H_Q\ul w.
\end{align*}
Then, because \(\ul w\) is a pullback over \(Z\), the calculation is essentially $K$-equivariant. Consider the short exact sequence of \(K\)-Mackey functors \(\ul w\rtarr \ulF \rtarr \ulg\) and the corresponding cofiber sequence \(\Si{j\rho_K} H_K\ul w \rtarr \Si{j\rho_K} H_K\ulF \rtarr \Si{j\rho_K} H_K\ulg\). The statement follows immediately from the resulting long exact sequence in homotopy.
\end{proof}

\begin{prop}
\label{HomologySkrhoQwstar}
For \(j=1\), the homotopy Mackey functors of \(\Si{j\rho_Q} H_Q\ul w^*\) are
\begin{align*}
    \ul\pi_i( \Si{j\rho_Q} H_Q\ul w^*) &\cong \left\lbrace \begin{array}{ll}
        \phi^*_Z\ulF & i=4 \\
        \ulg & i=2.
    \end{array}\right.
\end{align*}
For \(j\geq 2\), they are
\begin{align*}
    \ul\pi_i( \Si{j\rho_Q} H_Q\ul w^*) &\cong \left\lbrace \begin{array}{ll}
        \phi^*_Z\ulF & i=4j \\
        \ul{mg} & i=4j-1 \\
        \phi^*_{LDR}\ulF\oplus g^{4j-2-i} & i\in[2j+2,4j-2] \\
        \ulg^{2(i-j)-5} & i\in[j+3,2j+1] \\
        \ulg & i=j+1.
    \end{array}\right.
\end{align*}
\end{prop}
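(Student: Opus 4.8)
The plan is to mimic the proof of \cref{HomologySkrhoQw}, but starting from a short exact sequence adapted to $\ulw^*$ rather than to $\ulw$. Over $K_4$ one has the short exact sequence $\ulw \into \ulF \onto \ulg$ of Mackey functors; pulling this back to $Q_8$ along the quotient (with value $0$ at the trivial subgroup, an exact operation) gives a short exact sequence $\ulw \into \phi^*_Z\ulF \onto \ulg$ of $Q_8$-Mackey functors, and taking linear duals over $\F$ produces the short exact sequence
\[
\ulg \into \phi^*_Z\ulF^* \onto \ulw^*
\]
(here $\ulg$ is self-dual, being concentrated at $Q_8$, and $(\phi^*_Z\ulF)^* = \phi^*_Z\ulF^*$). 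Suspending by $j\rho_Q$ and passing to Eilenberg--Mac~Lane spectra yields the cofiber sequence
\[
\Si{j\rho_Q} H_Q \ulg \rtarr \Si{j\rho_Q} H_Q \phi^*_Z\ulF^* \rtarr \Si{j\rho_Q} H_Q \ulw^*,
\]
and I would read the answer off of the associated long exact sequence in homotopy Mackey functors.

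The homotopy Mackey functors of the middle term are recorded in \cref{HtpyjrhoQphiF*}; the feature to exploit is that $\Si{j\rho_Q} H_Q \phi^*_Z\ulF^*$ has no nonzero homotopy below degree $j+3$ (for $j=1$ its homotopy is concentrated in degree $4 = j+3$). For the left-hand term, $\ulg$ is a geometric Mackey functor concentrated at $Q_8$, so $H_Q\ulg$ is $\widetilde{E\cF[Q_8]}$-local; since $\rho_Q-1$ has no $Q_8$-fixed points this forces $\Si{j\rho_Q}H_Q\ulg \simeq \Si{j}H_Q\ulg$, and hence $\mpi_i(\Si{j\rho_Q}H_Q\ulg)$ is $\ulg$ for $i=j$ and $0$ otherwise. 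Because the homotopy of the two outer terms is supported in disjoint degrees, the long exact sequence splits into isomorphisms $\mpi_i(\Si{j\rho_Q}H_Q\ulw^*) \iso \mpi_i(\Si{j\rho_Q}H_Q\phi^*_Z\ulF^*)$ for $i \geq j+2$, a connecting isomorphism $\mpi_{j+1}(\Si{j\rho_Q}H_Q\ulw^*) \iso \mpi_j(\Si{j\rho_Q}H_Q\ulg) = \ulg$, and vanishing in degrees $\leq j$. Matching this against \cref{HtpyjrhoQphiF*} gives exactly the stated lists: the homotopy of $\Si{j\rho_Q}H_Q\ulw^*$ coincides with that of $\Si{j\rho_Q}H_Q\phi^*_Z\ulF^*$ except for the additional copy of $\ulg$ in degree $j+1$.

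Since the two outer terms of the cofiber sequence have homotopy in disjoint degrees, no nonsplit extensions can occur, so once the short exact sequence is in hand the argument is pure bookkeeping --- in contrast to, e.g., the proof of \cref{HtpySigmakRhoHQM}. The main (and essentially only nontrivial) point is the construction of the short exact sequence $\ulg \into \phi^*_Z\ulF^* \onto \ulw^*$, i.e.\ identifying the dual of the pulled-back sequence $\ulw \into \phi^*_Z\ulF \onto \ulg$ with the one displayed above. As alternatives, one could follow the proof of \cref{HomologySkrhoQw} verbatim, reducing to a $K_4$-computation via the facts that $H_Q\ulw^*$ has contractible underlying spectrum and that $\ulw^*$ is a pullback over $Z$; or one could apply Anderson duality to \cref{HomologySkrhoQw}, using that $\ulw$ is torsion and $\rho_Q$ is orientable so that $\Si{j\rho_Q}H_Q\ulw^* \simeq \Si{1} I(\Si{-j\rho_Q}H_Q\ulw)$. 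Both of these require first computing a corresponding negative-suspension version, so the route above is the most direct.
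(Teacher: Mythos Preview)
Your proof is correct and is essentially the paper's own argument, just organized at the $Q_8$-level rather than after an explicit reduction to $K_4$. The paper's proof says only that one proceeds as in \cref{HomologySkrhoQw} starting from the $K$-sequence $\ulg \into \ulF^* \onto \ulw^*$; your sequence $\ulg \into \phi_Z^*\ulF^* \onto \ulw^*$ is precisely the $\phi_Z^*$-pullback of that, and by citing \cref{HtpyjrhoQphiF*} for the middle term you are invoking the same $K_4$-computation in repackaged form. One small wording slip: where you say ``the two outer terms'' have disjoint support, you mean the two \emph{known} terms $\Si{j\rho_Q}H_Q\ulg$ and $\Si{j\rho_Q}H_Q\phi_Z^*\ulF^*$; but your degree analysis makes clear what is intended and is correct.
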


\begin{proof}
The proof is the same as that in \cref{HomologySkrhoQw}, except that we start with the exact sequence of \(K\)-Mackey functors \(\ulg \rtarr \ulF^* \rtarr \ul w^*\).
\end{proof}

\section{Slice spectral sequences}
\label{sec:SpectralSequences}

Here we include the slice spectral sequences for \(\Si{n} H_Q\ulZ\) for several values of \(n\) between 5 and 15. 
In some cases, we use the restriction to the $C_4$-subgroups to determine some of the slice differentials.

The grading is the same as that in \cite[Section 4.4.2]{Kervaire}. The Mackey functor \(\ul E^{t-n,t}_2\) is \(\ul\pi_nP^t_t(X)\). We also follow the Adams convention, where \(\ul\pi_nP^t_t(X)\) has coordinates \((n,t-n)\) and the differential
\begin{align*}
    d_r: \ul E^{s,t}_r \rtarr \ul E^{s+r,t+r-1}_r
\end{align*}
points left one and up \(r\).

The $Q$-Mackey functors that appear in these spectral sequences are listed in \cref{SSS:Qmack}. We also display some companion $C_4$-slice spectral sequences, and the $C_4$-Mackey functors that appear are listed in \cref{SSS:C4mack}.

\begin{table}[h] 
\caption[Symbols for {$Q$}-Mackey functors]{Symbols for $Q$-Mackey functors}
\label{SSS:Qmack}
\begin{center}
	{\renewcommand{\arraystretch}{1.5}
		\begin{tabular}{|c|c|c|}
			\hline 
			$\square=\ulZ$
			& $\raisebox{-0.3ex}{$\phiZF$} = \phi^*_Z\ulF$
			& $\bpent =\phi^*_{LDR} \ulF$
			\\\hline
			$\mysterysymbol = \mystery$
			& $\circ = \ul B(3,0)$
			& $\phiZM = \phi^*_Z\ul B(2,0)$
			\\ \hline
			 $\btrap =\ul{mg}$
			 & $\gcirc=\ulg^n$ &  \\ \hline 
	\end{tabular} }
\end{center}
\end{table}

\begin{table}[h] 
\caption[Symbols for {$C_4$}-Mackey functors]{Symbols for $C_4$-Mackey functors}
\label{SSS:C4mack}
\begin{center}
	{\renewcommand{\arraystretch}{1.5}
		\begin{tabular}{|c|c|c|c|}
			\hline 
			$\square=\ulZ$
			& $\raisebox{-0.3ex}{$\phiZF$} = \phi^*_{C_2}\ulF$
			& $\circ = \ul B(2,0)$
			 & $\bullet=\ulg$  \\ \hline 
	\end{tabular} }
\end{center}
\end{table}

\begin{eg}
In the spectral sequences for \(\Si{5} H_Q\ulZ\), \(\Si{6} H_Q\ulZ\), and \(\Si{7} H_Q\ulZ\), because we must be left with
\begin{align*}
    \ul\pi_n( P^n_n\Si{n} H_Q\ulZ) \cong \ulZ,
\end{align*}
all differentials are forced.
\end{eg}

\begin{eg} \label{SSS8Z}
For \(\Si{8} H_Q\ulZ\), the pattern of differentials emanating from the Mackey functor $\mpi_6 (P^8_8 \Si8 H_Q \ulZ)$ is forced; no other pattern of differentials wipes out all classes in this region.
The shorter differentials clearing out the smaller region are then similarly forced. 
\end{eg}

\begin{eg}
In the cases of $\Si{n}H_Q\ulZ$ for $n=10$, $12$, and 15, we also display the corresponding slice spectral sequence for $\Si{n} H_{C_4}\ulZ$, where we use $C_4$ to indiscriminately refer to any of the subgroups $L, D, R \leq Q$. The slice differentials in the $C_4$-case force many of the slice differentials for the $Q$-equivariant spectra.
\end{eg}

\vfill

\ifDrawTikzPictures

\sseqset{
    classes= {fill, inner sep = 0pt, minimum size = 0.22em},
    class labels={below=2pt}, 
    differentials=black,
    class pattern=linear, 
    class placement transform = { rotate = 225, scale = 1.2 },
    run off differentials = ->, 
    grid = go, 
    grid color = gray!30,
FC2class/.sseq style = {fill = black, rectangle, draw, inner sep = 0.4ex},
Zclass/.sseq style={fill=none, rectangle, draw,inner sep=0.6ex},
2Zclass/.sseq style={fill=black!40, rectangle, draw,inner sep=0.4ex},
4Zclass/.sseq style = {fill = black, rectangle, draw, inner sep = 0.4ex},
Z4class/.sseq style={fill=none, circle, draw,inner sep=0.28ex},
Z4classSource/.sseq style={circle, inner sep = 0.28ex, draw, fill = red!70},
Z4classTarget/.sseq style={circle, inner sep = 0.28ex, draw, fill = black!40},
FK4class/.sseq style = {fill = black, diamond, draw, inner sep = 0.4ex},
Fclass/.sseq style = {fill = black!40, diamond, draw, inner sep = 0.4ex},
Fstarclass/.sseq style = {fill = black, diamond, draw, inner sep = 0.4ex},
Fmissclass/.sseq style = {fill = none, diamond, draw, inner sep = 0.4ex},
mgclass/.sseq style = {fill = black, trapezium, trapezium angle=70,
 minimum width=0pt, 
 inner sep = 0.4ex,
 inner xsep = 0.3ex},
mg2class/.sseq style = {fill = black, trapezium, trapezium angle=70,
 inner sep = 0.4ex},
phiFclass/.sseq style = {fill = black, regular polygon, regular polygon sides=5, 
 minimum width=0pt, 
 inner sep = 0.3ex,
 },
 phiZFclass/.sseq style = { phiZFshape, fill=black, draw, inner sep=0.35ex},
 phiZMclass/.sseq style = {phiZMshape,  fill = none, outer sep=0ex,inner sep=0.4ex},
 mystclass/.sseq style = {fill=none, circle, draw,inner sep=0.28ex},
}

\begin{sseqdata}[name=SI5Q8,
    y range={0}{10}, 
    x range={0}{8}, 
    y tick step = 2,
    x tick step = 2,
    classes= {fill, inner sep = 0pt, minimum size = 0.22em},
    class labels={below=2pt}, 
    differentials=black,
    class pattern=linear, 
    class placement transform = { rotate = 45, scale = 1.2 },
    xscale=0.55,
    yscale = 0.6,
    run off differentials = ->, 
    grid = go, 
    grid color = gray!30!white
]

\class(1,7)
\class[mgclass](2,6)

\class(2,3)
\class[mgclass](3,2)
\class[Zclass](5,0)

\d3(2,3)(1,7)
\d3(3,2)(2,6)

\draw[fill=white](3.75,7.3) rectangle (7.25,8.7);
\node at (5.5,8) {\SI^{5}H_{Q}\protect\ulZ}; 

\end{sseqdata}

\begin{sseqdata}[name=SI6Q8,
    y range={0}{16}, 
    x range={0}{8}, 
    y tick step = 2,
    x tick step = 2,
    classes= {fill, inner sep = 0pt, minimum size = 0.22em},
    class labels={below=2pt}, 
    differentials=black,
    class pattern=linear, 
    class placement transform = { rotate = 45, scale = 1.2 },
    xscale=0.55,
    yscale = 0.6,
    run off differentials = ->, 
    grid = go, 
    grid color = gray!30!white
]

\class(2,14)

\class(2,10)
\class[phiFclass](3,9)

\class(3,3)
\class[mgclass](4,2)
\class[Zclass](6,0)

\d4(3,9)(2,14)
\replacesource[mgclass]

\d6(3,3)(2,10)
\d6(4,2)(3,9)

\draw[fill=white](4.75,12.3) rectangle (8.25,13.7);
\node at (6.5,13) {\SI^{6}H_{Q}\protect\ulZ}; 

\end{sseqdata}

\begin{sseqdata}[name=SI7Q8,
    y range={0}{22}, 
    x range={0}{10}, 
    y tick step = 2,
    x tick step = 2,
    classes= {fill, inner sep = 0pt, minimum size = 0.22em},
    class labels={below=2pt}, 
    differentials=black,
    class pattern=linear, 
    class placement transform = { rotate = 45, scale = 1.2 },
    xscale=0.55,
    yscale = 0.6,
    run off differentials = ->, 
    grid = go, 
    grid color = gray!30!white
]

\class(3,21)

\class(3,13)
\class[phiFclass](4,12)

\class[phiZMclass](2,6)
\class[phiZFclass](4,4)

\class[phiZMclass](3,4)
\class[mystclass,"?" inside,font={\tiny}](5,2)
\class[Zclass](7,0)

\d[target anchor=south]1(3,4)(2,6)
\d[target anchor=south]1(5,2)(4,4)
\replacetarget
\replacesource[mgclass]

\d8(4,4)(3,13)
\d8(4,12)(3,21)
\replacesource[mgclass]

\d9(5,2)(4,12)

\draw[fill=white](4.75,18.3) rectangle (8.25,19.7);
\node at (6.5,19) {\SI^{7}H_{Q}\protect\ulZ}; 

\end{sseqdata}

\begin{sseqdata}[name=SI8Q8,
    y range={0}{28}, 
    x range={0}{10}, 
    y tick step = 2,
    x tick step = 2,
    classes= {fill, inner sep = 0pt, minimum size = 0.22em},
    class labels={below=2pt}, 
    differentials=black,
    class pattern=linear, 
    class placement transform = { rotate = 45, scale = 1.2 },
    xscale=0.55,
    yscale = 0.6,
    run off differentials = ->, 
    grid = go, 
    grid color = gray!30!white
]

\class(4,28)

\class[Z4class,"2" {inside,font=\tiny}](3,21)

\class[Z4class,"3" {inside,font=\tiny}](4,16)
 \class[phiFclass](5,15)

\class[Z4class,"2" {inside,font=\tiny}](2,10)
\class[phiFclass](3,9)

\class[phiFclass](3,7)
\class[mgclass](4,6)
\class[phiZFclass](5,5)

\class[phiZMclass](4,4)
\class[mystclass,"?" inside,font={\tiny}](6,2)
\class[Zclass](8,0)

\d2(6,2)(5,5)
\replacesource[mgclass]
\replacetarget
\d2(4,4)(3,7)
\replacesource
\d2(4,6)(3,9)
\replacetarget
\d2(3,7)(2,10)

\d4(4,4)(3,9)
\d4(4,16)(3,21)
\replacesource

\d{10}(5,5)(4,16)

\d{12}(6,2)(5,15)
\d{12}(5,15)(4,28)

\draw[fill=white](5.75,22.3) rectangle (9.25,23.7);
\node at (7.5,23) {\Si{8}H_{Q}\protect\ulZ}; 

\end{sseqdata}

\begin{sseqdata}[name=SI9Q8,
    y range={0}{36}, 
    x range={0}{10}, 
    y tick step = 2,
    x tick step = 2,
    classes= {fill, inner sep = 0pt, minimum size = 0.22em},
    class labels={below=2pt}, 
    differentials=black,
    class pattern=linear, 
    class placement transform = { rotate = 45, scale = 1.2 },
    xscale=0.5,
    yscale = 0.5,
    run off differentials = ->, 
    grid = go, 
    grid color = gray!30!white
]

\class(5,35)

\class[Z4class,"2" {inside,font=\tiny}](4,28)

\class[Z4class,"3" {inside,font=\tiny}](5,19)
 \class[phiFclass](6,18)

\class[phiFclass](4,12)
\class[Z4class,"2" {inside,font=\tiny}](3,13)
\class(2,14)

\class[phiZFclass](6,6)
\class[mgclass](5,7)
\class[phiFclass](4,8)
\class(3,9)

\class[phiZMclass](5,4)
\class[mystclass,"?" inside,font={\tiny}](7,2)
\class[Zclass](9,0)

\d3(7,2)(6,6)
\replacesource[mgclass]
\replacetarget
\d3(5,4)(4,8)
\replacesource
\replacetarget[Z4class,"2" {inside,font=\tiny}]

\d4(5,7)(4,12)
\replacetarget
\d4(4,8)(3,13)
\d4(3,9)(2,14)

\d7(5,4)(4,12)

\d8(5,19)(4,28)
\replacesource
\d{12}(6,6)(5,19)

\d{15}(7,2)(6,18)
\replacetarget

\d{16}(6,18)(5,35)

\draw[fill=white](6.75,26.3) rectangle (10.25,27.7);
\node at (8.5,27) {\Si{9}H_{Q}\protect\ulZ}; 

\end{sseqdata}

\begin{sseqdata}[name=SI10Q8,
    y range={0}{48}, 
    x range={0}{12}, 
    y tick step = 2,
    x tick step = 2,
    classes= {fill, inner sep = 0pt, minimum size = 0.22em},
    class labels={below=2pt}, 
    differentials=black,
    class pattern=linear, 
    class placement transform = { rotate = 45, scale = 1.2 },
    xscale = 0.38,
    yscale = 0.38,
    run off differentials = ->, 
    grid = go, 
    grid color = gray!30!white
]

\class(6,42)

\class[Z4class,"2" {inside,font=\tiny}](5,35)

\class[Z4class,"3" {inside,font=\tiny}](4,28)

\class[Z4class,"3" {inside,font=\tiny}](5,23)
\class[Z4class,"3" {inside,font=\tiny}](6,22)
 \class[phiFclass](7,21)
 
\class(3,21)

\class(3,17)
\class[Z4class,"3" {inside,font=\tiny}](4,16)
 \class[phiFclass](5,15)

\class(2,12)
\class(4,10)
\class[phiFclass](5,9)
\class[mgclass](6,8)
\class[phiZFclass](7,7)

\class(2,10)
\class[phiFclass](3,9)

\class(3,7)
\class[mgclass](4,6)
\class[phiZMclass](6,4)
\class[mystclass,"?" inside,font={\tiny}](8,2)
\class[Zclass](10,0)

\d2(3,7)(2,10)
\d2(4,6)(3,9)
\d2(3,9)(2,12)

\d4(8,2)(7,7)
\replacesource[mgclass]
\replacetarget
\d4(6,4)(5,9)
\replacesource
\replacetarget[Z4class,"2" {inside,font=\tiny}]
\d4(4,16)(3,21)
\replacesource[Z4class,"2" {inside,font=\tiny}]

\d6(4,10)(3,17)
\d6(5,9)(4,16)
\d6(6,8)(5,15)
\replacetarget
\d4(5,23)(4,28)

\d{10}(6,4)(5,15)

\d{12}(6,22)(5,35)
\replacesource

\d{14}(7,7)(6,22)

\d{18}(8,2)(7,21)
\replacetarget

\d{20}(7,21)(6,42)

\draw[fill=white](7.15,34.2) rectangle (11.5,35.8);
\node at (9.5,35) {\Si{10}H_{Q}\protect\ulZ};

\end{sseqdata}

\begin{sseqdata}[name=SI11Q8,
    y range={0}{50}, 
    x range={0}{12}, 
    y tick step = 2,
    x tick step = 2,
    classes= {fill, inner sep = 0pt, minimum size = 0.22em},
    class labels={below=2pt}, 
    differentials=black,
    class pattern=linear, 
    class placement transform = { rotate = 45, scale = 1.2 },
    xscale = 0.37,
    yscale = 0.37,
    run off differentials = ->, 
    grid = go, 
    grid color = gray!30!white
]

\class(7,49)

\class[Z4class,"2" {inside,font=\tiny}](6,42)

\class[Z4class,"3" {inside,font=\tiny}](5,35)

\class(4,28)
\class[Z4class,"3" {inside,font=\tiny}](6,26)
\class[Z4class,"3" {inside,font=\tiny}](7,25)
 \class[phiFclass](8,24)
 
\class(4,20)
\class[Z4class,"3" {inside,font=\tiny}](5,19)
 \class[phiFclass](6,18)

\class(3,13)
\class[mgclass](4,12)
\class(5,11)
\class[phiFclass](6,10)
\class[mgclass](7,9)
\class[phiZFclass](8,8)

\class(3,9)
\class[phiFclass](4,8)
\class[mgclass](5,7)
\class[phiZFclass](6,6)

\class(4,7)
\class[mgclass](5,6)
\class[Z4class,"\ " {inside,font=\tiny}](7,4)
\class[mystclass,"?" inside,font={\tiny}](9,2)
\class[Zclass](11,0)

\d1(7,4)(6,6)
\replacesource[phiZMclass]
\d1(5,6)(4,8)
\replacetarget
\d1(4,7)(3,9)

\d4(5,7)(4,12)
\d4(4,8)(3,13)

\d5(7,4)(6,10)
\replacesource
\replacetarget[Z4class,"2" {inside,font=\tiny}]
\d5(9,2)(8,8)
\replacetarget
\replacesource[mgclass]

\d8(5,11)(4,20)
\d8(6,10)(5,19)
\d8(5,19)(4,28)
\d8(7,9)(6,18)
\replacetarget
\d8(6,26)(5,35)

\d{13}(7,4)(6,18)

\d{16}(8,8)(7,25)
\d{16}(7,25)(6,42)

\d{21}(9,2)(8,24)
\replacetarget

\d{24}(8,24)(7,49)

\draw[fill=white](-0.15,38.2) rectangle (4.5,39.8);
\node at (1.9,39) {\Si{11}H_{Q}\protect\ulZ};

\end{sseqdata}

\begin{sseqdata}[name=SI12Q8,
    y range={0}{56}, 
    x range={0}{12}, 
    y tick step = 2,
    x tick step = 2,
    classes= {fill, inner sep = 0pt, minimum size = 0.22em},
    class labels={below=2pt}, 
    differentials=black,
    class pattern=linear, 
    class placement transform = { rotate = 45, scale = 1.2 },
    xscale = 0.33,
    yscale = 0.33,
    run off differentials = ->, 
    grid = go, 
    grid color = gray!30!white
]

\class(8,56)

\class[Z4class,"2" {inside,font=\tiny}](7,49)

\class[Z4class,"3" {inside,font=\tiny}](6,42)

\class[Z4class,"4" {inside,font=\tiny}](5,35)

\class[Z4class,"3" {inside,font=\tiny}](6,30)
\class[Z4class,"3" {inside,font=\tiny}](7,29)
\class[Z4class,"3" {inside,font=\tiny}](8,28)
\class[phiFclass](9,27)
 
\class[Z4class,"2" {inside,font=\tiny}](4,28)

\class[Z4class,"3" {inside,font=\tiny}](5,23)
\class[Z4class,"3" {inside,font=\tiny}](6,22)
\class[phiFclass](7,21)

\class[Z4class,"2" {inside,font=\tiny}](3,17)
\class[Z4class,"3" {inside,font=\tiny}](4,16)
\class[phiFclass](5,15)

\class[Z4class,"3" {inside,font=\tiny}](4,14)
\class[phiFclass](5,13)
\class[Z4class,"2" {inside,font=\tiny},inner sep={0.5pt}](5,13)
\class[phiFclass](6,12)
\class(6,12)
\class[phiFclass](7,11)
\class[mgclass](8,10)
\class[phiZFclass](9,9)

\class(4,10)
\class[phiFclass](5,9)
\class[mgclass](6,8)
\class[phiZFclass](7,7)

\class(5,7)
\class[mgclass](6,6)
\class[Z4class,"\ " {inside,font=\tiny}](8,4)
\class[mystclass,"?" inside,font={\tiny}](10,2)
\class[Zclass](12,0)

\d2(4,14)(3,17)
\replacesource
\d2(5,7)(4,10)
\d2(5,13,2)(4,16)
\d2(5,13,1)(4,16)
\replacesource[mgclass]
\d2(6,12,1)(5,15)
\d2(8,4)(7,7)
\replacesource[phiZMclass]
\d2(6,6)(5,9)
\replacetarget

\d4(5,23)(4,28)
\replacesource
\d4(6,30)(5,35)
\replacetarget
\d4(6,8)(5,13)
\d4(5,9)(4,14)

\d6(8,4)(7,11)
\replacesource
\replacetarget[Z4class,"2" {inside,font=\tiny}]
\d6(10,2)(9,9)
\replacetarget
\replacesource[mgclass]

\d{10}(6,12,2)(5,23)
\d{10}(7,11)(6,22)
\replacetarget
\d{10}(8,10)(7,21)
\replacetarget

\d{12}(6,22)(5,35)
\d{12}(7,29)(6,42)

\d{16}(8,4)(7,21)

\d{18}(9,9)(8,28)
\replacetarget[Z4class,"2" {inside,font=\tiny}]

\d{20}(8,28)(7,49)

\d{24}(10,2)(9,27)
\replacetarget

\d{28}(9,27)(8,56)

\draw[fill=white](-0.15,38.2) rectangle (4.5,39.8);
\node at (2.2,39) {\Si{12}H_{Q}\protect\ulZ};

\end{sseqdata}

\begin{sseqdata}[name=SI13Q8,
    y range={0}{64}, 
    x range={0}{14}, 
    y tick step = 2,
    x tick step = 2,
    classes= {fill, inner sep = 0pt, minimum size = 0.22em},
    class labels={below=2pt}, 
    differentials=black,
    class pattern=linear, 
    class placement transform = { rotate = 45, scale = 2 },
    xscale = 0.29,
    yscale = 0.29,
    run off differentials = ->, 
    grid = go, 
    grid color = gray!30!white
]

\class(9,63)

\class[Z4class,"2" {inside,font=\tiny}](8,56)

\class[Z4class,"3" {inside,font=\tiny}](7,49)

\class[Z4class,"4" {inside,font=\tiny}](6,42)

\class[Z4class,"2" {inside,font=\tiny}](5,35)
\class[Z4class,"3" {inside,font=\tiny}](7,33)
\class[Z4class,"3" {inside,font=\tiny}](8,32)
\class[Z4class,"3" {inside,font=\tiny}](9,31)
\class[phiFclass](10,30)
 
\class[Z4class,"2" {inside,font=\tiny}](4,28)
\class[Z4class,"3" {inside,font=\tiny}](6,26)
\class[Z4class,"3" {inside,font=\tiny}](7,25)
\class[phiFclass](8,24)

\class(3,21)
\class[Z4class,"3" {inside,font=\tiny}](5,19)
\class[phiFclass](6,18)

\class(4,16)
\class[Z4class,"3" {inside,font=\tiny}](5,15)
\class[phiFclass](6,14)
\class[Z4class,"2" {inside,font=\tiny},inner sep={0.5pt}](6,14)
\class[phiFclass](7,13)
\class(7,13)
\class[phiFclass](8,12)
\class[mgclass](9,11)
\class[phiZFclass](10,10)

\class(2,14)
\class[Z4class,"2" {inside,font=\tiny}](3,13)
\class[phiFclass](4,12)
\class(5,11)
\class[phiFclass](6,10)
\class[mgclass](7,9)
\class[phiZFclass](8,8)

\class(3,10)
\class[Z4class,"2" {inside,font=\tiny}](4,9)
\class[phiFclass](5,8)
\class(6,7)
\class[mgclass](7,6)
\class[Z4class,"\ " {inside,font=\tiny}](9,4)
\class[mystclass,"?" inside,font={\tiny}](11,2)
\class[Zclass](13,0)

\d3(3,10)(2,14)
\d3(4,9)(3,13)
\d3(5,8)(4,12)
\d3(6,7)(5,11)
\d3(9,4)(8,8)
\replacesource[phiZMclass]
\d3(7,6)(6,10)
\replacetarget

\d4(7,13,1)(6,18)
\d4(4,16)(3,21)
\d4(6,10)(5,15)
\replacetarget[Z4class,"2" {inside,font=\tiny}]
\d4(7,9)(6,14,1)
\d4(6,14,1)(5,19)
\d4(6,14,2)(5,19)

\d7(9,4)(8,12)
\replacesource
\replacetarget[Z4class,"2" {inside,font=\tiny}]
\d7(11,2)(10,10)
\replacetarget
\replacesource[mgclass]

\d8(7,33)(6,42)
\replacetarget
\d8(6,26)(5,35)
\replacesource

\d{12}(5,15)(4,28)
\d{12}(8,12)(7,25)
\replacetarget
\d{12}(9,11)(8,24)
\replacetarget
\d{12}(7,13,2)(6,26)

\d{16}(8,32)(7,49)
\d{16}(7,25)(6,42)

\d{19}(9,4)(8,24)

\d{20}(10,10)(9,31)
\replacetarget[Z4class,"2" {inside,font=\tiny}]

\d{24}(9,31)(8,56)

\d{27}(11,2)(10,30)
\replacetarget

\d{28}(10,30)(9,63)

\draw[fill=white](-0.15,38.2) rectangle (4.5,39.8);
\node at (2.2,39) {\Si{13}H_{Q}\protect\ulZ};

\end{sseqdata}

\fi


\includegraphics{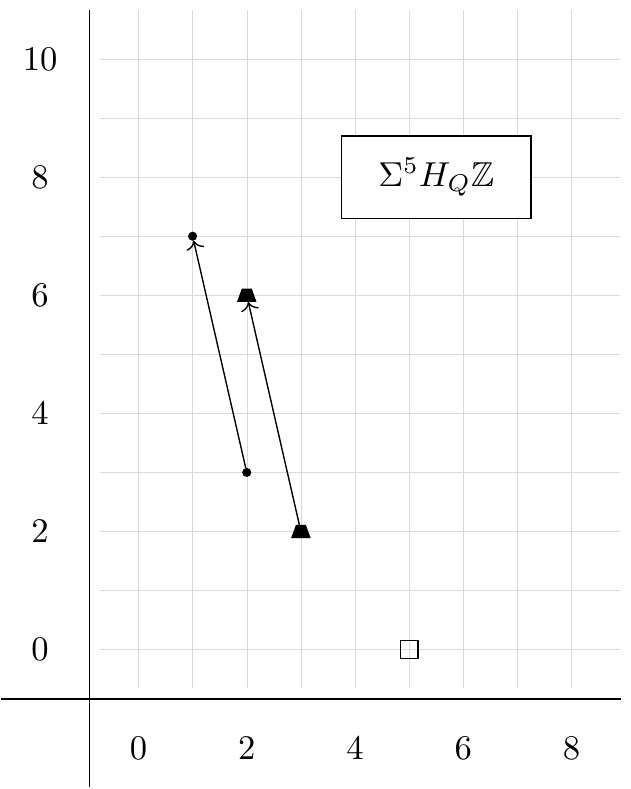}
\includegraphics{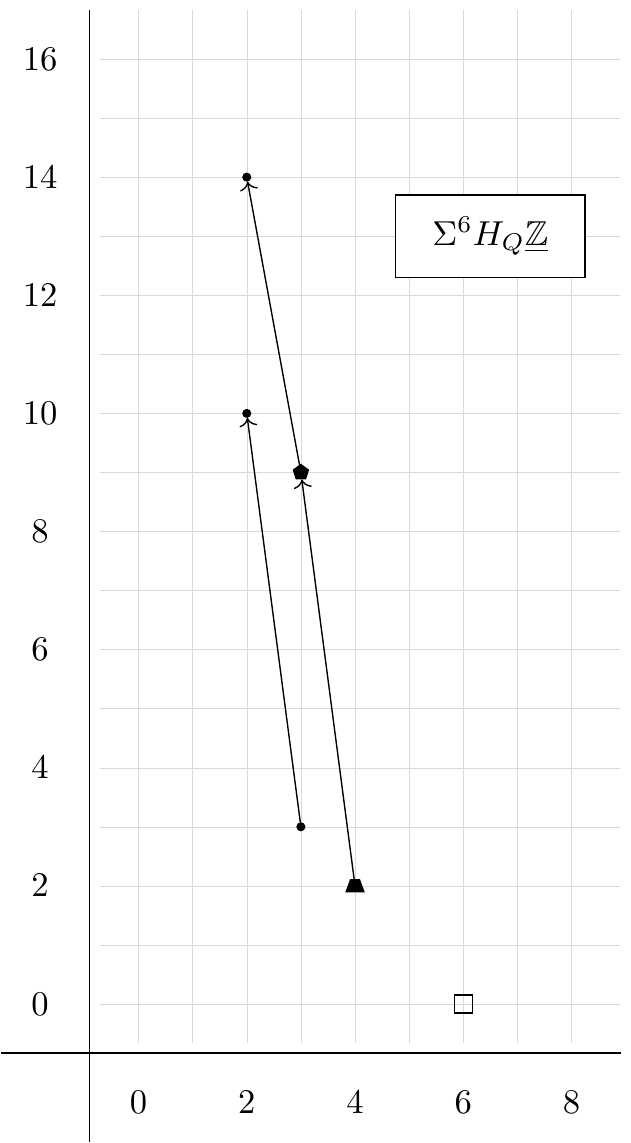}

\includegraphics[width=0.475\textwidth]{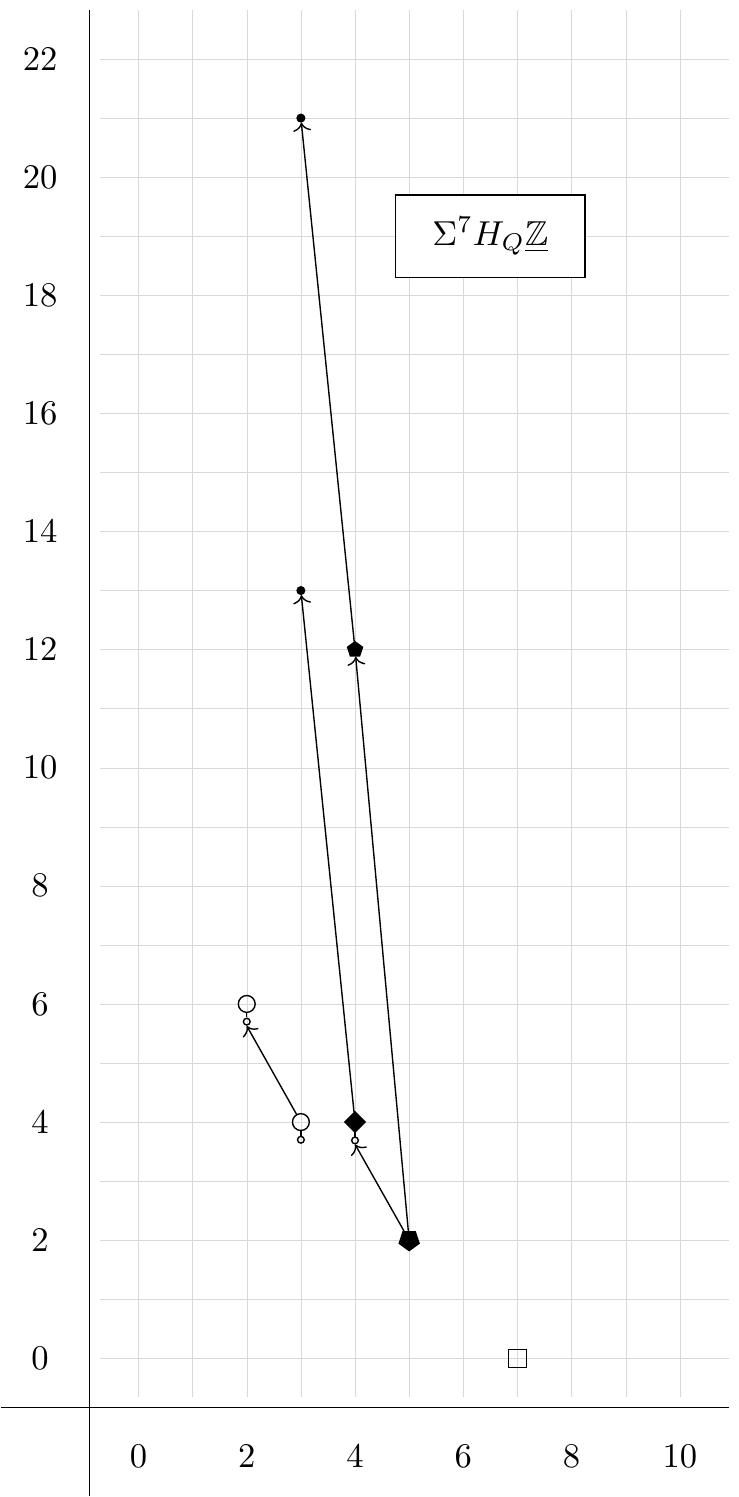}
\includegraphics[width=0.475\textwidth]{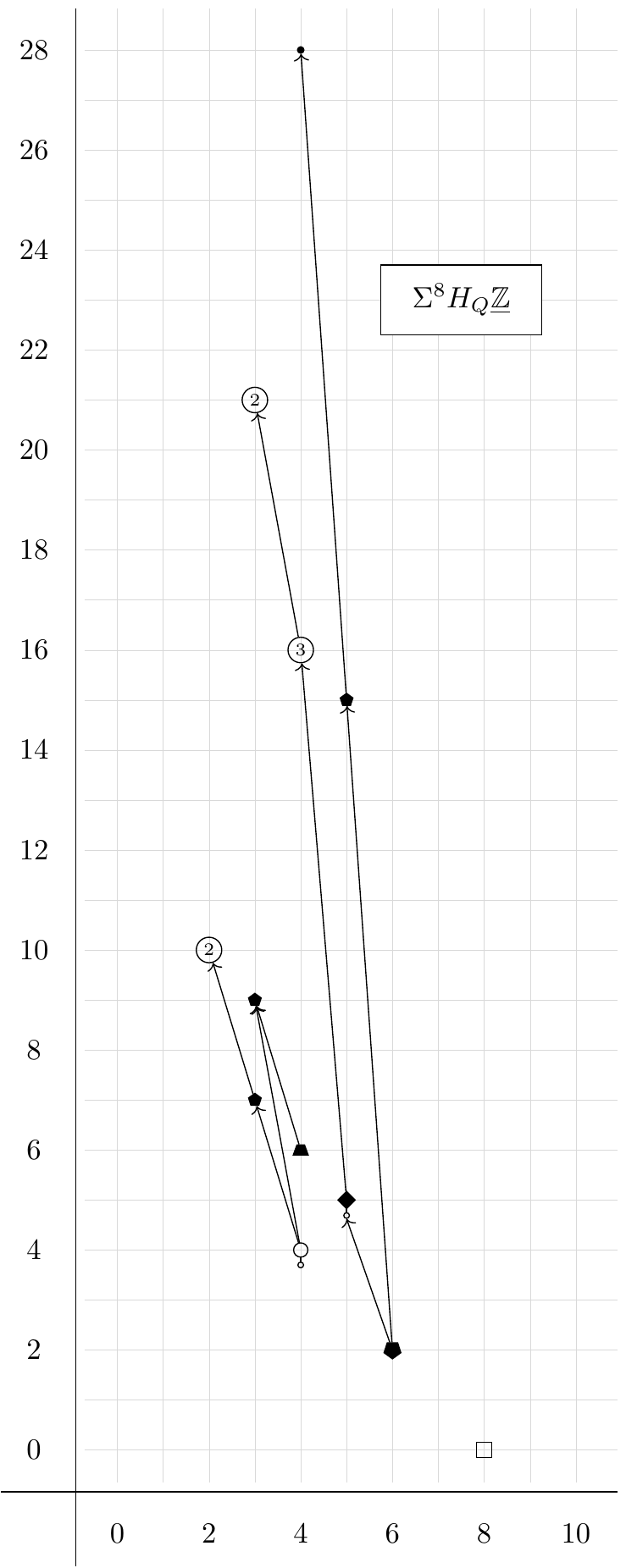}

\includegraphics{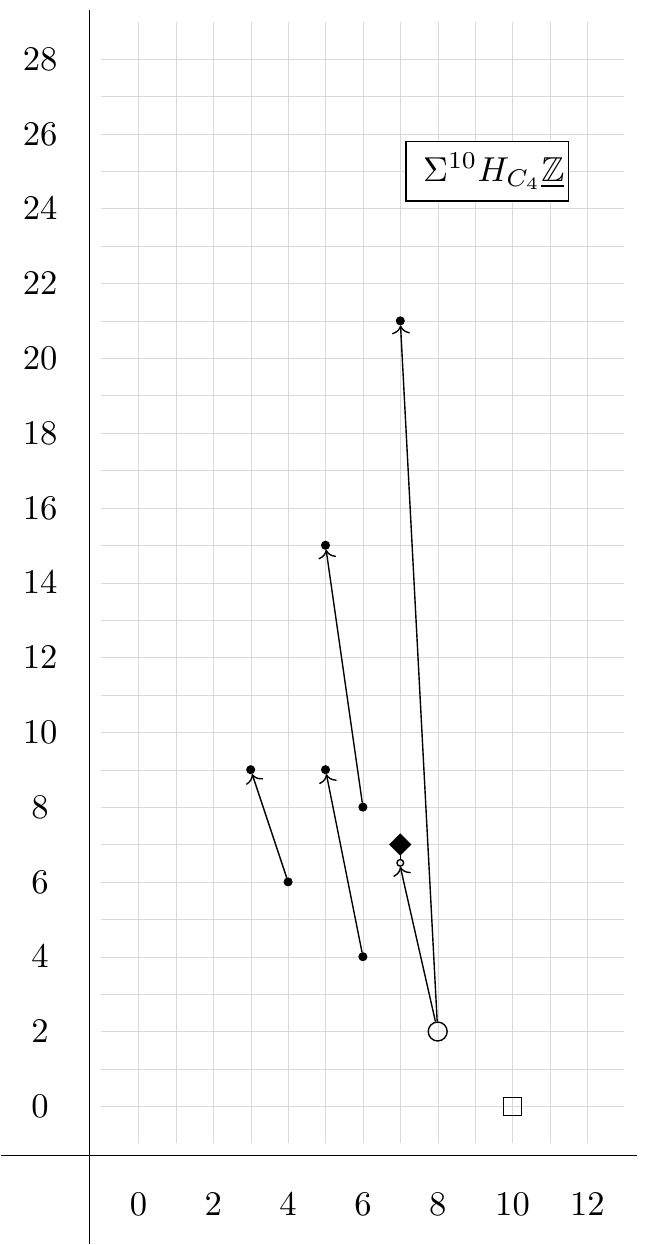}
\includegraphics{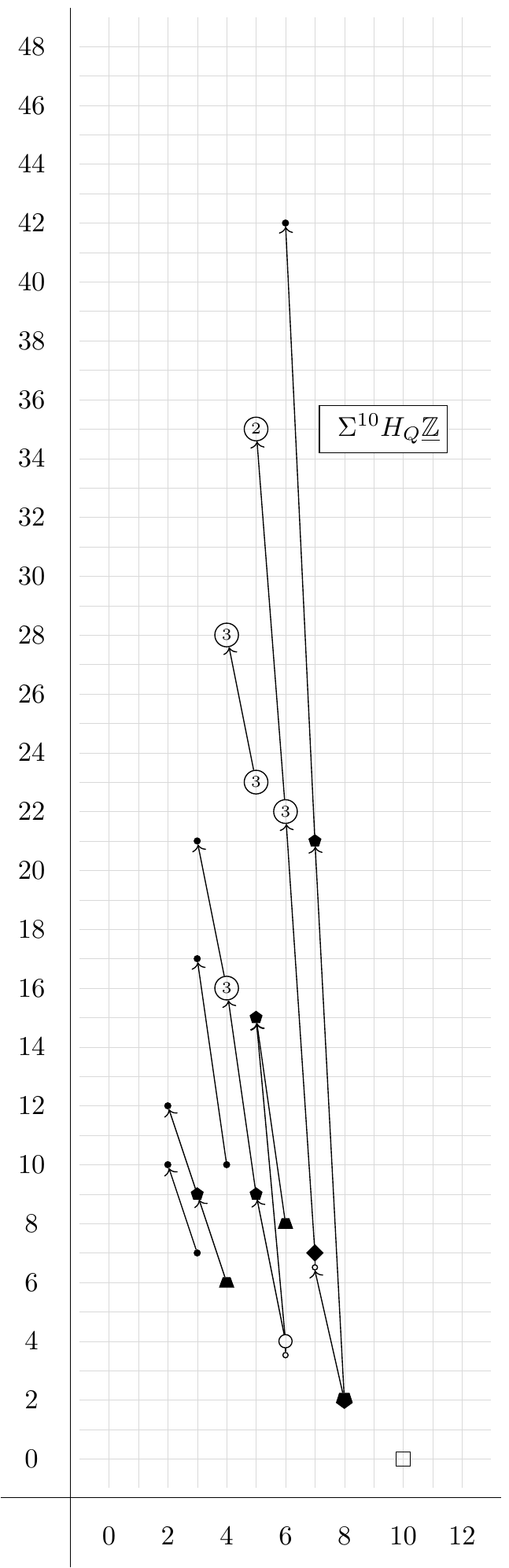}

\includegraphics{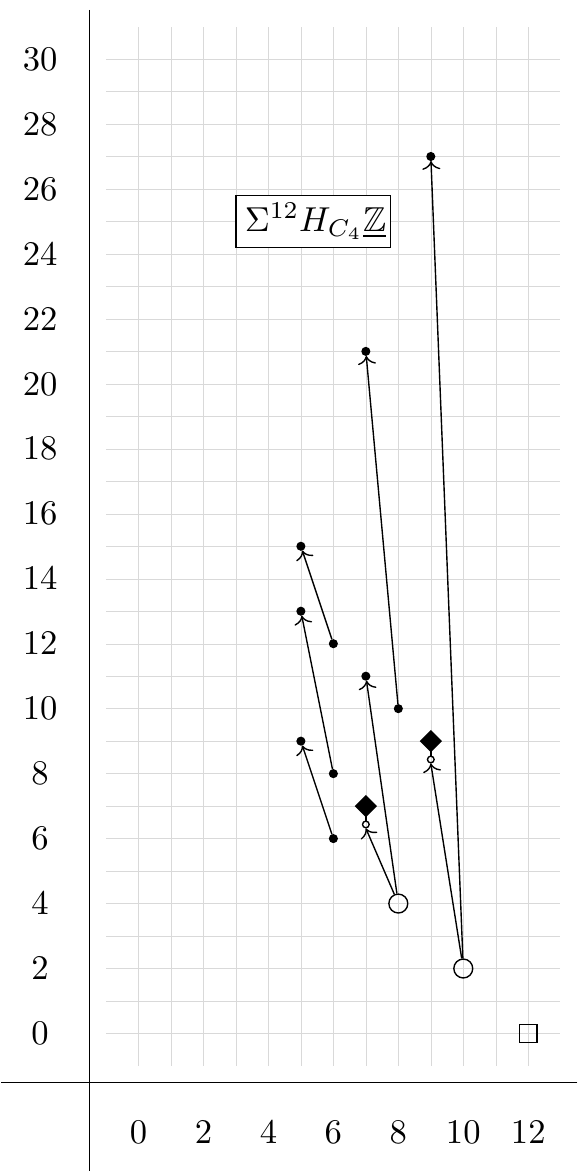}
\includegraphics{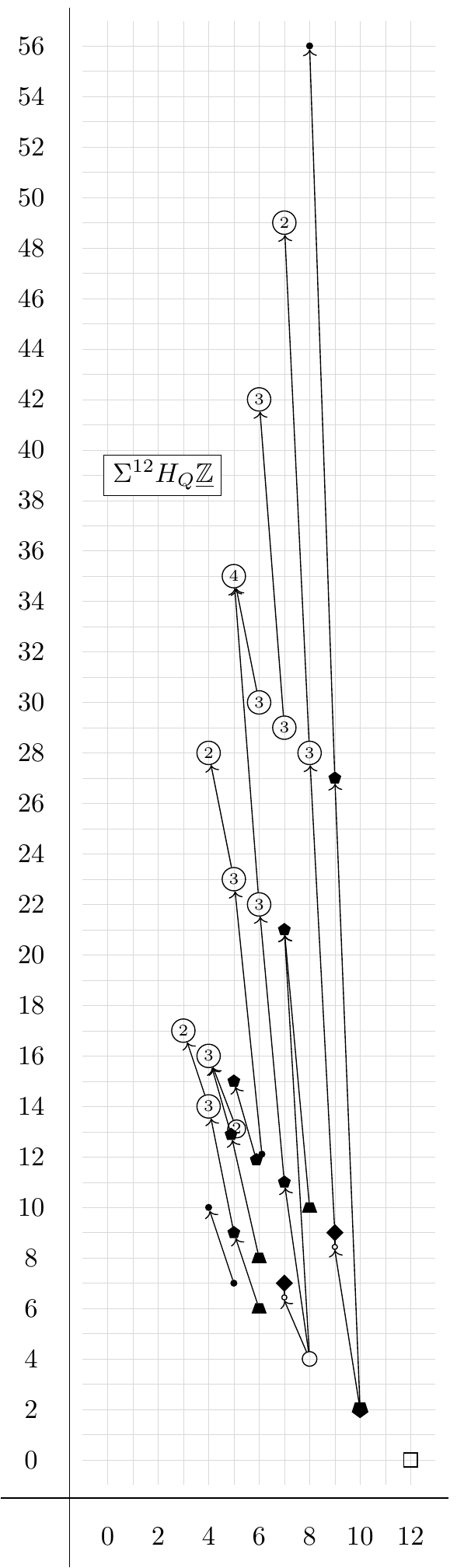}

\includegraphics[width=0.475\textwidth]{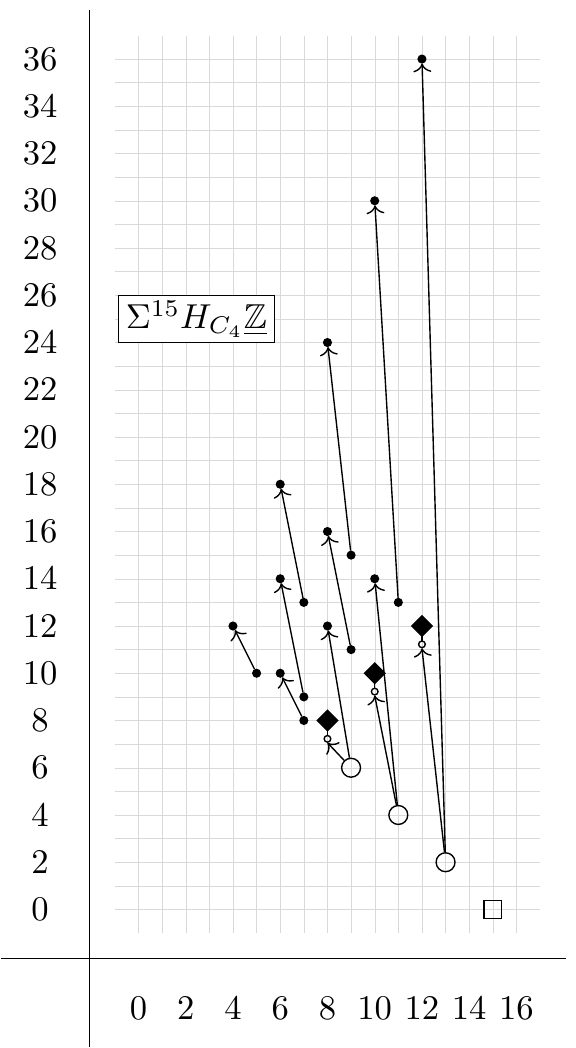}
\includegraphics[width=0.475\textwidth]{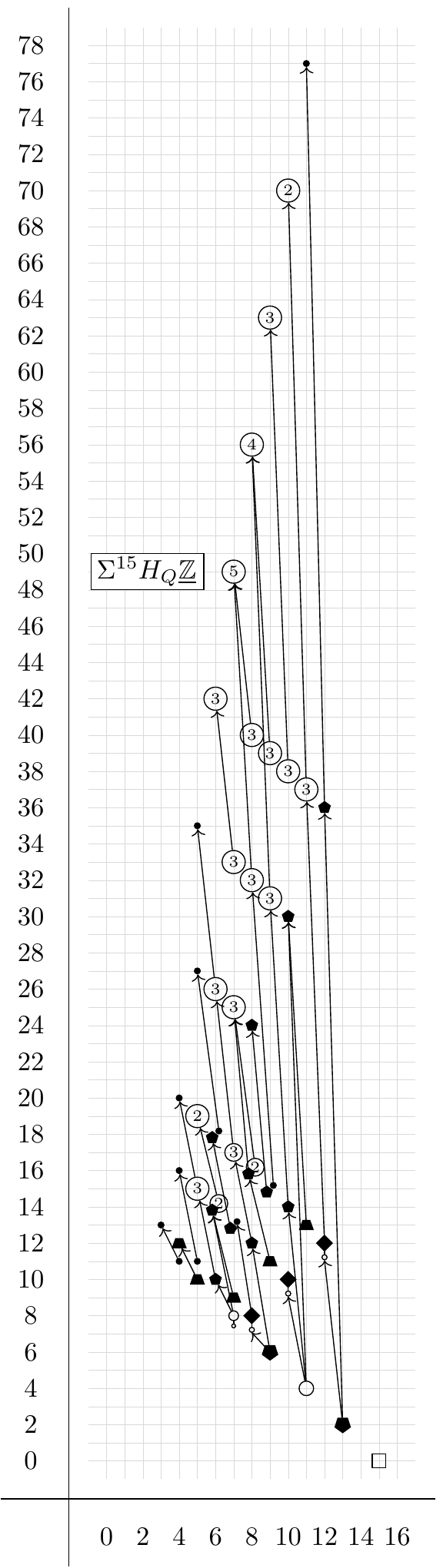}

\bibliographystyle{amsalpha}

\begin{bibdiv}
\begin{biblist}

\bib{V}{article}{
  eprint = {https://arxiv.org/abs/2111.06559},  
  author = {Angeltveit, Vigleik},
  title = {The slice spectral sequence for the cyclic group of order p},
  year = {2021},
}

\bib{BDS}{article}{
   author={Balmer, Paul},
   author={Dell'Ambrogio, Ivo},
   author={Sanders, Beren},
   title={Grothendieck-Neeman duality and the Wirthm\"{u}ller isomorphism},
   journal={Compos. Math.},
   volume={152},
   date={2016},
   number={8},
   pages={1740--1776},
   issn={0010-437X},
   review={\MR{3542492}},
   doi={10.1112/S0010437X16007375},
}

\bib{BasuGhosh}{article}{
    author = {Basu, S.},
    author = {Ghosh, S.},
    title = {Non-trivial extensions in equivariant cohomology with constant coefficients},
      year={2021},
      eprint={https://arxiv.org/abs/2108.12763},
}
    
\bib{BSW}{article}{
   author={Bouc, Serge},
   author={Stancu, Radu},
   author={Webb, Peter},
   title={On the projective dimensions of Mackey functors},
   journal={Algebr. Represent. Theory},
   volume={20},
   date={2017},
   number={6},
   pages={1467--1481},
   issn={1386-923X},
   review={\MR{3735915}},
   doi={10.1007/s10468-017-9695-y},
}

\bib{Brown}{book}{
   author={Brown, Kenneth S.},
   title={Cohomology of groups},
   series={Graduate Texts in Mathematics},
   volume={87},
   publisher={Springer-Verlag, New York-Berlin},
   date={1982},
   pages={x+306},
   isbn={0-387-90688-6},
   review={\MR{672956}},
}

\bib{Dug}{article}{
   author={Dugger, Daniel},
   title={An Atiyah-Hirzebruch spectral sequence for $KR$-theory},
   journal={$K$-Theory},
   volume={35},
   date={2005},
   number={3-4},
   pages={213--256 (2006)},
   issn={0920-3036},
   review={\MR{2240234}},
   doi={10.1007/s10977-005-1552-9},
}

\bib{GY}{article}{
   author={Guillou, B.},
   author={Yarnall, C.},
   title={The Klein four slices of $\Sigma^nH\underline{\mathbb F}_2$},
   journal={Math. Z.},
   volume={295},
   date={2020},
   number={3-4},
   pages={1405--1441},
   issn={0025-5874},
   review={\MR{4125695}},
   doi={10.1007/s00209-019-02433-3},
}

\bib{SlicePrimer}{article}{
   author={Hill, Michael A.},
   title={The equivariant slice filtration: a primer},
   journal={Homology Homotopy Appl.},
   volume={14},
   date={2012},
   number={2},
   pages={143--166},
   issn={1532-0073},
   review={\MR{3007090}},
   doi={10.4310/HHA.2012.v14.n2.a9},
}

\bib{Kervaire}{article}{
   author={Hill, M. A.},
   author={Hopkins, M. J.},
   author={Ravenel, D. C.},
   title={On the nonexistence of elements of Kervaire invariant one},
   journal={Ann. of Math. (2)},
   volume={184},
   date={2016},
   number={1},
   pages={1--262},
   issn={0003-486X},
   review={\MR{3505179}},
   doi={10.4007/annals.2016.184.1.1},
}
		
\bib{HHR}{article}{
   author={Hill, Michael A.},
   author={Hopkins, Michael J.},
   author={Ravenel, Douglas C.},
   title={The slice spectral sequence for the $C_4$ analog of real
   $K$-theory},
   journal={Forum Math.},
   volume={29},
   date={2017},
   number={2},
   pages={383--447},
   issn={0933-7741},
   review={\MR{3619120}},
   doi={10.1515/forum-2016-0017},
}

\bib{HHR2}{article}{
   author={Hill, Michael A.},
   author={Hopkins, M. J.},
   author={Ravenel, D. C.},
   title={The slice spectral sequence for certain $RO(C_{p^n})$-graded
   suspensions of $H\underline{\bf Z}$},
   journal={Bol. Soc. Mat. Mex. (3)},
   volume={23},
   date={2017},
   number={1},
   pages={289--317},
   issn={1405-213X},
   review={\MR{3633137}},
   doi={10.1007/s40590-016-0129-3},
}

\bib{HSWX}{article}{
  author = {Hill, Michael A.},
  author = {Shi, XiaoLin Danny},
  author = {Wang, Guozhen},
  author = {Xu, Zhouli},  
  title = {The slice spectral sequence of a $C_4$-equivariant height-4 Lubin-Tate theory},
  year = {2018},
  eprint = {https://arxiv.org/abs/1811.07960},
}

\bib{HKBPO}{article}{
   author={Hu, Po},
   author={Kriz, Igor},
   title={The homology of $BPO$},
   conference={
      title={Recent progress in homotopy theory},
      address={Baltimore, MD},
      date={2000},
   },
   book={
      series={Contemp. Math.},
      volume={293},
      publisher={Amer. Math. Soc., Providence, RI},
   },
   date={2002},
   pages={111--123},
   review={\MR{1887531}},
   doi={10.1090/conm/293/04945},
}

\bib{SKriz}{article}{
   author={Kriz, Sophie},
   title={Notes on equivariant homology with constant coefficients},
   journal={Pacific J. Math.},
   volume={309},
   date={2020},
   number={2},
   pages={381--399},
   issn={0030-8730},
   review={\MR{4202017}},
   doi={10.2140/pjm.2020.309.381},
}

\bib{LMS}{book}{
   author={Lewis, L. G., Jr.},
   author={May, J. P.},
   author={Steinberger, M.},
   author={McClure, J. E.},
   title={Equivariant stable homotopy theory},
   series={Lecture Notes in Mathematics},
   volume={1213},
   note={With contributions by J. E. McClure},
   publisher={Springer-Verlag, Berlin},
   date={1986},
   pages={x+538},
   isbn={3-540-16820-6},
   review={\MR{866482}},
   doi={10.1007/BFb0075778},
}
	
\bib{Lu}{article}{
    title={On the $RO(G)$-graded coefficients of $Q_8$ equivariant cohomology}, 
    author={Lu, Yunze},
    year={2021},
    eprint={https://arxiv.org/abs/2111.01926},
}

\bib{MSZ}{article}{
  author = {Meier, Lennart},
  author = {Shi, XiaoLin Danny},
  author = {Zeng, Mingcong},
  title = {The localized slice spectral sequence, norms of Real bordism, and the Segal conjecture},
  year = {2020},
  eprint = {https://arxiv.org/abs/2008.04963},
}

\bib{Slone}{article}{
      title={Klein Four 2-slices and the Slices of $\Sigma^{\pm n}H\underline{\mathbb{Z}}$}, 
      author={Slone, Carissa},
      year={2021},
      eprint={https://arxiv.org/abs/2106.02767},
      note={To appear in {\it Math. Z}},
}

\bib{Sulyma}{book}{
   author={Sulyma, Yuri John Fraser},
   title={Equivariant Aspects of Topological Hochschild Homology},
   note={Thesis (Ph.D.)--The University of Texas at Austin},
   publisher={ProQuest LLC, Ann Arbor, MI},
   date={2019},
   pages={95},
   isbn={979-8678-19586-9},
   review={\MR{4197745}},
}

\bib{TW}{article}{
   author={Th\'{e}venaz, Jacques},
   author={Webb, Peter},
   title={The structure of Mackey functors},
   journal={Trans. Amer. Math. Soc.},
   volume={347},
   date={1995},
   number={6},
   pages={1865--1961},
   issn={0002-9947},
   review={\MR{1261590}},
   doi={10.2307/2154915},
}

\bib{Ullman}{article}{
   author={Ullman, John},
   title={On the slice spectral sequence},
   journal={Algebr. Geom. Topol.},
   volume={13},
   date={2013},
   number={3},
   pages={1743--1755},
   issn={1472-2747},
   review={\MR{3071141}},
   doi={10.2140/agt.2013.13.1743},
}

\bib{Waner}{article}{
  author={Waner, Stefan},
  title={Periodicity in the cohomology of universal {$G$}-spaces},
   journal={Illinois J. Math.},
   volume={30},
   date={1986},
   number={3},
   pages={468--478},
   issn={0019-2082},
   review={\MR{850344}},
   }

\bib{Yarn}{book}{
   author={Yarnall, Carolyn Marie},
   title={The Slices of $S^{n \lambda} HZ$ for Cyclic p-Groups},
   note={Thesis (Ph.D.)--University of Virginia},
   publisher={ProQuest LLC, Ann Arbor, MI},
   date={2013},
   pages={99},
   isbn={978-1303-39719-6},
   review={\MR{3187569}},
}

\bib{Yarnall}{article}{
   author={Yarnall, Carolyn},
   title={The slices of $S^n\wedge H\underline{\Z}$ for cyclic
   $p$-groups},
   journal={Homology Homotopy Appl.},
   volume={19},
   date={2017},
   number={1},
   pages={1--22},
   issn={1532-0073},
   review={\MR{3628673}},
   doi={10.4310/HHA.2017.v19.n1.a1},
}

\bib{Z}{article}{
    author={Zeng, M.},
    title={Equivariant Eilenberg-Mac~Lane spectra in cyclic $p$-groups},
    year={2018},
    eprint={https://arxiv.org/abs/1710.01769},
}

\bib{Zou}{book}{
    author={Zou, Y.},
    title={$RO(D_{2p})$-graded slice spectral sequence for $HZ$},
    year={2018},
    note={Thesis (Ph.D.)-University of Rochester},
}

\end{biblist}
\end{bibdiv}

\end{document}